 \documentclass [final]{amsart}


\usepackage{mathabx}
\usepackage[utf8]{inputenc} 
\usepackage[english]{babel}
\usepackage{amstext}
\usepackage{euscript}
\usepackage{bbm}
\usepackage{mathrsfs}
\usepackage{enumerate}
\usepackage{graphicx}
\usepackage{caption}
\usepackage{amscd}
\usepackage{verbatim}
\usepackage{amssymb}
\usepackage{amsthm}
\usepackage{amsmath}
\usepackage{amstext}
\usepackage{amsfonts}
\usepackage{latexsym}
\usepackage{mathtools}
\usepackage{enumitem} 
\usepackage{accents}
\usepackage{stmaryrd}
\usepackage{float}

\usepackage[foot]{amsaddr} 
\usepackage{textcomp} 
\usepackage{cite} 


\usepackage[usenames,dvipsnames]{xcolor} 
\definecolor{dkblue}{RGB}{1,31,91} 
\definecolor{oxfordblue}{RGB}{4,30,66}

\definecolor{mycolor}{rgb}{0.97,0.98,0.99}
\definecolor{mysecondcolor}{rgb}{0.2,0.2,0.8}

\definecolor{mydarkgreen}{RGB}{42,55,46}
\definecolor{mygreenone}{RGB}{76,131,122}
\definecolor{mytan}{RGB}{225,221,191}
\definecolor{mytant}{RGB}{247.5,243.1,210.1}
\definecolor{mydarkblue}{RGB}{4,37,58}
\definecolor{mydarkbluet}{RGB}{6,55.5,87}
\definecolor{mydarkbluett}{RGB}{12,111,174}

\usepackage[colorlinks=true, pdfstartview=FitV, linkcolor=dkblue, citecolor=dkblue, urlcolor=dkblue]{hyperref}

\usepackage{todonotes}

\usepackage[color]{showkeys}

\definecolor{myred}{rgb}{0.7,0.1,0.1}
\definecolor{mygreen}{rgb}{0.1,0.7,0.1}
\definecolor{myblue}{rgb}{0.2,0.2,0.5}

\theoremstyle{plain}
\newtheorem{thm}{Theorem}
\newtheorem{remark}[thm]{Remark}
\newtheorem{prop}[thm]{Proposition}

\newtheorem{lemma}[thm]{Lemma}
\newtheorem{defn}[thm]{Definition}

\numberwithin{equation}{section}
\numberwithin{thm}{section}




\newcommand{\pv}{\text{p.v.}}

\def\R {\mathbb R}

\newcommand{\minspace}{\hspace{0.05cm}}
\newcommand{\etab}{{\boldsymbol{\eta}}}
\newcommand{\thetab}{{\boldsymbol{\theta}}}
\newcommand{\xib}{\bm{\xi}}
\newcommand{\alphab}{{\bm{\alpha}}}
\newcommand{\betab}{{\bm{\beta}}}
\newcommand{\gammab}{{\bm{\gamma}}}
\newcommand{\deltab}{{\bm{\delta}}}
\newcommand{\xb}{{\bm{x}}}
\newcommand{\yb}{{\bm{y}}}
\newcommand{\hb}{{\boldsymbol{h}}}
\newcommand{\hx}{\bm{\widehat{x}}}
\newcommand{\hxh}{\bm{\widehat{x}_h}}
\newcommand{\hX}{\bm{\widehat{X}}}
\newcommand{\hy}{\bm{\widehat{y}}}
\newcommand{\hz}{\bm{\widehat{z}}}

\newcommand{\hxi}{\bm{\widehat{\xi}}}

\newcommand{\combin}[2]{\begin{pmatrix}
                    #1\\
                    #2
                 \end{pmatrix}}


\newcommand{\domainA}{\mc{DA}_{\sigma_1,\sigma_2}}


\newcommand{\paren}[1]{\left(#1\right)}
\newcommand{\jump}[1]{\llbracket#1\rrbracket}
\newcommand{\D}[2]{\frac{d#1}{d#2}}
\newcommand{\PD}[2]{\frac{\partial#1}{\partial#2}}

\newcommand{\PDD}[3]{\frac{\partial^{#1}{#2}}{\partial{#3}^{#1}}}

\newcommand{\at}[2]{\left. #1 \right|_{#2}}

\newcommand{\mc}[1]{\mathcal{#1}}
\newcommand{\wh}[1]{\widehat{#1}}

\newcommand{\bm}[1]{\boldsymbol{#1}}
\newcommand{\abs}[1]{\left\lvert #1 \right\rvert}
\newcommand{\norm}[1]{\left\lVert #1 \right\rVert}

\newcommand{\mbs}{{\mathbb{S}^2}}
\newcommand{\mcu}{{\mc{U}}}

\newcommand{\mcbr}[1]{{\mc{B}\paren{#1}}}
\newcommand{\mbr}{{\mathbb{R}^2}}
\newcommand{\mbrn}[1]{{\mathbb{R}^{#1}}}

\newcommand{\colofmat}[2]{\bm{#1}_{\bullet,#2}}
\newcommand{\rowofmat}[2]{\bm{#1}_{#2,\bullet}}

\newcommand{\starnorm}[1]{\left| #1 \right|_*}
\newcommand{\circlenorm}[1]{\left| #1 \right|_\circ}

\newcommand{\vph}{\varphi}

\setcounter{tocdepth}{1}

\begin{document}

\keywords{Peskin problem, 3D, Fluid-Structure Interaction, immersed boundary problem, Stokes flow}
\subjclass[2020]{35Q35, 35C15,  35R11, 35R35, 76D07.}
\date{\today}

\title[Well-Posedness of the 3D Peskin Problem]{Well-Posedness of the 3D Peskin Problem}

\author[E. Garc\'ia-Ju\'arez]{Eduardo Garc\'ia-Ju\'arez$^{\ast}$}
\thanks{$^{\ast}$supported by the European Union’s Horizon 2020 research and innovation programme under the Marie Skłodowska-Curie grant agreement CAMINFLOW No 101031111, and the AEI project PID2021-125021NAI00 (Spain).
}
\address{$^{\ast}$Departamento de An\'alisis Matem\'atico, Universidad de Sevilla, C/Tarfia s/n, Campus Reina Mercedes, 41012, Sevilla, Spain. \href{mailto:egarciajuarez@ub.edu}{egarciajuarez@ub.edu}}

\author[P.-C. Kuo]{Po-Chun Kuo$^{\dagger,\ddagger}$}
\thanks{$^{\ddagger}$partially supported by NSF grant DMS-2042144 (USA) awarded to YM}

\author[Y. Mori]{Yoichiro Mori$^{\dagger,\mathsection}$}
\thanks{$^{\mathsection}$partially supported by the NSF grant DMS-1907583, 2042144 (USA) and the Math+X award from the Simons Foundation.}

\author[R. M. Strain]{Robert M. Strain$^{\dagger,\mathparagraph}$}
\address{$^\dagger$Department of Mathematics, University of Pennsylvania, David Rittenhouse Lab., 209 South 33rd St., Philadelphia, PA 19104, USA. 
$^{\ddagger}$\href{mailto:kuopo@sas.upenn.edu}{kuopo@sas.upenn.edu}
$^{\mathsection}$\href{mailto:y1mori@math.upenn.edu}{y1mori@math.upenn.edu}
$^{\mathparagraph}$\href{mailto:strain@math.upenn.edu}{strain@math.upenn.edu}}
\thanks{$^{\mathparagraph}$partially supported by the NSF grants DMS-1764177 and DMS-2055271 (USA)}

\begin{abstract}
This paper introduces the {\em{3D Peskin problem}}: a two-dimensional elastic membrane immersed in a three-dimensional steady Stokes flow. We obtain the equations that model this free boundary problem and show that they admit a boundary integral reduction, providing an evolution equation for the elastic interface. We consider general nonlinear elastic laws, i.e., the fully nonlinear Peskin problem, and prove that the problem is well-posed in low-regularity H\"older spaces. Moreover, we prove that the elastic membrane becomes smooth instantly in time.
\end{abstract}

\maketitle

\tableofcontents

\thispagestyle{empty}

\section{Introduction}

The immersed boundary method, introduced by Peskin \cite{Peskin77, Peskin02} to study the blood flow around heart valves, has been widely applied to numerically study fluid-structure interaction (FSI) problems. These FSI problems, in which a fluid interacts with elastic structures, appear naturally in many engineering and biophysics applications \cite{Pozrikidis03, richter17}. Despite their importance, both the computational methods and the FSI problems themselves are poorly understood from an analytical standpoint. A major impediment has been the lack of analytical understanding of the underlying PDEs, which are typically nonlinear and nonlocal. Results are particularly scarce in the more realistic three-dimensional settings, where the coupling of nonlocal effects with non-trivial geometry substantially increases the complexity of the problem.

Since the recent breakthrough works \cite{LinTong19} and \cite{MoriRodenbergSpirn19}, which provided the strong solution theory for the problem of an immersed elastic string in a two-dimensional fluid, the so-called \textit{2D Peskin problem} has attracted a lot of attention \cite{Tong21, Li21, GJMoriStrain20, GancedoGraneroScrobogna21, KeNguyen21, CameronStrain21, Tong22}. 
In this paper, we initiate the study of its three-dimensional counterpart. We introduce the formulation and develop the well-posedness theory for the \textit{three-dimensional (fully nonlinear) Peskin problem} of an elastic membrane immersed in a fluid. 

\subsection{Description of the problem}

We consider the following problem in which a three-dimensional incompressible Stokes fluid interacts with an elastic membrane in $\mathbb{R}^3$.
A closed elastic interface $\Gamma$ encloses a simply connected bounded domain $\Omega\subset\mathbb{R}^3$
filled with a Stokes fluid with viscosity $\mu$. The outside region $\mathbb{R}^3\backslash \Omega$ is filled with 
a Stokes fluid of viscosity $1$. The equations satisfied are:
\begin{align}
\label{uin}
\mu \Delta \bm{u}-\nabla p&=0 \text{ in } \Omega, \\
\label{uout}
\Delta \bm{u}-\nabla p &=0 \text{ in } \mathbb{R}^3\backslash \Omega,\\
\label{incomp}
\nabla \cdot \bm{u}&=0 \text{ in } \mathbb{R}^3\backslash \Gamma.
\end{align}
Here $\bm{u}$ is the velocity field and $p$ is the pressure. We impose the following condition in the far field:
\begin{equation}\label{uinf}
\bm{u}\to 0 \text{ as } |\bm{x}|\to \infty.
\end{equation}
We  supplement the above with interface conditions on the time-evolving surface $\Gamma$.
For any quantity $w$ defined on 
$\Omega$ and $\mathbb{R}^3\backslash \Omega$, we set:
\begin{equation*}
\jump{w}=\at{w}{\Gamma_{\rm i}}-\at{w}{\Gamma_{\rm e}}
\end{equation*}
where $\at{w}{\Gamma_{\rm i,e}}$ are the trace values of $w$ at $\Gamma$ evaluated 
from the $\Omega$ (interior) and $\mathbb{R}^3\backslash\Omega$ (exterior) sides of $\Gamma$.
Let $\bm{n}$ be the outward pointing unit normal vector on $\Gamma$.
The interface conditions are:
\begin{align}
\label{ujump}
\jump{\bm{u}}&=0,\\
\label{stressjump}
\jump{\Sigma\bm{n}}&=\bm{F}_{\rm el},\; \Sigma=\begin{cases}
\mu \paren{\nabla \bm{u}+(\nabla \bm{u})^{\rm T}}-pI &\text{ in } \Omega\\
\nabla \bm{u}+(\nabla \bm{u})^{\rm T}-pI &\text{ in } \mathbb{R}^3\backslash \Omega
\end{cases},\\\label{convect}
\PD{\bm{X}}{t}&=\bm{u}(\bm{X},t),
\end{align}
where $I$ is the $3\times 3$ identity matrix and $\bm{X}:\mathbb{S}^2\mapsto \Gamma(t)$ the map that describes the evolving membrane. This map gives the deformation of the reference configuration $\mathbb{S}^2$, the standard embedding of the sphere of radius $1$ in $\mathbb{R}^3$.
The first condition is the no-slip boundary condition and the second is the stress balance condition where $\Sigma$
is the fluid stress and $\bm{F}_{\rm el}$
is the elastic force exerted by the interface $\Gamma$. The last condition states that the membrane evolves with the fluid flow. Note that the elastic surface $\Gamma=\Gamma(t)$ and hence $\Omega=\Omega(t)$ changes with time. Once given the constitutive equation for the elastic force $\bm{F}_{\text{el}}$, equations \eqref{uin}-\eqref{convect} form the so-called \textit{jump} formulation of the 3D Peskin problem.
Let $\wh{g}$ and $g$ denote the metric tensors on $\mathbb{S}^2$ and $\Gamma$ respectively. A natural choice for the elastic stretching force is given by \cite{evans1980mechanics,griffith2020immersed}
\begin{equation}\label{Felast}
    \begin{aligned}
    \bm{F}_{\text{el}}=\sqrt{\text{det}(\widehat{g}^{-1}g)}\nabla_{\mathbb{S}^2}\cdot\bm{T}(\nabla_{\mathbb{S}^2}\bm{X}),
    \end{aligned}
\end{equation}
where 
\begin{equation*}
    \bm{T}(\nabla_{\mathbb{S}^2}\bm{X}):=\frac{\mathcal{T}(|\nabla_{\mathbb{S}^2}\bm{X}|)}{|\nabla_{\mathbb{S}^2}\bm{X}|}\nabla_{\mathbb{S}^2}\bm{X}=:T(|\nabla_{\mathbb{S}^2}\bm{X}|)\nabla_{\mathbb{S}^2}\bm{X},
\end{equation*}
$\nabla_{\mathbb{S}^2}$ denotes the surface gradient on $\mathbb{S}^2$, $|A|$ denotes the Frobenius norm of matrix $A$, and $\mathcal{T}$ has to satisfy $\mathcal{T}>0$,  $d\mathcal{T}/d\lambda\geq0$ (see Section \ref{sec:Notation} for further notation). In Section \ref{sec:formulation} more details about the derivation of the elastic force are given. 
For a Hookean material, $\mathcal{T}$ is linear and hence the elastic force is linear in $\bm{X}$. We will consider general $\mathcal{T}$, i.e., the \textit{fully nonlinear} Peskin problem.

Compared to fluid interface problems, such as a drop of liquid surrounded by another fluid or vacuum \cite{Solonnikov77, Pozrikidis92, Denisova94, Shimizu12, PrussSimonett16}, where only the shape of the interface matters, here it is not expected that Eulerian methods on their own should suffice. Due to the elastic nature of the membrane, the stretching, given by the parametrization, has a strong influence on the evolution. Thus, one needs to keep track of the membrane configuration. Lagrangian methods are needed, making it harder to work in higher dimensions. In particular, one cannot freely reparametrize the surface, an idea frequently used to obtain extra cancellations in the study of fluid interfaces \cite{HouLowengrubShelley94, CordobaCordobaGancedo11, GancedoGarciaJuarezPatelStrain21}.

An important feature of the Peskin problem is that it admits a Boundary Integral formulation, whose derivation is given in Section \ref{sec:formulation}. When $\mu=1$, the problem \eqref{uin}-\eqref{Felast} is equivalent to the following evolution equation for $\bm{X}$:
\begin{equation}\label{Peskin_3D_BIF}
\begin{aligned}
\PD{\bm{X}}{t}(\bm{\hx})&=\int_{\mathbb{S}^2}\!\!\! G(\bm{X}(\hx)-\bm{X}(\hy))\nabla_{\mathbb{S}^2}\!\cdot\! \Big(\mc{T}(|\nabla_{\mathbb{S}^2}\bm{X}(\hy)|)\frac{\nabla_{\mathbb{S}^2}\bm{X}(\bm{\hy})}{|\nabla_{\mathbb{S}^2}\bm{X}(\hy)|}\Big)d\mu_{\mathbb{S}^2}(\bm{\hy}),\\
\bm{X}(\hx)|_{t=0}&=\bm{X}_0(\hx),
\end{aligned}
\end{equation}
where $G(\bm{x})$ is the Stokeslet tensor in $\R^3$:
\begin{equation}\label{G_def}
    G(\bm{x})=\frac{1}{8\pi}\Big(\frac{1}{|\bm{x}|}I_3+\frac{\bm{x}\otimes\bm{x}}{|\bm{x}|^3}\Big).
\end{equation}
We have suppressed the dependence of $\bm{X}$ on $t$ to avoid cluttered notation. Henceforth, we will assume $\mu=1$.
It will be sometimes convenient in the analysis to work with coordinates. Let $\bm{\theta}=(\theta_1,\theta_2)$ be a (local)
coordinate system on $\mathbb{S}^2$ and let $\wh{\bm{x}}=\wh{\bm{X}}(\bm{\theta})\in \mathbb{S}^2\subset\mathbb{R}^3$ be the point on $\mathbb{S}^2$ corresponding to $\bm{\theta}$. 
Let $\bm{X}(\bm{\theta})=\bm{X}(\hX(\thetab))\in \Gamma\subset \mathbb{R}^3$ be the position on $\Gamma$
corresponding to the coordinate point $\bm{\theta}$ (see Figure \ref{fig1}). 
If $\wh{\bm{x}}=\wh{\bm{X}}(\bm{\theta})$, we will  write $\bm{X}(\wh{\bm{x}})$ and $\bm{X}(\bm{\theta})$ in an abuse of notation.
Then, after integration by parts and choosing an isothermal coordinate system, equation \eqref{Peskin_3D_BIF} becomes
\begin{equation}\label{Peskin_3D_coor}
\begin{split}
\PD{\bm{X}}{t}(\thetab)&=-\text{p.v.}\int_{\mathbb{R}^2} \PD{}{\eta_i}G(\bm{X}(\thetab)\!-\!\bm{X}(\bm{\eta}))\tilde{\bm{F}}_{\text{el},i}(\bm{X})(\bm{\eta})d\eta_1d\eta_2,
\end{split}
\end{equation}
where we denote 
\begin{equation*}
    \begin{aligned}
    \tilde{\bm{F}}_{\text{el},i}(\bm{X})(\etab)&=\frac{\mc{T}(\lambda(\etab))}{\lambda(\etab)}\PD{}{\eta_i}\bm{X}(\etab),\quad \lambda(\etab)=\sqrt{\text{tr}(\wh{g}^{-1}(\etab)g(\etab))} .
    \end{aligned}
\end{equation*}
Above we use the explicit definitions of $\wh{g}$ and $g$ given in \eqref{metricTensors}.
\begin{figure}[h]
\includegraphics[scale=0.85]{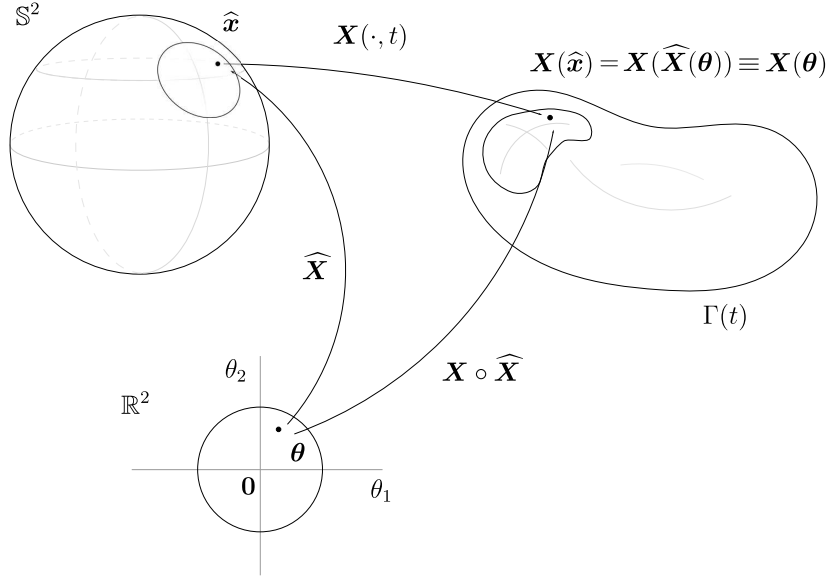}
\caption{Deformation map $\bm{X}(\cdot,t):\mathbb{S}^2\rightarrow \Gamma(t)$.}\label{fig1}
\end{figure}

Some important properties of the solutions to the Peskin problem \eqref{Peskin_3D_BIF} are easier to deduce from the jump formulation \eqref{uin}-\eqref{convect}. The incompressibility condition \eqref{incomp}, together with \eqref{convect}, implies the conservation of the volume of the enclosed region $\Omega$:
\begin{equation*}
    \begin{aligned}
    \frac{d}{dt}|\Omega(t)|&=0,\quad |\Omega(t)|=\frac13\int_{\mathbb{S}^2}\bm{X}(\hx)\cdot\bm{n}(\hx)d\mu_{\Gamma(t)}(\hx).
    \end{aligned}
\end{equation*}
Moreover, the elastic energy defined as follows
\begin{equation*}
    \begin{aligned}
    \mathcal{E}(\bm{X})=\int_{\mathbb{S}^2}A_E(|\nabla_{\mathbb{S}^2}\bm{X}(\hx)|)d\mu_{\mathbb{S}^2}(\hx),\quad A_E'(\lambda)= \mathcal{T}(\lambda),
    \end{aligned}
\end{equation*}
satisfies the balance
\begin{equation*}
    \begin{aligned}
    \frac{d}{dt}\mathcal{E}(\bm{X})&=-\int_{\mathbb{R}^3}|\nabla\bm{u}|^2d\bm{x},
    \end{aligned}
\end{equation*}
which shows that the elastic energy is dissipated due to the viscosity of the fluid.
This relation follows from \eqref{convect}, integration by parts, and using conditions \eqref{stressjump}, \eqref{incomp}, \eqref{uin}-\eqref{uout}, and \eqref{ujump}, consecutively.
For a linear elasticity law, the elastic energy is the $\dot{H}^1(\mathbb{S}^2)$ norm of the interface,
 \begin{equation*}
     \begin{aligned}
     \mathcal{E}(\bm{X})=\frac12\int_{\mathbb{S}^2}|\nabla_{\mathbb{S}^2}\bm{X}(\hx)|^2d\mu_{\mathbb{S}^2}(\hx).
     \end{aligned}
 \end{equation*}
A third important property of the Peskin problem is that it satisfies a scaling invariance.
We must first mention that the definition of solutions requires that the interface is non-degenerate and does not self-intersect. This is typically enforced through the \textit{arc-chord} condition:
\begin{equation*}
    \begin{aligned}
      \starnorm{\bm{X}}
    &:= \inf_{\substack{\widehat{\bm{x}}\neq\widehat{\bm{y}}\\\hx,\hy\in\mathbb{S}^2}} \frac{\abs{\bm{X}\paren{\widehat{\bm{x}}}-\bm{X}\paren{\widehat{\bm{y}}}}}{\abs{\widehat{\bm{x}}-\widehat{\bm{y}}}}>0.
    \end{aligned}
\end{equation*}
If $\bm{X}(\hx,t)$ solves \eqref{Peskin_3D_BIF}, then, for any $\lambda>0$, $\bm{X}_{\lambda}(\hx,t):=\lambda^{-1}\bm{X}(\lambda\hx,\lambda t)$ also solves the equation, and $|\bm{X}_{\lambda}|_*=|\bm{X}|_*$. 
Hence, $\dot{C}^1(\mathbb{S}^2)$ and spaces with the same scaling, such as $\dot{H}^2(\mathbb{S}^2)$, are \textit{critical} spaces for 3D Peskin problem. Notice that the energy balance above only gives control of the $\dot{H}^1(\mathbb{S}^2)$ norm, hence the Peskin problem is \textit{supercritical}.

\subsection{Main results} The formulation of the problem, both in jump and Boundary Integral forms, is derived in Section \ref{sec:formulation}. Once the formulation is provided, the main objective of the paper is to show that the problem is well-posed. More specifically, we will first show the existence and uniqueness of strong solutions with initial data in little H\"older spaces, $h^{1,\gamma}(\mathbb{S}^2)$, $\gamma\in(0,1)$, defined as the completion of the set of smooth functions in $C^{1,\gamma}(\mathbb{S}^2)$. 
\begin{defn}[Strong solution] Let $\bm{X}\in C([0,T];C^{1,\gamma}(\mathbb{S}^2))\cap C^1([0,T];C^\gamma(\mathbb{S}^2))$, $\gamma\in(0,1)$, and $|\bm{X}(t)|_*>0$ for $t\in[0,T]$. Then, $\bm{X}$ is a strong solution to the 3D Peskin problem with initial data $\bm{X}(0)=\bm{X}_0$ if it satisfies equation \eqref{Peskin_3D_BIF} for $t\in(0,T]$ and $\bm{X}(t)\rightarrow\bm{X}_0$ in $C^{1,\gamma}(\mathbb{S}^2)$ as $t\to0$. 
\end{defn}
The choice of little H\"older spaces will be needed to obtain the convergence to the initial data.
In Section \ref{sec: local well-posed} we will prove the following:
\begin{thm}\label{MainTheorem1}
Consider the 3D Peskin problem \eqref{Peskin_3D_BIF} with initial data  satisfying $\bm{X}_0\in h^{1,\gamma}(\mathbb{S}^2)$, $|\bm{X}_0|_*>0$, and $\mc{T}\in C^3$ such that $\mc{T}>0$, $d\mc{T}/d\lambda\geq0$. Then, there exists some time $T>0$ such that \eqref{Peskin_3D_BIF} has a unique strong solution $\bm{X}$,
\begin{equation*}
    \bm{X}\in C([0,T];h^{1,\gamma}(\mathbb{S}^2))\cap C^1([0,T];h^\gamma(\mathbb{S}^2)).
\end{equation*}
\end{thm}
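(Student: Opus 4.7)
The strategy is to view \eqref{Peskin_3D_BIF} as a quasilinear parabolic evolution of order one on $\mathbb{S}^2$ and to close the argument via continuous maximal regularity in little H\"older spaces in the spirit of Da Prato--Grisvard. This framework is tailored to the regularity class of the statement and automatically produces strong continuity $\bm{X}(t)\to\bm{X}_0$ in $C^{1,\gamma}$ at $t=0$, which is precisely what forces the use of little H\"older spaces rather than the full $C^{1,\gamma}$ class.

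\emph{Principal linearization.} Fixing a reference configuration $\bm{X}_\ast\in h^{1,\gamma}(\mathbb{S}^2)$ with $|\bm{X}_\ast|_\ast>0$, I would work in isothermal coordinates as in \eqref{Peskin_3D_coor} and expand, near $\etab=\thetab$,
$\bm{X}_\ast(\thetab)-\bm{X}_\ast(\etab)=\nabla\bm{X}_\ast(\thetab)\cdot(\thetab-\etab)+O(|\thetab-\etab|^2)$
inside $\partial_{\eta_i}G$. The leading kernel defines a matrix-valued zero-order Calder\'on--Zygmund operator on $\mathbb{R}^2$ whose symbol depends smoothly on $\nabla\bm{X}_\ast(\thetab)$ and on $\mc{T}(\lambda)/\lambda$. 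Combined with the outer derivative of $\bm{X}$ in $\tilde{\bm{F}}_{\text{el},i}$, this yields a first-order pseudodifferential operator $\mc{L}(\bm{X}_\ast)$ on $\mathbb{S}^2$ whose principal symbol is a positive-definite matrix multiple of $|\xi|$, where positivity is inherited from the Stokeslet and from the monotonicity assumptions $\mc{T}>0$, $d\mc{T}/d\lambda\geq 0$. The remainder $\mc{N}(\bm{X}):=\partial_t\bm{X}-\mc{L}(\bm{X})\bm{X}$ collects finite differences of $\nabla\bm{X}$ weighted by the nonlinear factor $T(|\nabla_{\mathbb{S}^2}\bm{X}|)$; it is subprincipal and locally Lipschitz on balls of $h^{1,\gamma}$ that stay uniformly away from self-intersection.

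\emph{Linear theory and fixed point.} The operator $\mc{L}(\bm{X}_\ast):h^{1,\gamma}(\mathbb{S}^2)\to h^{\gamma}(\mathbb{S}^2)$ is sectorial and generates a strongly continuous analytic semigroup, with $h^{1,\gamma}$ appearing as a continuous interpolation space between $h^{\gamma}$ and a natural higher-regularity domain. The Da Prato--Grisvard theorem then provides continuous maximal regularity for the inhomogeneous linear problem $\partial_t\bm{Y}=\mc{L}(\bm{X}_\ast)\bm{Y}+\bm{f}$ in the class $C([0,T];h^{1,\gamma})\cap C^1([0,T];h^{\gamma})$, with norms uniform as $\bm{X}_\ast$ varies in a small neighborhood of $\bm{X}_0$. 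Rewriting \eqref{Peskin_3D_BIF} as $\partial_t\bm{X}=\mc{L}(\bm{X}_0)\bm{X}+[\mc{L}(\bm{X})-\mc{L}(\bm{X}_0)]\bm{X}+\mc{N}(\bm{X})$, I would set up a contraction map on a small ball in $C([0,T];h^{1,\gamma})\cap C^1([0,T];h^{\gamma})$ centered at the semigroup extension $e^{t\mc{L}(\bm{X}_0)}\bm{X}_0$ of the initial datum; the Lipschitz estimates on $\mc{L}$ and $\mc{N}$ and the maximal regularity estimate combine to make this a contraction provided $T$ is small enough, and the arc-chord condition is preserved on this ball by continuity of $|\cdot|_\ast$. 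Uniqueness follows by applying the same Lipschitz bounds to the difference of two solutions at the $h^{\gamma}$ level and invoking Gronwall.

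\emph{Main obstacle.} The hard step is the splitting itself: one must prove, via careful kernel expansions for the Stokeslet pulled back to the deformed interface $\Gamma$, that $\mc{L}(\bm{X}_\ast)$ really is sectorial on $h^\gamma$ with domain $h^{1,\gamma}$ and that $\bm{X}_\ast\mapsto\mc{L}(\bm{X}_\ast)$ together with $\bm{X}\mapsto\mc{N}(\bm{X})$ are locally Lipschitz uniformly under the arc-chord bound. This requires commutator estimates for singular integrals on $\mathbb{S}^2$ acting on H\"older classes and quantitative control of the replacement of $\bm{X}(\thetab)-\bm{X}(\etab)$ by $\nabla\bm{X}(\thetab)\cdot(\thetab-\etab)$ inside the tensorial Stokeslet $G$. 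In contrast to the 2D Peskin problem on $\mathbb{T}$, the genuine sphere geometry together with the non-scalar Stokeslet make the identification of the principal symbol delicate, and the fully nonlinear dependence of $\bm{T}$ on $\nabla_{\mathbb{S}^2}\bm{X}$ forces this dependence to be tracked through every estimate rather than linearized away.
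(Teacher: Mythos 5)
Your proposal takes essentially the same route as the paper: both reduce \eqref{Peskin_3D_BIF} to a quasilinear parabolic evolution, prove that the G\^ateaux derivative of the nonlinearity is sectorial on little H\"older spaces with domain $h^{1,\gamma}(\mathbb{S}^2)$ via frozen-coefficient Fourier multiplier analysis and commutator/kernel estimates on stereographic charts, establish local Lipschitz continuity of $F$ and $\partial_{\bm{X}}F$, and then invoke an abstract quasilinear maximal-regularity theorem in continuous interpolation spaces (the paper cites Lunardi's Theorem 8.4.1, which is the same Da Prato--Grisvard machinery you have in mind). One caveat worth noting: you propose freezing the coefficient at the outer variable, replacing $\bm{X}_\ast(\thetab)-\bm{X}_\ast(\etab)$ by $\nabla\bm{X}_\ast(\thetab)\cdot(\thetab-\etab)$, whereas the paper deliberately expands with $\nabla\bm{X}(\etab)$ (and only afterwards freezes at a chart center) because keeping the full $\partial_{\eta_i}$-derivative structure on the kernel is what makes the integration-by-parts cancellations and the subsequent higher-regularity bootstrap close; the paper explicitly warns that the $\thetab$-frozen version creates higher derivatives of $\bm{X}$ in the remainder. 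Also, in the $C^{1,\gamma}$ class the remainder in your expansion is $O(|\thetab-\etab|^{1+\gamma})$, not $O(|\thetab-\etab|^2)$, which is precisely the gain that makes the remainder operator subprincipal.
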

It is instructive to briefly recall the idea of the proof for the 2D linear Peskin problem \cite{MoriRodenbergSpirn19}. For $\bm{X}$ a non-degenerate, closed simple plane curve, the boundary integral formulation in 2D is given by
\begin{equation*}
    \begin{aligned}
    \partial_t\bm{X}(\theta,t)=-\int_{\mathbb{S}^1}\partial_{\eta}G(\bm{X}(\theta)-\bm{X}(\eta))\partial_\eta\bm{X}(\eta)d\eta,
    \end{aligned}
\end{equation*}
where $G$ is the Stokeslet in $\mathbb{R}^2$. It turns out that one can perform a \textit{small-scale decomposition} \cite{HouLowengrubShelley94, MoriRodenbergSpirn19} to write it as follows
\begin{equation*}
    \begin{aligned}
    \partial_t\bm{X}=\frac14\Lambda \bm{X}+\mathcal{R}(\bm{X}),\quad \Lambda\bm{X}=\mathcal{H}\partial_\theta\bm{X},
    \end{aligned}
\end{equation*}
with $\mathcal{R}(\bm{X})$ a lower order operator compared to $\Lambda$.
Then, it is natural to construct the solution as a fixed point of the equation written in Duhamel form:
\begin{equation*}
    \begin{aligned}
    \bm{X}(t)=e^{t\Lambda}\bm{X}_0+\int_0^te^{(t-\tau)\Lambda}\mathcal{R}(\bm{X}(\tau))d\tau
    \end{aligned}
\end{equation*}
We notice two important facts: the semigroup is explicit, both in space and Fourier variables, and the equation is semilinear. Even for nonlinear elastic law, the leading term has a kernel not depending on the curve itself, $-\frac14\mathcal{H}(T(|\nabla_{\mathbb{S}^2}\bm{X}|)\nabla_{\mathbb{S}^2}\bm{X})$, making it possible to use the $\Lambda$-like structure via energy methods \cite{CameronStrain21}.

Here, we first consider the strategy adapted to nonlinear equations in \cite{Lunardi:analytic-semigroups-optimal-parabolic} (see also \cite{Rodenberg:peskin-thesis} for 2D Peskin). Let us write equation \eqref{Peskin_3D_BIF} as follows
\begin{equation*}
\begin{aligned}
\PD{\bm{X}}{t}=F(\bm{X}),\quad \bm{X}|_{t=0}=\bm{X}_0.
\end{aligned}
\end{equation*}
Then, at least formally, linearization around the initial data would give
\begin{equation}\label{Duhamel_abs}
    \begin{aligned}
    \bm{X}(t)=e^{\mathcal{L}(\bm{X}_0)t}\bm{X}_0+\int_0^t e^{\mathcal{L}(\bm{X}_0)(t-\tau)}E(\bm{X}(\tau))d\tau,
    \end{aligned}
\end{equation}
with $\mathcal{L}(\bm{X}_0)=\partial_{\bm{X}}F(\bm{X}_0)\bm{X}$ the Gateaux derivative of $F$ at $\bm{X}_0$ and $E(\bm{X})=F(\bm{X})-\mathcal{L}(\bm{X}_0)$. Hence, while $E(\bm{X})$ is not expected to be smoother than $\mathcal{L}(\bm{X}_0)$, it should be small for short time. However, one first need to make sense of the above expression \eqref{Duhamel_abs} by showing that  $\mathcal{L}(\bm{X}_0)$ generates an analytic semigroup, which amounts to proving that the operator is sectorial in adequate spaces. This is the core of the abstract Theorem \ref{Lunardi_thm}, whose proof encompasses a fixed point argument. The application of this theorem to our problem soon becomes highly involved. This is done in Propositions \ref{Festimate}-\ref{Fsemigroup}.
Since the equation is not semilinear, the process will require to further decompose the operator $\mathcal{L}(\bm{X}_0)$ and then freeze the coefficients at a given point. 
The decomposition must be done maintaining a derivative structure for the kernel that allows extra cancellations, required to control the singular integral operators that appear, and so that we can invert the frozen-coefficient operator (the study of this part is done separately in Section \ref{sec:frozen}). 
Schematically, we decompose the kernel in \eqref{Peskin_3D_coor} as follows
\begin{equation}\label{apprx}
\begin{split}
\PD{}{\eta_i}\big(G(\bm{X}(\thetab)\!-\!\bm{X}(\bm{\eta}))\big)&\approx -\PD{}{x_j}G\paren{\nabla\bm{X}(\etab)(\thetab-\etab)}\PD{X_{j}}{\eta_i}(\etab)+R(\bm{X})(\thetab)\\
&\approx \frac{1}{8\pi}\frac{\PD{\bm{X}(\etab)}{\eta_i}\cdot(\nabla \bm{X}(\etab)(\thetab\!-\!\etab))}{|\nabla \bm{X}(\etab)(\thetab\!-\!\etab)|^3}+\dots+R(\bm{X})(\thetab),
\end{split}
\end{equation}
where one expects $R(\bm{X})$ to be lower order and the dots represent additional terms of high order coming from the second term in $G(\hx)$ \eqref{G_def} (we note that in 2D these additional high-order terms cancel each other). These leading kernels are not of convolution type and cannot be written as a derivative. For this purpose, one could be tempted to use $\nabla\bm{X}(\thetab)$ in the approximation instead of $\nabla{\bm{X}}(\etab)$. However, higher derivatives of $\bm{X}$ would appear later in the proof and the argument would not close. Thus, to take advantage of the derivative structure, we will be forced to estimate together the leading and remainder terms. In a second step, we approximate $\nabla\bm{X}(\etab)$ in the leading kernels above by its value at a given point (see Lemma \ref{lem_dif} for more details), which requires the introduction of a partition of unity for the sphere. 
Due to the geometry of the problem, we need to work with charts, and due to the nonlocal character of the equation a second localization procedure will be needed. A fine implementation of these localization procedures will be crucial to avoid transition maps that would otherwise overcomplicate the proof.

For the fully nonlinear Peskin problem, we must linearize and freeze the coefficient of the elastic force as well. In Section \ref{sec:symbol}, we show that the frozen-coefficient linear operator in the general force case is given by
\begin{equation*}
(\mc{L}_A\bm{Y})_k(\thetab)=-\int_{\mathbb{R}^2} \PD{}{\eta_i}(G_{k,l}(A\paren{\bm{\theta}-\bm{\eta}}))(T_F(A)\nabla\bm{Y})_{l,i}(\etab)d\eta_1d\eta_2,
\end{equation*}
where $A$ is a constant matrix and $T_F(A)$ a tensor \eqref{tensionD}.
Thus, in the general case, the multiplier for the frozen-coefficient linear operator becomes:
\begin{equation*}
\begin{split}
L_A&(\bm{\xi})=\frac{I+\bm{v}(\bm{\xi})\otimes \bm{v}(\bm{\xi})}{4{\rm det}(B)\abs{B^{-1}\bm{\xi}}}\paren{
\frac{\mc{T}(\norm{A}_F)}{\norm{A}_F}\paren{\abs{\bm{\xi}}^2 I\!-\!\frac{A\bm{\xi}\otimes A\bm{\xi}}{\norm{A}_F}}\!+\!\D{\mc{T}}{\lambda}(\norm{A}_F)\frac{A\bm{\xi}\otimes A\bm{\xi}}{\norm{A}_F}},
\end{split}
\end{equation*}
where $||\cdot||_{F}$ denotes the Frobenius norm.
It is not difficult to see that, if $\mc{T}>0$ and $d\mc{T}/d\lambda\geq 0$, then the above is coercive in $\abs{\bm{\xi}}^2$. 
Moreover, in contrast to the 2D case, $d\mc{T}/d\lambda=0$ is allowed. In fact, if $\mc{T}$ satisfies $\mc{T}>0$ and  $(d\mc{T}/d\lambda)/\mc{T}>-1$, then the problem is expected to be locally well-posed if the initial condition is sufficiently close to the uniform sphere ($\wh{g}^{-1}g$ is close to a multiple of the identity matrix). This is an interesting difference between 2D and 3D Peskin. We will use this operator (in conjunction with the localization procedures) to show that the full operator $\mathcal{L}(\bm{X}_0)=\partial_{\bm{X}}F(\bm{X}_0)\bm{X}$ is sectorial.
The approximation in \eqref{apprx} is done on the equation written in coordinates partly to obtain a linear leading operator given by a Fourier multiplier.

Next, we notice that the regularity obtained in Theorem \ref{MainTheorem1} for the strong solutions is not enough to satisfy equation \eqref{Peskin_3D_BIF} in a classical sense. 
Obtaining higher regularity for the solutions is also important since this further regularity is needed for the equivalence between different formulations to hold.
The abstract theory for nonlinear equations in \cite{Lunardi:analytic-semigroups-optimal-parabolic} does not yield gain of smoothness for the solution, and in fact this important point is left open in the 2D results in \cite{Rodenberg:peskin-thesis}. Nevertheless, we are able to prove that initial data in little H\"older spaces become smooth for positive times.
\begin{thm}\label{MainTheorem2}
Let $\bm{X}$ be the solution to the Peskin problem with initial data $\bm{X}_0\in h^{1,\gamma}(\mathbb{S}^2)$ constructed in Theorem \ref{MainTheorem1}. Then,  for any $\alpha\in(0,1)$, it holds that $\bm{X}\in C^1((0,T]; C^{3,\alpha}(\mathbb{S}^2))$. Moreover, for any $3\leq n\in\mathbb{N}$ and $\alpha\in(0,1)$, assuming that $\mc{T}\in C^{n,\alpha}$, it holds that $\bm{X}\in C^1((0,T]; C^{n+1,\beta}(\mathbb{S}^2))$, for any $\beta<\alpha$.
\end{thm}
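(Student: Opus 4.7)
The plan is to bootstrap in regularity, exploiting that the sectorial operator $\mathcal{L}(\bm{X}_0)$ used in Theorem \ref{MainTheorem1} has a strictly positive, first-order Fourier symbol $L_A(\bm{\xi})$. The associated analytic semigroup is smoothing in the H\"older scale, at the rate $\|e^{t\mathcal{L}(\bm{X}_0)}\|_{h^{1,\gamma}\to h^{1,\gamma+\delta}} \lesssim t^{-\delta}$ for small $\delta>0$ with $\gamma+\delta<1$. Inserting this estimate in the Duhamel formula \eqref{Duhamel_abs}, and handling the nonlinear remainder $E(\bm{X})$ as a perturbation just as in the existence argument, upgrades the solution of Theorem \ref{MainTheorem1} to $\bm{X}(t_0)\in h^{1,\alpha'}(\mathbb{S}^2)$ for any $\alpha'\in(\gamma,1)$ and any $t_0\in(0,T)$. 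The Peskin problem can then be reinitialized from $t_0$ with this smoother data, preserving $|\bm{X}|_*>0$ by continuity in $C^{1,\gamma}$.

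To pass beyond the $C^{1,\alpha}$ ceiling I differentiate the boundary integral equation \eqref{Peskin_3D_coor} in an isothermal coordinate $\theta_k$. Writing $\bm{Y}_m=\partial_{\theta_k}^m\bm{X}$, the same kernel decomposition \eqref{apprx} that drove the existence proof produces, at each stage $m$, an equation of the form
\begin{equation*}
    \partial_t \bm{Y}_m = \mathcal{L}(\bm{X})\bm{Y}_m + R_m,
\end{equation*}
where $R_m$ gathers commutator and lower-order contributions and is polynomial in the derivatives of $\bm{X}$ up to order $m$ and in $\mathcal{T}^{(0)},\ldots,\mathcal{T}^{(m)}$. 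The frozen-coefficient machinery of Section \ref{sec:frozen}, combined with the two-stage partition-of-unity localization developed for Theorem \ref{MainTheorem1}, yields a Schauder-type estimate for this variable-coefficient linear parabolic problem, gaining one full order of H\"older regularity per iteration with an arbitrarily small loss. Two iterations on top of Step~1 deliver $\bm{X}\in C^1((0,T];C^{3,\alpha}(\mathbb{S}^2))$, and performing $n-2$ further iterations under the assumption $\mathcal{T}\in C^{n,\alpha}$ yields $\bm{X}\in C^1((0,T];C^{n+1,\beta}(\mathbb{S}^2))$ for every $\beta<\alpha$, the loss accumulating across iterations and the smoothness of $\mathcal{T}$ appearing precisely when it is needed to differentiate the tension law.

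The main obstacle is controlling the commutators $[\partial_{\theta_k},\mathcal{L}(\bm{X})]$ generated at each bootstrap step. Because $\mathcal{L}(\bm{X})$ is a genuinely nonlocal, variable-coefficient singular integral whose coefficients are only as regular as the quantity being inductively tracked, one cannot appeal to a black-box pseudodifferential calculus: the commutator kernels must be re-estimated by hand using the derivative structure of \eqref{apprx}, and the localization procedure must be reapplied at each stage so that $\nabla\bm{X}(\bm{\eta})$ can still be replaced by its value at the base point with a controlled remainder. A secondary subtlety, essentially routine, is ensuring that the inhomogeneous linear problem with these low-regularity coefficients actually admits the desired Schauder estimate in the little H\"older scale and that the reinitialization can be iterated without shrinking the time interval to zero.
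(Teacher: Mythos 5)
Your proposal correctly identifies the broad architecture — localize, differentiate, use the frozen-coefficient semigroup — but the first step hides a genuine gap that the authors explicitly flag. You propose to bootstrap directly on the abstract Duhamel formula \eqref{Duhamel_abs}, gaining a fractional H\"older derivative by treating $E(\bm{X})=F(\bm{X})-\mathcal{L}(\bm{X}_0)\bm{X}$ ``as a perturbation just as in the existence argument.'' The problem is that \eqref{Peskin_3D_BIF} is fully quasilinear, not semilinear: both $F(\bm{X})$ and $\mathcal{L}(\bm{X}_0)\bm{X}$ are of full order one, so $E(\bm{X})$ contains the same top-order derivative you are trying to upgrade. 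If you attempt to close a Gronwall loop after applying the smoothing estimate for $e^{t\mathcal{L}(\bm{X}_0)}$, the estimate of the inhomogeneity $E(\bm{X}(\tau))$ in $h^{\gamma+\delta}$ already requires $\bm{X}(\tau)\in C^{1,\gamma+\delta}$, which is exactly what you do not yet know. The paper states this plainly: ``the bootstrapping argument cannot be done on \eqref{Duhamel_abs} directly, because the right-hand side contains terms of highest regularity,'' and notes that this point was left open in the corresponding 2D argument.

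The paper's workaround has two ingredients that your proposal omits. First, the circularity is broken by a mollification: one solves the regularized system \eqref{reg_splitting_mol} for smooth $\bm{X}^\delta$, derives uniform-in-$\delta$ \emph{a priori} bounds in the higher norms via the Duhamel formula \eqref{duhamel}/\eqref{nablaX_eq_duham}, and then shows $\bm{X}^\delta\to\bm{X}$ in $L^\infty(0,T;C^{1,\gamma})$ using the contraction-type estimate \eqref{convergence_split}, where the density of smooth functions (i.e.\ the little H\"older structure) is essential. Second, the step of differentiating the equation hinges on the identity \eqref{move_der}, which moves a surface derivative in $\hat{\bm{x}}$ onto the $\hat{\bm{y}}$-variable of the kernel by integration by parts, producing only terms of the same or lower order; without this structural observation, $\partial_{\theta_k}\mathcal{N}(\bm{X})Z$ naively has a kernel one order more singular and the claim that ``$R_m$ gathers commutator and lower-order contributions'' does not hold. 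Your proposal gestures at ``re-estimating the commutator kernels by hand using the derivative structure,'' but the transfer identity is the concrete mechanism that makes this true, and without naming it the induction step is not established.
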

We use the solutions constructed in the previous theorem and Duhamel formula \eqref{Duhamel_abs} to perform a bootstrapping argument. We build on the properties of the semigroup $e^{-t\mc{L}_A}$ (see Section \ref{sec:frozen} and Appendix \ref{sec:appB}) to first gain regularity in mixed-type spaces $L^p(0,T; C^{n,\alpha}(\mathbb{S}^2))$ and then transfer this higher regularity in space to show regularity in time as well. A key point is that, while the kernels are not of convolution type, we find that it is possible to move derivatives in $\thetab$ to derivatives in $\etab$ at the expense of new terms of the same order, but not higher (see \eqref{move_der}). As explained above, we must work with the equation localized around a given point and later deal with the corresponding commutators and combine the estimates (see Section \ref{sec:highreg} for more details). However, the bootstrapping argument cannot be done on \eqref{Duhamel_abs} directly, because the right-hand side contains terms of highest regularity. We combine this process with a regularization argument (see \eqref{convergence_split}), where the use of little H\"older spaces becomes crucial.

\subsection{Related results}

The first analytical results for the 2D Peskin problem appeared recently in \cite{LinTong19, MoriRodenbergSpirn19}. In \cite{LinTong19}, energy arguments are used to prove local well-posedness for $H^{\frac{5}{2}}$ initial data and also  exponential convergence to steady states for sufficiently close to equilibrium initial data is shown. The authors in \cite{MoriRodenbergSpirn19} lowered the required initial regularity to barely subcritical spaces, $h^{1,\gamma}$, $\gamma\in(0,1)$, showed instant smoothing, and provided a blow up criterion.

After these works, many improvements for the 2D Peskin problem have appeared. The work \cite{GJMoriStrain20} deals with the setting in which the enclosed fluid is different to the exterior one, and shows asymptotic stability for small data in Wiener algebra critical spaces. The result \cite{CameronStrain21} shows the local well-posedness and smoothing for general data in the critical Besov space $B_{2,1}^{\frac32}$, including  the case of nonlinear elastic law. The sharpest result in terms of regularity appeared in
\cite{KeNguyen21}, where the semilinear 2D Peskin problem is shown to be well-posed in $B_{\infty,\infty}^1$, and thus with possibly non-Lipschitz curves.

In relation to the Peskin problem, the article \cite{Tong21}  introduces a regularization of the problem inspired by the immersed boundary method and studies its convergence.  Filaments that resist both bending and stretching are considered in \cite{Li21}.
Finally, we mention two works that introduce simplified models of the 2D Peskin problem. The work \cite{GancedoGraneroScrobogna21} considers a model for the normal component and shows the existence of global solutions for Lipschitz data near the equilibrium. Very recently, \cite{Tong22} derives a PDE to model the tangential effects of the Peskin problem in the case of an infinitely long and straight string and obtains global solutions with initial data in the energy class. Moreover, the author presents many connections of the model with well-known one-dimensional PDEs.

From a mathematical point of view, there are remarkable similarities between the 2D Peskin problem and the so-called Muskat problem.
In particular, both problems have the same leading linear operator, they can be written in Boundary Integral form \cite{HouLowengrubShelley94, CordobaCordobaGancedo11}, they have the same scaling and satisfy an energy balance \cite{ConstantinCordobaGancedoStrain2013, HaziotPausader2022}.
The Muskat problem, which describes the movement of the interface between incompressible fluids in a porous medium, has been intensively studied in the last two decades \cite{Ambrose04, CordobaCordobaGancedo11, EscherMatioc11, ConstantinCordobaGancedoStrain2013, Cameron19, Matioc19, AlazardLazar20, NguyenPausader20, AlazardNguyen22}, and some of the techniques developed there have been successfully extended in the last  years to lower the required regularity for the well-posedness of the 2D Peskin problem \cite{GJMoriStrain20, CameronStrain21, GancedoGraneroScrobogna21, KeNguyen21}. However, while there are also results for the 3D Muskat problem \cite{Ambrose07, CordobaCordobaGancedo13, AlazardNguyen22_2, ChenNguyenXu21, GancedoLazar22}, in all these results the interface is a surface given by graph, hence the geometry does not play a major role. Even in 2D, in the recent non-graph setting \cite{GancedoGarciaJuarezPatelStrain21} that considers a bubble of fluid surrounded by another in a porous medium, a change of parametrization becomes crucial, which is not allowed in the Peskin problem. 

We finally mention some results with more complex elastic interactions
\cite{CoutandShkoller06, ChengShkoller10, WangZhangZhang12, CanicGaliMuha2020, LengelerRuzicka14, MuhaCanic13, PlotnikovToland11, AmbroseSiegel17}, mostly dedicated to more qualitative results and weak solutions. Part of the interest generated by the Peskin problem is due to its relative simplicity, which makes it possible to initiate the analytical study of the rich variety of behaviors in FSI problems, including longtime dynamics.

\subsection{Outline} The rest of the paper is structured as follows. In Section \ref{sec:formulation}, we obtain the expression for the elastic law and show the Boundary Integral formulation for the 3D Peskin problem. Section \ref{sec:Notation} contains the notation used along the paper as well as some definitions and standard results concerning the stereographic projection. Next, in Section \ref{sec:leading}, we introduce the operators that will be used later in the paper, we decompose the equation and compute the multiplier of the leading term. 
Section \ref{calculus_est} is dedicated to study the operators previously defined, to show the needed commutators estimates, and to prove Lemma \ref{lem_dif}. These lemmas will be repeatedly used in the proof of the main theorems. In Section \ref{sec:frozen}, we show that the frozen-coefficient operator generates an analytic semigroup (for which we need the multiplier results contained in Appendix \ref{sec:appA}, with further properties studied in Appendix \ref{sec:appB}). Finally, Sections \ref{sec: local well-posed} and Section \ref{sec:highreg} contain the proofs of the main results: Theorems \ref{MainTheorem1} and \ref{MainTheorem2}.

\section{Formulation and Boundary Integral Reduction}\label{sec:formulation}

The formulation of the problem \eqref{uin}-\eqref{stressjump} is closed once that the expression for $\bm{F}_{\rm el}$ is given.
To specify the elastic force $\bm{F}_{\rm el}$ in \eqref{stressjump}, we consider the elastic energy 
of the interface $\Gamma$.
We consider an elastic energy $\mc{E}(\bm{X})$ of the form:
\begin{equation*}
\mc{E}(\bm{X})=\int_{\mathbb{S}^2} E\paren{\PD{\bm{X}}{\bm{\theta}},\bm{\theta}} d\mu_{\mathbb{S}^2},
\end{equation*}
where $\mu_{\mathbb{S}^2}$ is the standard measure on the unit sphere. 
From this, we may compute the elastic force by taking the variational derivative as follows.
Let $\bm{X}=(X_1,X_2,X_3)^{\rm T}$.
Define the following metric tensors $\wh{g}$ and $g$ on $\mathbb{S}^2$ and $\Gamma$ respectively, whose $i,j$ components are given by:
\begin{equation}\label{metricTensors}
\wh{g}_{ij}=\PD{\wh{\bm{X}}}{\theta_i}\cdot\PD{\wh{\bm{X}}}{\theta_j},\quad g_{ij}=\PD{\bm{X}}{\theta_i}\cdot\PD{\bm{X}}{\theta_j}.
\end{equation}
We write the energy density as follows:
\begin{equation}\label{AE}
E=A_{E}(s_{ij}, \bm{\theta}), \; s_{ij}=\PD{X_i}{\theta_j},\; i=1,\cdots 3, \; j=1,2.
\end{equation}
Let $\bm{Y}=(Y_1,Y_2,Y_3)^{\rm T}$ be a perturbation of the configuration that is compactly supported on the open set $\mc{U}$ on 
which the coordinate system $\bm{\theta}$ is defined. 
We have: 
\begin{equation}\label{Evar}
\begin{split}
\at{\D{}{\tau}\mc{E}(\bm{X}+\tau\bm{Y})}{\tau=0}&=\int_{\mc{U}} \PD{A_E}{s_{ij}}\PD{Y_i}{\theta_j}\sqrt{{\rm det}\wh{g}}d\theta_1d\theta_2\\
&=-\int_{\mc{U}} \PD{}{\theta_j}\paren{\PD{A_E}{s_{ij}}\sqrt{{\rm det}\wh{g}}}Y_i d\theta_1d\theta_2,
\end{split}
\end{equation}
where the summation convention is in effect. We set:
\begin{equation*}
F_{{\rm el},i}=\frac{1}{\sqrt{{\rm det}g}} \PD{}{\theta_j}\paren{\PD{A_E}{s_{ij}}\sqrt{{\rm det}\wh{g}}},
\end{equation*}
where $F_{{\rm el},i}$ are the components of the elastic force $\bm{F}_{\rm el}$ of equation \eqref{stressjump}. 
With this prescription of the elastic force, the solutions satisfy the following energy relation:
\begin{equation}\label{dEdt}
\D{\mc{E}}{t}=-\paren{\int_\Omega 2\mu\abs{\nabla_{\rm S}\bm{u}}^2d\bm{x}+\int_{\mathbb{R}^3\backslash\Omega}2\abs{\nabla_{\rm S}\bm{u}}^2}d\bm{x},
\; \nabla_{\rm S}\bm{u}=\frac{1}{2}\paren{\nabla \bm{u}+(\nabla \bm{u})^{\rm T}}.
\end{equation}
We will now impose symmetry conditions to determine the explicit form of $A_E$ and hence $E$. 
Let $\bm{\theta}$ be a (local) orthogonal coordinate system on $\mathbb{S}^2$
so that the two coordinate tangent vectors are orthogonal:
\begin{equation*}
\PD{\wh{\bm{X}}}{\theta_1}\cdot \PD{\wh{\bm{X}}}{\theta_2}=0.
\end{equation*}
We thus have an orthonormal frame on (a neighborhood of) $\mathbb{S}^2$ given by the two vectors:
\begin{equation*}
\wh{\bm{e}}_i=\PD{\wh{\bm{X}}}{\theta_i}\abs{\PD{\wh{\bm{X}}}{\theta_i}}^{-1},\; i=1,2.
\end{equation*}
The deformation map $\bm{X}$ maps the above unit orthogonal vectors to the following two vectors:
\begin{equation*}
\bm{e}_i=\PD{\bm{X}}{\theta_i}\abs{\PD{\wh{\bm{X}}}{\theta_i}}^{-1},\; i=1,2.
\end{equation*}
Consider the matrix $3\times 2$ matrix $B=(\bm{e}_1,\bm{e}_2)$ whose column vectors are given by $\bm{e}_i$.
We may say that the energy density $E$ is a function of $B$ and $\bm{\theta}$:
\begin{equation*}
E=A_E(B,\bm{\theta}),
\end{equation*}
where we have continued to use the notation $A_E$ as in \eqref{AE}.
By homogeneity of the unit sphere, we impose that $A_E$ does not have an explicit dependence on $\bm{\theta}$.
Furthermore, the value of $A_E$ should not depend on the choice of orthonormal frame $\wh{\bm{e}}_i$ or the coordinate 
system in which $\bm{X}$ resides. This implies the following. 
\begin{equation}\label{invarianceSO23}
A_E(B)=A_E(R_3BR_2) \text{ for all } R_3\in {\rm SO}(3) \text{ and } R_2\in {\rm SO}(2) 
\end{equation}
where ${\rm SO}(2)$ and ${\rm SO}(3)$ are the group on rotation matrices in $2$ and $3$ dimensions respectively.
Let:
\begin{equation*}
H=\begin{pmatrix} 
\bm{e}_1\cdot\bm{e}_1 & \bm{e}_1\cdot \bm{e}_2 \\ 
\bm{e}_1\cdot\bm{e}_2 & \bm{e}_2\cdot \bm{e}_2
\end{pmatrix}.
\end{equation*}
The invariance condition \eqref{invarianceSO23} implies that $A_E$ can only be a function of the trace and determinants of $H$,
\begin{equation}\label{EAE}
E=A_E(\lambda,\gamma), \; \lambda=\sqrt{{\rm tr}(H)}, \; \gamma=\sqrt{{\rm det}(H)}.
\end{equation}
In terms of the metric tensors $g$ and $\wh{g}$, we can write $\lambda$ and $\gamma$ as:
\begin{equation}\label{lamg}
\lambda^2={\rm tr}(\wh{g}^{-1}g), \; \gamma^2={\rm det}(\wh{g}^{-1}g).
\end{equation}
The above expressions for $\lambda$ and $\gamma$ are valid even when $\bm{\theta}$ is not an orthogonal coordinate system.
We may substitute \eqref{EAE} into \eqref{Evar} to obtain:
\begin{equation}
\begin{split}\label{Felkgeneral}
{F}_{{\rm el},k}&=F_{\lambda,k}+F_{\gamma,k},\\
F_{\lambda,k}&=\frac{1}{\sqrt{{\rm det}{g}}}\PD{}{\theta_i}\paren{\frac{1}{\lambda}\PD{A_E}{\lambda}\sqrt{{\rm det}\wh{g}}\minspace\wh{g}^{ij}\PD{X_k}{\theta_j}},\\
F_{\gamma,k}&=\frac{1}{\sqrt{{\rm det}{g}}}\PD{}{\theta_i}\paren{\PD{A_E}{\gamma}\sqrt{{\rm det}g}\minspace g^{ij}\PD{X_k}{\theta_j}},
\end{split}
\end{equation}
where we use the standard notation $a^{ij}$ to denote the inverse tensor $(a^{-1})_{ij}$.
Note that the expressions $F_{\lambda,k}$ and $F_{\gamma,k}$ are similar but differ crucially in whether $\wh{g}$ or $g$ features inside the force expressions.
This is most clearly seen in the following simple cases. If we let $A_E=\lambda^2/2$, we have:
\begin{equation}\label{Fel}
F_{{\rm el},k}=F_{\lambda,k}=\gamma \Delta_{\mathbb{S}^2} X_k, \;  \Delta_{\mathbb{S}^2} X_k=\frac{1}{\sqrt{{\rm det}\wh{g}}}\PD{}{\theta_i}\paren{\sqrt{{\rm det}\wh{g}}\minspace \wh{g}^{ij}\PD{X_k}{\theta_j}},
\end{equation}
where $\Delta_{\mathbb{S}^2}$ is the Laplace-Beltrami operator on the unit sphere. If we let $A_E=\gamma$, we have:
\begin{equation*}
F_{{\rm el},k}=F_{\gamma,k}=\Delta_{\Gamma}X_k=\frac{1}{\sqrt{{\rm det}{g}}}\PD{}{\theta_i}\paren{\sqrt{{\rm det}g}\minspace g^{ij}\PD{X_k}{\theta_j}}=-2\kappa_\Gamma n_k,
\end{equation*}
where $\Delta_\Gamma$ is the Laplace-Beltrami operator of the closed elastic surface $\Gamma$, $\kappa_\Gamma$ is the mean curvature of $\Gamma$ 
and $n_k$ is the $k$-th component of the outward normal vector $\bm{n}$ of $\Gamma$. 
This is just the well-known statement on the variation of surface area. 
We see from the above expressions that the $F_{\lambda,k}$ expresses an elastic force that depends strongly on the stretching of the spherical reference configuration whereas $F_{\gamma,k}$ is a surface tension force.

The prescription of interfacial elastic energy density as in \eqref{AE} or \eqref{EAE} has its origins the classical work of \cite{evans1980mechanics}, and may be called the membrane neo-Hookean model. Specific forms for this energy have been used extensively in the modeling and simulation of fluid-structure interaction problems \cite{griffith2020immersed, fai2013immersed, ko2016parametric}. 

We now rewrite our evolution equation in a form suitable for our analysis. 
Henceforth we focus on the case when $A_E$ is only a function of $\lambda$, and the viscosity $\mu$ of the interior fluid is equal to $1$. 

Let us rewrite the equations of motion. 
Let $G$ be the Stokeslet tensor in $\mathbb{R}^3$:
\begin{equation}\label{Stokeslet}
G_{i,j}(\bm{x})=\frac{1}{8\pi}\paren{\frac{\delta_{i,j}}{\abs{\bm{x}}}+\frac{x_ix_j}{\abs{\bm{x}}^3}}, \; \bm{x}=(x_1,x_2,x_3).
\end{equation}
Let
\begin{equation*}
\begin{split}
\wh{F}_{\rm{el},k}&=\frac{1}{\gamma}F_{\rm{el},k}=
\frac{1}{\sqrt{{\rm det}{\wh{g}}}}\PD{}{\theta_i}\paren{\lambda^{-1}\mc{T}(\lambda)\sqrt{{\rm det}\wh{g}}\minspace \wh{g}^{ij}\PD{X_k}{\theta_j}}\\
&=\nabla_{\mathbb{S}^2}\cdot\paren{\lambda^{-1}\mc{T}(\lambda)\nabla_{\mathbb{S}^2}X_k},
\end{split}
\end{equation*}
where $\mc{T}(\lambda)=\partial A_E/\partial \lambda$ (see \eqref{Felkgeneral}).
Let $\wh{\bm{F}}_{\rm{el}}=(F_{\rm{el},1},F_{\rm{el},2},F_{\rm{el},3})^{\rm T}$. 
Notice that we can write $\lambda$ in terms of $\bm{X}$,
\begin{equation}\label{Tlaw}
    \lambda(\hx)^2=|\nabla_{\mathbb{S}^2}\bm{X}(\hx)|^2,
\end{equation}
which can be seen from their definitions
\begin{equation*}
    \begin{aligned}
    \lambda^2&=\text{tr}(\wh{g}^{-1}g)=\wh{g}^{ij}g_{ji},\\
    |\nabla_{\mathbb{S}^2}\bm{X}|^2&=\nabla_{\mathbb{S}^2}X_k\cdot\nabla_{\mathbb{S}^2}X_k=\PD{X_k}{x_i}\PD{X_k}{x_l}\wh{g}^{ij}\wh{g}^{lm}\wh{g}_{jm}=g_{il}\wh{g}^{il}.
    \end{aligned}
\end{equation*}
When $\mu=1$, we may write the evolution of $\bm{X}$ as
\begin{equation*}
\begin{aligned}
\PD{\bm{X}}{t}(\bm{\hx})&=\int_{\mathbb{S}^2} G(\bm{X}(\hx)-\bm{X}(\hy))\wh{\bm{F}}_{{\rm el}}(\bm{\hy})d\mu_{\mathbb{S}^2}(\bm{\hy})\\
&=\int_{\mathbb{S}^2}\!\!\! G(\bm{X}(\hx)-\bm{X}(\hy))\nabla_{\mathbb{S}^2}\!\cdot\! \Big(\mc{T}(|\nabla_{\mathbb{S}^2}\bm{X}(\hy)|)\frac{\nabla_{\mathbb{S}^2}\bm{X}(\bm{\hy})}{|\nabla_{\mathbb{S}^2}\bm{X}(\hy)|}\Big)d\mu_{\mathbb{S}^2}(\bm{\hy}),
\end{aligned}
\end{equation*}
and integrating by parts, we obtain
\begin{equation}\label{Xieqn}
\begin{aligned}
\PD{\bm{X}}{t}(\bm{\hx})&\!=\!-\pv\!\int_{\mathbb{S}^2}\!\!\! \nabla_{\mathbb{S}^2}G(\bm{X}(\hx)\!-\!\bm{X}(\hy))\!\cdot\! \mc{T}(|\nabla_{\mathbb{S}^2}\bm{X}(\hy)|)\frac{\nabla_{\mathbb{S}^2}\bm{X}(\bm{\hy})}{|\nabla_{\mathbb{S}^2}\bm{X}(\hy)|}d\mu_{\mathbb{S}^2}(\bm{\hy}).
\end{aligned}
\end{equation}
In the following, we will suppress the principal value notation.
Introducing a smooth partition of unity $\{\rho_n\}$, subordinate to a finite atlas of the sphere $\{\mc{U}_n\}$, we may write our problem as follows
\begin{equation*}
\begin{aligned}
\PD{\bm{X}}{t}(\bm{\hx})&\!=\!-\!\sum_n\!\int_{\mathbb{S}^2}\!\!\! \nabla_{\mathbb{S}^2}G(\bm{X}(\hx)\!-\!\bm{X}(\hy))\!\cdot\!\frac{\mc{T}(|\nabla_{\mathbb{S}^2}\bm{X}(\hy)|)}{|\nabla_{\mathbb{S}^2}\bm{X}(\hy)|}\nabla_{\mathbb{S}^2}\big(\rho_n(\hy)\bm{X}(\bm{\hy})\big)d\mu_{\mathbb{S}^2}(\bm{\hy}).
\end{aligned}
\end{equation*}
This can be rewritten using the local charts:
\begin{equation*}
\begin{split}
\PD{\bm{X}}{t}(\hx)&=-\!\sum_n\!\int_{\mc{U}_n}\! \!\wh{g}^{ij}(\etab)\PD{}{\eta_i}G(\bm{X}(\hx)\!-\!\bm{X}_n(\bm{\eta}))\\
&\hspace{0.cm}\times\wh{g}_{jm}(\etab)\frac{\mc{T}(\sqrt{\wh{g}^{qr}g_{rq}(\etab)})}{\sqrt{\wh{g}^{qr}g_{rq}(\etab)}}\wh{g}^{pm}(\etab)\PD{}{\eta_p}\big(\rho_n(\bm{\eta})\bm{X}_{n}(\bm{\eta})\big)\sqrt{{\rm det}\wh{g}(\bm{\eta})}d\eta_1d\eta_2,
\end{split}
\end{equation*}
where $\bm{X}_n(\eta)$ is the coordinate map on the $n$-th coordinate chart and $\rho_n(\etab)=\rho_n(\hX(\etab))$ (see Section \ref{sec:Notation} for details of notation).
We may take an isothermal coordinate system (the stereographic projection gives such a system, for example) on each chart $\mc{U}_n$, which yields:
\begin{equation}\label{3DPeskin_charts}
\begin{split}
\PD{\bm{X}}{t}(\hx)&=-\sum_n\int_{\mc{U}_n} \PD{}{\eta_i}G(\bm{X}(\hx)\!-\!\bm{X}_n(\bm{\eta}))\frac{\mc{T}(\lambda_n(\etab))}{\lambda_n(\etab)}\PD{}{\eta_i}\big(\rho_n(\bm{\eta})\bm{X}_n(\bm{\eta})\big)d\eta_1d\eta_2,
\end{split}
\end{equation}
where we denote 
\begin{equation}\label{lambda_n}
    \begin{aligned}
    \lambda_n(\etab)&=\sqrt{\text{tr}(\wh{g}^{-1}(\etab)g(\etab))}=\sqrt{2}\|\nabla\bm{X}_n(\etab)\|_F\|\nabla \hX_n(\etab)\|_F^{-1},
    \end{aligned}
\end{equation}
and $\|A\|_F:=\sqrt{\text{tr}(A^TA)}$ is the Frobenius norm.

\section{Preliminaries}

In this section we introduce the notations that will be used in the rest of the paper and summarize some standard results about stereographic projection charts for the sphere.

\subsection{Notations}\label{sec:Notation}
Einstein notation over repeated indices will be of constant use.
Given vectors $\bm{v},\bm{w}$ and matrices $A,B,C$ with the same size, we denote
\begin{align*}
    \abs{\bm{v}}:=&\norm{\bm{v}}=\sqrt{v_iv_i},\\
    \norm{A}:=&\sup_{\abs{\bm{v}}> 0}\frac{\abs{A\bm{v}}}{\abs{\bm{v}}}=\sup_{\abs{\bm{v}}= 1}\abs{A\bm{v}},\\
    A : B :=&\rm{tr}\paren{A^T B}=A_{ij}B_{ij},\\
    |A|:=&\norm{A}_F:=\sqrt{A:A},\\
    \bm{v}\otimes\bm{w}:=&\bm{v}\bm{w}^T,\\
    \paren{A\otimes B}_{ijkl}:=&A_{ij}B_{kl},\\
    \paren{\paren{A\otimes B}C}_{ij}:=& A_{ij}B_{kl}C_{kl}=\paren{B:C}A_{ij}.
\end{align*}
We will denote $\colofmat{C}{j}$ to the vector given by the $jth$ column of $C$, and $\rowofmat{C}{j}$ to the one given by the $jth$ row.

We will denote $\mu_{\mathbb{S}^2}$ the standard measure on the unit sphere, and for simplicity we will write $d\widehat{y}$ instead of $d\mu_{\mathbb{S}^2}(\widehat{y})$.

We will write high partial derivatives in $\mathbb{R}^k$ by multi-index $\bm{\alpha}$, where multi-index $\bm{\alpha}$ is a sequence of $k$ nonegative integers.
i.e. $\bm{\alpha}=\paren{\alpha_1, \alpha_2, \cdots, \alpha_k}\in \mathbb{N}_0^k$, where $\mathbb{N}_0=\{0\}\bigcap \mathbb{N}$.
\begin{defn}[Multi-index]
Given $\bm{\alpha}, \bm{\beta}\in \mathbb{N}_0^k$, we have the following arithmetic about the multi-index.

(i)
\begin{align*}
    \abs{\bm{\alpha}}=&\alpha_1+\alpha_2+\dots+\alpha_k\\
    \bm{\alpha}!=&\alpha_1!\alpha_2!\dots\alpha_k!\\
    \bm{\alpha}+\bm{\beta}=&\paren{\alpha_1+\beta_1, \alpha_2+\beta_2, \cdots, \alpha_k+\beta_k}
\end{align*}

(ii) 
We set $\bm{\alpha}\leq \bm{\beta}$, which is $\alpha_i\leq\beta_i$ for all $i=1,2,\cdots,k$.
Then, we have
\begin{align*}
    \bm{\alpha}-\bm{\beta}=&\paren{\alpha_1-\beta_1, \alpha_2-\beta_2, \cdots, \alpha_k-\beta_k}\\
    \begin{pmatrix}
    \bm{\alpha}\\
    \bm{\beta}
    \end{pmatrix}
    =&\frac{\bm{\alpha}!}{\paren{\bm{\alpha}-\bm{\beta}}!\bm{\beta}!}=\frac{\alpha_1 !}{\paren{\alpha_1-\beta_1} ! \beta_1 !}\cdots\frac{\alpha_k !}{\paren{\alpha_k-\beta_k} ! \beta_k !}
\end{align*}

\end{defn}
High partial derivatives can be written as

\begin{align}
    \partial_{\xb}^{\bm{\alpha}} f\paren{\xb}:= \PD{^{\alpha_1}}{x_1^{\alpha_1}}\PD{^{\alpha_2}}{x_2^{\alpha_2}}\cdots\PD{^{\alpha_k}}{x_k^{\alpha_k}}f\paren{\xb}\label{high_pd}
\end{align}
where $\bm{\alpha}:=(\alpha_1, \alpha_2, \cdots, \alpha_k)$ and $\abs{\bm{\alpha}}=\alpha_1+\dots+\alpha_k$ is the total number of derivatives.

We will use the following set of non-singular matrices 
\begin{equation}\label{s1s2}
\mc{DA}_{\sigma_1,\sigma_2}:=\left\{ A: \forall\xib\neq\bm{0}, \sigma_2\abs{\bm{\xi}}\leq \abs{A\bm{\xi}}\leq \sigma_1\abs{\bm{\xi}}\right\}
\end{equation}
where $\sigma_1\geq \sigma_2>0$.

Euclidean balls of radius $R$ centered at $\hx\in \mathbb{R}^n$ will be denoted by $B_{\hx,R}$, and for balls centered at the origin we will also denote $\mc{B}(R):=B_{\bm{0},R}$.

We will denote $\bm{X}(\hx;t):\mathbb{S}^2\mapsto \Gamma(t)$ the deformation map that describes the evolving membrane, and we will omit the dependence on time for simplicity of notation, $\bm{X}(\hx)$. We will consider a finite atlas $\{\mc{U}_n, \wh{\bm{X}}_n\}$ of the sphere with $\bm{0}\in\mc{U}_n$, such that the coordinate functions $\wh{\bm{X}}_n(\bm{\theta}):\mc{U}_n\subset\mathbb{R}^2\bigcup \{\infty\}\mapsto \mathbb{S}^2$ satisfy
\begin{equation}\label{isothermal}
    \PD{\wh{\bm{X}}_n(\bm{\theta})}{\theta_1}\cdot \PD{\wh{\bm{X}}_n(\bm{\theta})}{\theta_2}=0,\qquad \norm{\PD{\wh{\bm{X}}_n(\bm{\theta})}{\theta_1}}=\norm{\PD{\wh{\bm{X}}_n(\bm{\theta})}{\theta_2}}.
\end{equation}
In particular, we will choose the standard stereographic coordinates. We set $\{\rho_n\}$ to be a smooth partition of unity subordinate to the coordinate patches $\{\mcu_n\}$.
For convenience in the definition of H\"older continuity, we take our system $\{\mathcal{U}_n, \widehat{\bm{X}}_n, \rho_n\}$ satisfying the following properties with some $R, \delta>0$.
\begin{defn}[System $\{\mathcal{U}_n, \widehat{\bm{X}}_n, \rho_n\}$]\label{def_charts}
Given $R>2\delta>0$, we set our isothermal coordinate charts $\{\mathcal{U}_n\}$ with the coordinate functions $\{\widehat{\bm{X}}_n({\thetab})\}$ and the partition $\{\rho_n\}$ to have the following properties:
\begin{enumerate}
    \item[i)] Set $\hx_n=\hX_n\paren{0}$, then $\mbs \subset \bigcup_n B_{\widehat{\bm{x}}_n,R}$, and there exists $0<R_n<\infty$ s.t
    \begin{align*}
        B_{\widehat{\bm{x}}_n,4R}\cap\mbs\subset\hX_n\paren{ B_{\bm{0},R_n}}\subset\hX_n(\mathcal{U}_n)
    \end{align*}
    \item[ii)] $\forall \widehat{\bm{x}}\in \mathbb{S}^2$, $\exists n$ s.t. 
\begin{align*}
    \widehat{\bm{X}}_n\paren{\thetab}= \widehat{\bm{x}}\quad \mbox{ for some } \thetab\in B_{\bm{0},R_n}, \mbox{ and } ( B_{\widehat{\bm{x}},2\delta}\cap\mbs)\subset \widehat{\bm{X}}_n\paren{B_{\bm{0},R_n}}.
\end{align*}
\item[iii)]$
    0\leq\rho_n\leq 1, \quad \rho_n\in C^\infty(\mathbb{S}^2), \quad \overline{supp\paren{\rho_n}} \subset B_{\widehat{\bm{x}}_n,2R}\cap\mbs.
$
\item[iv)]$\forall \widehat{\bm{x}}\in \mbs$,
$    \sum_{n}\rho_n\paren{\hx}=1.$
\end{enumerate}
\end{defn}
\begin{remark}
    If $|\widehat{\bm{X}}_n\paren{\thetab}-\widehat{\bm{X}}_n\paren{\etab}|\geq C \abs{\thetab-\etab}$ on $\mcu_n$, then $\mcu_n$ is totally bounded.
\end{remark}
Given $f\paren{\widehat{\bm{x}}}: \mbs \rightarrow \mathbb{R}$, we will denote $f_n\paren{\thetab}: \mcu_n\subset{\mathbb{R}^2}\rightarrow \mathbb{R}$ with $f_n({\thetab}):=f({\widehat{\bm{X}}_n({\thetab})})$. Analogously, we will denote $\bm{X}_n(\thetab)=\bm{X}(\hX_n(\thetab))$.
\begin{remark}
    If $\widehat{\bm{X}}_n\paren{\thetab}=\widehat{\bm{X}}_m\paren{\etab}$, then $\bm{X}_n\paren{\thetab}=\bm{X}_m\paren{\etab}$.
\end{remark}
\begin{defn}[H\"older semi-norm]
\begin{align*}
\begin{split}
    \jump{f\paren{\widehat{\bm{x}}}}_{C_\delta^\gamma\paren{\mbs}}
    :=&\!\!\!\!\sup_{0<\abs{\widehat{\bm{x}}-\widehat{\bm{y}}}<\delta}\!\!\!\!\frac{\abs{f\paren{\widehat{\bm{x}}}\!-\!f\paren{\widehat{\bm{y}}}}}{\abs{\widehat{\bm{x}}-\widehat{\bm{y}}}^\gamma}
    =\sup_n\sup_{0<\abs{\widehat{\bm{X}}_n\paren{\thetab}-\widehat{\bm{X}}_n\paren{\etab}}<\delta}\frac{\abs{f_n\paren{\thetab}-f_n\paren{\etab}}}{|\widehat{\bm{X}}_n\paren{\thetab}\!-\!\widehat{\bm{X}}_n\paren{\etab}|^\gamma},    
\end{split}\\
\begin{split}
    \jump{f\paren{\widehat{\bm{x}}}}_{C^\gamma\paren{\mbs}}
    :=&\sup_{0<\abs{\widehat{\bm{x}}-\widehat{\bm{y}}}}\frac{\abs{f\paren{\widehat{\bm{x}}}-f\paren{\widehat{\bm{y}}}}}{\abs{\widehat{\bm{x}}-\widehat{\bm{y}}}^\gamma}
    =\sup_{\widehat{\bm{X}}_n\paren{\thetab}\neq\widehat{\bm{X}}_m\paren{\etab}}\frac{\abs{f_n\paren{\thetab}-f_m\paren{\etab}}}{|\widehat{\bm{X}}_n\paren{\thetab}-\widehat{\bm{X}}_m\paren{\etab}|^\gamma}.
\end{split}
\end{align*}
\end{defn}
\begin{defn}[Arc-chord condition]
\begin{align*}
    \starnorm{f}
    &:= \inf_{\widehat{\bm{x}}\neq\widehat{\bm{y}}} \frac{\abs{f\paren{\widehat{\bm{x}}}-f\paren{\widehat{\bm{y}}}}}{\abs{\widehat{\bm{x}}-\widehat{\bm{y}}}}
    =\inf_{\widehat{\bm{X}}_n\paren{\thetab}\neq\widehat{\bm{X}}_m\paren{\etab}}\frac{\abs{f_n\paren{\thetab}-f_m\paren{\etab}}}{|\widehat{\bm{X}}_n\paren{\thetab}-\widehat{\bm{X}}_m\paren{\etab}|}.
\end{align*}
\end{defn}
\begin{defn}[Locally Arc-chord condition in the Charts]
Given $V_n\subset \mcu_n$,
\begin{align*}
    \abs{f}_{\circ, n}:=&\inf_{\thetab\neq\etab, \thetab,\etab \in V_n} \frac{|f_n(\thetab)-f_n(\etab)|}{|\thetab-\etab|},\\
    \circlenorm{f}:=&\inf_n \abs{f}_{\circ, n}.
\end{align*}
\end{defn}
\begin{defn}[$L^p$ norms]
\begin{equation*}
\begin{aligned}
    \norm{f\paren{\widehat{\bm{x}}}}_{L^p\paren{\mbs}}^p&:=\sum_n \int_{\mcu_n} \rho_n \paren{\thetab}\abs{f_n\paren{\thetab}}^p \sqrt{\det \widehat{g}_n} d\thetab\\
    &= \sum_n \norm{\paren{\rho_n}^{\frac{1}{p}} f_n}_{L^p\paren{\mcu_n}}^p
    \leq \sum_n \norm{f_n\paren{\thetab}}_{L^p\paren{\mcu_n}}^p.
\end{aligned}
\end{equation*}
\end{defn}
\subsection{Standard Stereographic Projection}
We will see the properties of the standard stereographic projection (i.e. the projection point is $(0,0,1)$).
For the other projection points, because $\mbs$ is centrosymmetric, we just need to rotate the coordinates of $\mbs$.
Hence, most properties among the projection charts are the same.

\begin{defn}[Standard Stereographic Projection]
We set $\widehat{\bm{X}}: \mbr \bigcup \{\infty\}\rightarrow \mbs$ with
\begin{align*}
    \begin{split}
        \widehat{\bm{X}}\paren{\thetab}=&\paren{\frac{2\theta_1}{1+\abs{\thetab}^2},\frac{2\theta_2}{1+\abs{\thetab}^2},\frac{-1+\abs{\thetab}^2}{1+\abs{\thetab}^2}},\\
        \widehat{\bm{X}}\paren{\infty}:=&\lim_{\abs{\thetab}\rightarrow \infty}\widehat{\bm{X}}\paren{\thetab}=\paren{0,0,1}.
    \end{split}
\end{align*}
Then,
\begin{align*}
    \thetab\paren{\widehat{\bm{x}}}=\paren{\frac{\widehat{x}_1}{1-\widehat{x}_3},\frac{\widehat{x}_2}{1-\widehat{x}_3}}, \thetab\paren{0,0,1}=\infty.
\end{align*}
We call the parameterization $\widehat{\bm{X}}$ the standard stereographic projection.
\end{defn}
We will denote $V_R$ the coordinate balls in $\mathbb{R}^2$, 
\begin{equation}\label{def_VR}
    \begin{aligned}
    V_R&:=\mc{B}\Big(\frac{R}{\sqrt{4-R^2}}\Big)\subset\mathbb{R}^2.
    \end{aligned}
\end{equation}
\begin{prop}
The standard stereographic projection has the following properties:

\begin{enumerate}
    \item[i)] 
    \begin{align*}
            \PD{\wh{\bm{X}}(\bm{\theta})}{\theta_1}\cdot \PD{\wh{\bm{X}}(\bm{\theta})}{\theta_2}=0,\qquad \norm{\PD{\wh{\bm{X}}(\bm{\theta})}{\theta_1}}=\norm{\PD{\wh{\bm{X}}(\bm{\theta})}{\theta_2}}=\frac{2}{1+\abs{\thetab}^2}.
    \end{align*}
    \item[ii)] For all $R>0$ and all $V_R$ \eqref{def_VR},
    $\widehat{\bm{X}}(V_R)= B_{(0,0,-1), R}\cap \mbs.$
    \item[iii)] For $\thetab,\etab\in\mathbb{R}^2$,
    \begin{align}
        |\widehat{\bm{X}}\paren{\thetab}-\widehat{\bm{X}}\paren{\etab}|\leq 2\abs{\thetab-\etab},\label{right_metric_ineq}
    \end{align}
    and if $\thetab,\etab\in V_R$ with $R\leq \sqrt{2}$,
    \begin{align}
        |\widehat{\bm{X}}\paren{\thetab}-\widehat{\bm{X}}\paren{\etab}|\geq \frac{2}{\pi}\abs{\thetab-\etab}.\label{left_metric_ineq}
    \end{align}
 \end{enumerate}
\end{prop}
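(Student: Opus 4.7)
The plan is to handle the three parts in sequence, with all three reducing to explicit computations starting from the formula for $\widehat{\bm{X}}$.

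For part (i), I would compute the partial derivatives directly. A short calculation gives, after combining terms,
\begin{equation*}
\PD{\widehat{\bm{X}}}{\theta_1}(\thetab)=\frac{2}{(1+|\thetab|^2)^2}\paren{1-\theta_1^2+\theta_2^2,\,-2\theta_1\theta_2,\,2\theta_1},
\end{equation*}
and symmetrically for $\PD{\widehat{\bm{X}}}{\theta_2}$. The three scalar identities $\PD{\widehat{\bm{X}}}{\theta_1}\cdot\PD{\widehat{\bm{X}}}{\theta_2}=0$, $|\PD{\widehat{\bm{X}}}{\theta_1}|^2=|\PD{\widehat{\bm{X}}}{\theta_2}|^2=\frac{4}{(1+|\thetab|^2)^2}$ then follow by expanding the sums and simplifying (the cross terms cancel, and the squared-length expansion collapses to $4(1+|\thetab|^2)^2/(1+|\thetab|^2)^4$). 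This is pure bookkeeping.

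For part (ii), the key observation is to compute the chord distance from the south pole:
\begin{equation*}
|\widehat{\bm{X}}(\thetab)-(0,0,-1)|^2 = \frac{4\theta_1^2+4\theta_2^2}{(1+|\thetab|^2)^2} + \paren{\frac{2|\thetab|^2}{1+|\thetab|^2}}^2 = \frac{4|\thetab|^2(1+|\thetab|^2)}{(1+|\thetab|^2)^2}=\frac{4|\thetab|^2}{1+|\thetab|^2}.
\end{equation*}
The right side is a strictly increasing function of $|\thetab|$ ranging over $[0,4)$, so $|\widehat{\bm{X}}(\thetab)-(0,0,-1)|\le R$ is equivalent to $|\thetab|\le R/\sqrt{4-R^2}$, which is precisely $\thetab\in V_R$. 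Equality of the images follows.

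For part (iii), the heart of the argument is the classical conformal identity
\begin{equation*}
|\widehat{\bm{X}}(\thetab)-\widehat{\bm{X}}(\etab)|^2 = \frac{4|\thetab-\etab|^2}{(1+|\thetab|^2)(1+|\etab|^2)}.
\end{equation*}
I would derive this using $|\widehat{\bm{X}}(\thetab)-\widehat{\bm{X}}(\etab)|^2 = 2-2\widehat{\bm{X}}(\thetab)\cdot\widehat{\bm{X}}(\etab)$ (since $\widehat{\bm{X}}$ lands on the unit sphere), computing the inner product
\begin{equation*}
\widehat{\bm{X}}(\thetab)\cdot\widehat{\bm{X}}(\etab) = \frac{4\thetab\cdot\etab + (|\thetab|^2-1)(|\etab|^2-1)}{(1+|\thetab|^2)(1+|\etab|^2)},
\end{equation*}
and simplifying $(1+|\thetab|^2)(1+|\etab|^2)-\bigl[4\thetab\cdot\etab+(|\thetab|^2-1)(|\etab|^2-1)\bigr]$; the cross $|\thetab|^2|\etab|^2$ terms cancel and what remains is $2(|\thetab|^2+|\etab|^2-2\thetab\cdot\etab)=2|\thetab-\etab|^2$. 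The upper bound \eqref{right_metric_ineq} is then immediate since $(1+|\thetab|^2)(1+|\etab|^2)\ge 1$. For \eqref{left_metric_ineq}, note that $\thetab\in V_R$ with $R\le\sqrt{2}$ forces $|\thetab|^2\le R^2/(4-R^2)\le 1$, hence $1+|\thetab|^2\le 4/(4-R^2)\le 2$, and the same for $\etab$. Therefore $(1+|\thetab|^2)(1+|\etab|^2)\le 4$, which yields $|\widehat{\bm{X}}(\thetab)-\widehat{\bm{X}}(\etab)|\ge |\thetab-\etab|$, a bound strictly stronger than the claimed $\tfrac{2}{\pi}$ constant.

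There is no real obstacle here: everything reduces to algebraic identities once the chord formula is in hand, and the chord formula itself is routine. The only mild care is in part (iii), where one must verify the upper bound $4/(4-R^2)\le 2$ for the restriction $R\le\sqrt{2}$, which justifies why the hypothesis is stated this way.
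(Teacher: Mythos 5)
Your proof is correct, and for part (iii) it takes a genuinely different and in fact cleaner route than the paper. Parts (i) and (ii) are handled exactly as one should (the paper does not even spell them out); the interesting comparison is part (iii). The paper proves the lower bound by passing through the spherical geodesic distance: it bounds the chord length below by $\tfrac{2}{\pi}$ times the geodesic distance using the elementary inequality
\begin{align*}
\frac{2}{\pi}\leq \frac{R\sqrt{4-R^2}}{2\cos^{-1}\frac{2-R^2}{2}}\leq \frac{|\widehat{\bm{X}}(\thetab)-\widehat{\bm{X}}(\etab)|}{\operatorname{dist}(\widehat{\bm{X}}(\thetab),\widehat{\bm{X}}(\etab);\mathbb{S}^2)}\leq 1,
\end{align*}
and then bounds the geodesic distance below by $\tfrac{4-R^2}{2}|\thetab-\etab|$ by integrating the conformal factor along the pulled-back geodesic. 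You instead derive the exact conformal identity
\begin{equation*}
|\widehat{\bm{X}}(\thetab)-\widehat{\bm{X}}(\etab)|^2=\frac{4\,|\thetab-\etab|^2}{(1+|\thetab|^2)(1+|\etab|^2)},
\end{equation*}
from which both inequalities are one-liners, and the lower bound comes out with constant $1$ rather than $\tfrac{2}{\pi}$, since $1+|\thetab|^2\leq \tfrac{4}{4-R^2}\leq 2$ on $V_R$ for $R\leq\sqrt{2}$. What the exact-identity route buys is brevity and a sharper constant with no differential geometry; what the paper's route buys is that the same geodesic-integration argument transfers with no changes when only an upper bound on the conformal factor is available rather than an exact formula — but for the round sphere that generality is unused, and your argument is simply better here. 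One small stylistic point: when you invoke \eqref{def_VR}, it's worth recording explicitly that $|\thetab|\leq R/\sqrt{4-R^2}$ is equivalent to $1+|\thetab|^2\leq 4/(4-R^2)$, since that is the inequality you actually use in both (ii) and (iii).
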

\begin{proof}
For iii), set $\hxi=\frac{\thetab-\etab}{\abs{\thetab-\etab}}$, then
\begin{align*}
\begin{split}
    |\widehat{\bm{X}}\paren{\thetab}\!-\!\widehat{\bm{X}}\paren{\etab}|
    &=  \Big|\int_0^{\abs{\thetab-\etab}}\!\!\PD{}{s} \widehat{\bm{X}}(\etab\!+\!s\hxi)ds\Big| \leq\int_0^{\abs{\thetab-\etab}}|\nabla\widehat{\bm{X}}(\etab\!+\!s\hxi)\cdot \hxi|ds\\
    &=  \int_0^{\abs{\thetab-\etab}}\frac{2}{1\!+\!|\etab+s\hxi|^2}ds \leq2\abs{\thetab-\etab}.
\end{split}
\end{align*}
Set $\text{dist} (\hx,\hy; \mbs)$ as the length of shortest curve connecting $\hx$ and $\hy$ on $\mbs$.
When $\thetab,\etab\in V_{R}$ with $R\leq \sqrt{2}$, the shortest curve $\ell$ for $\text{dist}(\widehat{\bm{X}}\paren{\thetab},\widehat{\bm{X}}\paren{\etab};\mbs)$ is on $B_{(0,0,-1), R}\cap \mbs$ and
\begin{align*}
    \frac{2}{\pi}\leq \frac{R\sqrt{4-R^2}}{2\cos^{-1}\frac{2-R^2}{2}}\leq \frac{|\widehat{\bm{X}}\paren{\thetab}-\widehat{\bm{X}}\paren{\etab}|}{\text{dist}(\widehat{\bm{X}}\paren{\thetab},\widehat{\bm{X}}\paren{\etab};\mbs)}\leq 1,
\end{align*}
where the above function of $R$ is a decreasing function.
Then, since $\hX$ is isothermal and $\hX^{-1}\paren{\ell}$ is in $V_R$,
\begin{align*}
\begin{split}
    \text{dist}(\widehat{\bm{X}}\paren{\thetab},\widehat{\bm{X}}\paren{\etab};\mbs)
    =&\int_\ell dl\paren{\hx}
    =\int_{\hX^{-1}\paren{\ell}}\frac{2}{1+\abs{\thetab}^2}d l\paren{\thetab}\\
    \geq&\frac{4-R^2}{2}\int_{\hX^{-1}\paren{\ell}}d l\paren{\thetab}
    \geq\frac{4-R^2}{2}\abs{\thetab-\etab}.
\end{split}
\end{align*}
Therefore
\begin{align*}
\begin{split}
        |\widehat{\bm{X}}\paren{\thetab}-\widehat{\bm{X}}\paren{\etab}|
    \geq&\frac{2}{\pi}\text{dist}\paren{\widehat{\bm{X}}\paren{\thetab},\widehat{\bm{X}}\paren{\etab};\mbs}\geq\frac{2}{\pi}\abs{\thetab-\etab}.
\end{split}
\end{align*}
\end{proof}
\subsection{The Quantitative Relationships between $\mbs$ and $\mbr$ on the Standard Stereographic Projection Chart}
Given a function $f\paren{\hx}$ on $\mbs$, we may define  $f\paren{\thetab}:=f(\hX \paren{\thetab})$ on $\mbr$.
$\hX \paren{\thetab}$ is isothermal, but the chart is neither isometric nor area-preserving.
Therefore, some quantities of $f$ between $\mbs$ and $\mbr$ are different. 
We have to check their quantitative relationships.

First, let see the H\"older continuous seminorm $C^\gamma$ and the arc-chord condition.
\begin{prop}
Given $f$ on $\mbs$, it holds that $\jump{f}_{C^\gamma\paren{\mbr}}\leq 2^\gamma\jump{f}_{C^\gamma\paren{\mbs}}$.
\end{prop}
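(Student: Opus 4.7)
The plan is to invoke the metric comparison \eqref{right_metric_ineq} from part iii) of the previous proposition and unpack the definition of $[f]_{C^\gamma(\mathbb{S}^2)}$ on the standard stereographic chart. Concretely, the H\"older seminorm on $\mathbb{R}^2$ should be read as
\begin{equation*}
[f]_{C^\gamma(\mathbb{R}^2)} = \sup_{\thetab \neq \etab} \frac{|f(\widehat{\bm{X}}(\thetab)) - f(\widehat{\bm{X}}(\etab))|}{|\thetab-\etab|^\gamma},
\end{equation*}
and the point is that the denominator $|\thetab - \etab|^\gamma$ in the Euclidean chart is \emph{smaller} than the corresponding chordal distance $|\widehat{\bm{X}}(\thetab) - \widehat{\bm{X}}(\etab)|^\gamma$, only up to a factor $2^{-\gamma}$.

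First I would fix $\thetab \neq \etab$ in $\mathbb{R}^2$ and apply \eqref{right_metric_ineq} to get
\begin{equation*}
|\widehat{\bm{X}}(\thetab) - \widehat{\bm{X}}(\etab)| \leq 2|\thetab - \etab|,
\end{equation*}
so that $|\thetab-\etab|^\gamma \geq 2^{-\gamma}|\widehat{\bm{X}}(\thetab) - \widehat{\bm{X}}(\etab)|^\gamma$. Dividing, one obtains
\begin{equation*}
\frac{|f(\widehat{\bm{X}}(\thetab)) - f(\widehat{\bm{X}}(\etab))|}{|\thetab-\etab|^\gamma} \leq 2^\gamma \frac{|f(\widehat{\bm{X}}(\thetab)) - f(\widehat{\bm{X}}(\etab))|}{|\widehat{\bm{X}}(\thetab) - \widehat{\bm{X}}(\etab)|^\gamma}.
\end{equation*}
The right-hand side is clearly bounded above by $2^\gamma [f]_{C^\gamma(\mathbb{S}^2)}$ (taking $n=m$ to be the chart index of the standard stereographic projection in the supremum defining $[f]_{C^\gamma(\mathbb{S}^2)}$, which is an allowable choice since that sup is over all $n,m$ and all pairs of chart arguments mapping to distinct points on the sphere). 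Taking the supremum over $\thetab \neq \etab$ on the left gives the claim.

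There is essentially no obstacle; the only subtlety is being a touch careful that the seminorm $[f]_{C^\gamma(\mathbb{R}^2)}$ as used here is the one induced by viewing $\mathbb{R}^2$ as the domain of the stereographic chart rather than a seminorm of an unrelated function on $\mathbb{R}^2$, so that $f(\thetab)$ means $f(\widehat{\bm{X}}(\thetab))$. Once this notational identification is made, the proof is a one-line consequence of \eqref{right_metric_ineq} combined with the sphere's chart-based definition of $[\,\cdot\,]_{C^\gamma(\mathbb{S}^2)}$.
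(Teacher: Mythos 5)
Your proof is correct and follows exactly the same route as the paper's: unpack the chart definition of $\jump{f}_{C^\gamma(\mathbb{R}^2)}$, apply the bound $|\hX(\thetab)-\hX(\etab)|\leq 2|\thetab-\etab|$ from \eqref{right_metric_ineq}, and pass the factor $2^\gamma$ out of the supremum. No gap.
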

\begin{proof}
\begin{align*}
\begin{split}
    \jump{f}_{C^\gamma\paren{\mbr}}
    &=\sup_{\thetab\neq \etab}\Big(\frac{|\hX\paren{\thetab}-\hX\paren{\etab}|}{\abs{\thetab-\etab}}\Big)^\gamma\frac{|f(\hX\paren{\thetab})-f(\hX\paren{\etab})|}{|\hX\paren{\thetab}-\hX\paren{\etab}|^\gamma}\leq 2^\gamma\jump{f}_{C^\gamma\paren{\mbs}}.
\end{split}
\end{align*}
\end{proof}
Notice that $\circlenorm{f}$ for $\mbr$ is zero, and the other sided inequality between $C^\gamma\paren{\mcu}$ and $C^\gamma\paren{\mcu}$ cannot hold with some constant $C>0$, thus we only can find local inequalities. 
Set $B_R= B_{(0,0,-1), R}\cap \mbs$, $V_R$ as in \eqref{def_VR}, and $\rho$ smooth on $\mbs$ and supported in $B_R$.
\begin{prop}
Given $R\leq \sqrt{2}$, it holds that
\begin{align*}
    \jump{\rho f}_{C^\gamma\paren{\mbs}}\leq&\big(\frac{\pi}{2}\big)^\gamma\jump{\rho f}_{C^\gamma\paren{\mbr}}, \\
    \starnorm{f}\leq&\frac{\pi}{2}\circlenorm{f}.
\end{align*}
\end{prop}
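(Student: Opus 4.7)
The second inequality is the easier of the two, so I would handle it first. For any $\theta \neq \eta \in V_R$ set $\hat{x}=\hat{X}(\theta)$, $\hat{y}=\hat{X}(\eta)$. By definition of $\starnorm{\cdot}$ and by \eqref{left_metric_ineq}, which gives $|\theta-\eta|/|\hat{X}(\theta)-\hat{X}(\eta)|\leq \pi/2$ for $\theta,\eta\in V_R$ with $R\leq\sqrt{2}$,
\begin{equation*}
\starnorm{f} \leq \frac{|f(\hat{x})-f(\hat{y})|}{|\hat{x}-\hat{y}|} = \frac{|f(\theta)-f(\eta)|}{|\theta-\eta|}\cdot\frac{|\theta-\eta|}{|\hat{X}(\theta)-\hat{X}(\eta)|} \leq \frac{\pi}{2}\cdot\frac{|f(\theta)-f(\eta)|}{|\theta-\eta|}.
\end{equation*}
Taking the infimum over $\theta,\eta\in V_R$ on the right yields $\starnorm{f}\leq (\pi/2)\circlenorm{f}$.

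For the first inequality, my plan is to split according to the location of $\hat{x},\hat{y}$ relative to $B_R$. If $\hat{x},\hat{y}\in B_R$, I write $\hat{x}=\hat{X}(\theta)$, $\hat{y}=\hat{X}(\eta)$ with $\theta,\eta\in V_R$ and apply \eqref{left_metric_ineq} to get
\begin{equation*}
\frac{|\rho f(\hat{x})-\rho f(\hat{y})|}{|\hat{x}-\hat{y}|^\gamma} = \frac{|\rho f(\theta)-\rho f(\eta)|}{|\theta-\eta|^\gamma}\left(\frac{|\theta-\eta|}{|\hat{X}(\theta)-\hat{X}(\eta)|}\right)^{\!\gamma} \leq \left(\frac{\pi}{2}\right)^{\!\gamma}\!\jump{\rho f}_{C^\gamma(\mathbb{R}^2)}.
\end{equation*}
If both $\hat{x},\hat{y}\notin B_R$, the quotient vanishes by the support condition on $\rho$. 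The remaining case is $\hat{x}\in B_R$, $\hat{y}\notin B_R$ (or vice versa), where $\rho f(\hat{y})=0$ but the pair $\hat{x},\hat{y}$ cannot be lifted to a single stereographic chart via \eqref{left_metric_ineq}.

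To dispose of this last case, the main geometric observation is that any shortest geodesic on $\mathbb{S}^2$ joining $\hat{x}\in B_R$ to $\hat{y}\notin B_R$ must cross $\partial B_R$ at some $\hat{z}$; since $\mathrm{supp}(\rho)\subset B_R$ (open) and the support is compact, $\rho f(\hat{z})=0$. Moreover, the Euclidean distance $|\hat{x}-\gamma(s)|$ along a unit-sphere geodesic parameterized by arclength equals $2\sin(s/2)$ and is therefore monotone on $[0,\pi]$, giving $|\hat{x}-\hat{z}|\leq|\hat{x}-\hat{y}|$. Approximating $\hat{z}$ from inside by $\hat{z}_k\in B_R$ lying on the geodesic with $\hat{z}_k\to\hat{z}$, continuity of $\rho f$ lets me pass the Case~1 bound to the limit:
\begin{equation*}
\frac{|\rho f(\hat{x})|}{|\hat{x}-\hat{y}|^\gamma} \leq \frac{|\rho f(\hat{x})|}{|\hat{x}-\hat{z}|^\gamma} = \lim_{k\to\infty}\frac{|\rho f(\hat{x})-\rho f(\hat{z}_k)|}{|\hat{x}-\hat{z}_k|^\gamma}\leq \left(\frac{\pi}{2}\right)^{\!\gamma}\!\jump{\rho f}_{C^\gamma(\mathbb{R}^2)}.
\end{equation*}
I expect this limiting argument at $\partial B_R$ (needed because \eqref{left_metric_ineq} is stated on $V_R$ open) to be the only genuine subtlety; it is routine once one notes that $\rho$ vanishes continuously at $\partial B_R$ by the support assumption.
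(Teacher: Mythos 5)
Your argument is correct and follows the same route as the paper: the same three-way case split, with the mixed case handled by inserting a point $\hz\in\partial B_R$ on which $\rho f$ vanishes and $|\hx-\hz|\leq|\hx-\hy|$. The only cosmetic difference is that the paper takes $\hz$ as the Euclidean-nearest boundary point to $\hx$ and applies $\jump{\cdot}_{C^\gamma\paren{\mbr}}$ directly to the pair $(\thetab,\xib)$ with $\xib\in\partial V_R$ (the seminorm is a supremum over all of $\mbr$, so no limiting step is needed), which bypasses both your geodesic-monotonicity observation and the approximating sequence $\hz_k$.
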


\begin{proof}
For $\jump{\rho f}_{C^\gamma\paren{\mbs}}$, when $\hx=\hX\paren{\thetab},\hy=\hX\paren{\etab}\in B_R$, $\thetab, \etab\in V_R$, so
\begin{align*}
\begin{split}
        \frac{\abs{\rho f\paren{\hx}-\rho f\paren{\hy}}}{\abs{\hx-\hy}^\gamma}
    =   &\Big(\frac{\abs{\thetab-\etab}}{|\hX\paren{\thetab}-\hX\paren{\etab}|}\Big)^\gamma\frac{\abs{\rho f\paren{\thetab}-\rho f\paren{\etab}}}{\abs{\thetab-\etab}^\gamma}\leq \big(\frac{\pi}{2}\big)^\gamma\jump{\rho f}_{C^\gamma\paren{\mbr}}.
\end{split}
\end{align*}
Next, when $\hx=\hX\paren{\thetab},\hy=\hX\paren{\etab}\in B_R^c$, $\frac{\abs{\rho f\paren{\hx}-\rho f\paren{\hy}}}{\abs{\hx-\hy}^\gamma}=0$.
Finally, when $\hx=\hX\paren{\thetab}\in B_R,\hy=\hX\paren{\etab}\in B_R^c$, $\thetab\in V_R, \etab\in V_R^c$, set $\hz=\hX\paren{\xib}\in \partial B_R$ s.t. $\abs{\hx-\hz}=\text{dist}\paren{\hx, \partial B_R}$.
Since $\rho$ is smooth on $\mbs$ and supported in $B_R$, $\rho\paren{\hz}=0=\rho\paren{\hy}$.
Then,
\begin{align*}
    \abs{\thetab-\xib}\leq \frac{\pi}{2}\abs{\hx-\hz}\leq \frac{\pi}{2}\abs{\hx-\hy},
\end{align*}
so
\begin{align*}
\begin{split}
        \frac{\abs{\rho f\paren{\hx}-\rho f\paren{\hy}}}{\abs{\hx-\hy}^\gamma}
    &=   \frac{\abs{\rho f\paren{\hx}-\rho f\paren{\hz}}}{\abs{\hx-\hy}^\gamma}
    =   \Big(\frac{\abs{\thetab-\xib}}{|\hX\paren{\thetab}-\hX\paren{\etab}|}\Big)^\gamma\frac{\abs{\rho f\paren{\thetab}-\rho f\paren{\xib}}}{\abs{\thetab-\xib}^\gamma}\\
    &\leq\big(\frac{\pi}{2}\big)^\gamma\jump{\rho f}_{C^\gamma\paren{\mbr}}.
\end{split}
\end{align*}
Now, for $\circlenorm{f}$,
\begin{align*}
\begin{split}
    \circlenorm{f}
    &=  \inf_{\thetab\neq\etab, \thetab,\etab \in V} \frac{|\hX\paren{\thetab}-\hX\paren{\etab}|}{\abs{\thetab-\etab}}\frac{|f(\hX\paren{\thetab})-f(\hX\paren{\etab})|}{|\hX\paren{\thetab}-\hX\paren{\etab}|}\geq \frac{2}{\pi}\starnorm{f}.
\end{split}
\end{align*}

\end{proof}

Next, let us discuss the relationship between $C^{1,\gamma}\paren{\mbs}$ and $C^{1,\gamma}\paren{\mbr}$ on the standard stereographic projection chart.
In the standard stereographic projection chart $\hx=\hX\paren{\thetab}$, the surface gradient of $f$, $\nabla_\mbs f\paren{\hx}$, is
\begin{align*}
    \nabla_\mbs f\paren{\hx}
    =\sum_{i,j}\wh{g}^{i,j}\PD{}{\theta_i}f\paren{\thetab}\PD{}{\theta_j}\hX\paren{\thetab}
    =\Big(\frac{1+\abs{\thetab}^2}{2}\Big)^2\sum_i \PD{}{\theta_i}f\paren{\thetab}\PD{}{\theta_i}\hX\paren{\thetab},
\end{align*}
where $\wh{g}^{ij}$ denotes the inverse tensor of $\wh{g}$.
Hence,
\begin{align*}
        \frac{2}{1+\abs{\thetab}^2}\abs{\nabla_\mbs f(\widehat{\bm{X}}(\thetab))}=\abs{\nabla f\paren{\thetab}}.
\end{align*}
We may use the above expressions to obtain the following proposition.
\begin{prop}\label{prop: equi_norm}
There exist $R_0>0$ and $C\geq2^{1+\gamma}$ s.t. for all $R<R_0$
\begin{align*}
    \norm{f}_{C^{1,\gamma}\paren{\mbr}}\leq& C\norm{f}_{C^{1,\gamma}\paren{\mbs}},\\
    \norm{\rho f}_{C^{1,\gamma}\paren{\mbs}}\leq&\norm{\rho f}_{C^{1,\gamma}\paren{\mbr}},\\
    \starnorm{f}\leq&\circlenorm{f},
\end{align*}
where $\rho$ is smooth on $\mbs$ and supported in $B_R$.
\end{prop}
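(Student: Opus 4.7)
My plan is to establish each of the three inequalities separately, exploiting the fine geometric facts about the standard stereographic projection collected in the preceding two propositions.

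For $\|f\|_{C^{1,\gamma}(\mbr)}\leq C\|f\|_{C^{1,\gamma}(\mbs)}$, the $L^\infty$ contributions are immediate from the identity $|\nabla f(\thetab)|=\tfrac{2}{1+|\thetab|^2}|\nabla_{\mbs}f(\hX(\thetab))|\leq 2\|\nabla_{\mbs}f\|_{L^\infty(\mbs)}$ noted just above. For the Hölder seminorm $[\nabla f]_{C^\gamma(\mbr)}$, I would start from the chain rule $\partial_{\theta_i}f(\thetab)=\nabla_{\mbs}f(\hX(\thetab))\cdot\partial_{\theta_i}\hX(\thetab)$, add and subtract $\nabla_{\mbs}f(\hX(\etab))\cdot\partial_{\theta_i}\hX(\thetab)$, and estimate the two resulting pieces. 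The piece carrying $\nabla_{\mbs}f(\hX(\thetab))-\nabla_{\mbs}f(\hX(\etab))$ is controlled by $\|\partial\hX\|_{L^\infty}\cdot[\nabla_{\mbs}f]_{C^\gamma(\mbs)}|\hX(\thetab)-\hX(\etab)|^\gamma$, and combining $\|\partial\hX\|_{L^\infty}\leq 2$ with \eqref{right_metric_ineq} in the form $|\hX(\thetab)-\hX(\etab)|^\gamma\leq 2^\gamma|\thetab-\etab|^\gamma$ produces exactly the factor $2^{1+\gamma}$. The remaining piece requires $[\partial\hX]_{C^\gamma(\mbr)}<\infty$, which I would verify directly from the explicit formula for $\hX$; the key observation is that $\partial\hX$ decays like $|\thetab|^{-2}$, so pairs with $|\thetab-\etab|$ large contribute harmlessly.

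For the second and third inequalities, the essential new ingredient is a sharpening of \eqref{left_metric_ineq}: for $\thetab,\etab\in V_R$ with $R$ small, $|\hX(\thetab)-\hX(\etab)|\geq c(R)|\thetab-\etab|$ with $c(R)\to 2$ as $R\to 0$. This I would obtain by re-running the geodesic argument from the proof of the previous proposition while tracking the constants more carefully: both the intrinsic distance ratio $(4-R^2)/2$ and the chord-to-geodesic ratio $R\sqrt{4-R^2}/(2\cos^{-1}(1-R^2/2))$ approach $2$ and $1$ respectively as $R\to 0$. The arc-chord bound $\starnorm{f}\leq\circlenorm{f}$ then follows immediately: for each chart $n$, any pair $\thetab,\etab\in V_n$ yields $\tfrac{|f(\hx)-f(\hy)|}{|\hx-\hy|}\leq c(R)^{-1}\tfrac{|f_n(\thetab)-f_n(\etab)|}{|\thetab-\etab|}$, so $\starnorm{f}\leq c(R)^{-1}\circlenorm{f}<\circlenorm{f}$ once $c(R)>1$, which holds for $R<R_0$ sufficiently small.

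For $\|\rho f\|_{C^{1,\gamma}(\mbs)}\leq\|\rho f\|_{C^{1,\gamma}(\mbr)}$, I would follow the template of the previous proposition's proof and split into the cases $\hx,\hy\in B_R$, $\hx,\hy\notin B_R$, and mixed, using that $\rho f$ vanishes outside $B_R$. The $L^\infty$ contributions follow from the pointwise identity $|\nabla_{\mbs}(\rho f)|=\tfrac{1+|\thetab|^2}{2}|\nabla(\rho f)|$ together with $\tfrac{1+|\thetab|^2}{2}\leq\tfrac{2}{4-R^2}<1$ on $V_R$ (for $R<\sqrt{2}$). For the Hölder seminorm I would expand $\nabla_{\mbs}(\rho f)=\bigl(\tfrac{1+|\thetab|^2}{2}\bigr)^2\sum_i\partial_i(\rho f)\partial_i\hX$ and split the difference into three pieces where only one factor varies, then combine the smallness of the scalar and chart-derivative factors on $V_R$ with the lower bound $c(R)^\gamma|\thetab-\etab|^\gamma\leq|\hX(\thetab)-\hX(\etab)|^\gamma$ in the denominator to absorb all constants. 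I expect this last estimate to be the main obstacle: balancing the pointwise smallness against the losses coming from the $C^\gamma(V_R)$-norms of $\partial\hX$ and of $\tfrac{1+|\thetab|^2}{2}$, so that the total multiplier remains $\leq 1$ uniformly for $R<R_0$, requires a delicate accounting of several competing constants.
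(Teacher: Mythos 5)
The paper states Proposition~\ref{prop: equi_norm} without proof, so there is no proof in the text to compare against; what follows is an assessment of your argument on its own merits.

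Your strategy is sound and, I believe, the intended one: the first inequality follows from the chain rule together with the global bound $[\partial\hX]_{C^\gamma(\mbr)}<\infty$ (which holds because $\partial\hX$ is bounded with globally bounded second derivatives, and one splits according to whether $|\thetab-\etab|\lessgtr 1$ — the decay observation alone is not quite the whole story), and the second and third inequalities hinge on sharpening \eqref{left_metric_ineq} to $|\hX(\thetab)-\hX(\etab)|\geq c(R)|\thetab-\etab|$ with $c(R)\to 2$ as $R\to 0$, which does follow by tracking the constants in the geodesic argument since both $\frac{4-R^2}{2}\to 2$ and $\frac{R\sqrt{4-R^2}}{2\cos^{-1}(1-R^2/2)}\to 1$.

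The one place you stop short is the Hölder seminorm estimate in the second inequality, where you flag a ``delicate accounting of several competing constants'' and leave it undone. This does close, and it is worth seeing why: write $\nabla_\mbs(\rho f)(\hX(\thetab))=A(\thetab)\hat B(\thetab)g(\thetab)$ with $A(\thetab)=\tfrac{1+|\thetab|^2}{2}$, $g=\nabla(\rho f)$, and $\hat B(\thetab)$ the $3\times 2$ matrix of orthonormal tangent vectors, and split the difference into the three pieces where only one factor varies. The two ``non-principal'' pieces (varying $A$ or $\hat B$) involve differences of smooth functions of $\thetab$, hence gain a full factor $|\thetab-\etab|$; after dividing by $|\hX(\thetab)-\hX(\etab)|^\gamma\geq c(R)^\gamma|\thetab-\etab|^\gamma$ they are $O\!\left(R^{1-\gamma}\right)\|g\|_{C^0(V_R)}$, i.e.\ uniformly small for $R$ small. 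The principal piece $A(\etab)\hat B(\etab)[g(\thetab)-g(\etab)]$ has magnitude $A(\etab)|g(\thetab)-g(\etab)|$ since $\hat B$ is an isometry, and after dividing by the denominator its coefficient is $\|A\|_{C^0(V_R)}/c(R)^\gamma\to \tfrac{1}{2}\cdot 2^{-\gamma}=2^{-1-\gamma}<1$. Combined with the $L^\infty$ pieces ($\|\rho f\|_{C^0}$ equal, $\|\nabla_\mbs(\rho f)\|_{C^0}\leq\|A\|_{C^0(V_R)}\|\nabla(\rho f)\|_{C^0}$ with $\|A\|_{C^0(V_R)}\to\tfrac12$), this gives $\|\rho f\|_{C^{1,\gamma}(\mbs)}\leq\|\rho f\|_{C^{1,\gamma}(\mbr)}$ once $R<R_0$ is small enough. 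So your ``main obstacle'' is not actually an obstacle once one notices the extra factor of $|\thetab-\etab|$ on the non-principal terms; I would state this explicitly rather than leaving it as an expectation.
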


\subsection{Stereographic Projection Charts $\widehat{\bm{X}}_n$ Covering $\mbs$}\label{covering_ste}

We consider a finite cover of the sphere  consisting of balls of radius $R<R_0$ and center $\hx_n$, $\{B_{\hx_n,R}\cap\mathbb{S}^2\}$, and a smooth partition of unity subordinated to it, $\{\rho_n\}$, such that $\rho_n(\hx)=0$ if $|\hx-\hx_n|\geq 2R$. For convenience, we set $\widehat{\bm{x}}_0= (0,0,-1)$. Then we take stereographic projection charts $\widehat{\bm{X}}_n$ covering $\mbs$ (see Figure \ref{fig2}). 
\begin{figure}[H]
\includegraphics[scale=0.75]{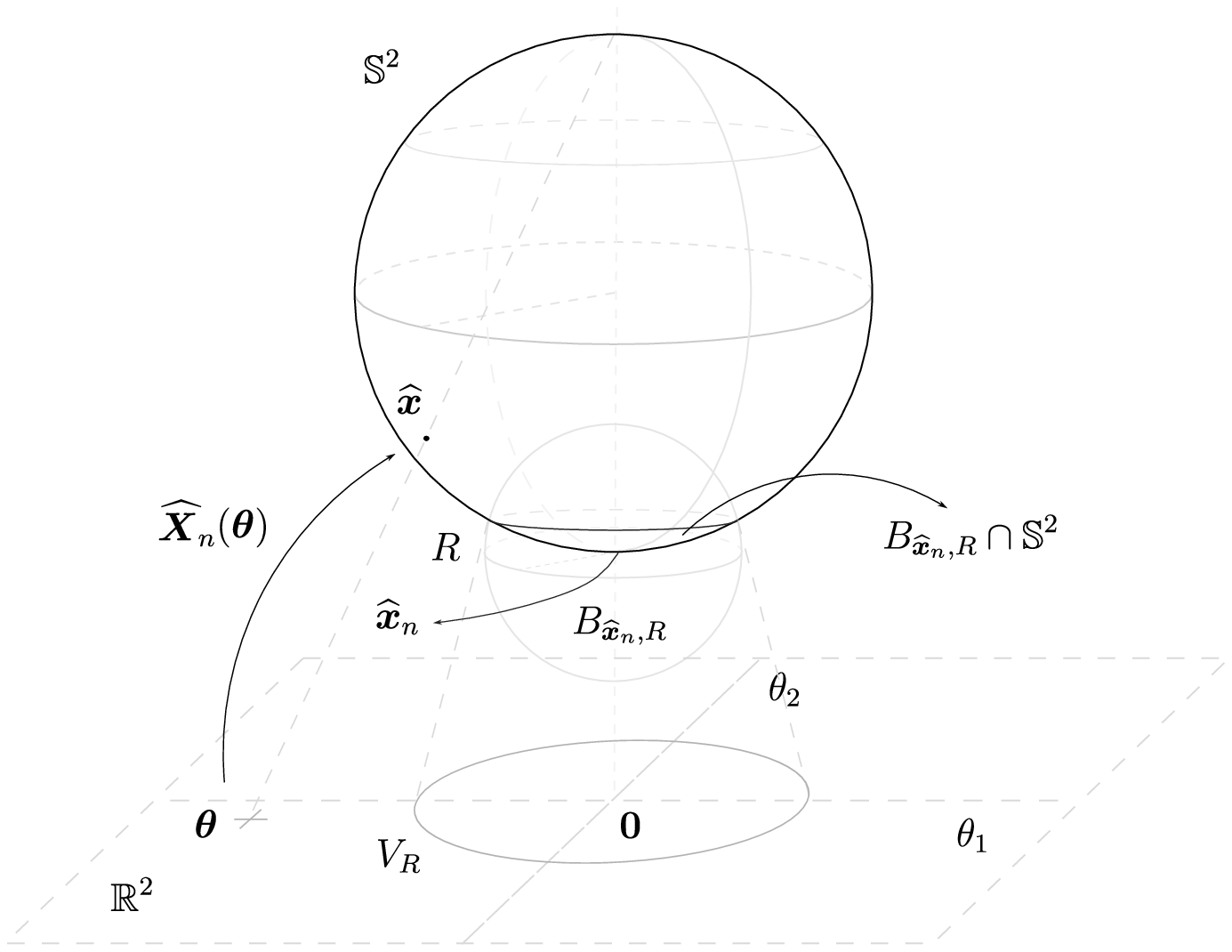}
\caption{Stereographic projection charts, $\hX_n(\thetab)$.}\label{fig2}
\end{figure}
\begin{defn}[Stereographic Projection Charts covering $\mbs$]
$\widehat{\bm{X}}_n$ are standard stereographic projection charts with 
\begin{equation*}
\left.\begin{aligned}
    \widehat{\bm{X}}_n&: \mbr\rightarrow  \mbs\\
    \widehat{\bm{X}}_n(\thetab)&=\Theta_n \widehat{\bm{X}}(\thetab) 
\end{aligned}\right\},
\end{equation*}
where $\Theta_n$ is the rotation matrix with $\widehat{\bm{x}}_n=\Theta_n\widehat{\bm{x}}_0$.
\end{defn}
\begin{prop}
In each chart $\widehat{\bm{X}}_n$, given $R<\frac{R_0}{4}$ and $R_0,C$ from Proposition \ref{prop: equi_norm}, since $\rho$ are supported in $B_n$, 
\begin{align*}
    \norm{f}_{C^{1,\gamma}\paren{\mbr}}&\leq C\norm{f}_{C^{1,\gamma}\paren{\mbs}},\\
    \norm{\rho_n f}_{C^{1,\gamma}\paren{\mbs}}&\leq\norm{\rho_n f}_{C^{1,\gamma}\paren{\mbr}},\\
    \starnorm{f}&\leq\abs{f}_{\circ,n}.
\end{align*}
As a consequence, $\starnorm{f}\leq\circlenorm{f}$.
\end{prop}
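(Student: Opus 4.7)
The plan is to reduce everything to the standard stereographic chart analyzed in Proposition \ref{prop: equi_norm} by exploiting the fact that the chart $\widehat{\bm{X}}_n$ differs from the standard chart $\widehat{\bm{X}}$ only by the rotation $\Theta_n$, which is an isometry of $\mathbb{R}^3$ preserving $\mathbb{S}^2$.

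First, given $f$ on $\mathbb{S}^2$ I would introduce the rotated function $\tilde f(\widehat{\bm{x}}) := f(\Theta_n \widehat{\bm{x}})$ and the rotated bump $\tilde\rho_n(\widehat{\bm{x}}) := \rho_n(\Theta_n \widehat{\bm{x}})$. By the definition $\widehat{\bm{X}}_n(\thetab) = \Theta_n \widehat{\bm{X}}(\thetab)$, the pullback of $f$ under $\widehat{\bm{X}}_n$ coincides with the pullback of $\tilde f$ under $\widehat{\bm{X}}$: namely $f_n(\thetab) = \tilde f(\widehat{\bm{X}}(\thetab))$. Thus computations in the chart $\widehat{\bm{X}}_n$ for $f$ are literally the same as computations in the standard chart $\widehat{\bm{X}}$ for $\tilde f$, and the right-hand sides of the three inequalities become the $\mathbb{R}^2$-norms of $\tilde f$ (resp.\ $\tilde\rho_n \tilde f$) in the standard chart.

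Second, I would use that $\Theta_n$ is a Euclidean isometry, so $|\Theta_n \widehat{\bm{x}} - \Theta_n \widehat{\bm{y}}| = |\widehat{\bm{x}} - \widehat{\bm{y}}|$, and since it preserves the surface measure and the intrinsic geometry of $\mathbb{S}^2$ it also commutes with $\nabla_{\mathbb{S}^2}$ up to the rotation action. This immediately yields
\[
  \|\tilde f\|_{C^{1,\gamma}(\mathbb{S}^2)} = \|f\|_{C^{1,\gamma}(\mathbb{S}^2)}, \qquad |\tilde f|_* = |f|_*,
\]
and analogously for the product $\tilde\rho_n \tilde f$. Moreover, since $\widehat{\bm{x}}_n = \Theta_n \widehat{\bm{x}}_0$, the support condition $\overline{\mathrm{supp}(\rho_n)}\subset B_{\widehat{\bm{x}}_n,2R}\cap\mathbb{S}^2$ from Definition \ref{def_charts}(iii) translates to $\overline{\mathrm{supp}(\tilde\rho_n)} \subset B_{\widehat{\bm{x}}_0, 2R}\cap \mathbb{S}^2$, which is precisely the support hypothesis of Proposition \ref{prop: equi_norm} with the role of $R$ there played by $2R$. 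Since $R < R_0/4$ gives $2R < R_0/2 < R_0$, the hypothesis is satisfied.

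Third, I would apply Proposition \ref{prop: equi_norm} to $\tilde f$ and $\tilde\rho_n \tilde f$ in the standard chart. The three inequalities produced there transfer to $f$ and $\rho_n f$ on the chart $\widehat{\bm{X}}_n$ via the isometry-invariance of the $\mathbb{S}^2$-sided norms established above, yielding the three displayed estimates with the same constant $C$. For the final consequence $\starnorm{f} \leq \circlenorm{f}$, I would simply note that by the previous inequality $\starnorm{f} \leq |f|_{\circ,n}$ for every $n$, and then take the infimum over $n$ on the right-hand side to obtain $\starnorm{f} \leq \inf_n |f|_{\circ,n} = \circlenorm{f}$. There is no real obstacle here; the only thing to be careful with is bookkeeping of the support radius (the factor of $2$ between $R$ and the support ball $B_{\widehat{\bm{x}}_n,2R}$), which is precisely why the hypothesis $R < R_0/4$, rather than $R < R_0$, is imposed.
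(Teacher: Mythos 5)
The paper states this proposition without proof, treating it as an immediate corollary of Proposition \ref{prop: equi_norm} and the rotational symmetry of $\mathbb{S}^2$, which is exactly the route you take. Your reduction --- setting $\tilde f = f\circ\Theta_n$, observing that $f_n(\thetab) = \tilde f(\widehat{\bm{X}}(\thetab))$ so that all chart-$n$ quantities for $f$ equal standard-chart quantities for $\tilde f$, using that $\Theta_n\in\mathrm{SO}(3)$ preserves the intrinsic and ambient Euclidean distances (hence the $\mathbb{S}^2$-side norms and $|\cdot|_*$), and then invoking Proposition \ref{prop: equi_norm} for $\tilde f$ and $\tilde\rho_n\tilde f$ --- is correct and complete; taking $\inf_n$ over $\starnorm{f}\leq |f|_{\circ,n}$ gives the final consequence. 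The only small inaccuracy is in your last sentence: the factor-of-$2$ discrepancy between the radius $R$ in the proposition and the support ball $B_{\hx_n,2R}$ only forces $R < R_0/2$, so the stricter $R < R_0/4$ is not explained by that bookkeeping alone; it is more likely chosen to leave room for the auxiliary cutoff $\widehat\rho_n$ supported in $B_{\hx_n,4R}$ (see \eqref{cutoff}) and the $B_{\hx_n,4R}$ coverage condition in Definition \ref{def_charts}(i), which are used elsewhere. This does not affect the validity of your argument, as $R<R_0/4$ is simply a stronger hypothesis than you need.
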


\section{Nonlinear decomposition}\label{sec:leading}
In this section we extract the leading structure of the equation and compute its symbol, introducing the notation for the operators that will appear along the paper.

\subsection{Nonlinear decomposition}\label{sec:leading:N}
We will usually consider separately the two terms involved in the Stokeslet kernel \eqref{Stokeslet},
 \begin{equation}\label{GG1G2}
\begin{aligned}
    G_{k,l}(\bm{x})&=G^1_{k,l}(\bm{x})+G^2_{k,l}(\bm{x}),\\
    G^1_{k,l}(\bm{x})&=\frac{1}{8\pi}\frac{\delta_{k,l}}{\abs{\bm{x}}},\quad G^2_{k,l}(\bm{x})=\frac{1}{8\pi}\frac{x_k x_l}{\abs{\bm{x}}^3},
\end{aligned}
\end{equation}
 and thus we will write our equation \eqref{Xieqn} as follows:
\begin{equation}\label{Xeq_F}
    \PD{\bm{X}}{t}(\hx)=F(\bm{X})(\hx)=F^1(\bm{X})(\hx)+F^2(\bm{X})(\hx),
\end{equation}
where
\begin{equation}\label{F_expr}
    \begin{aligned}
    F(\bm{X})(\hx)&=-\int_{\mathbb{S}^2} \nabla_{\mathbb{S}^2}G(\bm{X}(\hx)-\bm{X}(\hy))\cdot T(|\nabla_{\mathbb{S}^2}\bm{X}(\hy)|)\nabla_{\mathbb{S}^2}\bm{X}(\hy)d\hy,\\
    F^j(\bm{X})(\hx)&=-\int_{\mathbb{S}^2} \nabla_{\mathbb{S}^2}G^j(\bm{X}(\hx)-\bm{X}(\hy))\cdot T(|\nabla_{\mathbb{S}^2}\bm{X}(\hy)|)\nabla_{\mathbb{S}^2}\bm{X}(\hy)d\hy.    \end{aligned}
\end{equation}
and we introduced the notation
\begin{equation}\label{Taulaw}
   T(|\nabla_{\mathbb{S}^2}\bm{X}|)=\frac{\mc{T}(|\nabla_{\mathbb{S}^2}\bm{X}|)}{|\nabla_{\mathbb{S}^2}\bm{X}|}.
\end{equation}
Above, we use the shorter notation $d\hy=d\mu_{\mathbb{S}^2}(\hy)$.
We define the following associate linear operators,
\begin{equation}\label{Ndef}
    \begin{aligned}
    (\mc{N}(\bm{X})Z)_k(\hx)&=-\int_{\mathbb{S}^2} \nabla_{\mathbb{S}^2}G_{kl}(\bm{X}(\hx)-\bm{X}(\hy))\cdot \rowofmat{Z}{l}(\hy)d\hy,\\
      (\mc{N}^j(\bm{X})Z)_k(\hx)&=-\int_{\mathbb{S}^2} \nabla_{\mathbb{S}^2}G_{kl}^j(\bm{X}(\hx)-\bm{X}(\hy))\cdot \rowofmat{Z}{l}(\hy)d\hy.
    \end{aligned}
\end{equation}
Then, we compute the kernels:
\begin{equation}\label{Gderx}
\begin{aligned}
\PD{}{x_i} G^1_{k,l}(\bm{x})&=\frac{-1}{8\pi}\frac{x_i}{|\bm{x}|^3}\delta_{k,l},\\
\PD{}{x_i} G^2_{k,l}(\bm{x})&=\frac{1}{8\pi}\frac{\delta_{k,i}x_l+x_k\delta_{i,l}}{|\bm{x}|^3}-\frac{3}{8\pi}\frac{x_k x_l x_i}{|\bm{x}|^5},
\end{aligned}    
\end{equation}
and by the chain rule,
\begin{equation}\label{q_kernels}
    \begin{aligned}
    q_{k,l}^j(\hx,\hy):&=\nabla_{\mathbb{S}^2}G_{k,l}^j(\bm{X}(\hx)-\bm{X}(\hy))\\
    &=-\PD{}{x_i}G_{k,l}^j(\bm{X}(\hx)-\bm{X}(\hy))\nabla_{\mathbb{S}^2}X_i(\hy).
    \end{aligned}
\end{equation}
so that we write
\begin{equation}\label{Ndef_q}
    (\mc{N}^j(\bm{X})Z)_k(\hx)=-\int_{\mathbb{S}^2}q^j_{k,l}(\hx,\hy)\cdot\rowofmat{Z}{l}(\hy)d\hy.
\end{equation}
The explicit expression for $q^j_{k,l}$ is given by
\begin{equation*}
    \begin{aligned}
     q^1_{k,l}(\hx,\hy)&=\frac{1}{8\pi} \frac{\Delta_{\hy}X_j(\hx)\nabla_{\mathbb{S}^2}X_j(\hy)}{|\Delta_{\hy}\bm{X}(\hx)|^3}\frac{\delta_{k,l}}{|\hx-\hy|^2},
    \end{aligned}
\end{equation*}
and
\begin{equation*}
    \begin{aligned}
     q^2_{k,l}(\hx,\hy)&=-\frac{1}{8\pi} \frac{\Delta_{\hy}X_l(\hx)\nabla_{\mathbb{S}^2}X_k(\hy)+\Delta_{\hy}X_k(\hx)\nabla_{\mathbb{S}^2}X_l(\hy)}{|\Delta_{\hy}\bm{X}(\hx)|^3}\frac{1}{|\hx-\hy|^2}\\
     &\quad+\frac{3}{8\pi} \frac{\Delta_{\hy}X_k(\hx)\Delta_{\hy}X_l(\hx)\Delta_{\hy}X_j(\hx)\nabla_{\mathbb{S}^2}X_j(\hy)}{|\Delta_{\hy}\bm{X}(\hx)|^5}\frac{1}{|\hx-\hy|^2}.
    \end{aligned}
\end{equation*}
Using the standard stereographic projection (see Section \ref{sec:Notation}) and the notation $\bm{X}(\thetab)=\bm{X}(\hX(\thetab))$, the equation for each component of $\bm{X}(\thetab)$ becomes
\begin{equation*}
\begin{aligned}
\PD{X_k}{t}(\thetab)&=(F(\bm{X}))_k(\thetab)\\
&=-\int_{\mbr} \PD{}{\eta_i}G_{k,l}(\bm{X}(\thetab)\!-\!\bm{X}(\bm{\eta}))T(\lambda(\etab))\PD{X_l}{\eta_i}(\bm{\eta})d\eta_1d\eta_2,
\end{aligned}
\end{equation*}
where $\lambda(\etab)$ is given in \eqref{lambda_n}
and we denote accordingly $F^j(\bm{X})(\thetab)$, $\mc{N}^j(\bm{X})Z(\thetab)$.
If we use the stereographic projection centered at $\hx_n$, then we denote $\mc{N}^j(\bm{X})Z_n(\thetab)$.

Then, we take the derivative in $G_{k,l}$ \eqref{GG1G2} to obtain
\begin{equation*}
    \begin{aligned}
    \PD{}{\eta_i}G_{k,l}(\bm{X}(\bm{\theta})\!-\!\bm{X}(\bm{\eta}))&=q_{i,k,l}(\thetab,\etab)\\
    &=q_{i,k,l}^1(\thetab,\etab)+q_{i,k,l}^2(\thetab,\etab),
    \end{aligned}
\end{equation*}
where
\begin{equation}\label{Gder}
    \begin{aligned}
    q^1_{i,k,l}(\thetab,\etab)&=\PD{}{\eta_i}G_{k,l}^1(\bm{X}(\bm{\theta})\!-\!\bm{X}(\bm{\eta}))=\frac{1}{8\pi}\delta_{k,l} \frac{\delta_{\etab}\bm{X}(\thetab)\cdot \PD{\bm{X}}{\eta_i}(\etab)}{|\delta_{\etab}\bm{X}(\thetab)|^3},
    \end{aligned}
\end{equation}
\begin{equation*}
    \begin{aligned}
     q^2_{i,k,l}(\thetab,\etab)&=\PD{}{\eta_i}G_{k,l}^2(\bm{X}(\bm{\theta})\!-\!\bm{X}(\bm{\eta}))\\
    &=-\frac{1}{8\pi}\frac{\PD{X_{k}}{\eta_i}(\etab)\delta_{\etab}X_{l}(\thetab)+\delta_{\etab}X_{k}(\thetab)\PD{X_{l}}{\eta_i}}{|\delta_{\etab} \bm{X}(\thetab)|^3}\\
    &\quad+\frac{3}{8\pi}\frac{\delta_{\etab}X_{k}(\thetab)\delta_{\etab}X_{l}(\thetab)}{|\delta_{\etab} \bm{X}(\thetab)|^5}\delta_{\etab}\bm{X}(\thetab)\cdot\PD{\bm{X}(\etab)}{\eta_i},
    \end{aligned}
\end{equation*}
so that we can write
\begin{equation}\label{Ntheta}
    (\mc{N}^j(\bm{X})Z)_k(\thetab)=-\int_{\mathbb{R}^2}q^j_{m,k,l}(\thetab,\etab)\PD{\wh{X}_i}{\eta_m}(\etab)Z_{l,i}(\etab)d\eta_1d\eta_2.
\end{equation}
We notice that the kernels in \eqref{Gder} are given by
\begin{equation}\label{kernels_q}
    q_{i,k,l}^j(\thetab,\etab)=-\PD{}{x_m}G_{k,l}^j(\bm{X}(\thetab)-\bm{X}(\etab))\PD{X_{m}}{\eta_i}(\etab).
\end{equation}
We introduce the following notation for finite differences,
\begin{equation}\label{delta_eta}
    \delta_{\bm{\eta}} g(\bm{\theta})=g(\bm{\theta})-g(\bm{\eta}),\qquad \Delta_{\bm{\eta}} g(\bm{\theta})=\frac{\delta_{\bm{\eta}} g(\bm{\theta})}{|\thetab-\etab|},
\end{equation}
and we extract the expected leading terms by replacing 
\begin{equation*}
    \delta_{\etab}\bm{X}(\thetab)\approx \nabla \bm{X}(\etab)(\thetab-\etab),\qquad (\nabla \bm{X})_{p,q}(\etab)=\frac{\partial X_{n,p}}{\partial \eta_q}(\etab).
\end{equation*}
Hence, we define the associate kernels
\begin{equation}\label{m_kernels}
    \begin{aligned}
    m_{i,k,l}(\thetab,\etab)&=-\PD{}{x_j}G_{k,l}\paren{\nabla\bm{X}(\etab)(\thetab-\etab)}\PD{X_{j}}{\eta_i}(\etab)\\
    &=m_{i,k,l}^1(\thetab,\etab)+m_{i,k,l}^2(\thetab,\etab),
    \end{aligned}
\end{equation}
and define the linear operators $\mc{M}(\nabla \bm{X})z$ as follows:
\begin{equation}\label{defM}
    \begin{aligned}
    (\mc{M}(\nabla\bm{X})Z)_k(\thetab)&=-\int_{\mbr} m_{m,k,l}(\thetab,\etab)
    \PD{\wh{X}_i}{\eta_m}(\etab)Z_{l,i}(\etab)d\eta_1d\eta_2\\
    &=(\mc{M}^1(\nabla\bm{X})Z)_k(\thetab)+(\mc{M}^2(\nabla\bm{X})Z)_k(\thetab),
    \end{aligned}
\end{equation}
with
\begin{equation*}
    \begin{aligned}
    (\mc{M}^j(\nabla\bm{X})Z)_k(\thetab)&=-\int_{\mbr} m_{m,k,l}^j(\thetab,\etab)
    \PD{\wh{X}_i}{\eta_m}(\etab)Z_{l,i}(\etab)d\eta_1d\eta_2.
    \end{aligned}
\end{equation*}
We compute the explicit expression of these kernels $m_{i,k,l}^j$ \eqref{m_kernels},
\begin{equation*}
    \begin{aligned}
    m_{i,k,l}^1(\thetab,\etab)&=\frac{1}{8\pi}\frac{\PD{\bm{X}(\etab)}{\eta_i}\cdot(\nabla \bm{X}(\etab)(\thetab\!-\!\etab))}{|\nabla \bm{X}(\etab)(\thetab\!-\!\etab)|^3},
    \end{aligned}
\end{equation*}
\begin{equation*}
    \begin{aligned}
    m_{i,k,l}^2(\thetab,\etab)&=-\frac{1}{8\pi}\frac{\PD{X_{k}(\etab)}{\eta_i}(\nabla \bm{X}(\etab)(\thetab\!-\!\etab))_l+\PD{X_{l}(\etab)}{\eta_i}(\nabla \bm{X}(\etab)(\thetab\!-\!\etab))_k}{|\nabla \bm{X}(\etab)(\thetab\!-\!\etab)|^3}\\
    &\hspace{-1.5cm}+\frac{3}{8\pi}\frac{(\nabla \bm{X}(\etab)(\thetab\!-\!\etab))_k(\nabla \bm{X}(\etab)(\thetab\!-\!\etab))_l}{|\nabla \bm{X}(\etab)(\thetab\!-\!\etab)|^5}(\nabla \bm{X}(\etab)(\thetab\!-\!\etab))\cdot\PD{\bm{X}(\etab)}{\eta_i}.
    \end{aligned}
\end{equation*}
We will use the notation
\begin{equation*}
    \nabla \bm{X}(\etab)(\thetab-\etab)=(\thetab-\etab)\cdot \nabla \bm{X}(\etab),\qquad \widehat{\bm{z}}=\frac{\bm{z}}{|\bm{z}|},
\end{equation*}
and we define
\begin{equation}\label{EetaX}
    E^{\etab}\bm{X}(\thetab):=(\widehat{\thetab-\etab})\cdot\nabla\bm{X}(\etab)-\Delta_{\etab} \bm{X}(\thetab),
\end{equation}
for which we have that,
\begin{equation}\label{Eeta_bound}
    \frac{|E^{\etab}(\bm{X}(\thetab))|}{|\thetab-\etab|^\gamma}\leq \jump{\nabla \bm{X}}_{C^{\gamma}(\mathbb{R}^2)}.
\end{equation}
Thus, we can write
\begin{equation}\label{nonlinear_split}
    \begin{aligned}
        \mc{N}(\bm{X})Z(\thetab)&=\mc{M}(\nabla \bm{X})Z(\thetab)+\mc{R}(\bm{X})Z(\thetab),
    \end{aligned}
\end{equation}
where the remainder term
\begin{equation*}
    \mc{R}(\bm{X})Z(\thetab)=\sum_{j=1}^2\mc{R}^j(\bm{X})Z(\thetab)
\end{equation*}
is given by
\begin{equation}\label{defR}
\begin{aligned}
    (\mc{R}^j(\bm{X})Z))_k(\thetab)&=(\mc{N}^j(\bm{X})Z)_k(\thetab)-(\mc{M}^j(\nabla\bm{X})(Z))_k(\thetab)\\
    &=\int_{\mbr}\mathcal{K}^j_{m,k,l}(\thetab,\etab)
    \PD{\wh{X}_i}{\eta_m}(\etab)Z_{l,i}(\etab)d\eta_1d\eta_2,
\end{aligned}
\end{equation}
with kernels
\begin{equation*}
    \begin{aligned}
    \mathcal{K}^1_{i,k,l}(\thetab,\etab)
    &=-q_{i,k,l}^1\paren{\thetab,\etab}+m_{i,k,l}^1\paren{\thetab,\etab}\\
    &=\frac{1}{8\pi}\frac{\delta_{k,l}}{|\thetab-\etab|^2}\PD{\bm{X}(\etab)}{\eta_i}\cdot\bigg(\frac{E^{\etab}\bm{X}(\thetab)}{|\Delta_{\etab}\bm{X}(\thetab)|^3}\\
    &\qquad-((\widehat{\thetab-\etab})\cdot\nabla\bm{X}(\etab))\Big(\frac{1}{|\Delta_{\etab}\bm{X}(\thetab)|^3}-\frac{1}{|(\widehat{\thetab-\etab})\cdot\nabla \bm{X}(\etab)|^3}\Big)\bigg),
    \end{aligned}
\end{equation*}
and
\begin{equation*}
    \mathcal{K}^2_{i,k,l}(\thetab,\etab)=-q_{i,k,l}^2\paren{\thetab,\etab}+m_{i,k,l}^2\paren{\thetab,\etab}=\mathcal{K}^{2,1}_{i,k,l}(\thetab,\etab)+\mathcal{K}^{2,2}_{i,k,l}(\thetab,\etab),
\end{equation*}
\begin{equation*}
    \begin{aligned}
    \mathcal{K}^{2,1}_{i,k,l}(\thetab,\etab)&=\frac{1}{8\pi}\frac{1}{|\thetab-\etab|^2}\bigg(\frac{-\PD{\bm{X}_{k}(\etab)}{\eta_i}}{|\Delta_{\etab}\bm{X}(\thetab)|^3} E^{\etab}X_{l}(\thetab)\\
    &\hspace{-0.5cm}+\PD{\bm{X}_{k}(\etab)}{\eta_i}((\widehat{\thetab-\etab})\cdot\nabla\bm{X}(\etab))_l\Big(\frac{1}{|\Delta_{\etab}\bm{X}(\thetab)|^3}-\frac{1}{|(\widehat{\thetab-\etab})\cdot\nabla \bm{X}(\etab)|^3}\Big)\\
    &\hspace{-0.5cm}-\frac{\PD{\bm{X}_{l}(\etab)}{\eta_i}}{|\Delta_{\etab}\bm{X}(\thetab)|^3} E^{\etab}X_{k}(\thetab)\\
    &\hspace{-0.5cm}+\PD{\bm{X}_{l}(\etab)}{\eta_i}((\widehat{\thetab-\etab})\cdot\nabla\bm{X}(\etab))_k\Big(\frac{1}{|\Delta_{\etab}\bm{X}(\thetab)|^3}-\frac{1}{|(\widehat{\thetab-\etab})\cdot\nabla \bm{X}(\etab)|^3}\Big)\bigg),
    \end{aligned}
\end{equation*}
\begin{equation*}
    \begin{aligned}
    \mathcal{K}^{2,2}_{i,k,l}&(\thetab,\etab)=\frac{1}{8\pi}\frac{3}{|\thetab-\etab|^2}\bigg(\frac{\Delta_{\etab}X_{k}(\thetab)\Delta_{\etab}X_{l}(\thetab)}{|\Delta_{\etab}\bm{X}(\thetab)|^5}\PD{\bm{X}(\etab)}{\eta_i}\cdot E^{\etab}\bm{X}(\thetab)\\
    &\quad+\frac{\PD{\bm{X}(\etab)}{\eta_i}\cdot((\widehat{\thetab-\etab})\cdot\nabla \bm{X}(\etab))}{|\Delta_{\etab}\bm{X}(\thetab)|^5}\Delta_{\etab}X_{k}(\thetab)E^{\etab}X_{l}(\thetab)\\
    &\quad+\frac{\PD{\bm{X}(\etab)}{\eta_i}\cdot((\widehat{\thetab-\etab})\cdot\nabla \bm{X}(\etab))}{|\Delta_{\etab}\bm{X}(\thetab)|^5}((\widehat{\thetab-\etab})\cdot\nabla\bm{X}(\etab))_l E^{\etab}X_{k}(\thetab)\\
    &\quad-\frac{\PD{\bm{X}(\etab)}{\eta_i}\cdot((\widehat{\thetab-\etab})\cdot\nabla \bm{X}(\etab))}{|\Delta_{\etab}\bm{X}(\thetab)|^5}((\widehat{\thetab-\etab})\cdot\nabla\bm{X}(\etab))_l((\widehat{\thetab-\etab})\cdot\nabla\bm{X}(\etab))_k\\
    &\hspace{4cm}\times\Big(\frac{1}{|\Delta_{\etab}\bm{X}(\thetab)|^5}-\frac{1}{|(\widehat{\thetab-\etab})\cdot\nabla\bm{X}(\etab)|^5}\Big)\bigg).
    \end{aligned}
\end{equation*}
\begin{remark}
Note that for all  positive odd integers $k$, it holds that
\begin{align}\label{odd_fractions}
\begin{split}
    \frac{1}{|\bm{u}|^{k}}&-\frac{1}{|\bm{v}|^{k}}=\frac{(\bm{v}-\bm{u})\cdot(\bm{u}+\bm{v})}{|\bm{u}|^k+|\bm{v}|^k}\frac{\sum_{i=1}^k |\bm{u}|^{2(i-1)}|\bm{v}|^{2(k-i)}}{|\bm{u}|^k|\bm{v}|^k}
\end{split}
\end{align}
and
\begin{align}\label{odd_fractions02}
    \abs{\frac{1}{|\bm{u}|^{k}}-\frac{1}{|\bm{v}|^{k}}}=\frac{\abs{\abs{\bm{v}}-\abs{\bm{u}}}\sum_{i=1}^k \abs{\bm{u}}^{i-1}\abs{\bm{v}}^{k-i}}{\abs{\bm{u}}^k \abs{\bm{v}}^k}\leq \frac{\abs{\bm{v}-\bm{u}}\sum_{i=1}^k \abs{\bm{u}}^{i-1}\abs{\bm{v}}^{k-i}}{\abs{\bm{u}}^k \abs{\bm{v}}^k} 
\end{align}
\end{remark}
In particular, formula \eqref{odd_fractions} with $\bm{u}=\Delta_{\etab}\bm{X}(\thetab)$ and $\bm{v}=(\widehat{\thetab-\etab})\cdot\nabla\bm{X}(\etab)$, together with \eqref{EetaX}-\eqref{Eeta_bound}, makes it clear that there is an extra cancellation in the kernels of \eqref{defR},

In summary, our equation \eqref{Xeq_F} is given by
\begin{equation*}
    \begin{aligned}
     \PD{\bm{X}}{t}(\thetab)&=F(\bm{X})(\thetab)\\
     &=\mc{N}(\bm{X})(T(|\nabla_{\mathbb{S}^2}\bm{X}|)\nabla_{\mathbb{S}^2}\bm{X})(\thetab)\\\
     &=\mc{M}(\nabla\bm{X})(T(|\nabla_{\mathbb{S}^2}\bm{X}|)\nabla_{\mathbb{S}^2}\bm{X})(\thetab)+\mc{R}(\bm{X})(T(|\nabla_{\mathbb{S}^2}\bm{X}|)\nabla_{\mathbb{S}^2}\bm{X})(\thetab).
    \end{aligned}
\end{equation*}

\subsection{Symbol of the leading term}\label{sec:symbol}

As a preliminary step towards studying the leading term $\mathcal{M}$ \eqref{defM}, let us consider its frozen-coefficient counterpart, i.e., replacing  $\nabla \bm{X}(\etab)$ by a constant matrix $A$ and letting $\wh{g}=I_2$. We start with the case $\mc{T}=\text{Id}$, that is, $T\equiv 1$:
\begin{equation}\label{defnL1}
(\mc{L}_A^L\bm{Y})_k(\thetab)=(\tilde{\mc{M}}(A)\nabla\bm{Y})_k(\thetab),
\end{equation}
where we define $\tilde{\mc{M}}(A)=\tilde{\mc{M}}^1(A)+\tilde{\mc{M}}^2(A)$ and
\begin{equation}\label{Mtilde}
    \begin{aligned}
    (\tilde{\mc{M}}^j(A)Z)_k(\thetab)=-\int_{\mathbb{R}^2} \PD{}{\eta_i}(G^j_{k,l}(A\paren{\bm{\theta}-\bm{\eta}}))Z_{l,i}(\etab)d\eta_1d\eta_2.
    \end{aligned}
\end{equation}
 Let $\mc{F}_{\bm{\theta}}$ be the 2D Fourier transform in $\bm{\theta}$ and $\bm{\xi}=(\xi_1,\xi_2)^{\rm T}$:
\begin{equation*}
\mc{F}_{\bm{\theta}}w(\bm{\xi})=\int_{\mathbb{R}^2}w(\bm{\theta})\exp(-i\bm{\theta}\cdot \bm{\xi})d\bm{\theta}.
\end{equation*}
We now compute the Fourier transform of the function $G_A$:
\begin{equation*}
G_A(\bm{\theta})=G(A\bm{\theta})=\frac{1}{8\pi}\paren{\frac{I}{\abs{A\bm{\theta}}}+\frac{A\bm{\theta}\otimes A\bm{\theta}}{\abs{A\bm{\theta}}^3}},
\end{equation*}
where $I_3$ is the $3\times 3$ identity matrix. Given that $\bm{\theta}\in \mathbb{R}^2$ and $A$ is a $3\times 2$ matrix, it is convenient 
to rewrite $G_A$ as follows. First, note that:
\begin{equation*}
\abs{A\bm{\theta}}^2=A\bm{\theta}\cdot A\bm{\theta}= \bm{\theta}\cdot \paren{A^{\rm T}A\bm{\theta}}=\abs{B\bm{\theta}}^2, \; B=\sqrt{A^{\rm T}A}.
\end{equation*}
Notice that $B$ is a $2\times 2$ symmetric positive definite matrix. Using this $B$, we have:
\begin{equation}\label{GAexp}
G_A(\bm{\theta})=\frac{1}{8\pi}\paren{\frac{I}{\abs{B\bm{\theta}}}+Q\paren{\frac{B\bm{\theta}\otimes B\bm{\theta}}{\abs{B\bm{\theta}}^3}} Q^{\rm T}},\; Q=AB^{-1}.
\end{equation}
We note that $Q$ is an isometry in the sense that $Q^{\rm T}Q=I_2$ where $I_2$ is the $2\times 2$ identity matrix.
We are now ready to compute the Fourier transform of $G_A$. First, note that:
\begin{equation}\label{Fthetainv}
\mc{F}_{\bm{\theta}}\paren{\frac{1}{\abs{\bm{\theta}}}}=\frac{2\pi}{\abs{\bm{\xi}}}.
\end{equation}
Thus, a simple change of variable yields:
\begin{equation}\label{FBthetainv}
\mc{F}_{\bm{\theta}}\paren{\frac{1}{\abs{B\bm{\theta}}}}=\frac{2\pi}{{\rm det}(B)\abs{B^{-1}\bm{\xi}}}.
\end{equation}
Next, note that:
\begin{equation*}
\begin{split}
\mc{F}_{\bm{\theta}}\paren{\frac{\theta_i\theta_j}{\abs{\bm{\theta}}^3}}&=\mc{F}_\theta\paren{\theta_i\theta_j\Delta_{\bm{\theta}}\paren{\frac{1}{\abs{\bm{\theta}}}}}=
\frac{\partial^2}{\partial\xi_i\partial\xi_j}\paren{\abs{\bm{\xi}}^2\mc{F}_\theta\paren{\frac{1}{\abs{\bm{\theta}}}}}\\
&=2\pi\frac{\partial^2}{\partial\xi_i\partial\xi_j}\abs{\bm{\xi}}=2\pi\paren{\frac{\delta_{ij}}{\abs{\bm{\xi}}}-\frac{\xi_i\xi_j}{\abs{\bm{\xi}}^3}},
\end{split}
\end{equation*}
where $\Delta_{\bm{\theta}}$ is the Laplacian in $\mathbb{R}^2$, $\delta_{ij}$ is the Kronecker delta and we used \eqref{Fthetainv} in the third equality. In matrix notation, the above can be written as:
\begin{equation*}
\mc{F}_{\bm{\theta}}\paren{\frac{\bm{\theta}\otimes\bm{\theta}}{\abs{\bm{\theta}}^3}}=2\pi \paren{\frac{I_2}{\abs{\bm{\xi}}}-\frac{\bm{\xi}\otimes \bm{\xi}}{\abs{\bm{\xi}}^3}}.
\end{equation*}
Again, by changing variables, we see that:
\begin{equation}\label{FBthetaBtheta}
\mc{F}_{\bm{\theta}}\paren{\frac{B\bm{\theta}\otimes B\bm{\theta}}{\abs{B\bm{\theta}}^3}}=\frac{2\pi}{{\rm det}(B)} \paren{\frac{I_2}{\abs{B^{-1}\bm{\xi}}}-\frac{B^{-1}\bm{\xi}\otimes B^{-1}\bm{\xi}}{\abs{B^{-1}\bm{\xi}}^3}}.
\end{equation}
Using \eqref{FBthetaBtheta}, \eqref{Fthetainv} and \eqref{GAexp}, we obtain:
\begin{equation*}
\begin{split}
&\mc{F}_{\bm{\theta}} G_A=\frac{1}{4{\rm det}(B)}\paren{\frac{I+QQ^{\rm T}}{\abs{B^{-1}\bm{\xi}}}-\frac{QB^{-1}\bm{\xi}\otimes QB^{-1}\bm{\xi}}{\abs{B^{-1}\bm{\xi}}^3}}\\
=&\frac{1}{4\sqrt{{\rm det}(A^{\rm T}A)}}\paren{\frac{I+A(A^{\rm T}A)^{-1}A^{\rm T}}{{\paren{\bm{\xi}\cdot (A^{\rm T}A)^{-1}\bm{\xi}}^{1/2}}}-
\frac{A(A^{\rm T}A)^{-1}\bm{\xi}\otimes A(A^{\rm T}A)^{-1}\bm{\xi}}{\paren{\bm{\xi}\cdot (A^{\rm T}A)^{-1}\bm{\xi}}^{3/2}}}
\end{split}
\end{equation*}
This implies that the Fourier symbol of $\mc{L}_A^L$ is given by:
\begin{equation*}
\mc{L}_A^L\bm{Y}=-\mc{F}^{-1}_{\bm \xi}L_A^L(\bm{\xi})\mc{F}_{\bm{\theta}}\bm{Y},\; L_A^L(\xi)=\abs{\bm{\xi}}^2\paren{\mc{F}_{\bm{\theta}}G_A}(\bm{\xi}).
\end{equation*}
To better understand the properties of Fourier multiplier $L_A^L(\bm{\xi})$, we first note that:
\begin{equation}
\begin{split}
QQ^{\rm T}-\frac{QB^{-1}\bm{\xi}\otimes QB^{-1}\bm{\xi}}{\abs{B^{-1}\bm{\xi}}^2}&=Q\paren{I_2-\frac{B^{-1}\bm{\xi}\otimes B^{-1}\bm{\xi}}{\abs{B^{-1}\bm{\xi}}^2}}Q^{\rm T}=\bm{v}(\bm{\xi})\otimes \bm{v}(\bm{\xi}),\\
\bm{v}(\bm{\xi})&=Q\mc{R}_{\pi/2}\frac{B^{-1}\bm{\xi}}{\abs{B^{-1}{\bm{\xi}}}}, \; \mc{R}_{\pi/2}=\begin{pmatrix} 0 & -1 \\ 1 & 0\end{pmatrix}.
\end{split}\label{Ltensorterm}
\end{equation}
Note that $\bm{v}(\bm{\xi})\in \mathbb{R}^3$ is a unit vector, and hence, the above matrix $3\times 3$ matrix is an orthogonal projection on to the subspace spanned by $\bm{v}(\bm{\xi})$.
We see that:
\begin{equation}\label{Lxisimple}
L_A^L(\bm{\xi})=\frac{\abs{\bm{\xi}}^2}{4{\rm det}(B)\abs{B^{-1}\bm{\xi}}}\paren{I+\bm{v}(\bm{\xi})\otimes \bm{v}(\bm{\xi})}.
\end{equation}
It is now immediate that $L_A^L(\bm{\xi})$ is a symmetric positive definite matrix for each $\bm{\xi}\neq 0$ with eigenvalues:
\begin{equation}\label{Lspec}
\lambda=\frac{\mu}{4}\abs{\bm{\xi}}^2 \text{ and } \frac{\mu}{2}\abs{\bm{\xi}}^2, \quad \mu=\frac{1}{{\rm det}(B)\abs{B^{-1}\bm{\xi}}},
\end{equation}
where the eigenspace for $\mu/2$ is spanned by $\bm{v}(\bm{\xi})$ and the two-dimensional eigenspace of $\mu/4$ is spanned by the orthogonal complement of $\bm{v}(\bm{\xi})$.
We also have:
\begin{equation}\label{Lmubnd}
\frac{\mu}{4}\abs{\bm{\xi}}^2\abs{\bm{w}}^2\leq \bm{w}\cdot L(\bm{\xi})\bm{w}\leq \frac{\mu}{2}\abs{\bm{\xi}}^2\abs{\bm{w}}^2
\end{equation}
for any $\bm{w}\in \mathbb{R}^3$.

In the case of general $\mc{T}$, the frozen coefficient linear operator can be obtained by a further linearization of the force function. 
Consider the expression:
\begin{equation*}
\frac{\mc{T}(\lambda_\tau)}{\lambda_\tau}\PD{(X_l+\tau Y_l)}{\theta_i}, \; \lambda_\tau=\lambda(\bm{X}+\tau\bm{Y})
\end{equation*}
where $\lambda$ is here viewed as a function of $\bm{X}$ through its dependence on $g$ (see \eqref{lamg}). Now, 
\begin{equation*}
\begin{split}
&\at{\D{}{\tau}\paren{\frac{\mc{T}(\lambda_\tau)}{\lambda_\tau}\PD{(X_l+\tau Y_l)}{\theta_i}}}{\tau=0}
=\paren{\frac{1}{\lambda}\D{\mc{T}}{\lambda}-\frac{\mc{T}}{\lambda^2}}\at{\D{\lambda_\tau}{\tau}}{\tau=0}\PD{X_l}{\theta_i}+\frac{\mc{T}}{\lambda}\PD{Y_l}{\theta_i}\\
=&\paren{\frac{1}{\lambda}\D{\mc{T}}{\lambda}-\frac{\mc{T}}{\lambda^2}}\frac{1}{\lambda}(\wh{g}^{-1})_{m,n}\PD{X_q}{\theta_m}\PD{Y_q}{\theta_n}\PD{X_l}{\theta_i}+\frac{\mc{T}}{\lambda}\PD{Y_l}{\theta_i}.
\end{split}
\end{equation*}
Now, the frozen coefficient approximation amounts to taking $\wh{g}=I_2$, $\partial X_l/\partial \theta_i=A_{l,i}$ and $\lambda=\norm{A}_F$. Thus, 
\begin{equation*}
\begin{split}
\at{\D{}{\tau}\paren{\frac{\mc{T}(\lambda_\tau)}{\lambda_\tau}\PD{(X_l+\tau Y_l)}{\theta_i}}}{\tau=0}&\approx (T_F(A))_{i,l,m,q}\PD{Y_q}{\theta_m},
\end{split}
\end{equation*}
with
\begin{equation}\label{tensionD}
T_F(A)=\frac{\mc{T}(\norm{A}_F)}{\norm{A}_F}I_2\otimes I_2-\paren{\frac{\mc{T}(\norm{A}_F)}{\norm{A}_F}-\D{\mc{T}}{\lambda}(\norm{A}_F)}\frac{A\otimes A}{\norm{A}_F^2}.    
\end{equation}
Thus, the frozen-coefficient linear operator in the general force case is given by
\begin{equation}\label{defnL2}
(\mc{L}_A\bm{Y})_k(\thetab)=-\int_{\mathbb{R}^2} \PD{}{\eta_i}(G_{k,l}(A\paren{\bm{\theta}-\bm{\eta}}))(T_F(A)\nabla\bm{Y})_{l,i}(\etab)d\eta_1d\eta_2.
\end{equation}
Let us now take the Fourier transform of the divergence of the above:
\begin{equation*}
\begin{split}
\mc{F}(\nabla&\cdot(T_F(A)\nabla\bm{Y}))(\xib)=-M_A(\xib)\mc{F}\bm{Y}(\xib),
\end{split}
\end{equation*}
where
\begin{equation}
    M_A(\xib)=\frac{\mc{T}(\norm{A}_F)}{\norm{A}_F}\paren{\abs{\bm{\xi}}^2 I-\frac{A\bm{\xi}\otimes A\bm{\xi}}{\norm{A}_F^2}}+\D{\mc{T}}{\lambda}(\norm{A}_F)\frac{A\bm{\xi}\otimes A\bm{\xi}}{\norm{A}_F^2}.
\label{tesionD_fm}
\end{equation}
Note that, if we set $\mc{T}=\text{Id}$, 
then $T_F=\text{Id}$ and the above reduces to $M(\xib)=|\xib|^2$. 
Thus, in the general case, the multiplier in $L_A(\xib)$ of \eqref{defnL2} becomes:
\begin{equation}\label{multiplierNon}
\begin{split}
L_A(\bm{\xi})&=\paren{\mc{F}_\theta G_A}(\bm{\xi})M_A(\bm{\xi})\\
&=\frac{I+\bm{v}(\bm{\xi})\otimes \bm{v}(\bm{\xi})}{4{\rm det}(B)\abs{B^{-1}\bm{\xi}}}\paren{
\frac{\mc{T}(\norm{A}_F}{\norm{A}_F}\paren{\abs{\bm{\xi}}^2 I-\frac{A\bm{\xi}\otimes A\bm{\xi}}{\norm{A}_F}}+\D{\mc{T}}{\lambda}(\norm{A}_F)\frac{A\bm{\xi}\otimes A\bm{\xi}}{\norm{A}_F}}.
\end{split}
\end{equation}
It is not difficult to see that, if $\mc{T}>0$ and $d\mc{T}/d\lambda\geq 0$, then the above is coercive in $\abs{\bm{\xi}}^2$.

\section{Calculus estimates}\label{calculus_est}

In this section we include some estimates of the operators that will be frequently used in later sections. 

\begin{lemma} Let $\bm{X}\in C^1(\mathbb{S}^2)$, such that $|\bm{X}|_*>0$. Then, the kernels $G_{k,l}^j(\bm{x})$ \eqref{GG1G2} and $q_{k,l}^j(\hx,\hy)$ \eqref{q_kernels} satisfy the following bounds
\begin{equation}\label{qkernel_bounds}
    \begin{aligned}
    \abs{\partial_{\xb}^{\bm{\alpha}}G_{k,l}^j(\bm{x})}&\leq \frac{C}{\abs{\xb}^{1+|\bm{\alpha}|}},\\
    \abs{\Delta_{\bm{y}}\paren{\partial_{\xb}^{\bm{\alpha}}G_{k,l}^j(\bm{x})}}&\leq C\frac{M^{1+|\bm{\alpha}|}}{m^{3+2|\bm{\alpha}|}},\\
    |q_{k,l}^j(\hx,\hy)|&\leq C\frac{|\nabla_{\mathbb{S}^2}\bm{X}(\hy)|}{|\Delta_{\hy}\bm{X}(\hx)|^2}\frac{1}{|\hx-\hy|^2}\leq C\frac{\|\nabla_{\mathbb{S}^2}\bm{X}\|_{C^0(\mathbb{S}^2)}}{|\bm{X}|_*^2}\frac{1}{|\hx-\hy|^2},
    \end{aligned}
\end{equation}
where $\abs{\bm{\alpha}}$ defined in \eqref{high_pd},  $M=\max\paren{\abs{\xb},\abs{\yb}}$, and $m=\min\paren{\abs{\xb},\abs{\yb}}$.
\end{lemma}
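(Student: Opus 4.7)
All three bounds are elementary, following from homogeneity of the Stokeslet, the mean value theorem, and the arc-chord condition.

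For the first bound, observe that both $G^1_{k,l}(\bm{x})=\delta_{k,l}/(8\pi|\bm{x}|)$ and $G^2_{k,l}(\bm{x})=x_kx_l/(8\pi|\bm{x}|^3)$ are positively homogeneous of degree $-1$ and smooth on $\mathbb{R}^3\setminus\{\bm{0}\}$. Hence $\partial_{\bm{x}}^{\bm{\alpha}}G^j_{k,l}(\bm{x})$ is positively homogeneous of degree $-1-|\bm{\alpha}|$ and smooth away from the origin. Writing $\bm{x}=|\bm{x}|\hat{\bm{x}}$ with $\hat{\bm{x}}\in\mathbb{S}^2$ gives $|\partial_{\bm{x}}^{\bm{\alpha}}G^j_{k,l}(\bm{x})|=|\bm{x}|^{-1-|\bm{\alpha}|}|\partial_{\bm{x}}^{\bm{\alpha}}G^j_{k,l}(\hat{\bm{x}})|$, and compactness of $\mathbb{S}^2$ yields the claim.

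For the second bound, set $h(\bm{z}):=\partial_{\bm{z}}^{\bm{\alpha}}G^j_{k,l}(\bm{z})$, so that by \eqref{delta_eta} the quantity in question equals $(h(\bm{x})-h(\bm{y}))/|\bm{x}-\bm{y}|$. I split into two cases. If $|\bm{x}-\bm{y}|\geq m/2$, the first bound and the triangle inequality yield
\begin{equation*}
\frac{|h(\bm{x})-h(\bm{y})|}{|\bm{x}-\bm{y}|}\leq \frac{|h(\bm{x})|+|h(\bm{y})|}{m/2}\leq \frac{C}{m^{2+|\bm{\alpha}|}}.
\end{equation*}
If $|\bm{x}-\bm{y}|<m/2$, then the segment $\bm{z}(t)=\bm{y}+t(\bm{x}-\bm{y})$, $t\in[0,1]$, satisfies $|\bm{z}(t)|\geq m-|\bm{x}-\bm{y}|\geq m/2$, and the mean value theorem applied with the first bound on $\nabla h$ (a derivative of $G^j_{k,l}$ of order $|\bm{\alpha}|+1$) gives
\begin{equation*}
|h(\bm{x})-h(\bm{y})|\leq |\bm{x}-\bm{y}|\sup_{t\in[0,1]}|\nabla h(\bm{z}(t))|\leq \frac{C|\bm{x}-\bm{y}|}{(m/2)^{2+|\bm{\alpha}|}}.
\end{equation*}
Either way, $|\Delta_{\bm{y}}h(\bm{x})|\lesssim 1/m^{2+|\bm{\alpha}|}$, which is stronger than the stated bound since $M\geq m$.

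For the third bound, the chain-rule identity \eqref{q_kernels} gives $|q^j_{k,l}(\hx,\hy)|\leq |\nabla G^j(\bm{X}(\hx)-\bm{X}(\hy))||\nabla_{\mathbb{S}^2}\bm{X}(\hy)|$. Applying the first bound with $|\bm{\alpha}|=1$ to the factor $|\nabla G^j|$ and factoring $|\bm{X}(\hx)-\bm{X}(\hy)|=|\Delta_{\hy}\bm{X}(\hx)||\hx-\hy|$ yields the first inequality. The arc-chord hypothesis supplies $|\Delta_{\hy}\bm{X}(\hx)|\geq|\bm{X}|_*$, and the trivial $|\nabla_{\mathbb{S}^2}\bm{X}(\hy)|\leq\|\nabla_{\mathbb{S}^2}\bm{X}\|_{C^0(\mathbb{S}^2)}$ completes the estimate. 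No step is conceptually delicate; the only mild care needed is the case split in the second bound, which handles the possibility that the segment between $\bm{x}$ and $\bm{y}$ passes close to the origin.
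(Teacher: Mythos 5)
Your proof is correct. The paper states this lemma without proof, treating the bounds as elementary consequences of homogeneity, the mean value theorem, and the arc-chord condition — exactly the route you take. Each step checks out: the first bound follows from positive $(-1)$-homogeneity of $G^1_{k,l}$ and $G^2_{k,l}$ (hence $(-1-|\bm{\alpha}|)$-homogeneity of their derivatives) plus continuity on $\mathbb{S}^2$; the case split $|\bm{x}-\bm{y}|\gtrless m/2$ is the right way to keep the connecting segment away from the singularity in the mean-value estimate; and the third bound is a direct combination of the first bound at order one, the identity $|\bm{X}(\hx)-\bm{X}(\hy)| = |\Delta_{\hy}\bm{X}(\hx)|\,|\hx-\hy|$, and $|\Delta_{\hy}\bm{X}(\hx)|\geq|\bm{X}|_*$. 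Worth noting: your derivation of the second bound actually yields $|\Delta_{\bm{y}}(\partial^{\bm{\alpha}}_{\bm{x}}G^j_{k,l})|\lesssim m^{-(2+|\bm{\alpha}|)}$, which is strictly stronger than the stated $C\,M^{1+|\bm{\alpha}|}/m^{3+2|\bm{\alpha}|}$ whenever $M>m$ (since $(M/m)^{1+|\bm{\alpha}|}\geq 1$); you correctly flag this. The looser form in the paper is harmless — it is never invoked in a regime where the sharpening matters — but your observation is accurate.
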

For the sake of completeness, we include a version of the divergence theorem that will be used. Notice that, following standard convention, we will not explicitly write the principal values elsewhere.
\begin{lemma}\label{Div_thm}
Given a matrix $A$ and a compact set $\mc{D}\subset\mathbb{R}^2$ containing $\bm{0}$, then 
\begin{align*}
    \pv \int_{\mc{D}^c} \nabla G(A\bm{\eta})d\etab:&=\lim_{L\rightarrow\infty} \int_{\mc{D}^c\cap \mcbr{L}} \nabla G(A\bm{\eta})d\etab\\
    &=-\int_{\partial\mc{D}} G(A\bm{\eta})\bm{n}\paren{\etab} dl\paren{\etab},
\end{align*}
where $\mcbr{L}\subset\mathbb{R}^2$ is the ball centered at $\bm{0}$ of radius $L$.
In particular,
\begin{align*}
    \pv \int_\mbr \nabla G(A\bm{\eta})d\etab:=\lim_{L\rightarrow\infty,\varepsilon\rightarrow 0} \int_{\mcbr{L}\setminus\mcbr{\varepsilon} } \nabla G(A\bm{\eta})d\etab=0.
\end{align*}
\end{lemma}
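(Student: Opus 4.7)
The plan is to prove this by applying the standard divergence theorem on the finite domain $\mc{D}^c \cap \mcbr{L}$ (resp.\ the annulus $\mcbr{L}\setminus \mcbr{\varepsilon}$ for the second statement) and then showing the boundary contributions on the circles $\partial \mcbr{L}$ and $\partial \mcbr{\varepsilon}$ vanish in the limit, by a scaling and parity argument.

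First, note that for each $(k,l)$ the entries $G_{k,l}(A\etab)$ are locally integrable in $\etab\in \mathbb{R}^2$ (they behave like $1/|\etab|$, with constants depending on $A$), so the $\eta_i$-derivatives $\partial_{\eta_i}G_{k,l}(A\etab)$ are classical smooth functions on $\mc{D}^c$. Componentwise, I would apply the usual divergence theorem
\begin{equation*}
\int_{\Omega}\PD{f}{\eta_i}\, d\etab = \int_{\partial \Omega} f\, n_i\, dl
\end{equation*}
to $\Omega = \mc{D}^c \cap \mcbr{L}$, with $f = G_{k,l}(A\etab)$. Since the outward unit normal on $\partial \mc{D}$ (as seen from $\Omega$) is $-\bm{n}$, where $\bm{n}$ is the outward normal of $\mc{D}$, this yields
\begin{equation*}
\int_{\mc{D}^c\cap \mcbr{L}} \PD{}{\eta_i}G_{k,l}(A\etab)\,d\etab
= -\int_{\partial \mc{D}} G_{k,l}(A\etab) n_i(\etab)\, dl(\etab) + \int_{\partial \mcbr{L}} G_{k,l}(A\etab)\frac{\eta_i}{|\etab|}\,dl(\etab).
\end{equation*}

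The main point is to show that the last boundary term tends to $0$ as $L\to\infty$. Using homogeneity $G_{k,l}(A\cdot L\hat{\etab}) = L^{-1}G_{k,l}(A\hat{\etab})$, and parameterizing $\partial \mcbr{L}$ by $\etab = L\hat\etab(\theta)$, $dl = L\, d\theta$, the boundary integral becomes
\begin{equation*}
\int_{\partial \mcbr{L}} G_{k,l}(A\etab)\frac{\eta_i}{|\etab|}\,dl(\etab) = \int_0^{2\pi} G_{k,l}(A\hat\etab(\theta))\,\hat\etab_i(\theta)\, d\theta,
\end{equation*}
which is independent of $L$. Since $G_{k,l}(A\hat\etab)$ is even in $\hat\etab$ while $\hat\etab_i$ is odd, the substitution $\theta\mapsto \theta+\pi$ shows this integral equals its own negative, hence is $0$. (This is really the statement that the integrand is the restriction to $\partial \mcbr{L}$ of an odd function of $\etab$.) Passing to the limit $L\to\infty$ gives the first identity. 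The second identity follows by applying the same computation to the annulus $\mcbr{L}\setminus \mcbr{\varepsilon}$: both boundary contributions on $\partial \mcbr{L}$ and $\partial \mcbr{\varepsilon}$ vanish by the identical parity argument, independent of $L$ and $\varepsilon$.

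I expect no serious obstacle beyond being careful with the tensorial interpretation: the identity is read componentwise over $(k,l,i)$, so $\nabla G(A\etab)$ is a $3\times 3\times 2$ object whose contraction against $\bm{n}$ amounts to summing over $i$ in the natural way. The only subtle point is the parity of $G_{k,l}$, which follows from $G(-\bm{x}) = G(\bm{x})$ and hence $G_{k,l}(A(-\hat\etab)) = G_{k,l}(A\hat\etab)$; this requires no nondegeneracy of $A$ beyond $|A\hat\etab|>0$ for all unit $\hat\etab$, which is exactly the condition under which the boundary integrand is well defined on $\partial \mcbr{L}$.
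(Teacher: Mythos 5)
Your proof is correct and follows essentially the same route as the paper: apply the divergence theorem on $\mc{D}^c\cap\mcbr{L}$ and kill the outer boundary term by noting that $G(A\etab)$ is even while the normal $\bm{n}(\etab)=\etab/L$ is odd. The only stylistic difference is that you additionally invoke the degree-$(-1)$ homogeneity to note the outer boundary integral is $L$-independent, which is a nice observation but not needed once the parity cancellation is in hand.
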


\begin{proof}
Since $\mc{D}$ is compact and contains $\bm{0}$, $\mc{D}\subset \mcbr{L}$ when $L$ is large enough. Then, by integration by parts
\begin{align*}
\begin{split}
     \int_{\mc{D}^c\cap \mcbr{L}}\!\!\!\!\!\!\! \nabla G(A\bm{\eta})d\etab    =-\!\int_{\partial\mc{D}} \!\!\!\!G(A\bm{\eta})\bm{n}\paren{\etab} dl\paren{\etab}+\!\int_{\partial\mcbr{L}}\!\!\!\!\!\! G(A\bm{\eta})\bm{n}\paren{\etab} dl\paren{\etab}.
\end{split}
\end{align*}
Since $G(A\bm{\eta})$ is even, the boundary term vanishes.
Therefore,
\begin{align*}
    \int_{\mc{D}^c} \nabla G(A\bm{\eta})d\etab=\lim_{L\rightarrow\infty} \int_{\mc{D}^c\cap \mcbr{L}} \nabla G(A\bm{\eta})d\etab=-\int_{\partial\mc{D}} G(A\bm{\eta})\bm{n}\paren{\etab} dl\paren{\etab}.
\end{align*}
Next, set $\mc{D}= \mcbr{\varepsilon}$,
\begin{align*}
    \int_\mbr \nabla G(A\bm{\eta})d\etab=\lim_{L\rightarrow\infty,\varepsilon\rightarrow 0} \int_{\mcbr{L}\cap \mcbr{\varepsilon}^c} \nabla G(A\bm{\eta})d\etab=0.
\end{align*}

\end{proof}

\begin{lemma}\label{Mtilde_bound}
Let $A$ be a matrix in the set $\mc{DA}_{\sigma_1,\sigma_2}$. Then, the linear operator $\tilde{\mc{M}}(A)$ \eqref{Mtilde} maps $C^\gamma(\mathbb{R}^2)\cap L^2(\mathbb{R}^2)$ to $C^\gamma(\mathbb{R}^2)\cap L^2(\mathbb{R}^2)$ for any any $\gamma\in(0,1)$. Moreover, 
\begin{equation*}
    \begin{aligned}
    \|\tilde{\mc{M}}^j(A)Z\|_{C^\gamma(\mathbb{R}^2)}&\leq \frac{C}{\sigma_2}\Big(1+\Big(\frac{\sigma_1}{\sigma_2}\Big)^2\Big)\|Z\|_{C^\gamma(\mathbb{R}^2)\cap L^2(\mathbb{R}^2)}.
    \end{aligned}
\end{equation*}
And given $A_1, A_2 \in \mc{DA}_{\sigma_1,\sigma_2}$
\begin{align*}
    \|\tilde{\mc{M}}^j(A_1)Z-\tilde{\mc{M}}^j(A_2)Z\|_{C^\gamma(\mathbb{R}^2)}&\leq \frac{C}{\sigma_2^2}\Big(1+\Big(\frac{\sigma_1}{\sigma_2}\Big)^5\Big)\|Z\|_{C^\gamma(\mathbb{R}^2)\cap L^2(\mathbb{R}^2)}\norm{A_1-A_2}.
\end{align*}
\end{lemma}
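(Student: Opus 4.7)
The approach is to view $\tilde{\mc{M}}^j(A)$ as a translation-invariant singular integral operator. After the change of variables $\xb=\thetab-\etab$, its kernel is
\begin{equation*}
K^j_{k,l,i}(\xb;A) := -\frac{\partial}{\partial x_m}G^j_{k,l}(A\xb)\,A_{m,i},
\end{equation*}
homogeneous of degree $-2$ in $\xb$. Using \eqref{qkernel_bounds} together with $|A\xb|\geq\sigma_2|\xb|$ and $\|A\|\leq\sigma_1$, I would first establish the Calder\'on--Zygmund-type pointwise bounds
\begin{equation*}
|K^j(\xb;A)|\leq \frac{C\sigma_1}{\sigma_2^2|\xb|^2},\qquad |\nabla_{\xb}K^j(\xb;A)|\leq \frac{C\sigma_1^2}{\sigma_2^3|\xb|^3}.
\end{equation*}
Since $G^j$ is even, the symmetry $\xb\mapsto-\xb$ forces $\int_{\partial \mcbr{r}}G^j(A\xb)\,\bm n\,dl=0$ for every $r$, so Lemma \ref{Div_thm} yields $\int_{\varepsilon<|\xb|<R}K^j(\xb;A)\,d\xb=0$ for all $0<\varepsilon<R$, and in particular $\mathrm{p.v.}\int_{\mbr}K^j(\xb;A)\,d\xb=0$. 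This lets me rewrite
\begin{equation*}
(\tilde{\mc{M}}^j(A)Z)_k(\thetab) = \int_{\mbr} K^j_{k,l,i}(\thetab-\etab;A)\bigl(Z_{l,i}(\etab)-Z_{l,i}(\thetab)\bigr)d\etab,
\end{equation*}
which is absolutely convergent near the diagonal.

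For the $L^\infty$ bound I would split at scale $|\xb|=1$: on $|\xb|<1$, H\"older continuity of $Z$ paired with $|\xb|^{-2}$ gives a contribution $\lesssim (\sigma_1/\sigma_2^2)\jump{Z}_{C^\gamma}$, while on $|\xb|>1$ Cauchy--Schwarz with $\|K^j\|_{L^2(\{|\xb|>1\})}\leq C\sigma_1/\sigma_2^2$ gives $\lesssim (\sigma_1/\sigma_2^2)\|Z\|_{L^2}$. For the $C^\gamma$ seminorm, fix $\thetab_1,\thetab_2$ with $d=|\thetab_1-\thetab_2|$ and midpoint $\thetab_0$, and split the difference $(\tilde{\mc{M}}^j(A)Z)(\thetab_1)-(\tilde{\mc{M}}^j(A)Z)(\thetab_2)$ as an integral over $B_{\thetab_0,3d}$ plus one over its complement. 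The inner piece is bounded by $C(\sigma_1/\sigma_2^2)d^\gamma \jump{Z}_{C^\gamma}$ by the same rewriting and H\"older estimate used for the $L^\infty$ bound. For the outer piece, I would add and subtract $Z(\thetab_0)$ and apply the mean value bound $|K^j(\thetab_1-\etab;A)-K^j(\thetab_2-\etab;A)|\leq Cd\sigma_1^2/(\sigma_2^3|\thetab_0-\etab|^3)$; pairing against $\jump{Z}_{C^\gamma}|\etab-\thetab_0|^\gamma$ on a medium annulus and against $\|Z\|_{L^2}$ via Cauchy--Schwarz on the far annulus yields $C(\sigma_1^2/\sigma_2^3)d^\gamma(\jump{Z}_{C^\gamma}+\|Z\|_{L^2})$. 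The two contributions combine into a constant of the form $C(\sigma_1/\sigma_2^2+\sigma_1^2/\sigma_2^3)\leq C\sigma_2^{-1}(1+(\sigma_1/\sigma_2)^2)$, as claimed.

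For the difference estimate I would apply the same scheme with the kernel $\tilde K(\xb):= K^j(\xb;A_1)-K^j(\xb;A_2)$, which still satisfies $\int_{\mbr}\tilde K(\xb)d\xb=0$ by linearity. When $|A_1-A_2|$ is small enough that the segment $A_s=sA_1+(1-s)A_2$ remains uniformly in $\mc{DA}_{\sigma_1,c\sigma_2}$, the mean value theorem in $A$ combined with the pointwise bounds $|\nabla^n G^j(A\xb)|\lesssim |A\xb|^{-(n+1)}$ yields
\begin{equation*}
|\tilde K(\xb)|\lesssim \frac{\sigma_1|A_1-A_2|}{\sigma_2^3|\xb|^2},\qquad |\nabla_{\xb}\tilde K(\xb)|\lesssim \frac{\sigma_1^2|A_1-A_2|}{\sigma_2^4|\xb|^3}.
\end{equation*}
Repeating the $L^\infty$ and $C^\gamma$ estimates with these sharpened bounds produces a prefactor of the form $C\sigma_2^{-2}(1+(\sigma_1/\sigma_2)^p)|A_1-A_2|$ with a moderate $p$; in the complementary regime $|A_1-A_2|\gtrsim\sigma_2$, the triangle inequality against the first part of the lemma gives a bound of the same shape after absorbing $1$ into $|A_1-A_2|/\sigma_2$, at the cost of a few additional factors of $\sigma_1/\sigma_2$. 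All such contributions are comfortably dominated by the claimed $C\sigma_2^{-2}(1+(\sigma_1/\sigma_2)^5)|A_1-A_2|$.

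The main obstacle is careful bookkeeping rather than a conceptual difficulty: the estimates must be carried out globally on $\mbr$, so $L^2$ control of $Z$ is genuinely needed for the long-range part of every split, while the anisotropy of the kernel (depending on the direction of $A\xb$, not only on $|\xb|$) prevents a direct appeal to scalar Calder\'on--Zygmund theory and forces one to extract the mean-zero cancellation by hand from Lemma \ref{Div_thm}. Tracking the exact powers of $\sigma_1/\sigma_2$ through each mean-value step, and ensuring that the interpolating segment $sA_1+(1-s)A_2$ stays uniformly non-degenerate so that $|A_s\xb|$ retains the required lower bound, is the tedious part of the argument.
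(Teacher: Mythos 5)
Your proposal follows essentially the same route as the paper: both hinge on the mean-zero cancellation supplied by Lemma~\ref{Div_thm}, a near/far split for the sup-norm (with $L^2$ control of $Z$ at long range), and a mean-value argument for the far part of the H\"older difference. The minor differences in bookkeeping (splitting about the midpoint $\thetab_0$ rather than about the endpoint, and subdividing the far region into medium and distant annuli where the paper simply uses one outer region and the decay $|\thetab-\etab|^{\gamma-3}$) are cosmetic and harmless.

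Two substantive remarks. First, on the difference estimate: you treat the non-convexity of $\domainA$ explicitly (interpolating $A_s=sA_1+(1-s)A_2$ only when $\|A_1-A_2\|$ is small, and falling back on the triangle inequality otherwise). That is a correct and safe way to do it, but the paper sidesteps the issue entirely by using the divided-difference bound in the second line of \eqref{qkernel_bounds}, which controls $|G^j(\xb)-G^j(\yb)|$ purely in terms of $M=\max(|\xb|,|\yb|)$ and $m=\min(|\xb|,|\yb|)$ and so never looks at intermediate points of the segment; applying this with $\xb=A_1(\thetab-\etab)$, $\yb=A_2(\thetab-\etab)$ gives the claimed powers of $\sigma_1/\sigma_2$ without any smallness assumption or case split. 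Second, and more importantly, you have a small gap: the lemma asserts that $\tilde{\mc{M}}^j(A)$ maps $C^\gamma\cap L^2$ into $C^\gamma\cap L^2$, so the $L^2\to L^2$ bound is part of the claim, and your argument only produces the $C^\gamma$ estimate. The paper supplies the $L^2$ bound in one line via Plancherel, noting that the Fourier multiplier $\xi_m\,\mc{F}_{\thetab}\big[G^j_{k,l}(A\,\cdot)\big](\xib)$ is a bounded function of $\xib$ (with constant depending on $\sigma_1,\sigma_2$); you should add this step.
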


\begin{proof}
Taking into account Lemma \ref{Div_thm}, we have
\begin{equation*}
    \begin{aligned}
    (\tilde{\mc{M}}^j(A) Z)_k(\thetab)=-\int_{\mathbb{R}^2}\PD{}{\eta_m}G^j_{k,l}(A(\thetab-\etab))\big(Z_{l,m}(\etab)-\mc{C}_{l,m}\big)d\eta_1d\eta_2,
    \end{aligned}
\end{equation*}
where $C_{l,m}$ is an arbitrary constant, that we will take to be zero or $Z_{l,m}(\thetab)$.
Then, the estimate for $|(\tilde{\mc{M}}^j(A) Z)_k(\thetab)|$ follows by splitting the integral in two terms,
\begin{equation*}
    \begin{aligned}
    (\tilde{\mc{M}}^j(A) Z)_k(\thetab)=I_1(\thetab)+I_2(\thetab),
    \end{aligned}
\end{equation*}
with
\begin{equation*}
    \begin{aligned}
    I_1(\thetab)&=-\int_{|\thetab-\etab|\leq 1}\PD{}{\eta_m}G^j_{k,l}(A(\thetab-\etab))\big(Z_{l,m}(\etab)-Z_{l,m}(\thetab)\big)d\eta_1d\eta_2,\\
    I_2(\thetab)&=-\int_{|\thetab-\etab|\geq 1}\PD{}{\eta_m}G^j_{k,l}(A(\thetab-\etab))Z_{l,m}(\etab)d\eta_1d\eta_2.
    \end{aligned}
\end{equation*}
Since
\begin{align}
    \PD{}{\eta_m}G^j_{k,l}(A(\thetab-\etab))=-\PD{G^j_{k,l}}{x_i}(A(\thetab-\etab)) A_{i,m},\label{DGA_01}
\end{align}
the kernel bounds \eqref{qkernel_bounds} and the fact that $A\in\mc{D}\mc{A}_{\sigma_1,\sigma_2}$ provide that
\begin{equation*}
    \begin{aligned}
    |I_1(\thetab)|&\leq C\frac{\sigma_1}{\sigma_2^2}\jump{Z}_{C^\gamma(\mathbb{R}^2)},\\
    |I_2(\thetab)|&\leq C\frac{\sigma_1}{\sigma_2^2}\|Z\|_{L^2(\mathbb{R}^2)},
    \end{aligned}
\end{equation*}
hence
\begin{equation*}
    \begin{aligned}
    | (\tilde{\mc{M}}^j(A) Z)_k(\thetab)|\leq C\frac{\sigma_1}{\sigma_2^2}\|Z\|_{C^\gamma(\mathbb{R}^2)\cap L^2(\mathbb{R}^2)}.
    \end{aligned}
\end{equation*}
We proceed with the seminorm. Let $\hb\in\mathbb{R}^2$, $|\hb|\leq 1$, and perform the following splitting
\begin{equation*}
    \begin{aligned}
    (\tilde{\mc{M}}^j(A) Z)_k(\thetab)-(\tilde{\mc{M}}^j(A) Z)_k(\thetab+\hb)=J_1+J_2+J_3+J_4,
    \end{aligned}
\end{equation*}
where
\begin{align*}
    J_1&=-\int_{\abs{\thetab-\etab}\leq 2\abs{\hb}} \PD{}{\eta_m}G^j_{k,l}(A(\thetab-\etab))\delta_{\etab}Z_{l,m}(\thetab) d\etab,\\
    J_2&=\int_{\abs{\thetab-\etab}\leq 2\abs{\hb}} \PD{}{\eta_m}G^j_{k,l}(A(\thetab+\hb-\etab))\delta_{\etab}Z_{l,m}(\thetab+\hb) d\etab,\\
    J_3&=\delta_{\thetab}Z_{l,m}(\thetab+\hb)\int_{\abs{\thetab-\etab}> 2\abs{\hb}} \PD{}{\eta_m}G^j_{k,l}(\thetab-\etab)d\etab,\\
    J_4&=\int_{\abs{\thetab-\etab}> 2\abs{\hb}}\!\!\Big(\PD{}{\eta_m}G^j_{k,l}\paren{A(\thetab\!+\!\hb\!-\!\etab)}\!-\! \PD{}{\eta_m}G^j_{k,l}(A(\thetab\!-\!\etab))\Big)\delta_{\etab}Z_{l,m}(\thetab\!+\!\hb)d\etab.
\end{align*}
Absolutely, 
$$\abs{J_1}+\abs{J_2}\leq C\frac{\sigma_1}{\sigma_2^2}\jump{Z}_{C^{\gamma}\paren{\mbr}}\abs{\hb}^\gamma.$$
Then, by Lemma \ref{Div_thm},
\begin{align*}
    J_3&=(Z_{l,m}(\thetab+\hb)-Z_{l,m}(\thetab))\int_{\abs{\thetab-\etab}= 2\abs{\hb}} G^j_{k,l}\paren{A(\thetab-\etab)}n_m(\etab)dl(\etab),
\end{align*}
and so
\begin{align*}
    \abs{J_3}&\leq C\jump{Z}_{C^{\gamma}\paren{\mbr}}\abs{\hb}^\gamma\frac{1}{\sigma_2}\int_{\abs{\thetab-\etab}= 2\abs{\hb}} \frac{1}{\abs{\thetab-\etab}}d l\paren{\etab}\\
    &\leq C\frac{1}{\sigma_2}\jump{Z}_{C^{\gamma}\paren{\mbr}}\abs{\hb}^\gamma.
\end{align*}
Finally, since
\begin{align}
    \PD{}{\theta_p}\PD{}{\eta_m}G^j_{k,l}(A(\thetab-\etab))=-\PD{}{x_q}\PD{}{x_i}G^j_{k,l}(A(\thetab-\etab)) A_{i,m}A_{q,p},\label{DGA_02}
\end{align}
it follows that
\begin{align*}
\begin{split}
        \abs{J_4}
    &=\Big|\int_{\abs{\thetab-\etab}> 2\abs{\hb}}\int_0^1 h_p\PD{}{\theta_p}\PD{}{\eta_m}G^j_{k,l}\paren{A(\thetab+s\hb-\etab)}\delta_{\etab}(Z_{l,m}(\thetab+\hb)dsd\etab\Big|\\
    &\leq C\frac{\sigma_1^2}{\sigma_2^3}\jump{Z}_{C^{\gamma}\paren{\mbr}}\abs{\hb}\int_{\abs{\thetab-\etab}> 2\abs{\hb}}\int_0^1 \frac{\abs{\thetab+\hb-\etab}^\gamma}{\abs{\thetab+s\hb-\etab}^{3}} dsd\etab.
\end{split}
\end{align*}
In the domain where $\abs{\thetab-\etab}> 2\abs{\hb}$, it holds that for $s\in [0,1]$,
\begin{align*}
    \frac{1}{2}\abs{\thetab+\hb-\etab}\leq\abs{\thetab+s\hb-\etab}\leq \frac{3}{2}\abs{\thetab+\hb-\etab}.
\end{align*}
Hence,
\begin{align*}
    \abs{J_4}\leq C\frac{\sigma_1^2}{\sigma_2^3}\jump{Z}_{C^{\gamma}\paren{\mbr}}\abs{\hb}^\gamma.
\end{align*}
Therefore, we obtain 
\begin{align*}
    \norm{\tilde{\mc{M}}^j(A) Z}_{C^\gamma\paren{\mathbb{R}}}& \leq\frac{C}{\sigma_2}\Big(1+\frac{\sigma_1^2}{\sigma_2^2}\Big)\|Z\|_{C^\gamma(\mathbb{R}^2)\cap L^2(\mathbb{R}^2)}.
\end{align*}
For the $L^2$ norm, since $\xi_m\mc{F}_\thetab \left[G^j_{k,l}\paren{A\thetab}\right]\paren{\xib}$ is bounded by $\sigma_1$ and $\sigma_2$, we have
\begin{align}
\begin{split}
        \norm{(\tilde{\mc{M}}^j(A) Z)_k}_{L^2\paren{\mathbb{R}}}
    &=\norm{\xi_m\mc{F}_\thetab \left[G^j_{k,l}\paren{A\thetab}\right]\paren{\xib} \mc{F}[Z_{l,m}]\paren{\xib}}_{L^2\paren{\mathbb{R}}}\\
    &\leq C\paren{\sigma_1,\sigma_2}\norm{ \mc{F}[Z]}_{L^2\paren{\mathbb{R}}}
    =C\paren{\sigma_1,\sigma_2}\norm{ Z}_{L^2\paren{\mathbb{R}}}
\end{split}
\end{align}
Next, through \eqref{qkernel_bounds}, \eqref{DGA_01}, and \eqref{DGA_02}, we have
\begin{align*}
\begin{split}
        \abs{G^j_{k,l}(A_1(\thetab-\etab))\!-\!G^j_{k,l}(A_2(\thetab-\etab))}
        &\leq C\frac{\sigma_1 \abs{\paren{A_1\!-\!A_2}(\thetab-\etab)}}{\sigma_2^3 \abs{\thetab-\etab}^3}\\
        &\leq C\frac{\sigma_1 }{\sigma_2^3 }\frac{\norm{\paren{A_1\!-\!A_2}}}{\abs{\thetab-\etab}},
\end{split}
\end{align*}
\begin{align*}
\begin{split}
        \Big|\PD{}{\eta_m}G^j_{k,l}(A_1(\thetab\!-\!\etab))&\!-\!\PD{}{\eta_m}G^j_{k,l}(A_2(\thetab\!-\!\etab))\Big|\\
    &\leq\Big|\PD{G^j_{k,l}}{x_i}(A_1(\thetab\!-\!\etab))\!-\!\PD{G^j_{k,l}}{x_i}(A_2(\thetab\!-\!\etab))\Big| |A_{1,i,m}|\\
        &\quad+\Big|\PD{G^j_{k,l}}{x_i}(A_2(\thetab\!-\!\etab))\paren{ A_{1,i,m}\!-\!A_{2,i,m}}\Big|\\
    &\leq C\Big(\frac{1}{\sigma_2^2}\!+\!\frac{\sigma_1^3}{\sigma_2^5 }\Big)\frac{\norm{\paren{A_1-A_2}}}{\abs{\thetab-\etab}^2},
\end{split}
\end{align*}
and
\begin{align*}
\begin{split}
      \Big|\PD{}{\theta_p}&\PD{}{\eta_m}G^j_{k,l}(A_1(\thetab\!-\!\etab))\!-\!\PD{}{\theta_p}\PD{}{\eta_m}G^j_{k,l}(A_2(\thetab\!-\!\etab))\Big|\\
    &\leq\Big|\PD{}{x_q}\PD{}{x_i}G^j_{k,l}(A_1(\thetab\!-\!\etab))\!-\!\PD{}{x_q}\PD{}{x_i}G^j_{k,l}(A_2(\thetab\!-\!\etab))\Big||A_{1,i,m}A_{1,q,p}|\\
        &\quad+\Big|\PD{}{x_q}\PD{}{x_i}G^j_{k,l}(A_2(\thetab\!-\!\etab))\paren{ A_{1,i,m}A_{1,q,p}\!-\!A_{2,i,m}A_{2,q,p}}\Big|\\
    &\leq C\Big(\frac{\sigma_1}{\sigma_2^3}\!+\!\frac{\sigma_1^5 }{\sigma_2^7 }\Big)\frac{\norm{\paren{A_1\!-\!A_2}}}{\abs{\thetab-\etab}^3}.
\end{split}
\end{align*}
Hence, we obtain
\begin{align*}
    \|\tilde{\mc{M}}^j(A_1)Z\!-\!\tilde{\mc{M}}^j(A_2)Z\|_{C^\gamma(\mathbb{R}^2)}&\leq \frac{C}{\sigma_2^2}\Big(1\!+\!\Big(\frac{\sigma_1}{\sigma_2}\Big)^5\Big)\|Z\|_{C^\gamma(\mathbb{R}^2)\cap L^2(\mathbb{R}^2)}\norm{A_1\!-\!A_2}.
\end{align*}

\end{proof}
As an immediate consequence of the previous lemma with
$\tilde{Z}_{l,m}=\PD{\wh{X}_i}{\eta_m}(\etab)Z_{l,i}(\etab),$
we obtain the following lemma for $\mc{M}(A)$:

\begin{lemma}\label{MA_bound}
Let $A$ be a matrix in the set $\mc{DA}_{\sigma_1,\sigma_2}$. Then, the linear operators $\mc{M}^j(A)$ \eqref{defM} map $C^\gamma(\mathbb{S}^2)$ to $C^\gamma(\mathbb{R}^2)$ for any any $\gamma\in(0,1).$ Moreover, 
\begin{equation*}
    \begin{aligned}
    \|\mc{M}^j(A)Z\|_{C^\gamma(\mathbb{R}^2)}&\leq \frac{C}{\sigma_2}\Big(1+\Big(\frac{\sigma_1}{\sigma_2}\Big)^2\Big)\sup_{l,m}\|\PD{\hX}{\eta_m}(\etab)\cdot \rowofmat{Z}{l}(\etab)\|_{C^\gamma(\mathbb{R}^2)\cap L^1(\mathbb{R}^2)}\\
    &\leq C(\sigma_1,\sigma_2)\|Z\|_{C^\gamma(\mathbb{S}^2)}.
    \end{aligned}
\end{equation*}
\end{lemma}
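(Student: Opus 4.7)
The plan is to view $\mc{M}^j(A)$ as $\tilde{\mc{M}}^j(A)$ applied to a transformed argument and then invoke Lemma \ref{Mtilde_bound} directly, exactly as the paper's phrase ``immediate consequence'' suggests. The key algebraic observation is that when $\nabla\bm{X}(\etab)$ in the definition \eqref{m_kernels} of the kernel is frozen at a constant matrix $A$, the chain rule gives
\begin{equation*}
m^j_{m,k,l}(\thetab,\etab) \;=\; -\PD{G^j_{k,l}}{x_j}\paren{A(\thetab-\etab)}\,A_{j,m} \;=\; \PD{}{\eta_m}G^j_{k,l}\paren{A(\thetab-\etab)},
\end{equation*}
which is precisely the kernel appearing in $\tilde{\mc{M}}^j(A)$ in \eqref{Mtilde}. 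Defining
\begin{equation*}
\tilde{Z}_{l,m}(\etab):=\PD{\wh{X}_i}{\eta_m}(\etab)\,Z_{l,i}(\etab),
\end{equation*}
a direct comparison of \eqref{defM} and \eqref{Mtilde} yields the identification $\mc{M}^j(A)Z=\tilde{\mc{M}}^j(A)\tilde{Z}$, which is the hint carried by the substitution suggested just before the lemma statement.

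With this reduction in hand, Lemma \ref{Mtilde_bound} applied to $\tilde Z$ immediately gives
\begin{equation*}
\|\mc{M}^j(A)Z\|_{C^\gamma(\mathbb{R}^2)} \;\leq\; \frac{C}{\sigma_2}\Big(1+(\sigma_1/\sigma_2)^2\Big)\sup_{l,m}\|\tilde{Z}_{l,m}\|_{C^\gamma(\mathbb{R}^2)\cap L^2(\mathbb{R}^2)},
\end{equation*}
which is the first inequality of the statement (up to reconciling the integrability exponent with the one in the lemma). To pass to the second inequality, I would then use that $\wh{\bm{X}}$ is the standard stereographic chart, so that $\abs{\PD{\wh{\bm{X}}}{\eta_m}(\etab)}=2/(1+|\etab|^2)$. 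This immediately places $\PD{\wh{\bm{X}}}{\eta_m}$ in $L^2(\mathbb{R}^2)\cap C^\gamma(\mathbb{R}^2)$ with a universal constant, while pulling $Z\in C^\gamma(\mathbb{S}^2)$ through the stereographic projection gives $\|Z\|_{C^\gamma(\mathbb{R}^2)}\leq 2^\gamma\|Z\|_{C^\gamma(\mathbb{S}^2)}$ by the inequality already established in Section \ref{sec:Notation}. Routine H\"older product estimates then yield $\|\tilde{Z}\|_{C^\gamma\cap L^2(\mathbb{R}^2)}\leq C\|Z\|_{C^\gamma(\mathbb{S}^2)}$, and combining with the previous display closes the proof.

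There is essentially no serious obstacle: the only conceptual step is the identification $\mc{M}^j(A)Z=\tilde{\mc{M}}^j(A)\tilde{Z}$ via the chain rule, after which everything reduces to Lemma \ref{Mtilde_bound} and the explicit decay of $\PD{\wh{\bm{X}}}{\eta_m}$ coming from isothermality of the stereographic chart. The one spot where a little care is needed is to ensure that the decay $1/(1+|\etab|^2)$ of $\PD{\wh{\bm{X}}}{\eta_m}$ is enough to absorb the mere boundedness of the pullback of $Z$; this is why $L^2$ rather than $L^1$ integrability of $\tilde Z$ is the natural quantity matching Lemma \ref{Mtilde_bound}.
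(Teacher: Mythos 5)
Your proof is correct and is exactly the argument the paper intends: you identify $\mc{M}^j(A)Z=\tilde{\mc{M}}^j(A)\tilde{Z}$ with $\tilde{Z}_{l,m}=\PD{\wh{X}_i}{\eta_m}Z_{l,i}$ (the substitution the paper states explicitly), then invoke Lemma~\ref{Mtilde_bound} and use the explicit decay $|\PD{\wh{\bm{X}}}{\eta_m}(\etab)|=2/(1+|\etab|^2)$ from isothermality of the stereographic chart to control $\|\tilde{Z}\|_{C^\gamma(\mathbb{R}^2)\cap L^2(\mathbb{R}^2)}$ by $\|Z\|_{C^\gamma(\mathbb{S}^2)}$. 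Your instinct to work with $L^2$ rather than the $L^1$ appearing in the lemma's displayed inequality is also right: $2/(1+|\etab|^2)$ lies in $L^2(\mathbb{R}^2)$ but \emph{not} in $L^1(\mathbb{R}^2)$, so the statement's $L^1$ appears to be a slip for the $L^2$ of Lemma~\ref{Mtilde_bound}, and the $L^2$ version is what actually makes the second inequality hold with $C(\sigma_1,\sigma_2)\|Z\|_{C^\gamma(\mathbb{S}^2)}$ on the right.
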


\begin{remark} In most cases, Proposition \ref{MA_bound} will be used with $Z$ compactly supported and given by a multiple of a gradient. Notice that in that case, for $Z=\lambda\nabla_{\mathbb{S}^2}\bm{X}$, $\lambda:\mathbb{R}^2\mapsto\mathbb{R}$, 
\begin{equation*}
    \begin{aligned}
    \PD{\hX}{\eta_m}(\etab)\cdot \rowofmat{Z}{l}(\etab)=\lambda(\etab)\PD{X_l}{\eta_m}(\etab),
    \end{aligned}
\end{equation*}
and therefore
\begin{equation*}
    \begin{aligned}
    \|\mc{M}^j(A)(\lambda\nabla_{\mathbb{S}^2}\bm{X})\|_{C^\gamma(\mathbb{R}^2)}&\leq C(\sigma_1,\sigma_2)\|\lambda\nabla\bm{X}\|_{C^\gamma(\mathbb{R}^2)}.
    \end{aligned}
\end{equation*}
\end{remark}

\begin{lemma}\label{Nbound_lem} Let $\bm{X}\in C^1(\mathbb{S}^2)$ such that $|\bm{X}|_*>0$. Then, the linear operators $\mc{N}^j(\bm{X})$ \eqref{Ndef} map $C^{\gamma}(\mathbb{S}^2)$ to $C^\gamma(\mathbb{S}^2)$ for any $\gamma\in(0,1)$. Moreover, 
\begin{equation}\label{Nbound}
    \begin{aligned}
    \|\mc{N}^j(\bm{X})Z\|_{C^\gamma(\mathbb{S}^2)}&\leq \frac{C}{|\bm{X}|_*}\Big(1+\Big(\frac{\|\nabla_{\mathbb{S}^2}\bm{X}\|_{C^0(\mathbb{S}^2)}}{|\bm{X}_*|}\Big)^2\Big)\|Z\|_{C^\gamma(\mathbb{S}^2)}.
    \end{aligned}
\end{equation}
\end{lemma}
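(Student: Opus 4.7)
The plan is to mimic the strategy used for Lemma \ref{Mtilde_bound} above, adapted to the fact that the kernel $q^j_{k,l}(\hx,\hy) = \nabla_{\mathbb{S}^2,\hy}G^j_{k,l}(\bm{X}(\hx)-\bm{X}(\hy))$ depends on $\bm{X}$ both at $\hx$ and $\hy$ and is therefore not translation invariant. The two main analytic inputs are the kernel bound $|q^j_{k,l}(\hx,\hy)| \leq C \|\nabla_{\mathbb{S}^2}\bm{X}\|_{C^0}|\bm{X}|_*^{-2}|\hx-\hy|^{-2}$ coming from \eqref{qkernel_bounds}, and its $\hx$-derivative, which after applying the chain rule inherits the second-derivative estimate $|\partial^2_{\bm{x}} G^j(\bm{x})| \leq C|\bm{x}|^{-3}$ and gives a bound of the form $|\partial_\thetab q^j_{k,l}(\thetab,\etab)| \leq C\|\nabla\bm{X}\|_{C^0}^2 |\bm{X}|_*^{-3}|\thetab-\etab|^{-3}$. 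Using the partition of unity $\{\rho_n\}$ and the stereographic charts of Section \ref{covering_ste}, it suffices by Proposition \ref{prop: equi_norm} to bound each localized piece of $(\mc{N}^j(\bm{X})Z)_k$ in $C^\gamma(\mathbb{R}^2)$ in a chart centered at the base point.

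For the pointwise bound, fix $\thetab$ and write the operator in coordinates as in \eqref{Ntheta}. Subtracting the constant $\PD{\wh X_i}{\eta_m}(\thetab)Z_{l,i}(\thetab)$ from the factor multiplying $q^j_{m,k,l}$ and splitting at $|\thetab-\etab|=1$, the near piece is bounded by $C\|\nabla\bm{X}\|_{C^0}|\bm{X}|_*^{-2}[Z]_{C^\gamma}$ using $|\thetab-\etab|^{\gamma-2}$ integrability, while the far piece is bounded by $C\|\nabla\bm{X}\|_{C^0}|\bm{X}|_*^{-2}\|Z\|_{C^0}$. The subtracted constant multiplies the principal value of $\PD{}{\eta_m}G^j_{k,l}(\bm{X}(\thetab)-\bm{X}(\etab))$; applying Lemma \ref{Div_thm} on the annulus $\varepsilon < |\thetab-\etab|< L$, letting $L\to\infty$ (the outer boundary vanishes because $G^j$ decays like $1/|\bm{x}|$) and $\varepsilon\to 0$ (the inner circle contribution $\int_{S^1} G^j_{k,l}(-\varepsilon\nabla\bm{X}(\thetab)\bm{\omega}+O(\varepsilon^2))(-\omega_m)\varepsilon\, d\bm{\omega}$ converges to a finite integral bounded by $C|\bm{X}|_*^{-1}$ thanks to $|\nabla\bm{X}(\thetab)\bm{\omega}| \geq |\bm{X}|_*$) yields a contribution bounded by $C|\bm{X}|_*^{-1}\|Z\|_{C^0}$.

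For the H\"older seminorm, given $\hb$ with $|\hb|\leq 1$, we follow exactly the four-piece decomposition $J_1+J_2+J_3+J_4$ from the proof of Lemma \ref{Mtilde_bound}. The pieces $J_1, J_2$ on $|\thetab-\etab|\leq 2|\hb|$ use the pointwise kernel bound together with the subtraction $Z(\etab)-Z(\thetab)$ (resp.\ $Z(\etab)-Z(\thetab+\hb)$) and yield $C\|\nabla\bm{X}\|_{C^0}|\bm{X}|_*^{-2}[Z]_{C^\gamma}|\hb|^\gamma$. The boundary correction $J_3$ produced by swapping the two regions of integration is again handled by Lemma \ref{Div_thm} applied to $\PD{}{\eta_m}G^j_{k,l}(\bm{X}(\thetab)-\bm{X}(\etab))$, giving $C|\bm{X}|_*^{-1}[Z]_{C^\gamma}|\hb|^\gamma$ (the circular integral of length $\sim |\hb|$ of a kernel of size $\sim 1/(|\bm{X}|_*|\hb|)$). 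The remaining piece $J_4$ uses the mean value theorem together with the second-derivative kernel bound to produce $C\|\nabla\bm{X}\|_{C^0}^2|\bm{X}|_*^{-3}[Z]_{C^\gamma}|\hb|^\gamma$, in which the integrability of $|\thetab-\etab|^{\gamma-3}$ on $\{|\thetab-\etab|>2|\hb|\}$ is what supplies the $|\hb|^\gamma$ factor. Summing all contributions assembles to the claimed bound $C|\bm{X}|_*^{-1}(1+(\|\nabla\bm{X}\|_{C^0}/|\bm{X}|_*)^2)\|Z\|_{C^\gamma}$. The main technical obstacle is the principal-value computation for the constant-subtraction piece: unlike the translation-invariant kernel in Lemma \ref{Mtilde_bound}, here $G^j_{k,l}(\bm{X}(\thetab)-\bm{X}(\etab))$ is only odd-symmetric to leading order in $\etab-\thetab$, so one must verify that the inner-boundary contribution remains uniformly bounded by $|\bm{X}|_*^{-1}$ as $\varepsilon\to 0$ and does not contaminate the estimate with higher powers of $\|\nabla\bm{X}\|_{C^0}$.
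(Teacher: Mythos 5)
Your decomposition (kernel bounds, four-piece splitting $J_1+J_2+J_3+J_4$, divergence theorem for the boundary piece, mean value theorem for the tail) is the right skeleton and matches the paper's. However, there is a real gap in how you handle the subtracted-constant contribution and the $J_3$ boundary piece: you work in a stereographic chart and invoke Lemma \ref{Div_thm} on the annulus $\varepsilon < |\thetab-\etab| < L$, claiming the outer boundary vanishes as $L\to\infty$ ``because $G^j$ decays like $1/|\bm{x}|$.'' This is false in the chart variable. Lemma \ref{Div_thm} is proved only for the translation-invariant kernel $G(A\etab)$, where the outer boundary vanishes by exact oddness of the integrand on the centered circle. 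For the actual kernel $G^j\paren{\bm{X}_n(\thetab)-\bm{X}_n(\etab)}$, the stereographic map sends $|\etab|\to\infty$ to the antipode of $\hx_n$, so $|\bm{X}_n(\thetab)-\bm{X}_n(\etab)|$ tends to a strictly positive constant, $G^j$ does \emph{not} decay, and the boundary integral over $\{|\thetab-\etab|=L\}$ grows like $L$. So your $L\to\infty$ limit does not exist, and neither the pointwise estimate for the subtracted-constant piece nor your estimate of $J_3$ is actually justified.

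The paper avoids this entirely by never leaving $\mathbb{S}^2$. In the pointwise bound, one subtracts $\mc{C}=Z(\hx)$ and integrates $|\hx-\hy|^{\gamma-2}$ over the \emph{compact} sphere; no near/far split and no principal-value computation is needed. For the H\"older piece $I^3$, one applies the \emph{surface} divergence theorem on the region $\{|\hx-\hy|\geq 2h\}\cap\mathbb{S}^2$, whose only boundary is the small circle $|\hx-\hy|=2h$ near the singularity; there is no outer boundary at all because $\mathbb{S}^2$ is closed. That yields
\begin{equation*}
I^3 = \paren{\rowofmat{Z}{l}(\hx)-\rowofmat{Z}{l}(\hxh)}\cdot\int_{\{|\hx-\hy|=2h\}\cap\mathbb{S}^2}G^j(\bm{X}(\hx)-\bm{X}(\hy))\bm{n}(\hy)\,dl(\hy),
\end{equation*}
and since $|G^j|\lesssim(|\bm{X}|_* h)^{-1}$ on a circle of length $\sim h$, the estimate $|I^3|\lesssim |\bm{X}|_*^{-1}\norm{Z}_{C^\gamma}h^\gamma$ follows. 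If you prefer to stay in coordinates, you would have to restrict the domain of the divergence theorem to a \emph{bounded} chart region (e.g.\ with the cutoff $\wh{\rho}_n$, as the paper does in the later localized lemmas) and then account for the cutoff boundary/error separately; the unbounded-annulus computation as you wrote it cannot be repaired simply by citing Lemma \ref{Div_thm}.
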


\begin{proof}
We first notice that we can introduce an arbitrary constant (matrix),
\begin{equation}\label{N_C}
\begin{aligned}
\mc{N}(\bm{X})Z(\hx)&=-\int_{\mathbb{S}^2} \nabla_{\mathbb{S}^2}G(\bm{X}(\hx)-\bm{X}(\hy))\cdot(Z(\hy)-\mc{C})d\hy.
\end{aligned}
\end{equation}
We will usually take $\mc{C}=0$ or $\mc{C}=Z(\hx)$. Recalling the kernels \eqref{q_kernels} and \eqref{Ndef_q}, we write the equation for each component
\begin{equation*}
\begin{aligned}
(\mc{N}^j(\bm{X})Z)_k(\hx)&=-\int_{\mathbb{S}^2} q_{k,l}^j(\hx,\hy)\cdot(\rowofmat{Z}{l}(\hy)-\rowofmat{\mc{C}}{l})d\hy,
\end{aligned}
\end{equation*}
where $\rowofmat{\mc{C}}{l}=\bm{0}$ or $\rowofmat{\mc{C}}{l}=\rowofmat{Z}{l}(\hx)$. 
We first perform the estimate for $|\mc{N}(\bm{X})Z(\hx)|$, $\hx\in\mathbb{S}^2$,
\begin{equation}\label{NC0bound}
\begin{aligned}
    |\mc{N}(\bm{X})Z(\hx)|&\leq \sum_{j=1}^2|\mc{N}^j(\bm{X})Z(\hx)|.
\end{aligned}
\end{equation}
Using the bound \eqref{qkernel_bounds} for the kernel, we have
\begin{equation}\label{NLinf_bound}
    \begin{aligned}
    |\mc{N}(\bm{X})Z(\hx)|&\leq C\frac{\|\nabla_{\mathbb{S}^2}\bm{X}\|_{C^0(\mathbb{S}^2)}}{|\bm{X}|_*^2}\int_{\mathbb{S}^2}\frac{|\rowofmat{Z}{l}(\hx)-\rowofmat{Z}{l}(\hy)|}{|\hx-\hy|^2}d\hy\\
    &\leq C\frac{\|\nabla_{\mathbb{S}^2}\bm{X}\|_{C^0(\mathbb{S}^2)}}{|\bm{X}|_*^2}\|Z\|_{C^\gamma(\mathbb{S}^2)}.
    \end{aligned}
\end{equation}
We proceed to estimate the H\"older seminorm. Let $\hx, \hxh\in\mathbb{S}^2$, and denote $h=|\hx-\hxh|$. We write
\begin{equation*}
\begin{aligned}
 (\mc{N}^j(\bm{X})Z)_k(\hx)\!-\!(\mc{N}^j(\bm{X})Z)_k(\hxh)&=\int_{\mathbb{S}^2}\! q_{k,l}^j(\hx, \hy)\cdot(\rowofmat{Z}{l}(\hx)\!-\!\rowofmat{Z}{l}(\hy))d\hy\\
 &\quad-\!\int_{\mathbb{S}^2} \!\!q_{k,l}^j(\hxh, \hy)\!\cdot\!(\rowofmat{Z}{l}(\hxh)\!-\!\rowofmat{Z}{l}(\hy))d\hy,
\end{aligned}
\end{equation*}
and perform the following splitting
\begin{equation}\label{Nholder_split}
    \begin{aligned}
    (\mc{N}^j(\bm{X})Z)_k(\hx)-(\mc{N}^j(\bm{X})Z)_k(\hxh)=I^1+I^2+I^3+I^4,
    \end{aligned}
\end{equation}
where
\begin{equation*}
    I^1=\int_{\{|\hx-\hy|\leq 2h\}\cap \mathbb{S}^2} q_{k,l}^j(\hx, \hy)\cdot(\rowofmat{Z}{l}(\hx)-\rowofmat{Z}{l}(\hy))d\hy,
\end{equation*}
\begin{equation*}
I^2=-\int_{\{\hx-\hy|\leq 2h\}\cap \mathbb{S}^2} q_{k,l}^j(\hxh, \hy)\cdot(\rowofmat{Z}{l}(\hxh)-\rowofmat{Z}{l}(\hy))d\hy,
\end{equation*}
\begin{equation*}
    I^3=(\rowofmat{Z}{l}(\hx)-\rowofmat{Z}{l}(\hxh))\cdot\int_{\{|\hx-\hy|\geq 2h\}\cap \mathbb{S}^2} q_{k,l}^j(\hx, \hy)d\hy,
\end{equation*}
\begin{equation*}
\begin{aligned}
    I^4=\int_{\{|\hx-\hy|\geq 2h\}\cap \mathbb{S}^2} &(q_{k,l}^j(\hx, \hy)-q_{k,l}^j(\hxh, \hy))\cdot(\rowofmat{Z}{l}(\hxh)-\rowofmat{Z}{l}(\hy))d\hy.
\end{aligned}
\end{equation*}
The first two terms are estimated directly
\begin{equation}\label{I1I2bound}
    \begin{aligned}
    |I^1|+|I^2|&\leq C\frac{\|\nabla_{\mathbb{S}^2}\bm{X}\|_{C^0(\mathbb{S}^2)}}{|\bm{X}|_*^2}\|Z\|_{C^\gamma(\mathbb{S}^2)}h^\gamma.
    \end{aligned}
\end{equation}
For the third, we use that the kernel is a derivative to integrate by parts and obtain that
\begin{equation}\label{I3bound}
    \begin{aligned}
    |I^3|&=|(\rowofmat{Z}{l}(\hx)-\rowofmat{Z}{l}(\hxh))\cdot\int_{\{|\hx-\hy|=2h\}\cap \mathbb{S}^2}G^j(\bm{X}(\hx)-\bm{X}(\hy))\bm{n}(\hy) dl_{\mathbb{S}^2}(\hy)|\\
    &\leq C\frac{\|Z\|_{C^\gamma(\mathbb{S}^2)}}{|\bm{X}|_*}|h|^\gamma.
    \end{aligned}
\end{equation}
Finally, we use the mean-value theorem on the kernel to estimate $I^4$. Set $\ell(s)$ the shortest path function from $\hxh$ to $\hx$ respect to arc-length $s$ variable, and $L=\text{dist}\paren{\hxh,\hx; \mbs}$.
Then, we have
\begin{equation*}
    \begin{aligned}
        G^j_{k,l}(\bm{X}(\hx)\!-\!\bm{X}(\bm{\hy}))- G^j_{k,l}&(\bm{X}(\hxh)\!-\!\bm{X}(\bm{\hy}))=\int_0^L \PD{}{s}G^j_{k,l}(\bm{X}(\ell\paren{s})\!-\!\bm{X}(\bm{\hy}))ds\\
    &=\int_0^L \nabla_\mbs G^j_{k,l}(\bm{X}(\ell\paren{s})\!-\!\bm{X}(\bm{\hy}))\cdot \PD{}{s}\ell\paren{s} ds.        
    \end{aligned}
\end{equation*}
Hence, for $q^j_{k,l}$ \eqref{q_kernels},
\begin{equation*}
\begin{aligned}
        |&q^j_{k,l}(\hx,\hy)-q^j_{k,l}(\hxh,\hy)|=   \Big|\nabla_\mbs\int_0^L \nabla_\mbs G_{k,l}(\bm{X}(\ell\paren{s})\!-\!\bm{X}(\bm{\hy}))\cdot \PD{}{s}\ell\paren{s} ds\Big|\\
    =   &\Big|\nabla_\mbs\int_0^L \PD{}{x_i} G_{k,l}(\bm{X}(\ell\paren{s})\!-\!\bm{X}(\bm{\hy}))\paren{\nabla_\mbs X_i\paren{\ell\paren{s}}\cdot \PD{}{s}\ell\paren{s}} ds\Big|\\
    =   &\Big|\int_0^L \!\!\!\PD{}{x_j} \PD{}{x_i} G_{k,l}(\bm{X}(\ell\paren{s})\!-\!\bm{X}(\bm{\hy}))\paren{\nabla_\mbs X_i\paren{\ell\paren{s}}\!\cdot\! \PD{}{s}\ell\paren{s}\!} \nabla_\mbs X_j\paren{\hy}ds\Big|,
    \end{aligned}
\end{equation*}
and recalling \eqref{qkernel_bounds} we obtain the bound
\begin{equation*}
        \begin{aligned}
     |q^j_{k,l}(\hx,\hy)-q^j_{k,l}(\hxh,\hy)| \leq\minspace & C\frac{\norm{ \nabla_\mbs \bm{X}}_{C^0\paren{\mbs}}^2}{\starnorm{\bm{X}}^3}\int_0^L \frac{1}{\abs{\ell\paren{s}-\hy}^3}ds.        
        \end{aligned}
    \end{equation*}
Then, we have that
\begin{equation*}
    \begin{aligned}
        |I_4|&\leq C\frac{\|\nabla_{\mathbb{S}^2}\bm{X}\|_{C^0(\mathbb{S}^2)}^2\|Z\|_{C^\gamma(\mathbb{S}^2)}}{\starnorm{\bm{X}}^3}\int_{\{\abs{\hx-\hy}\geq 2h\}\cap\mbs}\!\!\!\!\!\!\!\!\!\!\!\!\!\!\!\!|\hxh\!-\!\hy|^\gamma\int_0^L\!\! \frac{ds}{\abs{\ell\paren{s}\!-\!\hy}^3}d\hy.    
    \end{aligned}
\end{equation*}
We notice that since $\ell(s)$ is the shortest path function from $\hxh$ to $\hx$ on $\mbs$, it holds that $\abs{\ell\paren{s}-\hx}\leq \abs{\hx-\hxh}$.
Thus,
\begin{align*}
\begin{split}
    \abs{\ell\paren{s}-\hy}
    \geq\abs{\hx-\hy} -\abs{\ell\paren{s}-\hx}
    \geq\abs{\hx-\hy} -\abs{\hxh-\hx}
    \geq\frac{1}{2}\abs{\hx-\hy}.
\end{split}
\end{align*}
In addition, 
\begin{align*}
\begin{split}
    \abs{\hxh-\hy}\leq \abs{\hx-\hy}+\abs{\hxh-\hx}\leq\frac{3}{2}\abs{\hx-\hy}.
\end{split}
\end{align*}
Finally, since $L\leq C h$, we conclude that
\begin{equation}\label{I4bound}
    \begin{aligned}
        |I_4|&\leq C\frac{\|\nabla_{\mathbb{S}^2}\bm{X}\|_{C^0(\mathbb{S}^2)}^2\|Z\|_{C^\gamma(\mathbb{S}^2)}}{\starnorm{\bm{X}}^3}h\int_{\{\hx-\hy|\geq2h\}\cap \mathbb{S}^2}\frac{d\hy}{|\hx-\hy|^{3-\gamma}}\\
        &\leq C\frac{\|\nabla_{\mathbb{S}^2}\bm{X}\|_{C^0(\mathbb{S}^2)}^2\|Z\|_{C^\gamma(\mathbb{S}^2)}}{\starnorm{\bm{X}}^3}h^\gamma.
    \end{aligned}
\end{equation}
Joining the bounds \eqref{I1I2bound}, \eqref{I3bound}, and \eqref{I4bound} back in \eqref{Nholder_split}, we conclude that
\begin{equation}\label{NHolderbound}
    \begin{aligned}
    [\mc{N}(\bm{X})Z]_{C^\gamma(\mathbb{S}^2)}&\leq \frac{C}{|\bm{X}|_*}\Big(1+\Big(\frac{\|\nabla_{\mathbb{S}^2}\bm{X}\|_{C^0(\mathbb{S}^2)}}{|\bm{X}_*|}\Big)^2\Big)\|Z\|_{C^\gamma(\mathbb{S}^2)},
    \end{aligned}
\end{equation}
and, with \eqref{NC0bound}, the same bound holds for the H\"older norm.

\end{proof}

We will need to localize the operators above. For that purpose, let us define the cutoff function $\wh{\rho}_n$, 
\begin{equation}\label{cutoff}
    \wh{\rho}_n(\hx)=\left\{\begin{aligned}
    &1 \quad\text{ if } \quad|\hx-\hx_n|\leq 3R,\\
    &0 \quad\text{ if }\quad |\hx-\hx_n|\geq 4R,
    \end{aligned}\right. 
\end{equation} 
and recall the partition of unity $\{\rho_n\}$ based on the points $\hx_n$ (see Subsection \ref{covering_ste}).

\begin{lemma}\label{Nout_bound}
Let $\bm{X}\in C^1(\mathbb{S}^2)$ such that $|\bm{X}|_*>0$, $\mc{N}(\bm{X})$ the linear operator defined by \eqref{Ndef}, and $\wh{\rho}_n$ the cutoff function \eqref{cutoff}. Then, for $Z\in C^0(\mathbb{S}^2)$ compactly supported on $B_{\hx_n,2R}\cap\mathbb{S}^2$, it holds that,
\begin{equation*}
    \begin{aligned}
    \|(1-\wh{\rho}_n)\mc{N}^j(\bm{X})Z\|_{C^1(\mathbb{S}^2)}\leq C(R,|\bm{X}|_*,\|\nabla_{\mathbb{S}^2}\bm{X}\|_{C^0(\mathbb{S}^2)})\|Z\|_{C^0(\mathbb{S}^2)}.
    \end{aligned}
\end{equation*}
\end{lemma}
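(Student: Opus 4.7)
The key observation driving the proof is that the cutoff $1-\wh{\rho}_n$ and the support of $Z$ are separated. Indeed, if $(1-\wh{\rho}_n(\hx))\neq 0$, then $|\hx-\hx_n|>3R$ by \eqref{cutoff}, while $\hy\in\mathrm{supp}(Z)\subset B_{\hx_n,2R}\cap\mathbb{S}^2$ forces $|\hy-\hx_n|\leq 2R$. Hence $|\hx-\hy|\geq R$ for all relevant pairs. This removes the singular behavior of the kernel $q^j_{k,l}(\hx,\hy)$ \eqref{q_kernels} in the integral defining $\mc{N}^j(\bm{X})Z(\hx)$, and turns the problem into a standard estimate for a bounded operator with smooth kernel over a set of finite measure.

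For the $C^0$ bound, I will use \eqref{qkernel_bounds}: at separated points,
\begin{equation*}
|q^j_{k,l}(\hx,\hy)|\leq C\frac{\|\nabla_{\mathbb{S}^2}\bm{X}\|_{C^0(\mathbb{S}^2)}}{|\bm{X}|_*^2 R^2},
\end{equation*}
so that, using \eqref{N_C} with $\mc{C}=0$ and integrating against $|Z|\leq \|Z\|_{C^0}$ over $\mathbb{S}^2$ (whose area is finite), I obtain $|(1-\wh{\rho}_n)\mc{N}^j(\bm{X})Z(\hx)|\leq C(R,|\bm{X}|_*,\|\nabla_{\mathbb{S}^2}\bm{X}\|_{C^0})\|Z\|_{C^0}$.

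For the $C^1$ bound, I apply the product rule
\begin{equation*}
\nabla_{\mathbb{S}^2}\bigl[(1-\wh{\rho}_n)\mc{N}^j(\bm{X})Z\bigr]=-(\nabla_{\mathbb{S}^2}\wh{\rho}_n)\,\mc{N}^j(\bm{X})Z+(1-\wh{\rho}_n)\,\nabla_{\mathbb{S}^2}\bigl[\mc{N}^j(\bm{X})Z\bigr].
\end{equation*}
The first summand is immediately controlled by $\|\nabla_{\mathbb{S}^2}\wh{\rho}_n\|_{C^0}$ times the $C^0$ bound just obtained, with $\|\nabla_{\mathbb{S}^2}\wh{\rho}_n\|_{C^0}\leq C/R$ from the construction of $\wh{\rho}_n$. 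For the second summand, since $|\hx-\hy|\geq R$ on the entire integration region, I can differentiate under the integral to get $\nabla_{\mathbb{S}^2,\hx}q^j_{k,l}(\hx,\hy)$. By the chain rule this involves $\partial_{x_m}\partial_{x_i}G^j_{k,l}(\bm{X}(\hx)-\bm{X}(\hy))$ multiplied by factors of $\nabla_{\mathbb{S}^2}\bm{X}$, and the bound in \eqref{qkernel_bounds} gives
\begin{equation*}
|\nabla_{\mathbb{S}^2,\hx}q^j_{k,l}(\hx,\hy)|\leq C\frac{\|\nabla_{\mathbb{S}^2}\bm{X}\|_{C^0(\mathbb{S}^2)}^{2}}{|\bm{X}|_*^{3}R^{3}}.
\end{equation*}
Integrating this against $Z$ over the compact region of integration yields the required bound on the second summand.

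The argument is essentially routine once the separation of supports is exploited; the only minor point to be careful about is the justification of differentiation under the integral for the surface gradient, which is immediate since the integrand and its $\hx$-derivative are both continuous and uniformly bounded on the set where $(1-\wh{\rho}_n(\hx))\neq 0$ and $\hy\in\mathrm{supp}(Z)$. No cancellation or principal value is needed, in contrast to the estimates leading to Lemma \ref{Nbound_lem}, because the kernel is non-singular on this product region.
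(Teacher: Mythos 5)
Your proposal is correct, and it reaches the same conclusion as the paper, but via a genuinely different (and in fact slightly stronger) route. The paper proves the Lipschitz bound by the standard difference-quotient argument it uses throughout: it takes two points $\hx,\hxh$, writes
\begin{equation*}
    I(\hxh)-I(\hx)=(\wh{\rho}_n(\hx)-\wh{\rho}_n(\hxh))\int q_{k,l}(\hx,\hy)\cdot\rowofmat{Z}{l}(\hy)d\hy
    +(1-\wh{\rho}_n(\hxh))\int\bigl(q_{k,l}(\hx,\hy)-q_{k,l}(\hxh,\hy)\bigr)\cdot\rowofmat{Z}{l}(\hy)d\hy,
\end{equation*}
and estimates each piece using the kernel bounds and the separation $|\hx-\hy|,|\hxh-\hy|\geq R$. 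You instead apply the product rule, differentiate under the integral sign (legitimate on the non-singular region, as you correctly justify), and bound the differentiated kernel directly. The two arguments exploit exactly the same separation-of-supports observation and the same kernel estimates \eqref{qkernel_bounds}; yours is a bit cleaner and actually establishes genuine $C^1$ differentiability of $(1-\wh{\rho}_n)\mc{N}^j(\bm{X})Z$ rather than only its Lipschitz regularity, while the paper's difference-quotient version fits uniformly into the pattern of the other Hölder-seminorm estimates in Section \ref{calculus_est}. Both approaches give the same dependence of the constant on $R$, $|\bm{X}|_*$, and $\|\nabla_{\mathbb{S}^2}\bm{X}\|_{C^0}$, so either is acceptable.
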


\begin{proof}
Let
\begin{equation*}
    I(\hx)=(1-\wh{\rho}_n(\hx))\mc{N}^j(\bm{X})Z(\hx).
\end{equation*}
Since $1-\wh{\rho}(\hx)=0$ when $\hx\in B_{\hx_n,3R}$, let $\hx\in\mathbb{S}^2\setminus B_{\hx_n,3R}$. Then, recalling the condition on the support of $Z$, 
\begin{equation*}
    \begin{aligned}
    I(\hx)&\!=\!(\wh{\rho}_n(\hx)\!-\!1)\!\int_{B_{\hx_n,2R}\cap \mathbb{S}^2}\!\!\!\!\!\!\!\!\!\!\!\!\!\!\!\!\!\! \nabla_{\mathbb{S}^2}G(\bm{X}(\hx)\!-\!\bm{X}(\hy))\!\cdot\!Z(\hy)d\hy,
    \end{aligned}
\end{equation*}
and using the bound \eqref{qkernel_bounds} for the kernel,
\begin{equation*}
    \begin{aligned}
    |I(\hx)|&\leq C\frac{\|\nabla_{\mathbb{S}^2}\bm{X}\|_{C^0(\mathbb{S}^2)}}{|\bm{X}|_*^2}\|Z\|_{C^0(\mathbb{S}^2)}\!\int_{B_{\hx_n,2R}\cap \mathbb{S}^2}\!\!\!\!\!\!\!\!\!\!\!\!\!\!\!\!\!\!\!|\hx\!-\!\hy|^{-2}d\hy.
    \end{aligned}
\end{equation*}
Since  we have that $|\hx-\hy|\geq R$, we obtain
\begin{equation}\label{In12C0bound}
    \begin{aligned}
    |I(\hx)|&\leq C(|\bm{X}|_*,\|\nabla_{\mathbb{S}^2}\bm{X}\|_{C^0(\mathbb{S}^2)})\|Z\|_{C^0(\mathbb{S}^2)}.
    \end{aligned}
\end{equation}
To estimate the H\"older seminorm, consider two points $\hx, \hxh\in\mathbb{S}^2$, $h=|\hx-\hxh|$. Due to the cut-off function $\wh{\rho}_n$,  the only non-trivial case is $\hx, \hxh\in \mathbb{S}^2\setminus B_{\hx_n,3R}$:
\begin{equation*}
    \begin{aligned}
    |I(\hxh)-I(\hx)|&=(\wh{\rho}_n(\hx)-\wh{\rho}_n(\hxh))\int_{B_{\hx_n,2R}\cap \mathbb{S}^2} \!\!\!\!\!q_{k,l}(\hx,\hy)\cdot\rowofmat{Z}{l}(\bm{\hy})d\hy\\
    &\quad+(1\!-\!\wh{\rho}_n(\hxh))\!\int_{B_{\hx_n,2R}\cap \mathbb{S}^2}\!\!\!\!\!\!\!\!\!\!\!\!\!\big(q_{k,l}(\hx,\hy)\!-\!q_{k,l}(\hxh,\hy)\big)\cdot\rowofmat{Z}{l}(\hy)d\hy\\
    &=J_1+J_2.
    \end{aligned}
\end{equation*}
The first term is bounded as \eqref{In12C0bound},
\begin{equation*}
    \begin{aligned}
    |J_1|&\leq C\|\wh{\rho}_n\|_{C^1(\mathbb{S}^2)}\frac{\|\nabla_{\mathbb{S}^2}\bm{X}\|_{C^0(\mathbb{S}^2)}}{|\bm{X}|_*^2}\|Z\|_{C^0(\mathbb{S}^2)}h.
    \end{aligned}
\end{equation*}
Recalling the expression of $q_{k,l}$ \eqref{q_kernels}, we can check that, since $|\hx-\hy|\geq R$, $|\hxh-\hy|\geq R$, 
\begin{equation*}
    \begin{aligned}
    |q_{k,l}(\hx,\hy)-q_{k,l}(\hxh,\hy)|&\leq C\frac{\|\nabla_{\mathbb{S}^2}\bm{X}\|_{C^0(\mathbb{S}^2)}^2}{|\bm{X}|_*^3R^3}h,
    \end{aligned}
\end{equation*}
hence
\begin{equation*}
    \begin{aligned}
    |J_2|&\leq \frac{C}{R}\frac{\|\nabla_{\mathbb{S}^2}\bm{X}\|_{C^0(\mathbb{S}^2)}^2}{|\bm{X}|_*^3}\|Z\|_{C^0(\mathbb{S}^2)}h.
    \end{aligned}
\end{equation*}
Therefore, 
\begin{equation}\label{In21bound}
    \begin{aligned}
    \|I\|_{C^1(\mathbb{S}^2)}&\leq C(R,|\bm{X}|_*,\|\nabla_{\mathbb{S}^2}\bm{X}\|_{C^0(\mathbb{S}^2)}\|_{C^1})\|Z\|_{C^0(\mathbb{S}^2)}.
    \end{aligned}
\end{equation}

\end{proof}

The previous lemma  holds analogously for the operators $\tilde{M}(A)$:
\begin{lemma}\label{MAout_bound}
Let $A\in\mc{D}\mc{A}_{\sigma_1,\sigma_2}$, $\tilde{\mc{M}}^j(A)$ the linear operator defined by \eqref{Mtilde}, and $\wh{\rho}_n$ the cutoff function \eqref{cutoff}. Then, for $Z\in C^0(\mathbb{R}^2)$ compactly supported on $V_{2R}$, it holds that,
\begin{equation*}
    \begin{aligned}
    \|(1-\wh{\rho}_n)\tilde{\mc{M}}^j(A)Z\|_{C^1(\mathbb{R}^2)}\leq C(R,\sigma_1,\sigma_2)\|Z\|_{C^0(\mathbb{R}^2)}.
    \end{aligned}
\end{equation*}
\end{lemma}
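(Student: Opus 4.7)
This statement is the frozen-coefficient analogue of Lemma \ref{Nout_bound}, and my plan is to mirror that proof in the chart. Setting $I(\thetab) := (1-\wh{\rho}_n(\thetab))(\tilde{\mc{M}}^j(A)Z)(\thetab)$, I would first observe that for sufficiently small $R$ the cutoff $\wh{\rho}_n$ (viewed via the chart as $\wh{\rho}_n(\hX_n(\thetab))$) equals $1$ on a neighborhood of $V_{2R}$, so $I$ vanishes there and only the $\thetab$'s lying outside $V_{3R}$ matter, where the chart--sphere metric comparison from Proposition \ref{prop: equi_norm} and \eqref{right_metric_ineq}--\eqref{left_metric_ineq} applies. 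For such $\thetab$ and any $\etab \in \mathrm{supp}(Z) \subset V_{2R}$, one gets $|\thetab-\etab| \gtrsim R$, so the kernel $\PD{}{\eta_m}G^j_{k,l}(A(\thetab-\etab))$ is nonsingular.

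Next, using the chain rule together with \eqref{qkernel_bounds} and the bound $|A(\thetab-\etab)| \geq \sigma_2|\thetab-\etab|$ coming from $A \in \mc{DA}_{\sigma_1,\sigma_2}$, I would record
\begin{equation*}
\Big|\PD{}{\eta_m}G^j_{k,l}(A(\thetab-\etab))\Big| \leq C\frac{\sigma_1}{\sigma_2^2}\frac{1}{|\thetab-\etab|^2}, \qquad \Big|\PD{}{\theta_p}\PD{}{\eta_m}G^j_{k,l}(A(\thetab-\etab))\Big| \leq C\frac{\sigma_1^2}{\sigma_2^3}\frac{1}{|\thetab-\etab|^3}.
\end{equation*}
The sup-norm bound is then immediate:
\begin{equation*}
|I(\thetab)| \leq C\frac{\sigma_1}{\sigma_2^2}\|Z\|_{C^0(\mathbb{R}^2)}\int_{V_{2R}}\frac{d\etab}{|\thetab-\etab|^2} \leq C(R,\sigma_1,\sigma_2)\|Z\|_{C^0(\mathbb{R}^2)}.
\end{equation*}
For the Lipschitz part of the $C^1$ norm, I would take $\thetab, \thetab+\hb$ outside $V_{3R}$ (the only nontrivial case, the others being handled by the vanishing of $1-\wh{\rho}_n$) and decompose
\begin{equation*}
I(\thetab+\hb)-I(\thetab) = \bigl(\wh{\rho}_n(\thetab)-\wh{\rho}_n(\thetab+\hb)\bigr)(\tilde{\mc{M}}^j(A)Z)(\thetab) + (1-\wh{\rho}_n(\thetab+\hb))\bigl((\tilde{\mc{M}}^j(A)Z)(\thetab+\hb)-(\tilde{\mc{M}}^j(A)Z)(\thetab)\bigr).
\end{equation*}
The first piece is controlled by $\|\wh{\rho}_n\|_{C^1}|\hb|$ times the sup-norm bound just obtained. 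The second piece is handled by the mean-value theorem on the kernel using the second-derivative bound above; since $|\thetab+s\hb-\etab| \geq cR$ uniformly in $s\in[0,1]$ on the support of $Z$, the resulting integral is finite and yields an $O(|\hb|)$ estimate, with constants depending only on $R$, $\sigma_1$, and $\sigma_2$.

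I do not foresee any genuine obstacle: the cutoff engineers a uniform positive separation between $\thetab$ and $\mathrm{supp}(Z)$, so every kernel and its first derivative is bounded and every integral is finite. The only thing worth being careful about is the bookkeeping of constants through the kernel bounds and the chart-radius calibration that ensures $V_{3R}$ strictly contains $V_{2R}$ with gap comparable to $R$, which is guaranteed by \eqref{def_VR} for $R$ small.
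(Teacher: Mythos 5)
Your proof is correct and follows exactly the approach the paper intends: the paper gives no separate proof of Lemma \ref{MAout_bound}, only the remark preceding it that the previous Lemma \ref{Nout_bound} ``holds analogously for the operators $\tilde{\mc{M}}^j(A)$,'' and you have faithfully transcribed that argument from the sphere to the chart, with the kernel bound $|\nabla_\xb G^j|\lesssim|\xb|^{-2}$, the lower bound $|A(\thetab-\etab)|\geq\sigma_2|\thetab-\etab|$ from $A\in\mc{DA}_{\sigma_1,\sigma_2}$, and the separation $|\thetab-\etab|\gtrsim R$ between $\mathbb{R}^2\setminus V_{3R}$ and $\mathrm{supp}\,Z\subset V_{2R}$ playing the roles that $|\bm{X}|_*$, $\|\nabla_{\mathbb{S}^2}\bm{X}\|_{C^0}$, and $B_{\hx_n,3R}$ versus $B_{\hx_n,2R}$ play in the spherical proof. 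One small caveat, present also in the paper's proof of Lemma \ref{Nout_bound}: the case where exactly one of $\thetab,\thetab+\hb$ lies in $V_{3R}$ is not literally ``handled by the vanishing of $1-\wh{\rho}_n$'' but requires either a reduction to small $|\hb|$ or the observation that $\wh{\rho}_n$ is Lipschitz and equal to $1$ on $\partial V_{3R}$, but this is routine bookkeeping and does not affect the argument.
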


\begin{lemma}\label{lem_commutator} Let $\hx_n\in\mathbb{S}^2$, $\bm{X}\in C^1(\mathbb{S}^2)$, $|\bm{X}|_*>0$, and $\rho_n\in C^\infty(B_{\hx_n,2R}\cap\mathbb{S}^2)$, $R<1/10$. Then, the commutator
\begin{equation*}
    \begin{aligned}
    [\rho_n,&\mc{N}(\bm{X})]Z(\hx)=-\!\int_{\mbs}\!\! \nabla_\mbs G(\bm{X}(\hx)\!-\!\bm{X}(\bm{\hy}))\!\cdot Z(\hy)(\rho_n(\hy)\!-\!\rho_n(\hx))d\hy,
    \end{aligned}
\end{equation*}
 satisfies that, for any $\gamma\in(0,1)$, 
\begin{equation}\label{In2bound}
    \begin{aligned}
    \norm{[\rho_n,\mc{N}(\bm{X})]Z}_{C^\gamma\paren{\mathbb{S}^2}}\leq C(|\bm{X}_*,\|\nabla_\mbs \bm{X}\|_{C^0(\mbs)})\|\nabla_\mbs \rho_n\|_{C^0(\mbs)}\|Z\|_{C^0\paren{\mbs}}.
    \end{aligned}
\end{equation}
\end{lemma}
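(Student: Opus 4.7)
The key point is that the factor $\rho_n(\hy)-\rho_n(\hx)$ gains one power of $|\hx-\hy|$ through the Lipschitz continuity of $\rho_n$, turning the order $|\hx-\hy|^{-2}$ kernel $q_{k,l}$ estimated in \eqref{qkernel_bounds} into a weakly singular kernel of order $|\hx-\hy|^{-1}$. This removes the principal-value character of the integral, so (unlike in Lemma \ref{Nbound_lem}) we need not exploit the derivative structure of $G$, and size bounds on the kernel plus a mean-value estimate suffice.

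\textbf{Step 1 ($C^0$ bound).} Combine the pointwise bound $|q_{k,l}(\hx,\hy)|\leq C|\bm{X}|_*^{-2}\|\nabla_\mbs\bm{X}\|_{C^0(\mbs)}|\hx-\hy|^{-2}$ from \eqref{qkernel_bounds} with $|\rho_n(\hy)-\rho_n(\hx)|\leq \|\nabla_\mbs\rho_n\|_{C^0(\mbs)}|\hx-\hy|$ to conclude
\begin{equation*}
\|[\rho_n,\mc{N}(\bm{X})]Z\|_{C^0(\mbs)}\leq C(|\bm{X}|_*,\|\nabla_\mbs\bm{X}\|_{C^0})\,\|\nabla_\mbs\rho_n\|_{C^0}\|Z\|_{C^0}\sup_{\hx\in\mbs}\int_{\mbs}\frac{d\hy}{|\hx-\hy|},
\end{equation*}
and the last integral is a finite constant since $\mbs$ is a compact $2$-manifold.

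\textbf{Step 2 (H\"older seminorm).} Fix $\hx,\hxh\in\mbs$ with $h=|\hx-\hxh|$, which we may assume $\leq \tfrac{1}{2}$ (else the seminorm is controlled by the $C^0$ bound). Using the algebraic identity
\begin{equation*}
q(\hx,\hy)\delta_1-q(\hxh,\hy)\delta_2=q(\hx,\hy)(\rho_n(\hx)-\rho_n(\hxh))+(q(\hx,\hy)-q(\hxh,\hy))(\rho_n(\hxh)-\rho_n(\hy)),
\end{equation*}
where $\delta_1=\rho_n(\hx)-\rho_n(\hy)$ and $\delta_2=\rho_n(\hxh)-\rho_n(\hy)$, split the difference of commutators into a near region $\{|\hx-\hy|\leq 2h\}$ and a far region $\{|\hx-\hy|\geq 2h\}$. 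On the near region, drop the identity and just use the raw kernel bounds on $q(\hx,\hy)$ and $q(\hxh,\hy)$ with the Lipschitz estimate on $\rho_n$; the resulting integrals are bounded by $C\|\nabla\rho_n\|_{C^0}\|Z\|_{C^0}\int_{|\hx-\hy|\leq 2h}|\hx-\hy|^{-1}d\hy\lesssim h\leq h^\gamma$. On the far region, the first piece yields $C\|\nabla\rho_n\|_{C^0}\|Z\|_{C^0}\,h\int_{|\hx-\hy|\geq 2h}|\hx-\hy|^{-2}d\hy$; for the second piece, apply the mean-value bound $|q(\hx,\hy)-q(\hxh,\hy)|\leq Ch|\hx-\hy|^{-3}$ (obtained along the shortest geodesic $\ell(s)$ exactly as in the estimate of $I_4$ in the proof of Lemma \ref{Nbound_lem}, which applies since $|\hx-\hy|\geq 2h$ implies $|\ell(s)-\hy|\geq \tfrac12|\hx-\hy|$), combined with $|\rho_n(\hxh)-\rho_n(\hy)|\leq \|\nabla\rho_n\|_{C^0}|\hx-\hy|+Ch$, to produce the same integrand. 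In both cases we obtain the logarithmic bound
\begin{equation*}
h\int_{|\hx-\hy|\geq 2h}\frac{d\hy}{|\hx-\hy|^{2}}\leq C h\,|\log h|\leq C_\gamma h^\gamma,
\end{equation*}
valid for any $\gamma\in(0,1)$.

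\textbf{Main obstacle.} The only mildly delicate point is the logarithmic divergence of $\int d\hy/|\hx-\hy|^2$; it is absorbed because $h|\log h|\lesssim_\gamma h^\gamma$ for $\gamma<1$, which is precisely the restriction in the statement. Everything else reduces to the same near/far splitting and geodesic mean-value argument used in Lemma \ref{Nbound_lem}, now simplified by the cancellation from $\rho_n(\hy)-\rho_n(\hx)$.
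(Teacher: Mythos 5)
Your proposal is correct and follows essentially the same route as the paper's proof: the same $C^0$ bound from the kernel size estimate and the Lipschitz gain from $\rho_n(\hy)-\rho_n(\hx)$, and then for the H\"older seminorm the same near/far splitting together with the same algebraic decomposition $q(\hx,\hy)\delta_1-q(\hxh,\hy)\delta_2 = q(\hx,\hy)(\rho_n(\hx)-\rho_n(\hxh))+(q(\hx,\hy)-q(\hxh,\hy))(\rho_n(\hxh)-\rho_n(\hy))$ on the far region (the paper's $I_n^3$ and $I_n^4$), with the logarithmic loss $h\log h^{-1}$ absorbed by $h^\gamma$. The only minor discrepancy is that the paper's exposition nominally invokes integration by parts for the $I_n^3$ term but then actually uses the crude size bound exactly as you do; your direct estimate is the cleaner way to read that step.
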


\begin{proof}
Recalling the kernel bound \eqref{qkernel_bounds},
\begin{align*}
\begin{split}
       |[\rho_n,\mc{N}(\bm{X})]Z(\hx)|&\leq C\frac{\|\nabla_\mbs \rho_n\|_{C^0(\mbs)}\|Z\|_{C^0(\mbs)}\| \nabla_\mbs \bm{X}\|_{C^0(\mbs)}}{|\bm{X}|_*^2}\int_{\mbs}\frac{1}{|\hx-\hy|} d\hy\\
    &\leq C(|\bm{X}_*,\| \nabla_\mbs \bm{X}|_{C^0(\mbs)})\|\nabla_\mbs \rho_n\|_{C^0(\mbs)}\|Z\|_{C^0(\mbs)}.
\end{split}
\end{align*}
Next, we study the H\"older seminorm. We take two points $\hx$, $\hxh$, denote $h=|\hx-\hxh|$, and perform a splitting analogous to \eqref{Nholder_split}. Using the kernel notation \eqref{q_kernels},
\begin{align*}
\begin{split}
       [\rho_n,\mc{N}(\bm{X})]Z(\hx)\!-\![\rho_n,\mc{N}(\bm{X})]Z(\hxh)&\!=\! \int_{\mbs}q_{k,l}(\hx,\hy)\paren{\rho_n(\hy)\!-\!\rho_n(\hx)}\!\cdot\! \rowofmat{Z}{l}(\hy)d\hy\\
        &\quad-\!\int_{\mbs}\!\!q_{k,l}(\hxh,\hy)\paren{\rho_n(\hy)\!-\!\rho_n(\hxh)}\!\cdot\! \rowofmat{Z}{l}(\hy)d\hy\\
  &  = I_n^{1}+I_n^{2}+I_n^{3}+I_n^{4},
\end{split}
\end{align*}
where
\begin{align*}
    I_n^{1}=&\int_{\{\abs{\hx-\hy}\leq 2h\}\cap\mbs}q_{k,l}(\hx,\hy)\paren{\rho_n(\hy)-\rho_n(\hx)}\cdot \rowofmat{Z}{l}(\hy)d\hy\\
    I_n^{2}=&-\int_{\{\abs{\hx-\hy}\leq 2h\}\cap\mbs}q_{k,l}(\hxh,\hy)\paren{\rho_n(\hy)-\rho_n(\hxh)}\cdot \rowofmat{Z}{l}(\hy)d\hy\\
    I_n^{3}=&\int_{\{\abs{\hx-\hy}\geq 2h\}\cap\mbs}q_{k,l}(\hx,\hy)\paren{\rho_n(\hxh)-\rho_n(\hx)}\cdot \rowofmat{Z}{l}(\hy)d\hy
\end{align*}
and
\begin{align*}
\begin{split}
        I_n^{4}=\int_{\{\abs{\hx-\hy}\geq 2h\}\cap\mbs}\hspace{-1.8cm}(q_{k,l}(\hx,\hy)\!-\!q_{k,l}(\hxh,\hy))\paren{\rho_n(\hy)\!-\!\rho_n(\hxh)}\cdot \rowofmat{Z}{l}(\hy)d\hy.
\end{split}
\end{align*}
Then, for $I_n^{1}$ and $I_n^{2}$,
\begin{align*}
\begin{split}
    |I_n^{1}|+|I_n^{2}|
    &\leq  C\frac{\norm{ \nabla_\mbs \rho_n}_{C^0\paren{\mbs}}\norm{ \nabla_\mbs \bm{X}}_{C^0\paren{\mbs}}}{\starnorm{\bm{X}}^2}\|Z\|_{C^0\paren{\mbs}}h.
\end{split}
\end{align*}
Next, for $I_n^{3}$, integration by parts gives that
\begin{align*}
\begin{split}
    \abs{I_n^{3}}
    &\leq C\abs{\rho_n(\hxh)\!-\!\rho_n(\hx)}\int_{\{\abs{\hx-\hy}\geq 2h\}\cap\mbs}\!\!\!\!\!\!\!\!\!\!\frac{\|Z\|_{C^0\paren{\mbs}}\norm{ \nabla_\mbs \bm{X}}_{C^0\paren{\mbs}}}{\starnorm{\bm{X}}^2\abs{\hx-\hy}^2}d\hy\\
    &\leq C(|\bm{X}_*,\|\nabla_\mbs \bm{X}\|_{C^0(\mbs)})\|\nabla_\mbs \rho_n\|_{C^0(\mbs)}\|Z\|_{C^0\paren{\mbs}}h\log h^{-1}.
\end{split}
\end{align*}
Finally, in $I_n^{4}$, the use of the mean-value theorem provides that
\begin{align*}
\begin{split}
        \abs{I_n^{4}}&
    \leq C\frac{\norm{ \nabla_\mbs \rho_n}_{C^0\paren{\mbs}}\|Z\|_{C^0\paren{\mbs}}\norm{ \nabla_\mbs \bm{X}}_{C^0\paren{\mbs}}^2}{\starnorm{\bm{X}}^3}\\
    &\hspace{2cm}\times\int_{\{\abs{\hx-\hy}\leq 2h\}\cap\mbs}\int_0^L \frac{\abs{\hxh-\hy}}{\abs{\ell\paren{s}-\hy}^3}ds d\hy\\
    &\leq C(|\bm{X}_*,\norm{ \nabla_\mbs \bm{X}}_{C^0\paren{\mbs}})\norm{ \nabla_\mbs \rho_n}_{C^0\paren{\mbs}}\norm{Z}_{C^0\paren{\mbs}}h.
\end{split}
\end{align*}
Thus,
\begin{align*}
    \norm{[\rho_n,\mc{N}(\bm{X})]Z}_{C^\gamma\paren{\mathbb{S}^2}}\leq C(|\bm{X}_*,\|\nabla_\mbs \bm{X}\|_{C^0(\mbs)})\|\nabla_\mbs \rho_n\|_{C^0(\mbs)}\|Z\|_{C^0\paren{\mbs}}.
\end{align*}

\end{proof}

\begin{lemma}\label{lem_dif} Let $\hx_n\in\mathbb{S}^2$ and $\widehat{\bm{X}}_n:\mathbb{R}^2\cup \{\infty\}\to\mathbb{S}^2$ the stereographic projection centered at $\hx_n$.
Let $\bm{X}\in C^{1,\gamma}(\mathbb{S}^2)$,  $|\bm{X}|_*>0$, and $\wh{\rho}_n$ given by \eqref{cutoff}. Let the linear operators $\mc{N}(\bm{X})$ and $\mc{M}(A)$ be defined by in \eqref{Ndef} and \eqref{defM}, with $A=\nabla\bm{X}_n(\bm{0})$, where we are using the notation $\bm{X}_n(\thetab)=\bm{X}(\widehat{\bm{X}}_n(\thetab))$.
Denote
\begin{equation*}
    \begin{aligned}
    I_n(\thetab)&=\wh{\rho}_n(\thetab)[\mc{M}(A)-\mc{N}(\bm{X})]Z_n(\thetab),
    \end{aligned}
\end{equation*}
Then, for $Z\in C^\gamma(\mathbb{S}^2)$ compactly supported on $B_{\hx_n,2R}\cap\mathbb{S}^2$, the following estimates hold:
\begin{equation}\label{Jn8bound}
    \begin{aligned}
    \|I_n\|_{C^\gamma(\mathbb{R}^2)}&\leq C(|\bm{X}|_*,\|\nabla_{\mathbb{S}^2}\bm{X}\|_{C^0(\mathbb{S}^2)})\big((1+\|\nabla_{\mathbb{S}^2}\bm{X}\|_{C^\gamma(B_{\hx_n,5R}\cap\mathbb{S}^2)})\|Z\|_{C^0(\mathbb{S}^2)}\\
    &\quad+\varepsilon(R)\|Z\|_{C^\gamma(\mathbb{S}^2)}\big),
    \end{aligned}
\end{equation}
and
\begin{equation}\label{Jn8bound2}
    \begin{aligned}
    \|I_n\|_{C^\gamma(\mathbb{R}^2)}&\leq C(|\bm{X}|_*,\|\nabla_{\mathbb{S}^2}\bm{X}\|_{C^0(\mathbb{S}^2)})\big(\|Z\|_{C^0(\mathbb{S}^2)}\!+\!\|\nabla_{\mathbb{S}^2}\bm{X}\|_{C^{\frac{\gamma}{2}}(B_{\hx_n,5R}\cap\mathbb{S}^2)}\|Z\|_{C^{\frac{\gamma}{2}}(\mathbb{S}^2)}\\
    &\quad+\varepsilon(R)\|Z\|_{C^\gamma(\mathbb{S}^2)}\big),
    \end{aligned}
\end{equation}
with $\varepsilon(R)\to 0$ as $R\to 0$ given by the modulus of continuity of $\nabla_{\mathbb{S}^2}\bm{X}$.
\end{lemma}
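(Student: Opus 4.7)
The plan is to use the nonlinear decomposition \eqref{nonlinear_split} to split
\begin{equation*}
\wh{\rho}_n[\mc{M}(A) - \mc{N}(\bm{X})]Z_n = \wh{\rho}_n[\mc{M}(A) - \mc{M}(\nabla\bm{X})]Z_n - \wh{\rho}_n \mc{R}(\bm{X})Z_n,
\end{equation*}
and then estimate each piece separately in $C^\gamma(\mathbb{R}^2)$.

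For the remainder $\wh{\rho}_n \mc{R}(\bm{X})Z_n$, I would exploit the subcriticality of the kernels $\mathcal{K}^j$ from \eqref{defR}: using identity \eqref{odd_fractions} for the fractional differences appearing there and the H\"older bound \eqref{Eeta_bound} for $E^{\etab}\bm{X}(\thetab)$, each kernel is controlled pointwise by $C\|\nabla_{\mathbb{S}^2}\bm{X}\|_{C^\gamma(B_{\hx_n,5R}\cap\mbs)}|\thetab-\etab|^{\gamma-2}$ with prefactors depending on $\|\nabla_{\mathbb{S}^2}\bm{X}\|_{C^0(\mbs)}$ and $|\bm{X}|_*$. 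Since $Z_n$ is supported in $V_{2R}$, direct integration yields the $C^0$ bound; for the H\"older seminorm I would follow a four-term splitting analogous to \eqref{Nholder_split}, where the critical ``third term'' (previously handled by integration by parts on a derivative kernel) is now bounded directly thanks to the integrable factor $|\thetab-\etab|^{\gamma-2}$, and the far-field ``fourth term'' uses the mean value theorem on $\mathcal{K}^j$. For the piece $\wh{\rho}_n[\mc{M}(A) - \mc{M}(\nabla\bm{X})]Z_n$, the kernel difference $m(A,\thetab,\etab) - m(\nabla\bm{X}_n(\etab),\thetab,\etab)$ is controlled (by the same calculations behind the second estimate of Lemma \ref{Mtilde_bound}) by $C\|A-\nabla\bm{X}_n(\etab)\|/|\thetab-\etab|^2$; since $Z_n$ is supported in $V_{2R}$, the variable $\etab$ lies near $\bm{0}$ throughout, so continuity of $\nabla\bm{X}$ at $\hx_n$ gives $\|A-\nabla\bm{X}_n(\etab)\| \leq \varepsilon(R)$ with $\varepsilon(R) \to 0$ as $R \to 0$. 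Rerunning the argument of Lemma \ref{Mtilde_bound} with this small factor then delivers the $\varepsilon(R)\|Z\|_{C^\gamma(\mbs)}$ contribution in \eqref{Jn8bound}.

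To obtain the refined bound \eqref{Jn8bound2}, I would redo the remainder estimate redistributing regularity between $\nabla\bm{X}$ and $Z$: replacing \eqref{Eeta_bound} by the analogous $|E^{\etab}\bm{X}(\thetab)| \leq \|\nabla\bm{X}\|_{C^{\gamma/2}}|\thetab-\etab|^{\gamma/2}$ and pairing this extra power with the bound $|Z(\etab)-Z(\thetab)| \leq \|Z\|_{C^{\gamma/2}}|\thetab-\etab|^{\gamma/2}$ in the near-diagonal integrals transfers half of the H\"older exponent from $\nabla\bm{X}$ onto $Z$. I expect the main obstacle to be the careful bookkeeping in the H\"older seminorm of the remainder: the four-term splitting, when applied to the quite intricate expressions defining $\mathcal{K}^{2,1}$ and $\mathcal{K}^{2,2}$, requires several uses of the mean value theorem producing a proliferation of terms, each of which must be tracked to fit within the precise structure prescribed by \eqref{Jn8bound}--\eqref{Jn8bound2} without generating undesired factors of $\|Z\|_{C^\gamma(\mbs)}$ that would lack the crucial $\varepsilon(R)$ smallness.
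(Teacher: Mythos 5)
Your proposed decomposition into $\wh{\rho}_n[\mc{M}(A)-\mc{M}(\nabla\bm{X})]Z_n$ and $\wh{\rho}_n\mc{R}(\bm{X})Z_n$, with each piece estimated separately, does not yield the stated bounds, and the obstruction is precisely the one the paper flags in the introduction right after \eqref{apprx}: ``to take advantage of the derivative structure, we will be forced to estimate together the leading and remainder terms.'' The paper's own proof keeps the two pieces together, writing $I_n^1$ in terms of the \emph{combined} kernel
\begin{equation*}
P^1_{m,k,l}(\thetab,\etab)=\frac{\delta_{k,l}}{8\pi}\PD{}{\eta_m}\Big(\frac{1}{|A(\thetab-\etab)|}-\frac{1}{|\bm{X}_n(\thetab)-\bm{X}_n(\etab)|}\Big),
\end{equation*}
which is an exact $\eta$-derivative. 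That structure is then used essentially: in the term $O^2$ (constant-value part of the finite difference) and in $Q^3$, $Q^6$, $Q^7$ (far-field parts of the H\"older increment), the area integral of $P^1$ is converted by integration by parts into boundary integrals on $\partial V_{5R}$ and $\{|\thetab-\etab|=2|\hb|\}$, and via \eqref{boundary_terms} the boundary integrand produces exactly the modulus-of-continuity factor $\varepsilon(R)$ paired with $\|Z\|_{C^\gamma}$.

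Once you split, the kernel of $\wh{\rho}_n[\mc{M}(A)-\mc{M}(\nabla\bm{X}_n)]Z_n$ is the $\mc{P}^1$ of \eqref{mcP}, which satisfies only $|\mc{P}^1|\lesssim\varepsilon(R)\,|\thetab-\etab|^{-2}$ (see \eqref{mcPbound}), a non-integrable singularity with no derivative structure: the term $\frac{(B_n(\etab)(\thetab-\etab))_m}{|A_n(\etab)(\thetab-\etab)|^3}$ is \emph{not} of the form $\partial_{\eta_m}(\cdot)$, because $A_n(\etab)=\nabla\bm{X}_n(\etab)$ depends on $\etab$ and $\bm{X}$ has no second derivatives. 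Concretely, your plan to ``rerun the argument of Lemma \ref{Mtilde_bound}'' breaks down at the step that there is called $J_3$: one cannot integrate by parts to produce a boundary term, and without it the far-field increment term becomes
\begin{equation*}
\delta_\thetab\tilde Z(\thetab+\hb)\int_{\{|\thetab-\etab|\geq 2|\hb|\}\cap V_{5R}}\mc{P}^1(\thetab,\etab)\,d\etab\lesssim \|Z\|_{C^\gamma}|\hb|^\gamma\cdot\varepsilon(R)\log\!\big(R/|\hb|\big),
\end{equation*}
which is not $O(|\hb|^\gamma)$ and so does not close the $C^\gamma$ seminorm estimate. The same defect appears already at the $C^0$ level in the analogue of $O^2$, where $\tilde Z(\thetab)\int_{V_{5R}}\mc{P}^1\,d\etab$ is not absolutely convergent. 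You anticipate a danger of ``undesired factors of $\|Z\|_{C^\gamma}$ without the $\varepsilon(R)$ smallness,'' but you locate it in the bookkeeping of the remainder $\mc{R}$; in fact $\mc{R}$ is harmless because its kernels $\mathcal K^j$ are $O(|\thetab-\etab|^{\gamma-2})$ and hence integrable, while the real trouble is in the $\mc{M}(A)-\mc{M}(\nabla\bm{X})$ piece whose kernel is not integrable and not a derivative. The rest of your outline (use of \eqref{odd_fractions}--\eqref{Eeta_bound}, the four-term H\"older splitting, and the $\gamma\mapsto\gamma/2$ redistribution to get \eqref{Jn8bound2}) matches the paper, but it only works once the two pieces are recombined into $P^1=\mc{P}^1-\mc{K}^1$ so that the integration-by-parts step is available.
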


\begin{proof}
First, we write
\begin{equation}\label{Jn8split}
    \begin{aligned}
    I_n&=I_n^1+I_n^2\\
    &=\sum_{j=1}^2\wh{\rho}_n(\thetab)[\mc{M}^j(A)-\mc{N}^j(\bm{X})]Z_n(\thetab)\\
    &=\sum_{j=1}^2\wh{\rho}_n(\thetab)[\mc{M}^j(A)-\mc{M}^j(\nabla\bm{X}_n)]Z_n(\thetab)-\mc{R}^j(\bm{X}_n)Z_n(\thetab),
    \end{aligned}
\end{equation}
and focus on the term $I_n^1$, as the other $I_n^2$ will follow similarly. 
We then write
\begin{equation*}
    \begin{aligned}
    I_n^1&=-\wh{\rho}_n(\thetab)\int_{\mathbb{R}^2} P_{m,k,l}^1(\thetab, \etab)\PD{\wh{X}_i}{\eta_m}(\etab) Z_{n,li}(\etab)d\eta_1d\eta_2,
    \end{aligned}
\end{equation*}
where we used the notation \eqref{def_VR} and
\begin{equation}\label{mcP}
    \begin{aligned}
    P_{m,k,l}^1(\thetab,\etab)
    =&\frac{\delta_{k,l}}{8\pi}\PD{}{\eta_m}\Big(\frac{1}{|A(\thetab-\etab)|}-\frac{1}{|\bm{X}_n(\thetab)-\bm{X}_n(\etab)|}\Big)\\
    =&-\frac{1}{8\pi}\frac{(B(\thetab\!-\!\etab))_m}{|A(\thetab\!-\!\etab)|^3}\delta_{k,l}+q_{m,k,l}^1(\thetab,\etab)\\
    =&-\frac{1}{8\pi}\frac{(B(\thetab\!-\!\etab))_m}{|A(\thetab\!-\!\etab)|^3}\delta_{k,l}+m_{m,k,l}^1(\thetab,\etab)-\mc{K}_{m,k,l}^1(\thetab,\etab)\\
    =&-\frac{1}{8\pi}\paren{\frac{(B(\thetab\!-\!\etab))_m}{|A(\thetab\!-\!\etab)|^3}-\frac{(B_n(\etab)(\thetab\!-\!\etab))_m}{|A_n(\etab)(\thetab\!-\!\etab)|^3}}\delta_{k,l}-\mc{K}_{m,k,l}^1(\thetab,\etab)\\
    :=& \mc{P}^1_{m,k,l}(\thetab,\etab)-\mc{K}_{m,k,l}^1(\thetab, \etab).
    \end{aligned}
\end{equation}
Above, we are denoting $B=A^TA$, $A=\nabla\bm{X}_n(\bm{0})$,  $A_n(\etab)=\nabla\bm{X}_n(\etab)$, and $\mc{K}_{m,k,l}^1$ was given in \eqref{defR}. 
We denote
\begin{equation*}
    \tilde{Z}_{l,m}(\etab)=\PD{\wh{X}_i}{\eta_m}(\etab)Z_{n,li}(\etab),
\end{equation*}
and note that
\begin{equation*}
    \begin{aligned}
    \|\tilde{Z}\|_{C^\gamma(\mathbb{R}^2)}\leq C\|Z\|_{C^\gamma(\mathbb{S}^2)}.
    \end{aligned}
\end{equation*}
We start with a bound for $|I_n^1|$. 
We split it as follows
\begin{equation}\label{Jn81}
    \begin{aligned}
    I_n^1&=O^1+O^2,
    \end{aligned}
\end{equation}
with
\begin{equation*}
    \begin{aligned}
    O^1&=\wh{\rho}_n(\thetab)\!\!\int_{V_{5R}} \!\!\!P_{mkl}^1(\thetab, \etab) \big(\tilde{Z}_{l,m}(\etab)-\tilde{Z}_{l,m}(\thetab)\big)d\eta_1d\eta_2\\
    O^2&=\wh{\rho}_n(\thetab)\tilde{Z}_{l,m}(\thetab)\int_{V_{5R}} P_{m,k,l}^1(\thetab, \etab)d\eta_1d\eta_2.
    \end{aligned}
\end{equation*}
Then, we split $O^1$ further
\begin{equation*}
    |O^1|\leq O^{1,1}+O^{1,2},
\end{equation*}
where
\begin{equation*}
    \begin{aligned}
    O^{1,1}&=\|\tilde{Z}\|_{C^\gamma(V_{5R})}\int_{V_{5R}}|\mc{P}^1_{m,k,l}(\thetab,\etab)||\thetab-\etab|^\gamma  d\eta_1d\eta_2,
    \end{aligned}
\end{equation*}    
\begin{equation*}
    \begin{aligned}
    O^{1,2}&=\wh{\rho}_n(\thetab)\int_{V_{5R}}|\mc{K}_{m,k,l}^1(\thetab,\etab)||\delta_{\etab}\tilde{Z}_{l,m}(\thetab)| d\eta_1d\eta_2.
    \end{aligned}
\end{equation*}    
For $\thetab\in V_{4R}$, $\etab\in V_{5R}$, we have the following bound for $\mc{P}^1_{m,k,l}$ \eqref{mcP},
\begin{equation}\label{mcPbound}
    \begin{aligned}
    |\mc{P}^1_{m,k,l}|&\leq \frac{1}{8\pi}|\frac{(B_n(\etab)-B)(\thetab-\etab)}{|A(\thetab-\etab)|^3}|\\
    &\quad+\frac{1}{8\pi}|B_n(\etab)(\thetab-\etab)||\frac{1}{|A(\thetab-\etab)|^3}-\frac{1}{|A_n(\etab)(\thetab-\etab)|^3}|\\
    &\leq C\frac{\|B_n(\cdot)-B\|_{C^0(V_{5R})}}{|\bm{X}|_{\circ,n}^3|\thetab-\etab|^2}+C\frac{\|A_n(\cdot)-A\|_{C^0(V_{5R})}\|\nabla\bm{X}_n\|_{C^0(V_{5R})}^7}{|\bm{X}|_{\circ,n}^9|\thetab-\etab|^2},
    \end{aligned}
\end{equation}
where \eqref{odd_fractions} has been used for the last term.
Then, the first term $O^{1,1}$ is high-order but with small coefficients,
\begin{equation*}
    \begin{aligned}
    O^{1,1}&\leq C\Big(\frac{\|B_n(\cdot)-B\|_{C^0(V_{5R})}}{|\bm{X}|_{\circ,n}^3}+\frac{\|A_n(\cdot)-A\|_{C^0(V_{5R})}\|\nabla\bm{X}_n\|_{C^0(V_{5R})}^7}{|\bm{X}|_{\circ,n}^9}\Big)R^\gamma\\
    &\hspace{1cm}\times\|\tilde{Z}\|_{C^{\gamma}(V_{5R})},
    \end{aligned}
\end{equation*}
since we have that
\begin{equation*}
    \|A_n(\cdot)-A\|_{C^0(V_{5R})}\leq \varepsilon(R)C(\|\nabla \bm{X}\|_{C^0(V_{5R})}),
\end{equation*}
with $\varepsilon(R)\to 0$ as $R\to 0$.
The kernel bound, with $\thetab\in V_{4R}$, $\etab\in V_{5R}$,
\begin{align}\label{kRbounds1}
    |\mathcal{K}^1_{m,k,l}(\thetab,\etab)|&\leq C\frac{\|\nabla \bm{X}_n\|_{C^0(V_{5R})}}{|\bm{X}|_{\circ,n}^3}[\nabla \bm{X}_n]_{C^\gamma(V_{5R})}\frac{1}{|\thetab-\etab|^{2-\gamma}},
\end{align}
gives that
\begin{equation*}
    \begin{aligned}
    O^{1,2}&\leq C\frac{\|\nabla\bm{X}_n\|_{C^0(V_{5R})}\|\nabla\bm{X}_n\|_{C^\gamma(V_{5R})}}{|\bm{X}|_{\circ,n}^3}R^\gamma\|\tilde{Z}\|_{C^0(V_{5R})}.
    \end{aligned}
\end{equation*}
But one also has the bound
\begin{align*}
    |\mathcal{K}^1_{m,k,l}(\thetab,\etab)|&\leq C\frac{\|\nabla \bm{X}_n\|_{C^0(V_{5R})}\|\nabla \bm{X}_n\|_{C^{\frac{\gamma}{2}}(V_{5R})}}{|\bm{X}|_{\circ,n}^3}\frac{1}{|\thetab-\etab|^{2-\frac{\gamma}2}},
\end{align*}
which gives that
\begin{equation*}
    \begin{aligned}
    O^{1,2}&\leq C\frac{\|\nabla\bm{X}_n\|_{C^0(V_{5R})} \|\nabla \bm{X}_n\|_{C^{\frac{\gamma}{2}}}}{|\bm{X}|_{\circ,n}^3}R^\gamma\|\tilde{Z}\|_{C^{\frac{\gamma}{2}}(V_{5R})}.
    \end{aligned}
\end{equation*}
Joining the two bounds, we obtain
\begin{equation}\label{O1bound}
    \begin{aligned}
    |O^1|&\leq C(|\bm{X}|_*,\|\nabla_{\mathbb{S}^2}\bm{X}\|_{C^0(\mathbb{S}^2)})R^\gamma\big(\|\tilde{Z}\|_{C^0(V_{5R})}\|\nabla_{\mathbb{S}^2}\bm{X}\|_{C^\gamma(B_{\hx_n,5R}\cap\mathbb{S}^2)}\\
    &\quad+\varepsilon(R)\|\tilde{Z}\|_{C^\gamma(V_{5R})}\big),
    \end{aligned}
\end{equation}
and 
\begin{equation*}
    \begin{aligned}
    |O^1|&\leq C(|\bm{X}|_*,\|\nabla_{\mathbb{S}^2}\bm{X}\|_{C^0(\mathbb{S}^2)})R^\gamma\big(\|\nabla_{\mathbb{S}^2}\bm{X}\|_{C^{\frac{\gamma}2}(B_{\hx_n,5R}\cap\mathbb{S}^2)}\|\tilde{Z}\|_{C^{\frac{\gamma}2}(V_{5R})}\\
    &\quad+\varepsilon(R)\|\tilde{Z}\|_{C^\gamma(V_{5R})}\big).
    \end{aligned}
\end{equation*}
To estimate $O^2$, we use \eqref{mcP} to integrate by parts,
\begin{equation*}
    \begin{aligned}
        |O^2|&\leq C|\wh{\rho}_n(\thetab)||\tilde{Z}_{l,m}(\thetab)|\int_{\partial V_{5R}} \Big|\frac{1}{|A(\thetab-\etab)|}-\frac{1}{|\bm{X}_n(\thetab)-\bm{X}_n(\etab)|}\Big|dl(\etab).
    \end{aligned}
\end{equation*}
Next, we note that since $\thetab\in V_{4R}$, we have that for $\etab\in\partial V_{5R}$, $|\thetab-\etab|\geq \frac{R}{2}$.
We compute the difference
\begin{equation}\label{boundary_terms}
\begin{aligned}
    \frac{1}{|A(\thetab-\etab)|}&-\frac{1}{|\bm{X}_n(\thetab)-\bm{X}_n(\etab)|}\\
    &=\frac{1}{|\thetab-\etab|}\Big(\frac{1}{|A(\widehat{\thetab-\etab})|}-\frac{1}{|\Delta_{\etab}\bm{X}_n(\thetab)|}\Big)\\
    &=\frac{1}{|\thetab\!-\!\etab|}\frac{\big(\Delta_{\etab}\bm{X}_n(\thetab)\!-\!A(\widehat{\thetab-\etab})\big)\cdot\big(\Delta_{\etab}\bm{X}_n(\thetab)\!+\!A(\widehat{\thetab\!-\!\etab})\big)}{|A(\widehat{\thetab\!-\!\etab})||\Delta_{\etab}\bm{X}_n(\thetab)|\big(|A(\widehat{\thetab\!-\!\etab})|\!+\!|\Delta_{\etab}\bm{X}_n(\thetab)|\big)},
\end{aligned}
\end{equation}
and 
\begin{equation*}
    \begin{aligned}
    \Delta_{\etab}\bm{X}_n(\thetab)\!-\!A(\widehat{\thetab-\etab})&=\Delta_{\etab}\bm{X}_n(\thetab)\!-\!\nabla\bm{X}_n(\etab)(\widehat{\thetab-\etab})\!+\!(\nabla\bm{X}_n(\etab)-A)(\widehat{\thetab-\etab})\\
    &=-E^{\etab}\bm{X}_n(\thetab)+(A_n(\etab)-A)(\widehat{\thetab-\etab}),
    \end{aligned}
\end{equation*}
where $E^{\etab}\bm{X}_n(\thetab)$ is given in \eqref{EetaX}.
Therefore, 
\begin{equation*}
    \begin{aligned}
        |O^2|&\leq C\|\tilde{Z}\|_{C^0(V_{5R})}\frac{\|\nabla \bm{X}_n\|_{C^0(V_{5R})}}{|\bm{X}|_{\circ,n}^3}\!\int_{\partial V_{5R}}\!\!\!\!\!\!\!\! \frac{|E^{\etab}\bm{X}_n(\thetab)|\!+\!\|A_n(\cdot)\!-\!A\|_{C^0(V_{5R})}}{|\thetab\!-\!\etab|}dl(\etab),
    \end{aligned}
\end{equation*}
hence
\begin{equation*}
    \begin{aligned}
    |O^2|&\leq C(|\bm{X}|*,\|\nabla_{\mathbb{S}^2}\bm{X}\|_{C^0(\mathbb{S}^2)})\|\tilde{Z}\|_{C^0(V_{5R})}(R^{\frac{\gamma}{2}}\|\nabla_{\mathbb{S}^2}\bm{X}\|_{C^\frac{\gamma}{2}(B_{\hx_n,5R}\cap\mathbb{S}^2)}+\varepsilon(R)),
    \end{aligned}
\end{equation*}
and
\begin{equation*}
    \begin{aligned}
    |O^2|&\leq C(|\bm{X}|*,\|\nabla_{\mathbb{S}^2}\bm{X}\|_{C^0(\mathbb{S}^2)})R^\gamma\|\nabla_{\mathbb{S}^2}\bm{X}\|_{C^\gamma(B_{\hx_n,5R}\cap\mathbb{S}^2)}\|\tilde{Z}\|_{C^0(V_{5R})}.
    \end{aligned}
\end{equation*}
Together with \eqref{O1bound} back in \eqref{Jn81}, we conclude that
\begin{equation}\label{Jn81C0bound}
    \begin{aligned}
    |I_n^1|&\leq   C(|\bm{X}|_*,\|\nabla_{\mathbb{S}^2}\bm{X}\|_{C^0(\mathbb{S}^2)})R^\gamma\big(\|\tilde{Z}\|_{C^0(V_{5R})}\|\nabla_{\mathbb{S}^2}\bm{X}\|_{C^\gamma(B_{\hx_n,5R}\cap\mathbb{S}^2)}\\
    &\quad+\varepsilon(R)\|\tilde{Z}\|_{C^\gamma(V_{5R})}\big),
    \end{aligned}
\end{equation}
and also
\begin{equation*}
    \begin{aligned}
    |I_n^1|&\leq  C(|\bm{X}|_*,\|\nabla_{\mathbb{S}^2}\bm{X}\|_{C^0(\mathbb{S}^2)})\big(R^{\frac{\gamma}{2}}\|\nabla_{\mathbb{S}^2}\bm{X}\|_{C^{\frac{\gamma}2}(B_{\hx_n,5R}\cap\mathbb{S}^2)}\|\tilde{Z}\|_{C^{\frac{\gamma}2}(V_{5R})}\\
    &\quad+\varepsilon(R)\|\tilde{Z}\|_{C^\gamma(V_{5R})}\big).
    \end{aligned}
\end{equation*}
We proceed to estimate the H\"older seminorm. Take $\thetab, \thetab+\hb\in V_{4R}$. We use the splitting \eqref{Jn81}, and start with the estimate for $O^1$:
\begin{equation}\label{OHoldersplit}
    \begin{aligned}
|O^1(\thetab&+\hb)-O^1(\thetab)|&=|Q^1+Q^2+Q^3+Q^4+Q^5|,
    \end{aligned}
\end{equation}
where, using the notation \eqref{delta_eta},
\begin{equation*}
    Q^1=-\wh{\rho}_n(\thetab+\hb)\int_{\{|\thetab-\etab|\leq 2|\hb|\}\cap V_{5R}}\hspace{-0.3cm} P^1_{m,k,l}(\thetab+\hb,\etab)\delta_{\etab}\tilde{Z}_{l,m}(\thetab+\hb)d\eta_1d\eta_2,
\end{equation*}
\begin{equation*}
 Q^2=-\wh{\rho}_n(\thetab+\hb)\int_{\{|\thetab-\etab|\leq 2|\hb|\}\cap V_{5R}}\hspace{-0.3cm} P^1_{m,k,l}(\thetab,\etab)\delta_{\etab}\tilde{Z}_{l,m}(\thetab)d\eta_1d\eta_2,
\end{equation*}
\begin{equation*}
    Q^3=\wh{\rho}_n(\thetab+\hb)\delta_{\thetab}\tilde{Z}_{l,m}(\thetab+\hb)\int_{\{|\thetab-\etab|\geq 2|\hb|\}\cap V_{5R}}\hspace{-0.3cm}P^1_{m,k,l}(\thetab, \etab)d\eta_1d\eta_2,
\end{equation*}
\begin{equation*}
    Q^4=\wh{\rho}_n(\thetab+\hb)\!\!\int_{\{|\thetab-\etab|\geq 2|\hb|\}\cap V_{5R}} \hspace{-2.1cm}(P^1_{m,k,l}(\thetab,\etab)-P^1_{m,k,l}(\thetab+\hb,\etab))\delta_{\etab}\tilde{Z}_{l,m}(\thetab+\hb)d\eta_1d\eta_2,
\end{equation*}
\begin{equation*}
    Q^5=(\wh{\rho}_n(\thetab)-\wh{\rho}_n(\thetab+\hb))\int_{V_{5R}} P^1_{m,k,l}(\thetab,\etab)\delta_{\etab}\tilde{Z}_{l,m}(\thetab)d\eta_1d\eta_2.
\end{equation*}
Recalling \eqref{mcP} and the bounds for $\mc{P}^1_{m,k,l}$ \eqref{mcPbound} and $\mc{K}^1_{m,k,l}$ \eqref{kRbounds1}, we obtain
\begin{equation*}
    \begin{aligned}
    |Q^1|+|Q^2|&\leq C\Big(\frac{\|B_n(\cdot)-B\|_{C^0(V_{5R})}}{|\bm{X}|_{\circ,n}^3}+\frac{\|A_n(\cdot)-A\|_{C^0(V_{5R})}\|\nabla\bm{X}_n\|_{C^0(V_{5R})}^7}{|\bm{X}|_{\circ,n}^9}\Big)\\
    &\hspace{1cm}\times\int_{\{|\thetab-\etab|\leq 2|\hb|\}\cap V_{5R}}\hspace{-0.3cm}\frac{[\tilde{Z}]_{C^\gamma(V_{5R})}}{|\thetab-\etab|^{2-\gamma}}d\eta_1d\eta_2\\
    &\hspace{-0.4cm}+C\frac{\|\nabla \bm{X}_n\|_{C^0(V_{5R})}}{|\bm{X}|_{\circ,n}^3}\int_{\{|\thetab-\etab|\leq 2|\hb|\}\cap V_{5R}}\hspace{-1.1cm}\frac{[\nabla \bm{X}_n]_{C^\gamma(V_{5R})}\|\tilde{Z}\|_{C^0(V_{5R})}}{|\thetab-\etab|^{2-\gamma}}d\eta_1d\eta_2,
    \end{aligned}
\end{equation*}
thus
\begin{equation}\label{Q1Q2bound}
    \begin{aligned}
    |Q^1|\!+\!|Q^2|&\leq  C(|\bm{X}|_*,\|\nabla_{\mathbb{S}^2}\bm{X}\|_{C^0(\mathbb{S}^2)})\big(\|\nabla_{\mathbb{S}^2}\bm{X}\|_{C^\gamma(B_{\hx_n,5R}\cap\mathbb{S}^2)}\|\tilde{Z}\|_{C^0(V_{5R})}\\
    &\quad+\varepsilon(R)\|\tilde{Z}\|_{C^\gamma(V_{5R})}\big)|\hb|^\gamma.
    \end{aligned}
\end{equation}
It is clear that we could also obtain the estimate
\begin{equation*}
    \begin{aligned}
    |Q^1|\!+\!|Q^2|&\leq  C(|\bm{X}|_*,\|\nabla_{\mathbb{S}^2}\bm{X}\|_{C^0(\mathbb{S}^2)})|\hb|^\gamma\big(\|\nabla_{\mathbb{S}^2}\bm{X}\|_{C^\frac{\gamma}2(B_{\hx_n,5R}\cap\mathbb{S}^2)}\|\tilde{Z}\|_{C^\frac{\gamma}{2}(V_{5R})}\\
    &\qquad+\varepsilon(R)\|\tilde{Z}\|_{C^\gamma(V_{5R})}\big).
    \end{aligned}
\end{equation*}
We see that the difference between those two type of estimate comes only from the kernel $\mc{K}$, where we can distribute half a derivative. The same idea propagates along the lines below, hence we only show the first estimate \eqref{Jn8bound}.
Then, integration by parts in $Q^3$ gives
\begin{equation*}
    \begin{aligned}
        |Q^3|&\leq C|\wh{\rho}_n(\thetab+\hb)||\delta_{\thetab}\tilde{Z}(\thetab+\hb)|\\
        &\hspace{1cm}\times\int_{\{|\thetab-\etab|=2|\hb|\}\cup\partial V_{5R}} \Big|\frac{1}{|A(\thetab-\etab)|}-\frac{1}{|\bm{X}_n(\thetab)-\bm{X}_n(\etab)|}\Big|dl(\etab).
    \end{aligned}
\end{equation*}
Using \eqref{boundary_terms}, $\abs{\hb}\leq 8R$ and that $\thetab\in V_{4R}$, we obtain that 
\begin{equation*}
    \begin{aligned}
        &\int_{\{|\thetab-\etab|=2|\hb|\}\cup\partial V_{5R}} \Big|\frac{1}{|A(\thetab-\etab)|}-\frac{1}{|\bm{X}_n(\thetab)-\bm{X}_n(\etab)|}\Big|dl(\etab)\\
        &\leq C\frac{\|\nabla\bm{X}\|_{C^0(V_{5R})}}{|\bm{X}|_{\circ,n}^3}\int_{\{|\thetab-\etab|=2|\hb|\}\cup\partial V_{5R}}\!\!\!\!\!\!\!\! \frac{|E^{\etab}\bm{X}_n(\thetab)|}{|\thetab\!-\!\etab|}+\frac{\|A_n(\cdot)\!-\!A\|_{C^0(V_{5R})}}{|\thetab\!-\!\etab|}dl(\etab)\\
        &\leq C\frac{\|\nabla\bm{X}\|_{C^0(V_{5R})}}{|\bm{X}|_{\circ,n}^3}\Big(\|\nabla \bm{X}\|_{C^0(V_{5R})}(\varepsilon(|\hb|)+\varepsilon(R))+\|\nabla \bm{X}\|_{C^0(V_{5R})}\varepsilon(R)\Big)\\
        &\leq C\frac{\|\nabla\bm{X}\|_{C^0(V_{5R})}^2}{|\bm{X}|_{\circ,n}^3}\varepsilon(R),
    \end{aligned}
\end{equation*}
hence
\begin{equation}\label{Q3bound}
    \begin{aligned}
        |Q^3|  &\leq \varepsilon(R)C(|\bm{X}|_*,\|\nabla_{\mathbb{S}^2}\bm{X}\|_{C^0(\mathbb{S}^2)})\|\tilde{Z}\|_{C^\gamma(V_{5R})}|\hb|^\gamma.
    \end{aligned}
\end{equation}
The term $Q^4$ in \eqref{OHoldersplit} is estimated by applying the mean-value theorem. As in the previous terms, we need to consider separately the two  kernels in $P^1_{m,k,l}$ \eqref{mcP}. We take a derivative on $\mc{P}^1_{m,k,l}$,
\begin{equation*}
    \begin{aligned}
    \PD{}{\theta_j}\mc{P}^1_{m,k,l}(\thetab,\etab)&=\frac{\delta_{k,l}}{8\pi}\Big(-\frac{B_{m,j}}{|A(\thetab-\etab)|^3}+\frac{(B_n(\etab))_{m,j}}{|A_n(\etab)(\thetab-\etab)|^3}\Big)\\
    &\hspace{-1.5cm}+\frac{\delta_{k,l}}{8\pi}\Big(3\frac{(B(\thetab-\etab))_m(B(\thetab-\etab))_j}{|A(\thetab-\etab)|^5}-3\frac{(B_n(\etab)(\thetab-\etab))_m(B_n(\etab)(\thetab-\etab))_j}{|A_n(\etab)(\thetab-\etab)|^5}\Big),
    \end{aligned}
\end{equation*}
and thus
\begin{equation*}
    \begin{aligned}
    |\PD{}{\theta_j}\mc{P}^1_{m,k,l}(\thetab,\etab)|&\leq C(\|\nabla\bm{X}_n\|_{C^0(V_{4R})},|\bm{X}|_{\circ,n})\frac{\varepsilon(R)}{|\thetab-\etab|^3},
    \end{aligned}
\end{equation*}
while the derivative of $\mc{K}^1_{m,k,l}$ is computed and bounded below
\begin{align*}
    \PD{}{\theta_j}\mathcal{K}_{m,k,l}^{1,1}(\thetab,\etab)&=-\delta_{k,l}\PD{\bm{X}_n(\etab)}{\eta_m} \cdot\\
    &\times\paren{3\frac{\Delta_{\etab}\bm{X}_n(\thetab)\cdot \PD{\bm{X}_n(\thetab)}{\theta_j}}{|\Delta_{\etab}\bm{X}_n(\thetab)|^5}\frac{ E^{\etab}\bm{X}_n(\thetab)}{|\thetab-\etab|^3}+\frac{\delta_{\etab}\PD{\bm{X}_n(\thetab)}{\theta_j}}{|\Delta_{\etab}\bm{X}_n(\thetab)|^3|\thetab-\etab|^3}},
\end{align*}
\begin{equation*}
    \begin{aligned}
     |\PD{}{\theta_j}\mathcal{K}^{1,1}(\thetab,\etab)|&\leq \frac{\|\nabla \bm{X}_n\|_{C^0(V_{4R})}}{|\bm{X}|_{\circ,n}^3}\frac{[\nabla \bm{X}_n]_{C^\gamma(V_{4R})}}{|\thetab-\etab|^{3-\gamma}}\paren{1\!+\! 3\frac{\|\nabla \bm{X}_n\|_{C^0(V_{4R})}}{|\bm{X}|_{\circ,n}}}\\
      &\leq C(|\bm{X}|_*,\|\nabla_{\mathbb{S}^2}\bm{X}\|_{C^0(\mathbb{S}^2)})\frac{\|\nabla_{\mathbb{S}^2}\bm{X}\|_{C^\gamma(B_{\hx_n,2R}\cap\mathbb{S}^2)}}{|\thetab-\etab|^{3-\gamma}}.
    \end{aligned}
\end{equation*}
Therefore, we obtain
\begin{equation}\label{Q4bound}
    \begin{aligned}
    |Q_4|&\leq C(|\bm{X}|_*,\|\nabla_{\mathbb{S}^2}\bm{X}\|_{C^0(\mathbb{S}^2)})\big(\|\nabla_{\mathbb{S}^2}\bm{X}\|_{C^\gamma(B_{\hx_n,5R}\cap\mathbb{S}^2)}\|\tilde{Z}\|_{C^0(V_{5R})}\\
    &\quad+\varepsilon(R)\|\tilde{Z}\|_{C^\gamma(V_{5R})}\big)|\hb|^\gamma.
    \end{aligned}
\end{equation}
Finally, the estimate for $Q^5$ \eqref{OHoldersplit} follows from the bounds \eqref{mcPbound} and \eqref{kRbounds1},
\begin{equation}\label{Q5bound}
    \begin{aligned}
    |Q^5|&\leq C(\|\nabla\bm{X}_n\|_{C^0(V_{5R})},|\bm{X}|_{\circ,n})\|\wh{\rho}_n\|_{C^\gamma(V_{5R})}|\hb|^\gamma\\
    &\qquad\times\Big(\varepsilon(R)\|\tilde{Z}\|_{C^\gamma(V_{5R})}\!+\!\|\nabla\bm{X}_n\|_{C^\gamma(V_{5R})}\|\tilde{Z}\|_{C^0(V_{5R})}\Big)\\
    &\leq C(|\bm{X}|_*,\|\nabla_{\mathbb{S}^2}\bm{X}\|_{C^0(\mathbb{S}^2)})\big(\|\nabla_{\mathbb{S}^2}\bm{X}\|_{C^\gamma(B_{\hx_n,5R}\cap\mathbb{S}^2)}\|\tilde{Z}\|_{C^0(V_{5R})}\\
    &\quad+\varepsilon(R)\|\tilde{Z}\|_{C^\gamma(V_{5R})}\big)|\hb|^\gamma.
    \end{aligned}
\end{equation}
We combine the bounds \eqref{Q1Q2bound}, \eqref{Q3bound}, \eqref{Q4bound}, and \eqref{Q5bound} into \eqref{OHoldersplit} to conclude that
\begin{equation}\label{O1Holderbound}
    \begin{aligned}
    |O^1(\thetab+\hb)-O^1(\thetab)|&\leq C(|\bm{X}|_*,\|\nabla_{\mathbb{S}^2}\bm{X}\|_{C^0(\mathbb{S}^2)})\big(\|\nabla_{\mathbb{S}^2}\bm{X}\|_{C^\gamma(B_{\hx_n,5R}\cap\mathbb{S}^2)}\|\tilde{Z}\|_{C^0(V_{5R})}\\
    &\quad+\varepsilon(R)\|\tilde{Z}\|_{C^\gamma(V_{5R})}\big)|\hb|^\gamma.
    \end{aligned}
\end{equation}
We continue with the H\"older seminorm for $O^2$ in \eqref{Jn81}. Integrating by parts
\begin{equation*}
    \begin{aligned}
    O^2&(\thetab+\hb)-O^2(\thetab)\\
    &=\frac{\delta_{k,l}}{8\pi}\wh{\rho}_n(\thetab+\hb)\tilde{Z}_{l,m}(\thetab+\hb)\\
    &\hspace{1.5cm}\times\int_{\partial V_{5R}}\Big(\frac{1}{|A(\thetab+\hb-\etab)|}-\frac{1}{|\bm{X}(\thetab+\hb)-\bm{X}_n(\etab)|}\Big)n_m(\etab) dl(\etab)\\
    &\quad-\frac{\delta_{k,l}}{8\pi}\wh{\rho}_n(\thetab)\tilde{Z}_{l,m}(\thetab)\int_{\partial V_{5R}}\!\! \Big(\frac{1}{|A(\thetab\!-\!\etab)|}\!-\!\frac{1}{|\bm{X}_n(\thetab)\!-\!\bm{X}_n(\etab)|}\Big)n_m(\etab)dl(\etab).
    \end{aligned}
\end{equation*}
Therefore, 
\begin{equation}\label{O2split}
    \begin{aligned}
    |O^2(\thetab+\hb)-O^2(\thetab)|&=|Q^6+Q^7|,
    \end{aligned}
\end{equation}
    with
\begin{equation*}
    \begin{aligned}
    Q^6&=  \frac{\delta_{k,l}}{8\pi}\Big(\wh{\rho}_n(\thetab+\hb)\tilde{Z}_{l,m}(\thetab+\hb)-\wh{\rho}_n(\thetab)\tilde{Z}_{l,m}(\thetab)\Big)\\
    &\qquad\times\int_{\partial V_{5R}}|\frac{1}{|A(\thetab+\hb-\etab)|}-\frac{1}{|\bm{X}(\thetab+\hb)-\bm{X}_n(\etab)|}|n_m(\etab)dl(\etab),
    \end{aligned}
\end{equation*}
\begin{equation*}
    \begin{aligned}
    Q^7&=\frac{\delta_{k,l}}{8\pi}\wh{\rho}_n(\thetab)\tilde{Z}_{l,m}(\thetab)\\
    &\qquad\times\bigg(
    \int_{\partial V_{5R}} \Big(\frac{1}{|A(\thetab+\hb-\etab)|}-\frac{1}{|\bm{X}_n(\thetab+\hb)-\bm{X}_n(\etab)|}\Big)n_m(\etab)dl(\etab)
    \\
    &\hspace{1.5cm}-\int_{\partial V_{5R}} \Big(\frac{1}{|A(\thetab-\etab)|}-\frac{1}{|\bm{X}_n(\thetab)-\bm{X}_n(\etab)|}\Big)n_m(\etab)dl(\etab)\bigg).
    \end{aligned}
\end{equation*}
Using \eqref{boundary_terms} and that $\thetab\in V_{4R}$, we obtain
\begin{equation*}
    \begin{aligned}
    |Q^6|&\leq C\|\tilde{Z}\|_{C^\gamma(V_{5R})}|\hb|^\gamma \frac{\|\nabla\bm{X}_n\|_{C^0(V_{5R})}}{|\bm{X}|_{\circ,n}^3}\\
    &\hspace{1cm}\times\int_{\partial V_{5R}} \frac{|E^{\etab}\bm{X}_n(\thetab)|\!+\!\|A_n(\cdot)\!-\!A\|_{C^0(V_{5R})}}{|\thetab\!-\!\etab|}dl(\etab)\\
    &\leq C\|\tilde{Z}\|_{C^\gamma(V_{5R})}|\hb|^\gamma \frac{\|\nabla\bm{X}_n\|_{C^0(V_{5R})}}{|\bm{X}|_{\circ,n}^3}\|\nabla\bm{X}_n\|_{C^0(V_{5R})}\varepsilon(R).
    \end{aligned}
\end{equation*}
Finally, we estimate $Q^7$, 
\begin{equation*}
    \begin{aligned}
    |&Q^7|\leq C\|\tilde{Z}\|_{C^0(V_{5R})}\int_{\partial V_{5R}}|\frac{1}{|A(\thetab+\hb-\etab)|}-\frac{1}{|A(\thetab-\etab)|}|dl(\etab)\\
    &+C\|\tilde{Z}\|_{C^0(V_{5R})}\int_{\partial V_{5R}}|\frac{1}{|\bm{X}_n(\thetab+\hb)-\bm{X}_n(\etab)|}-\frac{1}{|\bm{X}_n(\thetab)-\bm{X}_n(\etab)|}|dl(\etab).
    \end{aligned}
\end{equation*}
Performing the differences we obtain that
\begin{equation*}
    \begin{aligned}
    |Q^7|&\leq C(|\bm{X}|_{\circ,n},\|\nabla\bm{X}_n\|_{C^0(V_{5R})})\frac{(|\hb|+|\hb|^{1-\gamma}R^\gamma)}{R}\|\tilde{Z}\|_{C^0(V_{5R})}|\hb|^\gamma\\
    &\leq  C(|\bm{X}|_{\circ,n},\|\nabla\bm{X}_n\|_{C^0(V_{5R})})\|\tilde{Z}\|_{C^0(V_{5R})}|\hb|^\gamma.
    \end{aligned}
\end{equation*}
Thus, going back to \eqref{O2split}, we conclude that
\begin{equation}\label{O2Holderbound}
    \begin{aligned}
    |&O^2(\thetab\!+\!\hb)\!-\!O^2(\thetab)|\!\leq\! C(|\bm{X}|_*,\|\nabla_{\mathbb{S}^2}\bm{X}\|_{C^0(\mathbb{S}^2)})\big(\|\tilde{Z}\|_{C^0(V_{5R})}\\
    &\quad+\varepsilon(R)\|\tilde{Z}\|_{C^\gamma(V_{5R})}\big)|\hb|^\gamma,
    \end{aligned}
\end{equation}
and together with \eqref{O1Holderbound} and \eqref{Jn81C0bound} back into \eqref{Jn81} we obtain the H\"older norm estimate for $I_n^1$. Since the kernel in $I_n^2$ \eqref{Jn8split} satisfy the same estimates, we conclude that
\begin{equation*}
    \begin{aligned}
    \|I_n\|_{C^\gamma(\mathbb{R}^2)}&\leq C(|\bm{X}|_*,\|\nabla_{\mathbb{S}^2}\bm{X}\|_{C^0(\mathbb{S}^2)})\big((1+\|\nabla_{\mathbb{S}^2}\bm{X}\|_{C^\gamma(B_{\hx_n,5R}\cap\mathbb{S}^2)})\|\tilde{Z}\|_{C^0(V_{5R})}\\
    &\quad+\varepsilon(R)\|\tilde{Z}\|_{C^\gamma(V_{5R})}\big)|\hb|^\gamma.
    \end{aligned}
\end{equation*}

\end{proof}

\section{Frozen-coefficient Semigroup}\label{sec:frozen}

We will need later in the proof (see Lemma \ref{Fsemigroup}) to deal with the following kernels, with $0\leq \alpha \leq 1$:
\begin{equation*}
\begin{split}
G_{\alpha}(A\bm{\theta})&=G^1(A\thetab)+\alpha G^2(A\thetab),
\end{split}
\end{equation*}
where $G^1$, $G^2$ are given in \eqref{GG1G2}.
From  Section \ref{sec:symbol}, we have that
\begin{equation*}
\begin{split}
\paren{\mc{F}_\theta G_{\alpha,A}}(\bm{\xi})&=\paren{\mc{F}_\theta G^1(A\bm{\theta})}(\bm{\xi})+\alpha\paren{\mc{F}_\theta G^2(A\bm{\theta})}(\bm{\xi}),\\
\paren{\mc{F}_\theta G^1(A\bm{\theta})}(\bm{\xi})&=\frac{1}{4\det(B)\abs{U\bm{\xi}}},\\
\paren{\mc{F}_\theta G^2(A\bm{\theta})}(\bm{\xi})&=\frac{1}{4\det(B)\abs{U\bm{\xi}}}\paren{ P- \frac{U\bm{\xi}}{\abs{U\bm{\xi}}}\otimes \frac{U\bm{\xi}}{\abs{U\bm{\xi}}}},
\end{split}
\end{equation*}
where $\lambda=\norm{A}_F, B=\sqrt{A^{\rm T}A}, \; P=A(A^{\rm T}A)^{-1}A^{\rm T}, \; U=A(A^{\rm T}A)^{-1}$.
Let us consider the operator defined in \eqref{defM}.
In preparation for this, we consider the operator with $\nabla\bm{X}$ given by a constant matrix and with $\wh{g}=I_2$, as defined in \eqref{defnL2}:

\begin{align*}
\begin{split}
     \mc{L}_{\alpha, A}\bm{Y}
    :=&\tilde{\mc{M}}_\alpha\paren{A}\paren{T_F(A)\nabla \bm{Y}}
    =\paren{\tilde{\mc{M}}^1(A)+\alpha \tilde{\mc{M}}^2(A)}\paren{T_F(A)\nabla \bm{Y}}
\end{split}
\end{align*}
where $T_F(A)$ is defined in \eqref{tensionD}. 
The parameter $\alpha$ is useful in Section \ref{sec: local well-posed}.
We will prove $\mc{L}_{\alpha, A}$ is a sectorial operator first.
That is to say, we have to estimate $\paren{z+\mc{L}_{\alpha, A}}^{-1}$ where $z$ is in a set with some $\omega\in \mathbb{R}, 0<\delta<\frac{\pi}{2}$ in the complex plane:
\begin{equation*}
\mc{S}_{\omega,\delta}=\lbrace z\in \mathbb{C}:\abs{{\rm arg}(z-\omega)}\leq \pi-\delta \rbrace.
\end{equation*}
Since $\tilde{\mc{M}}^j(A)$ is a singular integral operator, it is difficult to compute its inverse operator.
However, $\tilde{\mc{M}}^j(A)$ is a convolution with kernel $G^j\paren{A\thetab}$, so we may use its Fourier multiplier to obtain 
\begin{align*}
    \paren{z+\mc{L}_{\alpha, A}}^{-1}\bm{Y}=&\mc{F}^{-1}(\paren{z-L_{\alpha,A}(\bm{\xi})}^{-1}\mc{F}\bm{Y}(\xib))(\thetab)
\end{align*}
where $L_{\alpha,A}$ is defined in \eqref{multiplierNon} (in this section, we will write $L_A$ instead of $L_{\alpha, A}$).
Then, we will use the Fourier multiplier to estimate the original operator.
In harmonic analysis, the Fourier multiplier theorem in $L^p$ norms is well-known \cite{Loukas:classical-fourier-analysis}.
We will use a Fourier multiplier theorem in semi-norms $\jump{\cdot}_{C^{\gamma}\paren{\mbr}}$. 

\begin{thm}\label{fouriermuliplierholdernorm}
If $T$ is a Fourier multiplier operator with multiplier \begin{equation*}
m\paren{\bm{\xi}}\in C^s \paren{\mathbb{R}^n\setminus \{\bm{0}\}}\cap L^\infty\paren{\mathbb{R}^n},\qquad s>\frac{n}{2},
\end{equation*}
and 
\begin{align*}
    \norm{\partial_{\bm{\xi}}^\alpha m\paren{\bm{\xi}}}\leq C_\alpha \abs{\bm{\xi}}^{-\abs{\alpha}}
\end{align*}
for all $\abs{\alpha}\leq s$, then for all $u\in C^\gamma\paren{\mathbb{R}^n}\cap L^2\paren{\mathbb{R}^n}$ where $0<\gamma<1$,
\begin{align*}
    \jump{T u}_{C^{\gamma}\paren{\mathbb{R}^n}}\leq C_{\gamma, s, n} D_m\jump{u}_{C^{\gamma}\paren{\mathbb{R}^n}},
\end{align*}
where $D_m= \max_{\abs{\alpha}\leq s}C_\alpha$.
\end{thm}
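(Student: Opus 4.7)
The plan is to combine a Littlewood--Paley decomposition with the homogeneous Besov characterization of the H\"older seminorm for $0<\gamma<1$. Fix a smooth radial bump $\psi$ supported in $\{1/2 \leq |\bm{\xi}| \leq 2\}$ with $\sum_{j\in\mathbb{Z}}\psi_j\equiv 1$ on $\mathbb{R}^n\setminus\{0\}$, where $\psi_j(\bm{\xi})=\psi(2^{-j}\bm{\xi})$, and set $\Delta_j u = \mc{F}^{-1}(\psi_j \mc{F}u)$. Using the standard equivalence
\begin{equation*}
[u]_{C^\gamma(\mathbb{R}^n)} \sim \sup_{j\in\mathbb{Z}} 2^{j\gamma}\|\Delta_j u\|_{L^\infty(\mathbb{R}^n)},
\end{equation*}
valid for $u \in L^2 \cap C^\gamma$, the theorem reduces to proving the frequency-localized bound
\begin{equation*}
\|\Delta_j T u\|_{L^\infty} \leq C D_m \|\widetilde{\Delta}_j u\|_{L^\infty},
\end{equation*}
uniformly in $j$, where $\widetilde{\Delta}_j$ is built from a slightly enlarged cutoff $\widetilde{\psi}_j$ satisfying $\widetilde{\psi}_j\psi_j=\psi_j$.

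Because $T$ is a Fourier multiplier, $\Delta_j T u = K^T_j * \widetilde{\Delta}_j u$ with $K^T_j = \mc{F}^{-1}(\psi_j m)$, and Young's convolution inequality reduces the problem to the uniform kernel bound $\|K^T_j\|_{L^1(\mathbb{R}^n)} \leq C_{s,n} D_m$. Rescaling $\bm{y} \mapsto 2^j \bm{y}$ shows that it suffices to bound the $L^1$ norm of $K = \mc{F}^{-1}(\psi \cdot m(2^j\cdot))$ uniformly in $j$. The key observation is that the Mikhlin hypothesis $\|\partial^\alpha m(\bm{\xi})\|\leq C_\alpha |\bm{\xi}|^{-|\alpha|}$ is scale invariant: the chain-rule factor $2^{j|\alpha|}$ is cancelled exactly by $|\bm{\xi}|^{-|\alpha|}$ evaluated on the support of $\psi$, where $|\bm{\xi}|\sim 1$. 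Consequently every derivative of order $\leq s$ of $\psi\cdot m(2^j\cdot)$ is bounded in $L^\infty$ by $C D_m$, uniformly in $j$.

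To bound $\|K\|_{L^1}$ I split $\mathbb{R}^n$ into $\{|\bm{y}|\leq 1\}$ and $\{|\bm{y}|>1\}$ and apply Cauchy--Schwarz on each piece combined with Plancherel:
\begin{align*}
\|K\|_{L^1(\{|\bm{y}|\leq 1\})} &\leq C_n \|K\|_{L^2} = C_n \|\psi\cdot m(2^j\cdot)\|_{L^2} \leq C_n D_m,\\
\|K\|_{L^1(\{|\bm{y}|>1\})} &\leq \bigl\||\bm{y}|^{-s}\bigr\|_{L^2(\{|\bm{y}|>1\})}\, \bigl\||\bm{y}|^s K\bigr\|_{L^2} \leq C_{s,n} D_m,
\end{align*}
where in the last line $\||\bm{y}|^s K\|_{L^2}$ is controlled by $L^2$ norms of derivatives of $\psi\cdot m(2^j\cdot)$ of order $\leq s$ via the Fourier-derivative identity. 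The integrability $\||\bm{y}|^{-s}\|_{L^2(|\bm{y}|>1)}<\infty$ is precisely the hypothesis $s>n/2$, and this is the one essential use of that assumption. The main technical obstacle is therefore this weighted $L^2$ estimate on the rescaled kernel, which realizes Mikhlin's observation inside a single frequency annulus; the $L^2$ assumption on $u$ is used only to ensure that $\mc{F}u$, the Littlewood--Paley pieces, and the action of $T$ are well-defined in a classical sense before passing to the seminorm estimate.
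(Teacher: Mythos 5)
Your proposal is correct and follows essentially the same route as the paper's proof: pass to the homogeneous Besov seminorm $\sup_j 2^{j\gamma}\|\dot\Delta_j \cdot\|_{L^\infty}$, bound the Littlewood--Paley piece of $Tu$ by Young's inequality against the rescaled kernel $\mathcal{F}^{-1}\big(\psi\cdot m(2^j\cdot)\big)$, and obtain a uniform $L^1$ bound on that kernel by Plancherel plus Cauchy--Schwarz against a decaying weight, with $s>n/2$ ensuring the weight is square integrable. The only cosmetic differences are that you split $\{|\bm{y}|\le 1\}$ from $\{|\bm{y}|>1\}$ and apply Cauchy--Schwarz on each piece where the paper uses the single weight $(1+|\bm{\theta}|^2)^{s}$, and you absorb the frequency support via an enlarged cutoff $\widetilde\Delta_j$ where the paper writes out $\dot\Delta_{j-1}+\dot\Delta_j+\dot\Delta_{j+1}$.
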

\begin{remark}
The proof of Theorem \ref{fouriermuliplierholdernorm} may be split into two parts.
The first part is the well-known equivalence between  the homogeneous Besov norm $\norm{\cdot}_{\Dot{B}_{\infty, \infty}^\gamma\paren{\mathbb{R}^n}}$  and H\"older seminorm  $\jump{\cdot}_{C^{\gamma}\paren{\mathbb{R}^n}}$ \cite{Loukas:modern-fourier-analysis}.
The second part is proving the Fourier multiplier theorem in homogeneous Besov norms $\norm{\cdot}_{\Dot{B}_{\infty, \infty}^\gamma\paren{\mathbb{R}^n}}$. Although these results are classical, we include the proof of the version we need in Appendix \ref{sec:appA} for the covenience of the reader. 
\end{remark}

We will compute $\paren{z-L_{\alpha,A}(\bm{\xi})}^{-1}$ in Section \ref{sec:fourier_fund_est}.
Next, for the norm $\norm{\cdot}_{C^{0}\paren{\mbr}}$, we may expand the result in \cite[Section 3.1]{Rodenberg:peskin-thesis} in $\mathbb{R}^2$.
\begin{lemma}[{\cite[Proposition 3.1.2]{Rodenberg:peskin-thesis}}]\label{c0normseminorm}
If $\jump{u}_{C^\gamma\paren{\mbr}}<\infty$ for some $\gamma\in\paren{0,1}$ and $\mc{F}u \paren{\bm{\xi}}=0$ in a neighborhood of $\bm{\xi}=\bm{0}$, then $\norm{u}_{C^0\paren{\mbr}}\leq C \jump{u}_{C^\gamma\paren{\mbr}}$, where $C$ depends on the neighborhood.
\end{lemma}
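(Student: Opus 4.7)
The plan is to exploit the spectral gap: a function whose Fourier transform vanishes on a neighborhood of the origin has no low-frequency content, so it can be recovered from its finite differences, which the H\"older seminorm controls directly.

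First, I would observe that $u$ is a tempered distribution: $\jump{u}_{C^\gamma(\mbr)}<\infty$ forces $|u(\thetab)|\leq |u(\bm{0})|+\jump{u}_{C^\gamma(\mbr)}|\thetab|^\gamma$, so $\mc{F}u$ makes sense in $\mc{S}'(\mbr)$. Let $r>0$ be such that $\mc{F}u(\xib)=0$ on $\{|\xib|<r\}$. I would then pick a bump $\chi\in C_c^\infty(\mbr)$ with $\chi\equiv 1$ in a neighborhood of $\bm{0}$ and $\text{supp}(\chi)\subset\{|\xib|<r\}$, and set $\eta:=\mc{F}^{-1}\chi$. Then $\eta$ is a Schwartz function on $\mbr$ with
\begin{equation*}
\int_{\mbr}\eta(\yb)\,d\yb=\chi(\bm{0})=1.
\end{equation*}

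Next, since $\chi(\xib)$ and $\mc{F}u(\xib)$ have disjoint supports, their product is identically zero. Fourier inversion then gives $\eta\ast u=0$ as a tempered distribution. Because $u$ is continuous (being H\"older) with polynomial growth and $\eta$ is Schwartz, this convolution coincides with the pointwise integral, yielding
\begin{equation*}
0=(\eta\ast u)(\thetab)=\int_{\mbr}\eta(\yb)u(\thetab-\yb)\,d\yb\qquad\text{for every }\thetab\in\mbr.
\end{equation*}
Combining this with the identity $u(\thetab)=u(\thetab)\int_{\mbr}\eta(\yb)\,d\yb$, I obtain the representation
\begin{equation*}
u(\thetab)=\int_{\mbr}\eta(\yb)\bigl(u(\thetab)-u(\thetab-\yb)\bigr)\,d\yb,
\end{equation*}
and hence
\begin{equation*}
|u(\thetab)|\leq \jump{u}_{C^\gamma(\mbr)}\int_{\mbr}|\eta(\yb)|\,|\yb|^\gamma\,d\yb\leq C\jump{u}_{C^\gamma(\mbr)},
\end{equation*}
where $C=\int_{\mbr}|\eta(\yb)|\,|\yb|^\gamma\,d\yb<\infty$ since $\eta$ is Schwartz. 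The dependence of $C$ on the neighborhood enters through the choice of $\chi$ (the scale $r$).

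The only genuine technical point is justifying $\eta\ast u=0$ pointwise when $u$ is merely H\"older continuous and not in any $L^p$ space; I would handle this by first establishing the identity in $\mc{S}'(\mbr)$ via Fourier inversion and then using the continuity of $u$ together with the rapid decay of $\eta$ to upgrade it to a pointwise statement. Once the reproducing formula is in place, the bound is immediate.
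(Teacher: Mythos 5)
Your argument is correct. Since the paper does not reproduce a proof of this lemma---it simply cites Proposition 3.1.2 of Rodenberg's thesis---there is nothing in the text to compare against, but your mollifier-based spectral-gap argument is the standard one and is sound: the Hölder bound forces polynomial growth, so $u\in\mc{S}'(\mbr)$ and $\mc{F}u$ is well-defined; the product $\chi\,\mc{F}u$ vanishes because $\operatorname{supp}\chi$ lies inside the open set where $\mc{F}u=0$; hence $\mc{F}(\eta\ast u)=0$ in $\mc{S}'$, so $\eta\ast u\equiv 0$, and since $\eta\in\mc{S}$ and $u$ is a continuous function of polynomial growth the distributional convolution agrees with the absolutely convergent integral. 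The normalization $\int\eta=\chi(\bm{0})=1$ then yields the reproducing formula, and the Hölder seminorm controls the finite difference inside the integral uniformly, giving the bound with $C=\int_{\mbr}|\eta(\yb)|\,|\yb|^\gamma\,d\yb$, which depends on the neighborhood through the choice of scale for $\chi$.

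A route more in keeping with the Littlewood--Paley machinery developed in Appendix~\ref{sec:appA} would be: since $\mc{F}u$ vanishes on $\{|\xib|<r\}$, the dyadic blocks $\Dot{\Delta}_j u$ vanish for all $j\le J_0$ with $2^{J_0+1}\le r$, so
\begin{equation*}
\|u\|_{C^0}\le \sum_{j>J_0}\|\Dot{\Delta}_j u\|_{L^\infty}
\le \sum_{j>J_0} 2^{-j\gamma}\,\|u\|_{\Dot{B}^\gamma_{\infty,\infty}}
\le \frac{2^{-J_0\gamma}}{1-2^{-\gamma}}\,\|u\|_{\Dot{B}^\gamma_{\infty,\infty}}
\lesssim \jump{u}_{C^\gamma(\mbr)},
\end{equation*}
using the equivalence $\|\cdot\|_{\Dot{B}^\gamma_{\infty,\infty}}\sim\jump{\cdot}_{C^\gamma}$ already invoked in the remark after Theorem~\ref{fouriermuliplierholdernorm}. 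This gets the same result and makes the dependence of $C$ on the neighborhood transparent via $J_0$. Your single-bump argument is more elementary and does not require invoking the Besov equivalence; either is a valid proof.
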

Therefore, we may choose a suitable cutting function $\varphi\paren{\xib}$ with $\varphi\paren{\xib}=1$ in a neighborhood of $\bm{\xi}=\bm{0}$.
Then, the rest of the work for $\norm{\paren{z+\mc{L}_{\alpha, A}}^{-1}\bm{Y}}_{C^{0}\paren{\mbr}}$ is only $\norm{\mc{F}^{-1}(\paren{z-L_{\alpha,A}(\bm{\xi})}^{-1}\varphi\paren{\xib}\mc{F}\bm{Y}(\xib))(\thetab)}_{C^{0}\paren{\mbr}}$.

\subsection{Fundamental estimates}\label{sec:fourier_fund_est}
We need some elementary estimates on operators $\mc{L}_A$ and $L_A$.
To achieve it, we first compute the estimate of $T_F(A)$.
\begin{lemma}\label{TF_est}
Given  matrice $A_1,A_2$ in $\domainA$, we have $\sigma_2\leq\norm{A_1}_F,\norm{A_2}_F\leq\sqrt{2}\sigma_1$.
Then, we have the following estimates for $T_F(A)$:
\begin{align*}
    \abs{T_F(A_1)_{ijkl}}\leq&  z_M^{(0)},\\
    \abs{T_F(A_1)_{ijkl}-T_F(A_2)_{ijkl}}\leq& C\paren{z_M^{(0)}\frac{\sigma_1}{\sigma_2^2}+z_M^{(1)}}\norm{A_1-A_2},
\end{align*}
where
\begin{align*}
    z_M^{(0)}=\max_{\sigma_2\leq\lambda\leq\sqrt{2}\sigma_1}  \abs{f_1\paren{\lambda}}+\abs{f_2\paren{\lambda}},&\quad 
    z_M^{(1)}=\max_{\sigma_2\leq\lambda\leq\sqrt{2}\sigma_1}  \abs{\D{f_1}{\lambda}}+\abs{\D{f_2}{\lambda}},\\
    f_1\paren{\lambda}=\frac{\mc{T}}{\lambda},&\quad
    f_2\paren{\lambda}=\frac{\mc{T}}{\lambda}-\D{\mc{T}}{\lambda}.
\end{align*}
More specific, given $Z\in C^\gamma\paren{\mbr}\bigcap L^2\paren{\mbr}$ with the size of $A_1$,
\begin{align}
    \norm{T_F(A_1)Z}_{C^\gamma\paren{\mbr}\bigcap L^2\paren{\mbr}}\leq& C z_M^{(0)} \norm{Z}_{C^\gamma\paren{\mbr}\bigcap L^2\paren{\mbr}}\label{TF_est00}\\
    \norm{\paren{T_F(A_1)-T_F(A_2)}Z}_{C^\gamma\paren{\mbr}\bigcap L^2\paren{\mbr}}\leq& C \paren{z_M^{(0)}\frac{\sigma_1}{\sigma_2^2}+z_M^{(1)}}\norm{A_1-A_2} \norm{Z}_{C^\gamma\paren{\mbr}\bigcap L^2\paren{\mbr}}\label{TF_est01}
\end{align}
\end{lemma}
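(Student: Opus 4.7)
The proof is essentially algebraic and follows directly from the explicit form of $T_F(A)$ together with the definitions of $z_M^{(0)}$ and $z_M^{(1)}$. Recall from \eqref{tensionD} that
\begin{equation*}
T_F(A)_{ijkl} = f_1(\lambda)\,\delta_{ij}\delta_{kl} - f_2(\lambda)\,\frac{A_{ij}A_{kl}}{\lambda^2},\qquad \lambda = \|A\|_F,
\end{equation*}
with $f_1,f_2$ as defined in the statement. For $A\in \mc{DA}_{\sigma_1,\sigma_2}$ one has $\sigma_2\leq \|A\|_F\leq \sqrt{2}\sigma_1$ (by equivalence of the Frobenius and operator norms up to a factor $\sqrt{2}$), so $f_i(\lambda)$ is controlled by $z_M^{(0)}$ and $\frac{d f_i}{d\lambda}$ by $z_M^{(1)}$. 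The pointwise bound $|T_F(A_1)_{ijkl}|\leq z_M^{(0)}$ follows immediately because $|A_{ij}|/\lambda\leq 1$.

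The main step is the pointwise Lipschitz estimate on $T_F$. I would write
\begin{equation*}
T_F(A_1)-T_F(A_2) = \bigl(f_1(\lambda_1)-f_1(\lambda_2)\bigr)I\otimes I - \bigl(f_2(\lambda_1)-f_2(\lambda_2)\bigr)\frac{A_1\otimes A_1}{\lambda_1^2} - f_2(\lambda_2)\Bigl(\frac{A_1\otimes A_1}{\lambda_1^2}-\frac{A_2\otimes A_2}{\lambda_2^2}\Bigr).
\end{equation*}
The first two terms are handled by the mean value theorem applied to $f_1,f_2$ on $[\sigma_2,\sqrt{2}\sigma_1]$, using $|\lambda_1-\lambda_2|=|\|A_1\|_F-\|A_2\|_F|\leq \|A_1-A_2\|_F\leq C\|A_1-A_2\|$, which contributes $C z_M^{(1)}\|A_1-A_2\|$. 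For the last term I would apply the add-and-subtract decomposition
\begin{equation*}
\frac{A_1\otimes A_1}{\lambda_1^2}-\frac{A_2\otimes A_2}{\lambda_2^2} = \frac{A_1\otimes(A_1-A_2)+(A_1-A_2)\otimes A_2}{\lambda_1^2} + \frac{A_2\otimes A_2}{\lambda_2^2}\cdot\frac{\lambda_2^2-\lambda_1^2}{\lambda_1^2}.
\end{equation*}
The first piece is bounded by $C\|A_1-A_2\|/\sigma_2\leq C(\sigma_1/\sigma_2^2)\|A_1-A_2\|$, while for the second I would use that $|A_2\otimes A_2|/\lambda_2^2\leq 1$ and
\begin{equation*}
\Big|\frac{\lambda_2^2-\lambda_1^2}{\lambda_1^2}\Big| \leq \frac{(\lambda_1+\lambda_2)}{\lambda_1^2}\,|\lambda_1-\lambda_2| \leq C\frac{\sigma_1}{\sigma_2^2}\|A_1-A_2\|.
\end{equation*}
Multiplying by $|f_2(\lambda_2)|\leq z_M^{(0)}$ yields exactly the coefficient $z_M^{(0)}\sigma_1/\sigma_2^2$ in the statement.

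Finally, the functional estimates \eqref{TF_est00} and \eqref{TF_est01} follow at once from the pointwise bounds. Indeed, $T_F(A_1)$ acts on $Z(\thetab)$ by multiplication with a constant (in $\thetab$) tensor whose components are bounded by $z_M^{(0)}$, so each component of $T_F(A_1)Z$ is a fixed finite linear combination of components of $Z$ with bounded coefficients; the $L^2$ norm, $C^0$ norm, and $C^\gamma$ seminorm all transfer directly (with a harmless combinatorial constant absorbed into $C$). The same reasoning applied to the tensor $T_F(A_1)-T_F(A_2)$, whose components are bounded by the Lipschitz estimate just established, gives \eqref{TF_est01}. I do not anticipate any significant obstacle here; the only slightly delicate point is keeping the power of $\sigma_1/\sigma_2$ tight, which is why I would use the normalization $|A_2\otimes A_2|/\lambda_2^2\leq 1$ rather than bounding the numerator and denominator separately.
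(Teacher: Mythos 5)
Your proposal is correct and matches the paper's argument essentially line by line: the same explicit component formula for $T_F(A)$, the same three-term decomposition of $T_F(A_1)-T_F(A_2)$ (mean value theorem on $f_1,f_2$ plus an add-and-subtract on the rank-one piece with the normalized factor $A_2\otimes A_2/\lambda_2^2$), and the same observation that the functional estimates follow because $T_F(A_1)$ and $T_F(A_1)-T_F(A_2)$ are constant tensors acting entrywise. The only cosmetic difference is the index placement of the Kronecker deltas ($\delta_{ij}\delta_{kl}$ versus the paper's $\delta_{ik}\delta_{jl}$), which is immaterial for every bound used.
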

\begin{proof}
Set $\lambda_i=\norm{A_i}_F$.
Since
\begin{align*}
    T_F(A_1)_{ijkl}=f_1\paren{\lambda_1}\delta_{ik}\delta_{jl}-f_2\paren{\lambda_1}\frac{\paren{A_1}_{ij}\paren{A_1}_{kl}}{\lambda_1^2},
\end{align*}
It is obvious to obtain the result of $T_F(A)_{ijkl}$.

Next,
\begin{align*}
\begin{split}
    &T_F(A_1)_{ijkl}-T_F(A_2)_{ijkl}
    =\quad\paren{f_1\paren{\lambda_1}-f_1\paren{\lambda_2}}\delta_{ik}\delta_{jl}\\
    &-\paren{f_2\paren{\lambda_1}-f_2\paren{\lambda_2}}\frac{\paren{A_1}_{ij}\paren{A_1}_{kl}}{\lambda_1^2}
    -f_2\paren{\lambda_2}\paren{\frac{\paren{A_1}_{ij}\paren{A_1}_{kl}}{\lambda_1^2}-\frac{\paren{A_2}_{ij}\paren{A_2}_{kl}}{\lambda_2^2}}
\end{split}
\end{align*}
Since
\begin{align*}
    \abs{\lambda_1-\lambda_2}\leq \norm{A_1-A_2}_F\leq C \norm{A_1-A_2},
\end{align*}
we obtain
\begin{align*}
\begin{split}
    \abs{f_i\paren{\lambda_1}-f_i\paren{\lambda_2}}\leq z_M^{(1)} \abs{\lambda_1-\lambda_2}\leq C z_M^{(1)} \norm{A_1-A_2}.
\end{split}
\end{align*}
\begin{align*}
\begin{split}
    &\frac{\paren{A_1}_{ij}\paren{A_1}_{kl}}{\lambda_1^2}-\frac{\paren{A_2}_{ij}\paren{A_2}_{kl}}{\lambda_2^2}\\
    =&\frac{\paren{A_1-A_2}_{ij}\paren{A_1}_{kl}+\paren{A_2}_{ij}\paren{A_1-A_2}_{kl}}{\lambda_1^2}+\frac{\paren{A_2}_{ij}\paren{A_2}_{kl}\paren{\lambda_2+\lambda_1}\paren{\lambda_2-\lambda_1}}{\lambda_1^2\lambda_2^2},
\end{split}
\end{align*}
so
\begin{align*}
    \abs{\frac{\paren{A_1}_{ij}\paren{A_1}_{kl}}{\lambda_1^2}-\frac{\paren{A_2}_{ij}\paren{A_2}_{kl}}{\lambda_2^2}}\leq C \frac{\sigma_1}{\sigma_2^2}\norm{A_1-A_2}
\end{align*}
Therefore,
\begin{align*}
    \abs{T_F(A_1)_{ijkl}-T_F(A_2)_{ijkl}}\leq C\paren{z_M^{(0)}\frac{\sigma_1}{\sigma_2^2}+z_M^{(1)}}\norm{A_1-A_2}
\end{align*}
Since $T_F(A_1),T_F(A_2)$ are linear operators, we may obtain the estimates in $C^\gamma\paren{\mbr}\cap L^2\paren{\mbr}$  of
$T_F(A_1)Z$ and $\paren{T_F(A_1)-T_F(A_2)}Z$.

\end{proof}
Then, because we have estimated $\tilde{\mc{M}}_\alpha$ in Thoerem \ref{Mtilde_bound}, we can obtain the bounds of $\mc{L}_{\alpha, A}$ and its difference $\mc{L}_{\alpha, A_1}-\mc{L}_{\alpha, A_2}$.

\begin{thm}\label{l:regul of LA}
Given a matrix $A_1, A_2\in \mc{DA}_{\sigma_1,\sigma_2}$, then for all $\bm{Y}\in C^{1,\gamma}\paren{\mbr}$ compactly supported,
$\mc{L}_{\alpha, A}\bm{Y}\in  C^\gamma\paren{\mathbb{R}^2}\cap L^2\paren{\mathbb{R}^2}$, and
\begin{align*}
\begin{split}
    \norm{\mc{L}_{\alpha, A}\bm{Y}}_{C^\gamma\paren{\mathbb{R}}}
    \leq \frac{z_M^{(0)}}{\sigma_2}\Big(1+\Big(\frac{\sigma_1}{\sigma_2}\Big)^2\Big)\norm{\nabla \bm{Y}}_{C^\gamma\paren{\mbr}\bigcap L^2\paren{\mbr}},
\end{split}
\end{align*}
\begin{align*}
\begin{split}
    &\norm{\mc{L}_{\alpha, A_1}\bm{Y}-\mc{L}_{\alpha, A_2}\bm{Y}}_{C^\gamma\paren{\mathbb{R}}}\\
    \leq&\frac{C}{\sigma_2}\Big(\frac{z_M^{(0)}}{\sigma_2}\Big(1+\frac{\sigma_1}{\sigma_2}\Big)+z_M^{(1)}\Big)\Big(1+\Big(\frac{\sigma_1}{\sigma_2}\Big)^5\Big)\norm{\nabla \bm{Y}}_{C^\gamma\paren{\mbr}\bigcap L^2\paren{\mbr}}\norm{A_1-A_2}
\end{split}    
\end{align*}
\end{thm}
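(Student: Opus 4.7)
The proof is a direct composition of the two preceding calculus lemmas, so the plan is mainly about decomposing the difference $\mc{L}_{\alpha,A_1}\bm{Y}-\mc{L}_{\alpha,A_2}\bm{Y}$ properly and being careful about which constant tracks which factor of $\sigma_1/\sigma_2$.

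For the first estimate, I would simply unfold the definition
\begin{equation*}
\mc{L}_{\alpha,A}\bm{Y} = \tilde{\mc{M}}^1(A)(T_F(A)\nabla\bm{Y}) + \alpha\,\tilde{\mc{M}}^2(A)(T_F(A)\nabla\bm{Y}),
\end{equation*}
and apply Lemma \ref{Mtilde_bound} to each $\tilde{\mc{M}}^j(A)$ with $Z=T_F(A)\nabla\bm{Y}$. This gives a factor $\sigma_2^{-1}(1+(\sigma_1/\sigma_2)^2)$ times $\|T_F(A)\nabla\bm{Y}\|_{C^\gamma(\mathbb{R}^2)\cap L^2(\mathbb{R}^2)}$. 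Then \eqref{TF_est00} of Lemma \ref{TF_est} brings in the factor $z_M^{(0)}$ and yields the stated bound (using $|\alpha|\leq 1$ to absorb the second piece into the constant).

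For the difference estimate, I would use the standard two-term telescoping
\begin{align*}
\mc{L}_{\alpha,A_1}\bm{Y} - \mc{L}_{\alpha,A_2}\bm{Y}
&= \tilde{\mc{M}}_\alpha(A_1)\bigl((T_F(A_1)-T_F(A_2))\nabla\bm{Y}\bigr) \\
&\quad + \bigl(\tilde{\mc{M}}_\alpha(A_1)-\tilde{\mc{M}}_\alpha(A_2)\bigr)\bigl(T_F(A_2)\nabla\bm{Y}\bigr).
\end{align*}
The first piece is controlled by Lemma \ref{Mtilde_bound} (the boundedness statement) composed with the Lipschitz estimate \eqref{TF_est01} for $T_F$, producing the factor
$\sigma_2^{-1}(1+(\sigma_1/\sigma_2)^2)\bigl(z_M^{(0)}\sigma_1/\sigma_2^2+z_M^{(1)}\bigr)\|A_1-A_2\|$.
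The second piece is controlled by the Lipschitz estimate for $\tilde{\mc{M}}^j$ in Lemma \ref{Mtilde_bound}, composed with the boundedness \eqref{TF_est00} for $T_F(A_2)$, producing $\sigma_2^{-2}(1+(\sigma_1/\sigma_2)^5)z_M^{(0)}\|A_1-A_2\|$. Adding the two contributions and collecting the worst powers of $\sigma_1/\sigma_2$ gives exactly the claimed bound.

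There is no real obstacle here; the only care needed is to verify that $T_F(A_2)\nabla\bm{Y}$ and $(T_F(A_1)-T_F(A_2))\nabla\bm{Y}$ lie in the joint space $C^\gamma(\mathbb{R}^2)\cap L^2(\mathbb{R}^2)$ required as the input for Lemma \ref{Mtilde_bound} (this is immediate from Lemma \ref{TF_est} since $\nabla\bm{Y}$ is compactly supported and in $C^\gamma$), and to verify that the $\alpha$-dependence does not spoil the constants, which it does not since $0\leq\alpha\leq 1$. The argument thus reduces to a clean composition of the two previously proved lemmas together with bookkeeping of the exponents of $\sigma_1/\sigma_2$.
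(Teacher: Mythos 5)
Your proof is correct and takes essentially the same route as the paper: unfold the definition, apply Lemma \ref{Mtilde_bound} composed with Lemma \ref{TF_est} for the first bound, and telescope the difference into two terms for the second. The only cosmetic difference is the order of the telescoping — you insert $\tilde{\mc{M}}_\alpha(A_1)(T_F(A_2)\nabla\bm{Y})$ as the middle term while the paper inserts $\tilde{\mc{M}}_\alpha(A_2)(T_F(A_1)\nabla\bm{Y})$ — which is immaterial since the relevant bounds are uniform over $\mc{DA}_{\sigma_1,\sigma_2}$.
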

\begin{proof}
Given $\bm{Y}\in C^{1,\gamma}\paren{\mbr}$ compactly supported, $\nabla\bm{Y}$ is also in $L^2\paren{\mathbb{R}^2}$, so $\paren{T_F(A)\nabla \bm{Y}}\in  C^\gamma\paren{\mathbb{R}^2}\cap L^2\paren{\mathbb{R}^2}$ by Lemma \ref{TF_est}.
By Theorem \ref{Mtilde_bound} and Lemma \ref{TF_est}, $\mc{L}_{\alpha, A}\bm{Y}\in  C^\gamma\paren{\mathbb{R}^2}\cap L^2\paren{\mathbb{R}^2}$ and
\begin{align*}
\begin{split}
    \norm{\mc{L}_{\alpha, A}\bm{Y}}_{C^\gamma\paren{\mathbb{R}}}
    \leq& \frac{C}{\sigma_2}\Big(1+\Big(\frac{\sigma_1}{\sigma_2}\Big)^2\Big)\norm{T_F(A)\nabla \bm{Y}}_{C^\gamma\paren{\mbr}\bigcap L^2\paren{\mbr}}\\
    \leq& \frac{z_M^{(0)}}{\sigma_2}\Big(1+\Big(\frac{\sigma_1}{\sigma_2}\Big)^2\Big)\norm{\nabla \bm{Y}}_{C^\gamma\paren{\mbr}\bigcap L^2\paren{\mbr}}.
\end{split}
\end{align*}
Next, since
\begin{align*}
\begin{split}
    \mc{L}_{\alpha, A_1}\bm{Y}-\mc{L}_{\alpha, A_2}\bm{Y}
    =&\quad\paren{\tilde{\mc{M}}_\alpha\paren{A_1}-\tilde{\mc{M}}_\alpha\paren{A_2}}\paren{T_F(A_1)\nabla \bm{Y}}\\
    &-\tilde{\mc{M}}_\alpha\paren{A_2}\paren{\paren{T_F(A_1)-T_F(A_2)}\nabla \bm{Y}},
\end{split}
\end{align*}
by Theorem \ref{Mtilde_bound} and Lemma \ref{TF_est},
\begin{align*}
\begin{split}
    &\norm{\mc{L}_{\alpha, A_1}\bm{Y}-\mc{L}_{\alpha, A_2}\bm{Y}}_{C^\gamma\paren{\mathbb{R}}}\\
    \leq&\quad C\frac{z_M^{(0)}}{\sigma_2^2}\Big(1+\Big(\frac{\sigma_1}{\sigma_2}\Big)^5\Big)\norm{\nabla \bm{Y}}_{C^\gamma\paren{\mbr}\bigcap L^2\paren{\mbr}}\norm{A_1-A_2}\\
    &+\frac{C}{\sigma_2}\paren{z_M^{(0)}\frac{\sigma_1}{\sigma_2^2}+z_M^{(1)}}\Big(1+\Big(\frac{\sigma_1}{\sigma_2}\Big)^2\Big)\norm{\nabla \bm{Y}}_{C^\gamma\paren{\mbr}\bigcap L^2\paren{\mbr}}\norm{A_1-A_2}
\end{split}
\end{align*}

\end{proof}

Next, we  compute some elementary estimates on the symbol $L_A$.
We denote $G_{\alpha,A}\paren{\thetab}:=G^1\paren{A\thetab}+\alpha G^2\paren{A\thetab}$.
The pairs of eigenvalues and respective eigenvectors of $\paren{\mc{F}_\theta G_{\alpha,A}}(\bm{\xi})$ are
\begin{align}
\paren{\paren{1+\alpha}\frac{\mu}{4}, \bm{v}\paren{\xib}}, \paren{\frac{\mu}{4},\bm{v}^\perp\paren{\xib}}, \paren{\frac{\mu}{4}, \bm{v}_0}, \label{eigen_G}   
\end{align}
and the pairs of $M_A(\bm{\xi})$ \eqref{tesionD_fm} are
\begin{align}
\paren{\frac{\mc{T}}{\lambda}\paren{\abs{\bm{\xi}}^2 -\frac{\abs{A\bm{\xi}}^2}{\lambda^2}}+\D{\mc{T}}{\lambda}\frac{\abs{A\bm{\xi}}^2}{\lambda^2}, A\bm{\xi}}, \paren{\frac{\mc{T}}{\lambda}\abs{\bm{\xi}}^2,U\mc{R}_{\pi/2}\xib}, \paren{\frac{\mc{T}}{\lambda}\abs{\bm{\xi}}^2, \bm{v}_0}. \label{eigen_Z}   
\end{align}

Since $\paren{\mc{F}_\theta G_{\alpha,A}}(\bm{\xi})$ and $M_A(\bm{\xi})$ are symmetric positive definite (s.p.d.), $L_A\paren{\bm{\xi}}$ is diagonalizable and p.d.
Then, we have some estimates of $L_A$ and its derivatives.
\begin{lemma}\label{l:LA_est}
Given $A\in \mc{DA}_{\sigma_1,\sigma_2}$, $L_A$ and its derivatives satisfy 

(i)
\begin{align}
\frac{\sigma_2}{\sigma_1^2}\abs{\bm{\xi}}^{-1}&\leq\mu\leq\frac{\sigma_1}{\sigma_2^2}\abs{\bm{\xi}}^{-1}, \label{mubnd}\\
z_{m}\abs{\bm{\xi}}^2 &\leq \norm{M_A(\bm{\xi})}\leq z_M^{(0)}\abs{\bm{\xi}}^2,\label{Zbnd}\\
\frac{\sigma_2 z_m}{4\sigma_1^2}\abs{\bm{\xi}}&\leq \norm{L_A(\bm{\xi})}\leq \frac{\sigma_1 z_M^{(0)}}{2\sigma_2^2}\abs{\bm{\xi}},\label{LAbnd}
\end{align}
where $z_m=\min_{\sigma_1\leq\lambda\leq\sqrt{2}\sigma_1} \paren{\frac{\mc{T}}{\lambda}, \paren{\frac{\mc{T}}{\lambda}+\D{\mc{T}}{\lambda}}\frac{\sigma_2^2}{\lambda^2}}, z_M^{(0)}=\max_{\sigma_1\leq\lambda\leq\sqrt{2}\sigma_1}  \paren{\frac{\mc{T}}{\lambda}+\D{\mc{T}}{\lambda}}$.

(ii)
\begin{align}
    \norm{\PD{L_{A}}{\xi_k}}&\leq C_1\frac{\sigma_1^2}{\sigma_2^3}z_M^{(0)},\label{LADbnd}\\
    \norm{\frac{\partial^2 L_{A}}{\partial \xi_k\partial \xi_l}}&\leq C_1\frac{\sigma_1^3}{\sigma_2^4}z_M^{(0)}\abs{\xib}^{-1}, \label{LADbnd02}
\end{align}
where $C_1$ is a constant that does not depend on $\alpha$ or $A$.

(iii) $\PD{}{\xi_j}\Delta_{\xib} L_A(\xib), \paren{\Delta_{\xib}}^2 L_A(\xib)$ and $\PD{}{\xi_j}\paren{\Delta_{\xib}}^2 L_A(\xib)$ may be written as
\begin{align}
    \PD{}{\xi_j}\Delta_{\xib} L_A(\xib)=&\frac{1}{|\xib|^2}\Phi^{(3)}_{A,j}(\hat{\xib})\label{LADfactor03},\\
    \paren{\Delta_{\xib}}^2 L_A(\xib)=&\frac{1}{|\xib|^3}\Phi^{(4)}_{A}(\hat{\xib})\label{LADfactor04},\\
    \PD{}{\xi_j}\paren{\Delta_{\xib}}^2 L_A(\xib)=&\frac{1}{|\xib|^4}\Phi^{(5)}_{A,j}(\hat{\xib}),\label{LADfactor05}
\end{align}
where $\Phi^{(3)}_{A,j}, \Phi^{(4)}_{A}, \Phi^{(5)}_{A,j}$ are bounded on $\abs{\hat{\xib}}=1$.
\end{lemma}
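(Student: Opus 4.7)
The proof hinges on three facts about the symbol: its explicit factorization $L_A(\xib) = (\mc{F}_\theta G_A)(\xib)\, M_A(\xib)$ from \eqref{multiplierNon}, the fact that $L_A$ is homogeneous of degree $1$ in $\xib$, and the spectral information collected in \eqref{eigen_G}--\eqref{eigen_Z}.

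For part (i), the plan is to first bound $\mu = (\det B \cdot \abs{B^{-1}\xib})^{-1}$ using $A \in \mc{DA}_{\sigma_1,\sigma_2}$. The singular values of $B = \sqrt{A^T A}$ lie in $[\sigma_2,\sigma_1]$, so $\det B \in [\sigma_2^2, \sigma_1^2]$ and $\abs{B^{-1}\xib} \in [\sigma_1^{-1}\abs{\xib}, \sigma_2^{-1}\abs{\xib}]$, yielding \eqref{mubnd} immediately. For \eqref{Zbnd} I would read off the eigenvalues \eqref{eigen_Z}: two of them equal $(\mc{T}/\lambda)\abs{\xib}^2$, while the third lies between $z_m \abs{\xib}^2$ and $z_M^{(0)} \abs{\xib}^2$ because $\abs{A\xib}^2/\lambda^2 \in [\sigma_2^2\abs{\xib}^2/\lambda^2,\, \abs{\xib}^2]$. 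For \eqref{LAbnd}, the upper bound is $\norm{L_A} \le \norm{\mc{F}_\theta G_A}\,\norm{M_A} \le (\mu/2)\, z_M^{(0)} \abs{\xib}^2$. For the lower bound, since $L_A = PM$ with $P := \mc{F}_\theta G_A$ and $M := M_A$ both symmetric positive definite, $L_A$ is similar to $P^{1/2} M P^{1/2}$ and has positive real eigenvalues; testing $P^{1/2} M P^{1/2}$ against a unit vector gives $\lambda_{\max}(L_A) \ge \lambda_{\min}(P)\,\lambda_{\min}(M) \ge (\mu/4)\, z_m \abs{\xib}^2$, and then $\norm{L_A} \ge \rho(L_A) = \lambda_{\max}(L_A)$ yields the desired bound.

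For (ii) and (iii), the decisive observation is homogeneity: from \eqref{multiplierNon} one sees that $L_A(\xib) = \abs{\xib}\, L_A(\hat{\xib})$, and $L_A(\hat{\xib})$ is $C^\infty$ on $\mathbb{S}^1$ since the only singularity of $L_A$ is at the origin. Consequently $\partial_\xib^{\bm{\alpha}} L_A$ is homogeneous of degree $1 - \abs{\bm{\alpha}}$ and its norm on the unit circle is finite. To prove (ii) I would differentiate \eqref{multiplierNon} explicitly, applying the chain rule and the estimates from (i); each $\xib$-derivative either hits the factor $\abs{B^{-1}\xib}^{-1}$ (generating a factor of size $\sim \sigma_1/\sigma_2^2$) or a polynomial in $A\xib$ and $\xib$ (generating at worst $\sim \sigma_1$). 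Tracking the worst factor produces the constants $C_1\, \sigma_1^2/\sigma_2^3\, z_M^{(0)}$ and $C_1\, \sigma_1^3/\sigma_2^4\, z_M^{(0)} \abs{\xib}^{-1}$. Independence of $C_1$ from $\alpha$ and $A$ is ensured because $\alpha \in [0,1]$ appears only as a bounded coefficient in $G_{\alpha,A}$, while the $A$-dependence has already been absorbed into the $\sigma_1,\sigma_2$ factors.

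Finally, (iii) follows directly from homogeneity: $\Delta_\xib L_A$ is homogeneous of degree $-1$, so $\PD{}{\xi_j}\Delta_\xib L_A$ has degree $-2$ and may be written $\abs{\xib}^{-2}\Phi^{(3)}_{A,j}(\hat{\xib})$; likewise $\Delta_\xib^{\,2} L_A$ has degree $-3$, yielding \eqref{LADfactor04}, and $\PD{}{\xi_j}\Delta_\xib^{\,2} L_A$ has degree $-4$, yielding \eqref{LADfactor05}. Each $\Phi^{(k)}_A$ is the restriction of a $C^\infty$ function to the compact set $\mathbb{S}^1$, hence bounded. The principal technical obstacle will be the careful bookkeeping of $\sigma_1, \sigma_2, z_M^{(0)}$ factors in (ii), but this is a routine computation; everything else drops out quickly from homogeneity and the eigenvalue analysis.
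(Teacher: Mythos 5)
Your argument for parts (i) and (ii) follows essentially the same route as the paper: bound $\mu$ from the singular values of $B$, read off the eigenvalues of $M_A$ from \eqref{eigen_Z}, combine via the symmetric-positive-definite product $L_A = (\mc{F}_\theta G_A) M_A$ for \eqref{LAbnd}, and then differentiate the explicit formula \eqref{multiplierNon} while tracking $\sigma_1,\sigma_2$. One small caution on \eqref{Zbnd}: the loose bound $\abs{A\xib}^2/\lambda^2 \leq \abs{\xib}^2$ that you quote is not sufficient to get the claimed positive lower bound $z_m\abs{\xib}^2$ when $d\mc{T}/d\lambda = 0$; one needs the sharper $\abs{A\xib}^2/\lambda^2 \leq (1-\sigma_2^2/\lambda^2)\abs{\xib}^2$, which follows because $\lambda^2 = s_1^2+s_2^2$ with both singular values $\geq \sigma_2$. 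This is exactly what makes the convex-combination argument produce $\paren{\mc{T}/\lambda + d\mc{T}/d\lambda}\sigma_2^2/\lambda^2$ as the eigenvalue floor. It is a small fix, but without it your bound degenerates.

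For part (iii) you take a genuinely different and shorter route than the paper. The paper proves an auxiliary structural Lemma \ref{LA_specform}, which writes $\partial_{\beta}L_A(\xib) = \abs{\xib}^{1-\abs{\beta}} P_\beta(\hat\xib)/\abs{U\hat\xib}^{2\abs{\beta}+3}$ with $P_\beta$ an explicit polynomial whose coefficients are tracked as polynomial functions of the entries of $A$, $U$, $P$, $\mc{T}$, etc.; boundedness of $\Phi^{(k)}_{A}$ on $\mathbb{S}^1$ then follows because $\abs{U\hat\xib}$ is pinned between $\sigma_2$ and $\sigma_1$. You instead observe directly that $L_A$ is homogeneous of degree one and $C^\infty$ on $\mathbb{R}^2\setminus\{0\}$, so every $\partial_\xib^{\bm{\alpha}}L_A$ is homogeneous of degree $1-\abs{\bm{\alpha}}$ and its restriction to the compact unit circle is automatically bounded. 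Your route is cleaner and requires no explicit polynomial bookkeeping. What the paper's heavier Lemma \ref{LA_specform} buys is explicit uniform dependence of the bounds on $(\sigma_1,\sigma_2,\mc{T})$ over the whole class $\mc{DA}_{\sigma_1,\sigma_2}$; this uniformity is actually exploited downstream in Lemma \ref{LA_kernelest}, where the constant is stated as $C_{\omega,\delta,\sigma_1,\sigma_2,\mc{T}}$ rather than $C_A$. Your approach gives boundedness for each fixed $A$ immediately, but to recover the uniformity you would still need to invoke compactness of $\mc{DA}_{\sigma_1,\sigma_2}$ (a closed bounded subset of $\mathbb{R}^{3\times 2}$) together with continuity of $(A,\hat\xib)\mapsto\partial_\xib^{\bm{\alpha}}L_A(\hat\xib)$ — a valid but less explicit argument. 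For the statement of Lemma \ref{l:LA_est} as written, your proof suffices.
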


\begin{proof}
(i) 
We first note the following inequalities:
\begin{equation}\label{BUbnd}
\sigma_2 \leq \norm{B}\leq \sigma_1, \; \frac{1}{\sigma_1}\leq \norm{B^{-1}}=\norm{U} \leq \frac{1}{\sigma_2}.
\end{equation}
We thus have:
\begin{equation}\label{detBbnd}
\sigma_2^2 \leq \det(B)=\norm{B}\norm{B^{-1}}^{-1}\leq \sigma_1^2,
\end{equation}
where we used the fact that $B$ is a $2\times 2$ symmetric positive definite matrix. From \eqref{BUbnd}, we immediately have:
\begin{equation}
\abs{U\bm{\xi}}=\abs{B^{-1}\bm{\xi}}\geq \frac{1}{\sigma_1}\abs{\bm{\xi}}.\label{Bxibnd}
\end{equation}
Therefore,
\begin{align*}
    \frac{\sigma_2}{\sigma_1^2}\abs{\bm{\xi}}^{-1}\leq\mu=\frac{1}{{\rm det}(B)\abs{B^{-1}\bm{\xi}}}\leq\frac{\sigma_1}{\sigma_2^2}\abs{\bm{\xi}}^{-1}.
\end{align*}
Next, through \eqref{eigen_Z}, one of the eigenvalues of $M_A$ is bounded by
\begin{align*}
     \paren{\frac{\mc{T}}{\lambda}+\D{\mc{T}}{\lambda}}\frac{\sigma_2^2}{\lambda^2}\abs{\bm{\xi}}^2\leq \frac{\mc{T}}{\lambda}\paren{\abs{\bm{\xi}}^2 -\frac{\abs{A\bm{\xi}}^2}{\lambda^2}}+\D{\mc{T}}{\lambda}\frac{\abs{A\bm{\xi}}^2}{\lambda^2} \leq  \paren{\frac{\mc{T}}{\lambda}+\D{\mc{T}}{\lambda}}\abs{\bm{\xi}}^2,
\end{align*}
so we may obtain \eqref{Zbnd}.
Finally, since $L_A$ is diagonalizable and p.d. with $L_A=\mc{F}_\theta G_{\alpha,A} M_A$, the eigenvalues of $L_A$ are between $\frac{\mu}{4} z_m\abs{\bm{\xi}}^2$ and $\frac{\mu}{2} z_M^{(0)}\abs{\bm{\xi}}^2$.
Hence, we get the bound \eqref{LAbnd}.

(ii) 
We now turn to \eqref{LADbnd}. Note that:
\begin{align}
\label{dUinv}
\PD{}{\xi_k}\paren{\frac{1}{\abs{U\bm{\xi}}}}&=-\frac{\paren{U^{\rm T}U\bm{\xi}}_k}{\abs{U\bm{\xi}}^3},\\
\label{d2Uinv}
\PD{}{\xi_k}\PD{}{\xi_l}\paren{\frac{1}{\abs{U\bm{\xi}}}}&=-\frac{\paren{U^{\rm T}U}_{k,l}}{\abs{U\bm{\xi}}^3}+3\frac{\paren{U^{\rm T}U\bm{\xi}}_k\paren{U^{\rm T}U\bm{\xi}}_l}{\abs{U\bm{\xi}}^5}.
\end{align}
Likewise, we have:
\begin{align}
\label{dUunit}
\PD{}{\xi_k}\paren{\frac{(U\bm{\xi})_j}{\abs{U\bm{\xi}}}}&=\frac{U_{jk}}{\abs{U\bm{\xi}}}-\frac{(U\bm{\xi})_j\paren{U^{\rm T}U\bm{\xi}}_k}{\abs{U\bm{\xi}}^3},\\
\nonumber \PD{}{\xi_k}\PD{}{\xi_l}\paren{\frac{(U\bm{\xi})_j}{\abs{U\bm{\xi}}}}&=-\frac{U_{jk}\paren{U^{\rm T}U\bm{\xi}}_l}{\abs{U\bm{\xi}}^3}-\frac{U_{jl}\paren{U^{\rm T}U\bm{\xi}}_k}{\abs{U\bm{\xi}}^3}\\
\label{d2Uunit}
&-\frac{(U\bm{\xi})_j\paren{U^{\rm T}U}_{k,l}}{\abs{U\bm{\xi}}^3}+3\frac{(U\bm{\xi})_j\paren{U^{\rm T}U\bm{\xi}}_k\paren{U^{\rm T}U\bm{\xi}}_l}{\abs{U\bm{\xi}}^5}.
\end{align}
The above relations, together with \eqref{BUbnd}, show that:
\begin{align*}
\abs{\PD{}{\xi_k}\paren{\frac{1}{\abs{U\bm{\xi}}}}}&\leq \frac{\sigma_1^2}{\sigma_2}\frac{1}{\abs{\bm{\xi}}^2},\quad
\abs{\PD{}{\xi_k}\PD{}{\xi_l}\paren{\frac{1}{\abs{U\bm{\xi}}}}}\leq \frac{4\sigma_1^3}{\sigma_2^2}\frac{1}{\abs{\bm{\xi}}^3},\\
\abs{\PD{}{\xi_k}\paren{\frac{(U\bm{\xi})_j}{\abs{U\bm{\xi}}}}}&\leq \frac{2\sigma_1}{\sigma_2}\frac{1}{\abs{\bm{\xi}}}, \quad
\abs{\PD{}{\xi_k}\PD{}{\xi_l}\paren{\frac{(U\bm{\xi})_j}{\abs{U\bm{\xi}}}}}\leq \frac{6\sigma_1^2}{\sigma_2^2}\frac{1}{\abs{\bm{\xi}}^2}.
\end{align*}
Thus, we obtain
\begin{align*}
    \norm{\PD{\mc{F}_\theta G_{\alpha,A}}{\xi_k}}&\leq C\frac{\sigma_1^2}{\sigma_2^3}\abs{\bm{\xi}}^{-2},\\
    \norm{\frac{\partial^2 \mc{F}_\theta G_{\alpha,A}}{\partial \xi_k\partial \xi_l}}&\leq C\frac{\sigma_1^3}{\sigma_2^4}\abs{\bm{\xi}}^{-3}, 
\end{align*}
Next, set $A=[A_1 A_2]$, since
\begin{align*}
    \abs{\PD{}{\xi_k} A\xib}=\abs{A_k}\leq \lambda,
\end{align*}
we have
\begin{align*}
    \norm{\PD{}{\xi_k} \paren{ A\xib \otimes A\xib}}\leq C\sigma_1\lambda \abs{\xib},\\
    \norm{\PD{}{\xi_k}\PD{}{\xi_l}\paren{ A\xib \otimes A\xib}}\leq C\lambda^2.
\end{align*}
Therefore,
\begin{align*}
    \norm{\PD{}{\xi_k} M_A}\leq C \paren{\frac{\mc{T}}{\lambda}+\D{\mc{T}}{\lambda}} \abs{\xib},\\
    \norm{\PD{}{\xi_k}\PD{}{\xi_l} M_A}\leq C \paren{\frac{\mc{T}}{\lambda}+\D{\mc{T}}{\lambda}}.
\end{align*}
The desired bound \eqref{LADbnd} now follows easily by combining the above estimates and $1\leq \frac{\sigma_1}{\sigma_2}$.

(iii)
By lemma \ref{LA_specform}, we may obtain
\begin{align*}
    \begin{split}
        \PD{}{\xi_j}\Delta_{\xib} L_A(\xib)
        =\sum_{i=1,2}\PD{}{\xi_{jii}}L_A(\xib)
        =\sum_{i=1,2}\frac{1}{\abs{\xib}^{2}}\frac{P_{jii}\paren{\hat{\xi}_1,\hat{\xi}_2}}{\abs{U\hat{\xib}}^{9}}.
    \end{split}
\end{align*}
Since $\sigma_2\leq\abs{U\hat{\xib}}\leq \sigma_1$ and $P_{jii}\paren{\hat{\xi}_1,\hat{\xi}_2}$ is a matrix of polynomials on the domain $\abs{\hat{\xib}}=1$,
$\frac{P_{jii}\paren{\hat{\xi}_1,\hat{\xi}_2}}{\abs{U\hat{\xib}}^{9}}$ is bounded.
Therefore, we have
\begin{align*}
    \Phi^{(3)}_{A,j}(\hat{\xib})=\sum_{i=1,2}\frac{P_{jii}\paren{\hat{\xi}_1,\hat{\xi}_2}}{\abs{U\hat{\xib}}^{9}},
\end{align*}
\begin{align*}
    \PD{}{\xi_j}\Delta_{\xib} L_A(\xib)=&\frac{1}{|\xib|^2}\Phi^{(3)}_{A,j}(\hat{\xib}),\\
    \paren{\Delta_{\xib}}^2 L_A(\xib)=&\frac{1}{|\xib|^3}\Phi^{(4)}_{A}(\hat{\xib}),\\
    \PD{}{\xi_j}\paren{\Delta_{\xib}}^2 L_A(\xib)=&\frac{1}{|\xib|^4}\Phi^{(5)}_{A,j}(\hat{\xib}).
\end{align*}
Similarly,
\begin{align*}
    \Phi^{(4)}_{A}(\hat{\xib})=&\sum_{i,k=1,2}\frac{P_{kkii}\paren{\hat{\xi}_1,\hat{\xi}_2}}{\abs{U\hat{\xib}}^{11}},\\
    \Phi^{(5)}_{A,j}(\hat{\xib})=&\sum_{i,k=1,2}\frac{P_{jkkii}\paren{\hat{\xi}_1,\hat{\xi}_2}}{\abs{U\hat{\xib}}^{13}}.
\end{align*}

\end{proof}

\subsection{Some estimates for $z+\mc{L}_{\alpha, A}$ and $\paren{z+\mc{L}_{\alpha, A}}^{-1}$}

Since $L_A\paren{\bm{\xi}}$ is p.d. and diagonalizable,  $P^{-1}\paren{\bm{\xi}} L_A\paren{\bm{\xi}}P\paren{\bm{\xi}} =D\paren{\bm{\xi}}$ where $D$ is a positive diagonal matrix.
Then,
\begin{align*}
\begin{split}
    P^{-1}\paren{\bm{\xi}} \paren{z+ L_A\paren{\bm{\xi}}}P\paren{\bm{\xi}}& =z+D\paren{\bm{\xi}},\\
    P^{-1}\paren{\bm{\xi}} \paren{z+ L_A\paren{\bm{\xi}}}^{-1}P\paren{\bm{\xi}}&= \paren{ z+D\paren{\bm{\xi}}}^{-1},
\end{split}
\end{align*}
and the eigenvalues of $\paren{z+ L_A\paren{\bm{\xi}}}^{-1}$ are on a curve
\begin{align*}
    \left\{\frac{1}{z+a}\Big|\,a\in \Big[\frac{\sigma_1 z_M^{(0)}}{2\sigma_2^2}\abs{\bm{\xi}},\frac{\sigma_2 z_m}{4\sigma_1^2}\abs{\bm{\xi}}\Big]\right\}.
\end{align*}
\begin{remark}
Let $\lambda>0$ and $z\in \mc{S}_{\omega,\delta}$ with $\omega>0$, $\beta:=\pi-$\text{arg}$\paren{z+\lambda-\omega}> \delta$.
Then, we obtain the following  inequality
\begin{align*}
\begin{split}
    \abs{z+\lambda}^2
    =   &\lambda^2+\abs{z}^2-2\abs{z}\lambda\cos \beta\\
    \geq&\lambda^2+\abs{z}^2-2\abs{z}\lambda\cos \delta\\
    =   &\cos\delta \paren{\lambda-\abs{z}}^2+\paren{1-\cos \delta}\paren{\lambda^2+\abs{z}^2}\\
    \geq&\paren{1-\cos \delta}\paren{\lambda^2+\abs{z}^2}.
\end{split}
\end{align*}
\end{remark}
Now, we estimate $\norm{\partial_{\bm{\xi}}^\alpha \paren{z+ L_A\paren{\bm{\xi}}}^{-1}}$ with $\abs{\alpha}\leq 2$.

\begin{lemma}
Given $\mc{S}_{\omega,\delta}$ with $\omega>0$ and $A\in \domainA$, we have the following estimates for all $z\in \mc{S}_{\omega,\delta}$,
\begin{align}
\frac{1}{\abs{z}+\frac{\sigma_1 z_M^{(0)}}{2\sigma_2^2}\abs{\bm{\xi}}}\leq\norm{\paren{z+ L_A\paren{\bm{\xi}}}^{-1}}\leq \frac{2}{\sqrt{\paren{1-\cos \delta}\big((\frac{\sigma_2 z_m}{4\sigma_1^2}\abs{\bm{\xi}})^2+\abs{z}^2}\big)},\label{IzLAbnd}
\end{align}
\begin{align}
\norm{\PD{}{\xi_k}\paren{z+ L_A\paren{\bm{\xi}}}^{-1}}\leq C_1\frac{\sigma_1^2}{\sigma_2^3}z_M^{(0)}\frac{4}{\paren{1-\cos \delta}\big((\frac{\sigma_2 z_m}{4\sigma_1^2}\abs{\bm{\xi}})^2+\abs{z}^2\big)},\label{DIzLAbnd}
\end{align}
and
\begin{align}
    \begin{split}
        \norm{\frac{\partial^2}{\partial \xi_l\partial \xi_k}\paren{z+ L_A\paren{\bm{\xi}}}^{-1}}&\leq C_1^2\frac{\sigma_1^4}{\sigma_2^6}{z_M^{(0)}}^2\frac{16}{\paren{\paren{1-\cos \delta}\paren{\paren{\frac{\sigma_2 z_m}{4\sigma_1^2}\abs{\bm{\xi}}}^2+\abs{z}^2}}^{\frac{3}{2}}}\\
        &\quad+C_1\frac{\sigma_1^3}{\sigma_2^4}z_M^{(0)}\frac{4}{\paren{1-\cos \delta}\abs{\bm{\xi}}\paren{\paren{\frac{\sigma_2 z_m}{4\sigma_1^2}\abs{\bm{\xi}}}^2+\abs{z}^2}}.
    \end{split}\label{D2IzLAbnd}
\end{align}
Moreover, there exists a constant $C_{\delta, \sigma_1, \sigma_2, \mc{T}}$ depending on $\delta, \sigma_1, \sigma_2 $ and $\mc{T}$ s.t. for all $\abs{\alpha}\leq 2$,
\begin{align}
    \norm{\partial_{\bm{\xi}}^\alpha \paren{z+ L_A\paren{\bm{\xi}}}^{-1}}\leq \frac{C_{\delta, \sigma_1, \sigma_2, \mc{T}}}{\abs{z}}\abs{\bm{\xi}}^{-\abs{\alpha}},\label{DDIzLAbnd}
\end{align}
and
\begin{align}
    \norm{\partial_{\bm{\xi}}^\alpha \paren{z+ L_A\paren{\bm{\xi}}}^{-1}}\leq \frac{C_{\delta, \sigma_1, \sigma_2, \mc{T}}}{\abs{z}^2}\abs{\bm{\xi}}^{1-\abs{\alpha}}.\label{DDIzLAbnd02}
\end{align}
\end{lemma}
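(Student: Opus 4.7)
The plan is to establish \eqref{IzLAbnd} first using the spectral structure of $L_A$, derive \eqref{DIzLAbnd} and \eqref{D2IzLAbnd} via the resolvent identity together with the derivative bounds from Lemma \ref{l:LA_est}(ii), and finally deduce \eqref{DDIzLAbnd} and \eqref{DDIzLAbnd02} from those three by elementary algebraic manipulations. For the lower bound in \eqref{IzLAbnd}, one simply notes $\norm{\paren{z+L_A}^{-1}}\geq \norm{z+L_A}^{-1}\geq\paren{\abs{z}+\norm{L_A}}^{-1}$ and invokes the upper part of \eqref{LAbnd}. The upper bound in \eqref{IzLAbnd} is more delicate because $L_A$ is not self-adjoint. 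Since $L_A=\paren{\mc{F}_{\theta}G_{\alpha,A}}M_A$ with both factors symmetric positive definite, conjugation by $\paren{\mc{F}_{\theta}G_{\alpha,A}}^{1/2}$ shows that $L_A$ is similar to the self-adjoint matrix $\tilde L_A:=\paren{\mc{F}_{\theta}G_{\alpha,A}}^{1/2}M_A\paren{\mc{F}_{\theta}G_{\alpha,A}}^{1/2}$, which has the same eigenvalues as $L_A$. Therefore $\paren{z+L_A}^{-1}$ and $\paren{z+\tilde L_A}^{-1}$ differ by a similarity whose condition number is uniformly bounded (by $\sqrt{1+\alpha}\leq\sqrt{2}$, using the eigenvalue structure of $\mc{F}_\theta G_{\alpha,A}$ recorded in \eqref{eigen_G}). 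Since $\tilde L_A$ is self-adjoint with eigenvalues in the interval from \eqref{LAbnd}, spectral calculus gives $\norm{\paren{z+\tilde L_A}^{-1}}=\max_i\abs{z+\lambda_i}^{-1}$, and the remark preceding the lemma supplies $\abs{z+\lambda}^2\geq\paren{1-\cos\delta}\paren{\abs{z}^2+\lambda^2}$ for each positive eigenvalue $\lambda$. Combining these yields the upper bound in \eqref{IzLAbnd}.

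For \eqref{DIzLAbnd} I differentiate the resolvent, $\partial_{\xi_k}\paren{z+L_A}^{-1}=-\paren{z+L_A}^{-1}\paren{\partial_{\xi_k}L_A}\paren{z+L_A}^{-1}$, take norms, and apply \eqref{IzLAbnd} together with the first-derivative bound \eqref{LADbnd}. For \eqref{D2IzLAbnd}, applying $\partial_{\xi_l}$ once more produces two types of contributions: two triple products of the form $\paren{z+L_A}^{-1}\paren{\partial_\xi L_A}\paren{z+L_A}^{-1}\paren{\partial_\xi L_A}\paren{z+L_A}^{-1}$, bounded via \eqref{IzLAbnd} and \eqref{LADbnd}, and a single sandwich $\paren{z+L_A}^{-1}\paren{\partial^2_{\xi_l\xi_k}L_A}\paren{z+L_A}^{-1}$, bounded via \eqref{IzLAbnd} and \eqref{LADbnd02}. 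These contributions assemble into exactly the two-term structure on the right-hand side of \eqref{D2IzLAbnd}.

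Finally, \eqref{DDIzLAbnd} and \eqref{DDIzLAbnd02} are algebraic consequences of \eqref{IzLAbnd}--\eqref{D2IzLAbnd}. The key inequalities are $\abs{z}^2+\lambda^2\geq\abs{z}^2$, $\abs{z}^2+\lambda^2\geq\lambda^2$, and the AM--GM estimate $\abs{z}^2+\lambda^2\geq 2\abs{z}\lambda$. For each $\abs{\alpha}\leq 2$, applying the appropriate choice to the denominators in \eqref{IzLAbnd}--\eqref{D2IzLAbnd} with $\lambda$ playing the role of $\abs{\xib}$ allows one to trade powers of $\abs{\xib}$ for powers of $\abs{z}$, producing the two scalings $\abs{z}^{-1}\abs{\xib}^{-\abs{\alpha}}$ and $\abs{z}^{-2}\abs{\xib}^{1-\abs{\alpha}}$. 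Each factor of $\bigl((1-\cos\delta)(\abs{z}^2+\lambda^2)\bigr)^{-1/2}$ arising in the estimates for \eqref{IzLAbnd}--\eqref{D2IzLAbnd} is split by writing $\abs{z}^2+\lambda^2=\tfrac{1}{2}(\abs{z}^2+\lambda^2)+\tfrac{1}{2}(\abs{z}^2+\lambda^2)$ and applying one of the three elementary inequalities to each half, according to whether the $\abs{z}$-decay or the $\abs{\xib}$-decay is required.

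The main obstacle is the upper bound in \eqref{IzLAbnd}: the non-self-adjointness of $L_A$ forces one to work via the similarity to $\tilde L_A$, and one must verify that the associated condition number is uniformly bounded independently of $A\in\mc{DA}_{\sigma_1,\sigma_2}$ and $\alpha\in[0,1]$. Once that is in place, the remaining steps are essentially bookkeeping, but care is required to track how the resulting constant depends on $\delta$, $\sigma_1$, $\sigma_2$, and the bounds $z_m, z_M^{(0)}$ on $\mc{T}$ through the inequalities, so that the final $C_{\delta,\sigma_1,\sigma_2,\mc{T}}$ is genuinely uniform in $z$ and $A$.
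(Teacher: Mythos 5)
Your approach is essentially the same as the paper's, with one genuine improvement. For the upper bound in \eqref{IzLAbnd}, the paper diagonalizes $L_A=PDP^{-1}$ and then asserts that the norm estimate ``follows'' from the eigenvalue bounds, without addressing the condition number of $P$; since $L_A=(\mc{F}_\theta G_{\alpha,A})M_A$ is a product of two symmetric positive definite matrices and hence not normal, this step is not automatic. Your observation that $L_A$ is similar, via $(\mc{F}_\theta G_{\alpha,A})^{1/2}$, to the symmetric matrix $(\mc{F}_\theta G_{\alpha,A})^{1/2}M_A(\mc{F}_\theta G_{\alpha,A})^{1/2}$ with the same spectrum, and that the condition number of this similarity is $\sqrt{1+\alpha}\le\sqrt{2}$ by \eqref{eigen_G}, makes the upper bound rigorous and accounts for the factor of $2$ in \eqref{IzLAbnd}. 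The lower bound via $\norm{M^{-1}}\ge\norm{M}^{-1}$ and the triangle inequality, the resolvent-derivative identities for \eqref{DIzLAbnd}--\eqref{D2IzLAbnd}, and the elementary algebra for \eqref{DDIzLAbnd}--\eqref{DDIzLAbnd02} match the paper's argument.

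One caveat applies equally to your proposal and to the paper's own proof: the bound \eqref{DDIzLAbnd02} cannot hold for $\abs{\alpha}=0$. As $\abs{\bm{\xi}}\to0$ with $z$ fixed, homogeneity gives $L_A(\bm{\xi})\to0$, so $\norm{(z+L_A)^{-1}}\to\abs{z}^{-1}$, while the claimed right-hand side $C\abs{\bm{\xi}}/\abs{z}^2$ tends to zero. Correspondingly, your three elementary inequalities cannot produce a lower bound of the form $(\abs{z}^2+\lambda^2)^{1/2}\ge c\,\abs{z}^2/\lambda$ from the single denominator copy available in \eqref{IzLAbnd}, because as $\lambda\to0$ the left side stays bounded by $\abs{z}$ while the right side diverges. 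The estimate \eqref{DDIzLAbnd02} is only invoked downstream (in the proof of Lemma \ref{LA_kernelest}) with $\abs{\alpha}\in\{1,2\}$, for which both your manipulations and the paper's go through; the claim should simply be restricted to $1\le\abs{\alpha}\le2$.
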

\begin{proof}
Since the eigenvalues of $\paren{z+ L_A\paren{\bm{\xi}}}^{-1}$ are between
$\Big(z+\frac{\sigma_1 z_M^{(0)}}{2\sigma_2^2}\abs{\bm{\xi}}\Big)^{-1}$ and $\Big(z+\frac{\sigma_2 z_m}{4\sigma_1^2}\abs{\bm{\xi}}\Big)^{-1}$, it follows that
\begin{align*}
     \frac{1}{\abs{z}+\frac{\sigma_1 z_M^{(0)}}{2\sigma_2^2}\abs{\bm{\xi}}}\leq\norm{\paren{z+ L_A\paren{\bm{\xi}}}^{-1}}\leq \frac{2}{\sqrt{\paren{1-\cos \delta}\paren{\paren{\frac{\sigma_2 z_m}{4\sigma_1^2}\abs{\bm{\xi}}}^2+\abs{z}^2}}}.
\end{align*}
Next,
\begin{align*}
    \begin{split}
        \PD{}{\xi_k}\paren{z+ L_A\paren{\bm{\xi}}}^{-1}=-\paren{z+ L_A\paren{\bm{\xi}}}^{-1}\PD{}{\xi_k}L_A\paren{\bm{\xi}}\paren{z+ L_A\paren{\bm{\xi}}}^{-1},
    \end{split}
\end{align*}
so by \eqref{LADbnd},
\begin{align*}
    \norm{\PD{}{\xi_k}\paren{z+ L_A\paren{\bm{\xi}}}^{-1}}\leq C_1\frac{\sigma_1^2}{\sigma_2^3}z_M^{(0)}\frac{4 }{\paren{1-\cos \delta}\paren{\paren{\frac{\sigma_2 z_m}{4\sigma_1^2}\abs{\bm{\xi}}}^2+\abs{z}^2}}.
\end{align*}
Finally,
\begin{align*}
    \begin{split}
         &\quad\frac{\partial^2}{\partial \xi_l\partial \xi_k}\paren{z+ L_A\paren{\bm{\xi}}}^{-1}\\
        =&\quad\paren{z+ L_A\paren{\bm{\xi}}}^{-1}\PD{}{\xi_l}L_A\paren{\bm{\xi}}\paren{z+ L_A\paren{\bm{\xi}}}^{-1}\PD{}{\xi_k}L_A\paren{\bm{\xi}}\paren{z+ L_A\paren{\bm{\xi}}}^{-1}\\
         &+\paren{z+ L_A\paren{\bm{\xi}}}^{-1}\PD{}{\xi_l}L_A\paren{\bm{\xi}}\paren{z+ L_A\paren{\bm{\xi}}}^{-1}\PD{}{\xi_k}L_A\paren{\bm{\xi}}\paren{z+ L_A\paren{\bm{\xi}}}^{-1}\\
         &-\paren{z+ L_A\paren{\bm{\xi}}}^{-1}\frac{\partial^2}{\partial \xi_l\partial \xi_k} L_A\paren{\bm{\xi}}\paren{z+ L_A\paren{\bm{\xi}}}^{-1}.
    \end{split}
\end{align*}
Therefore,
\begin{align*}
    \begin{split}
        \norm{\frac{\partial^2}{\partial \xi_l\partial \xi_k}\paren{z+ L_A\paren{\bm{\xi}}}^{-1}}&\leq C_1^2\frac{\sigma_1^4}{\sigma_2^6}{z_M^{(0)}}^2\frac{16 }{\paren{\paren{1-\cos \delta}\paren{\paren{\frac{\sigma_2 z_m}{4\sigma_1^2}\abs{\bm{\xi}}}^2+\abs{z}^2}}^{\frac{3}{2}}}\\
        &\quad+C_1\frac{\sigma_1^3}{\sigma_2^4}{z_M^{(0)}}\frac{4 }{\paren{1-\cos \delta}\abs{\bm{\xi}}\paren{\paren{\frac{\sigma_2 z_m}{4\sigma_1^2}\abs{\bm{\xi}}}^2+\abs{z}^2}}.
    \end{split}
\end{align*}
From the inequalities
\begin{align*}
    \frac{1}{\sqrt{\paren{\paren{\frac{\sigma_2 z_m}{4\sigma_1^2}\abs{\bm{\xi}}}^2+\abs{z}^2}}}\leq \frac{1}{\abs{z}} \mbox{ and } \frac{1}{\sqrt{\paren{\paren{\frac{\sigma_2 z_m}{4\sigma_1^2}\abs{\bm{\xi}}}^2+\abs{z}^2}}}\leq \frac{1}{\frac{\sigma_2 z_m}{4\sigma_1^2}\abs{\bm{\xi}}},
\end{align*}
we obtain 
\begin{align*}
    \norm{\partial_{\bm{\xi}}^\alpha \paren{z+ L_A\paren{\bm{\xi}}}^{-1}}\leq \frac{C_{\delta, \sigma_1, \sigma_2}}{\abs{z}}\abs{\bm{\xi}}^{-\abs{\alpha}},\\
    \norm{\partial_{\bm{\xi}}^\alpha \paren{z+ L_A\paren{\bm{\xi}}}^{-1}}\leq \frac{C_{\delta, \sigma_1, \sigma_2}}{\abs{z}^2}\abs{\bm{\xi}}^{1-\abs{\alpha}}
\end{align*}
for all $\abs{\alpha}\leq 2$, where the constant $C_{\delta, \sigma_1, \sigma_2, \mc{T}}$ only depends on $\delta, \sigma_1, \sigma_2$ and $\mc{T}$.
\end{proof}
Next, let us consider $\norm{\partial_{\bm{\xi}}^\alpha \abs{\bm{\xi}}\paren{z+ L_A\paren{\bm{\xi}}}^{-1}}$ with $\abs{\alpha}\leq 2$.
\begin{lemma}
Given $\mc{S}_{\omega,\delta}$ with $\omega>0$ and $A\in \domainA$, the following estimates hold for all $z\in \mc{S}_{\omega,\delta}$
\begin{align}
\norm{\abs{\bm{\xi}}\paren{z+ L_A\paren{\bm{\xi}}}^{-1}}\leq \frac{2\abs{\bm{\xi}}}{\sqrt{\paren{1-\cos \delta}\paren{\paren{\frac{\sigma_2 z_m}{4\sigma_1^2}\abs{\bm{\xi}}}^2+\abs{z}^2}}},\label{IxizLAbnd}
\end{align}
\begin{align}
\begin{split}
         \quad\norm{\PD{}{\xi_k}\paren{\abs{\bm{\xi}}\paren{z+ L_A\paren{\bm{\xi}}}^{-1}}}
    &\leq \frac{2}{\sqrt{\paren{1-\cos \delta}\paren{\paren{\frac{\sigma_2 z_m}{4\sigma_1^2}\abs{\bm{\xi}}}^2+\abs{z}^2}}}\\
    &\quad+ C_1\frac{\sigma_1^2}{\sigma_2^3}{z_M^{(0)}}\frac{4\abs{\bm{\xi}}}{\paren{1-\cos \delta}\paren{\paren{\frac{\sigma_2 z_m}{4\sigma_1^2}\abs{\bm{\xi}}}^2+\abs{z}^2}},
\end{split}\label{DIxizLAbnd}
\end{align}
\begin{align}
    \begin{split}
        \norm{\frac{\partial^2}{\partial \xi_l\partial \xi_k}\paren{\abs{\bm{\xi}}\paren{z+ L_A\paren{\bm{\xi}}}^{-1}}}&\leq \frac{4}{\abs{\bm{\xi}}\sqrt{\paren{1-\cos \delta}\paren{\paren{\frac{\sigma_2 z_m}{4\sigma_1^2}\abs{\bm{\xi}}}^2+\abs{z}^2}}}\\
        &\quad+C_1\frac{\sigma_1^3}{\sigma_2^4} {z_M^{(0)}}\frac{12}{\paren{1-\cos \delta}\paren{\paren{\frac{\sigma_2 z_m}{4\sigma_1^2}\abs{\bm{\xi}}}^2+\abs{z}^2}}\\
        &\quad+C_1^2\frac{\sigma_1^4}{\sigma_2^6} {z_M^{(0)}}^2\frac{16\abs{\bm{\xi}}}{\paren{\paren{1-\cos \delta}\paren{\paren{\frac{\sigma_2 z_m}{4\sigma_1^2}\abs{\bm{\xi}}}^2+\abs{z}^2}}^{\frac{3}{2}}}.
    \end{split}\label{D2IxizLAbnd}
\end{align}
Moreover, there exists a constant $C_{\delta, \sigma_1, \sigma_2, \mc{T}}$ depending on $\delta, \sigma_1, \sigma_2$ and $\mc{T}$ s.t. for all $\abs{\alpha}\leq 2$,
\begin{align}
    \norm{\partial_{\bm{\xi}}^\alpha \abs{\bm{\xi}}\paren{z+ L_A\paren{\bm{\xi}}}^{-1}}\leq C_{\delta, \sigma_1, \sigma_2, \mc{T}}\abs{\bm{\xi}}^{-\abs{\alpha}}.\label{DDIxizLAbnd}
\end{align}
\end{lemma}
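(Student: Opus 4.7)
The plan is to apply the Leibniz rule to the product $|\bm{\xi}|\,(z+L_A(\bm{\xi}))^{-1}$ and combine the estimates on derivatives of $(z+L_A(\bm{\xi}))^{-1}$ already proven in \eqref{IzLAbnd}, \eqref{DIzLAbnd}, \eqref{D2IzLAbnd} with the elementary bounds
\begin{equation*}
 |\bm{\xi}|\cdot 1,\qquad \Big|\PD{|\bm{\xi}|}{\xi_k}\Big|=\Big|\frac{\xi_k}{|\bm{\xi}|}\Big|\leq 1,\qquad \Big|\frac{\partial^2|\bm{\xi}|}{\partial\xi_l\partial\xi_k}\Big|\leq \frac{2}{|\bm{\xi}|}.
\end{equation*}

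First, the bound \eqref{IxizLAbnd} is simply $|\bm{\xi}|$ times the right inequality in \eqref{IzLAbnd}. For \eqref{DIxizLAbnd} I would expand
\begin{equation*}
\PD{}{\xi_k}\big(|\bm{\xi}|(z+L_A)^{-1}\big)=\PD{|\bm{\xi}|}{\xi_k}(z+L_A)^{-1}+|\bm{\xi}|\PD{}{\xi_k}(z+L_A)^{-1},
\end{equation*}
apply $|\partial_{\xi_k}|\bm{\xi}||\leq 1$ to the first summand with \eqref{IzLAbnd}, and \eqref{DIzLAbnd} directly to the second. Similarly, for \eqref{D2IxizLAbnd} I would expand the second derivative into the three standard summands (two first-derivative-of-$|\bm{\xi}|$ times first-derivative-of-resolvent cross terms, a second-derivative-of-$|\bm{\xi}|$ times resolvent term, and a $|\bm{\xi}|$ times second-derivative-of-resolvent term) and plug in \eqref{IzLAbnd}, \eqref{DIzLAbnd}, \eqref{D2IzLAbnd} respectively. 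Each piece produces exactly one of the three terms appearing on the right-hand side of \eqref{D2IxizLAbnd}.

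Finally, for the uniform bound \eqref{DDIxizLAbnd}, I would absorb all constants depending on $\delta$, $\sigma_1$, $\sigma_2$, $\mc{T}$ into a single $C_{\delta,\sigma_1,\sigma_2,\mc{T}}$ and use the elementary inequality
\begin{equation*}
\frac{1}{\sqrt{(\tfrac{\sigma_2 z_m}{4\sigma_1^2}|\bm{\xi}|)^2+|z|^2}}\leq \frac{C_{\sigma_1,\sigma_2,\mc{T}}}{|\bm{\xi}|},
\end{equation*}
together with the analogous estimate where the square root is cubed, to convert each of the three summands in \eqref{D2IxizLAbnd} into a pure power of $|\bm{\xi}|^{-|\alpha|}$. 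A direct count shows that every summand in the $|\alpha|=0,1,2$ case collapses to at most $C_{\delta,\sigma_1,\sigma_2,\mc{T}}|\bm{\xi}|^{-|\alpha|}$, giving \eqref{DDIxizLAbnd}. No step is really an obstacle here; the only care needed is the bookkeeping of the constants and verifying that the worst-behaving summand in $|\alpha|=2$ indeed produces $|\bm{\xi}|^{-2}$ rather than a slower decay, which follows from the presence of an extra $|\bm{\xi}|$ in the numerator of the $C_1^2$-term of \eqref{D2IxizLAbnd}.
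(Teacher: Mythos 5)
Your proposal matches the paper's proof exactly: apply the Leibniz rule to $|\bm{\xi}|\,(z+L_A)^{-1}$, bound each piece by the resolvent estimates \eqref{IzLAbnd}, \eqref{DIzLAbnd}, \eqref{D2IzLAbnd} together with the elementary bounds on $|\bm{\xi}|$ and its derivatives, and then for \eqref{DDIxizLAbnd} absorb the $|z|$-dependence via $\bigl((\tfrac{\sigma_2 z_m}{4\sigma_1^2}|\bm{\xi}|)^2+|z|^2\bigr)^{-1/2}\leq \tfrac{4\sigma_1^2}{\sigma_2 z_m}|\bm{\xi}|^{-1}$. Nothing is missing; the argument is the same and the only thing left is the bookkeeping of the explicit constants, which the paper carries through as you describe.
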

\begin{proof}
By \eqref{IzLAbnd}, it is easy to obtain
\begin{align*}
\norm{\abs{\bm{\xi}}\paren{z+ L_A\paren{\bm{\xi}}}^{-1}}\leq \frac{2\abs{\bm{\xi}}}{\sqrt{\paren{1-\cos \delta}\paren{\paren{\frac{\sigma_2 z_m}{4\sigma_1^2}\abs{\bm{\xi}}}^2+\abs{z}^2}}}.
\end{align*}
Next,
\begin{align*}
    \PD{}{\xi_k}\paren{\abs{\bm{\xi}}\paren{z+ L_A\paren{\bm{\xi}}}^{-1}}=&\PD{\abs{\bm{\xi}}}{\xi_k}\paren{z+ L_A\paren{\bm{\xi}}}^{-1}+\abs{\bm{\xi}}\PD{}{\xi_k}\paren{z+ L_A\paren{\bm{\xi}}}^{-1}.
\end{align*}
Therefore, with \eqref{IzLAbnd} and \eqref{DIzLAbnd},
\begin{align*}
\begin{split}
     \quad\norm{\PD{}{\xi_k}\paren{\abs{\bm{\xi}}\paren{z+ L_A\paren{\bm{\xi}}}^{-1}}}
    \leq &\quad \frac{2}{\sqrt{\paren{1-\cos \delta}\paren{\paren{\frac{\sigma_2 z_m}{4\sigma_1^2}\abs{\bm{\xi}}}^2+\abs{z}^2}}}\\
    &+ C_1\frac{\sigma_1^2}{\sigma_2^3} {z_M^{(0)}}\frac{4\abs{\bm{\xi}}}{\paren{1-\cos \delta}\paren{\paren{\frac{\sigma_2 z_m}{4\sigma_1^2}\abs{\bm{\xi}}}^2+\abs{z}^2}}.
\end{split}
\end{align*}
Finally,
\begin{align*}
    \begin{split}
        &\quad\frac{\partial^2}{\partial \xi_l\partial \xi_k}\paren{\abs{\bm{\xi}}\paren{z+ L_A\paren{\bm{\xi}}}^{-1}}\\
        =&\quad \frac{\partial^2}{\partial \xi_l\partial \xi_k}\abs{\bm{\xi}}\paren{z+ L_A\paren{\bm{\xi}}}^{-1}+\PD{}{\xi_k}\abs{\bm{\xi}}\PD{}{\xi_l}\paren{z+ L_A\paren{\bm{\xi}}}^{-1}\\
        &+\PD{}{\xi_l}\abs{\bm{\xi}}\PD{}{\xi_k}\paren{z+ L_A\paren{\bm{\xi}}}^{-1}+\abs{\bm{\xi}}\frac{\partial^2}{\partial \xi_l\partial \xi_k}\paren{z+ L_A\paren{\bm{\xi}}}^{-1}.
    \end{split}
\end{align*}
Hence,
\begin{align*}
    \begin{split}
        \norm{\frac{\partial^2}{\partial \xi_l\partial \xi_k}\paren{\abs{\bm{\xi}}\paren{z+ L_A\paren{\bm{\xi}}}^{-1}}}&\leq \frac{4}{\abs{\bm{\xi}}\sqrt{\paren{1-\cos \delta}\paren{\paren{\frac{\sigma_2 z_m}{4\sigma_1^2}\abs{\bm{\xi}}}^2+\abs{z}^2}}}\\
        &\quad+C_1\frac{\sigma_1^3}{\sigma_2^4} {z_M^{(0)}}\frac{12}{\paren{1-\cos \delta}\paren{\paren{\frac{\sigma_2 z_m}{4\sigma_1^2}\abs{\bm{\xi}}}^2+\abs{z}^2}}\\
        &\quad+C_1^2\frac{\sigma_1^4}{\sigma_2^6} {z_M^{(0)}}^2\frac{16\abs{\bm{\xi}}}{\paren{\paren{1-\cos \delta}\paren{\paren{\frac{\sigma_2 z_m}{4\sigma_1^2}\abs{\bm{\xi}}}^2+\abs{z}^2}}^{\frac{3}{2}}}.
    \end{split}
\end{align*}
With $\frac{1}{\sqrt{\paren{\paren{\frac{\sigma_2 z_m}{4\sigma_1^2}\abs{\bm{\xi}}}^2+\abs{z}^2}}}\leq \frac{1}{\frac{\sigma_2 z_m}{4\sigma_1^2}\abs{\bm{\xi}}}$,
we obtain 
\begin{align*}
    \norm{\partial_{\bm{\xi}}^\alpha \paren{z+ L_A\paren{\bm{\xi}}}^{-1}}\leq C_{\delta, \sigma_1, \sigma_2, \mc{T}}\abs{\bm{\xi}}^{-\abs{\alpha}}
\end{align*}
for all $\abs{\alpha}\leq 2$, where the constant $C_{\delta, \sigma_1, \sigma_2, \mc{T}}$ only depends on $\delta, \sigma_1, \sigma_2$ and $\mc{T}$.
\end{proof}
Now, we may prove $\mc{L}_{\alpha, A}$ is a sectorial operator and  obtain the estimate of $\paren{z-\mc{L}_{\alpha, A}}^{-1}$.
\begin{thm}\label{inverse_bounds}
Given a matrix $A$ satisfying the condition \eqref{s1s2} and a constant $K>0$ , there exists $\mc{S}_{\omega,\delta}$ with $\omega,\delta>0$ s.t. for all $z\in \mc{S}_{\omega,\delta}$
\begin{align*}
    \norm{\paren{z-\mc{L}_{\alpha, A}}^{-1}\bm{Y}}_{C^\gamma\paren{\mbr}}\leq \frac{C_{\omega,\delta, \sigma_1, \sigma_2,\mc{T}}}{\abs{z}}\norm{\bm{Y}}_{C^\gamma\paren{\mbr}},
\end{align*}
and
\begin{align*}
    \norm{\paren{z-\mc{L}_{\alpha, A}}^{-1}\bm{Y}}_{C^{1,\gamma}\paren{\mbr}}\leq C_{\omega,\delta, \sigma_1, \sigma_2,\mc{T}}\norm{\bm{Y}}_{C^\gamma\paren{\mbr}},
\end{align*}
for all $\bm{Y}\in C^\gamma(\mbr)\cap L^p(\mbr)$ with $1\leq p\leq 2$.
\end{thm}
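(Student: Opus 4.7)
The proof exploits the Fourier multiplier structure of $\mc{L}_{\alpha,A}$. Since $\tilde{\mc{M}}_\alpha(A)$ is a convolution with kernel $G^1(A\thetab) + \alpha G^2(A\thetab)$, the operator $\mc{L}_{\alpha,A} = \tilde{\mc{M}}_\alpha(A)(T_F(A)\nabla\,\cdot)$ is the Fourier multiplier with symbol $-L_A(\xib)$, so
\[
(z - \mc{L}_{\alpha,A})^{-1}\bm{Y}(\thetab) = \mc{F}^{-1}\bigl[(z + L_A(\xib))^{-1}\mc{F}\bm{Y}(\xib)\bigr](\thetab).
\]
By \eqref{LAbnd}, the eigenvalues of $L_A(\xib)$ are positive and comparable to $|\xib|$; combined with the sector inequality $|z+\lambda|^2 \geq (1-\cos\delta)(\lambda^2 + |z|^2)$ for $z\in\mc{S}_{\omega,\delta}$ and $\lambda\geq 0$ (the remark preceding \eqref{IzLAbnd}), this makes $z + L_A(\xib)$ invertible pointwise on $\mbr$ for every $z\in\mc{S}_{\omega,\delta}$ provided $\omega>0$ is chosen appropriately, with the quantitative derivative bounds \eqref{DDIzLAbnd}--\eqref{DDIxizLAbnd} already in hand.

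The first inequality follows by applying Theorem \ref{fouriermuliplierholdernorm} in dimension $n=2$ with $s=2 > n/2$ to $m(\xib) = (z + L_A(\xib))^{-1}$: the Mikhlin-type hypothesis $\|\partial^{\bm\alpha}_{\xib} m\|\leq C_{\bm\alpha} |\xib|^{-|\bm\alpha|}$ for $|\bm\alpha|\leq 2$ is precisely \eqref{DDIzLAbnd}, with $D_m \leq C_{\delta,\sigma_1,\sigma_2,\mc{T}}/|z|$. This yields
\[
\jump{(z-\mc{L}_{\alpha,A})^{-1}\bm{Y}}_{C^\gamma(\mbr)} \leq \frac{C}{|z|}\jump{\bm{Y}}_{C^\gamma(\mbr)}.
\]
For the $C^{1,\gamma}$ seminorm, I factor the multiplier of the gradient as $i\xi_j(z+L_A)^{-1} = i(\xi_j/|\xib|) \cdot |\xib|(z+L_A)^{-1}$; the second factor satisfies the Mikhlin bound \eqref{DDIxizLAbnd} with constant independent of $|z|$, while the first is the Riesz transform symbol, itself a bounded Fourier multiplier on $C^\gamma$. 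Composing yields $\jump{\nabla(z-\mc{L}_{\alpha,A})^{-1}\bm{Y}}_{C^\gamma(\mbr)} \leq C\jump{\bm{Y}}_{C^\gamma(\mbr)}$.

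To control the $C^0$ parts of both norms I would use a low/high frequency decomposition. Fix a smooth cutoff $\varphi$ with $\varphi\equiv 1$ in a neighborhood of $\xib=0$ and split the symbol as $(1-\varphi)(z+L_A)^{-1} + \varphi(z+L_A)^{-1}$. The high-frequency piece produces an output whose Fourier transform vanishes near the origin, so Lemma \ref{c0normseminorm} bounds its $C^0$ norm by the Hölder seminorm just estimated. The low-frequency piece has a compactly supported, bounded symbol, and Hausdorff--Young, together with the hypothesis $\bm{Y}\in L^p$ for some $1\leq p\leq 2$, gives
\[
\|T_\varphi \bm{Y}\|_{L^\infty} \leq \|\varphi(z+L_A)^{-1}\|_{L^{p'}}\|\bm{Y}\|_{L^p} \leq \frac{C}{|z|}\|\bm{Y}\|_{L^p},
\]
using the pointwise bound $\|(z+L_A(\xib))^{-1}\|\leq C/|z|$ from \eqref{DDIzLAbnd}. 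An analogous splitting with $\xi_j(z+L_A)^{-1}$, combined with the pointwise bound $\|\xi_j(z+L_A)^{-1}\|\leq C$, controls $\|\nabla(z-\mc{L}_{\alpha,A})^{-1}\bm{Y}\|_{C^0(\mbr)}$.

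The main technical obstacle is the precise tracking of the $|z|$-dependence throughout: the first estimate requires the sharp $1/|z|$ decay from \eqref{DDIzLAbnd} rather than the weaker \eqref{DDIzLAbnd02}, whereas the $C^{1,\gamma}$ estimate needs the cancellation in \eqref{DDIxizLAbnd} producing a constant independent of $z$. This careful bookkeeping is what motivated the parallel preparation of the $\partial^{\bm\alpha}(z+L_A)^{-1}$ and $\partial^{\bm\alpha}|\xib|(z+L_A)^{-1}$ hierarchies in Lemmas on $L_A$ earlier in Section \ref{sec:frozen}; combining them with the frequency splitting yields the stated bounds with the desired dependence on $|z|$ and permits the subsequent application to sectoriality of $\mc{L}_{\alpha,A}$.
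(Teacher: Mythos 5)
The Fourier-multiplier skeleton you describe — symbol $(z+L_A)^{-1}$ for the inverse, the Mikhlin estimates \eqref{DDIzLAbnd} and \eqref{DDIxizLAbnd}, Theorem \ref{fouriermuliplierholdernorm} for the H\"older seminorms, and the low/high frequency split with Lemma \ref{c0normseminorm} controlling the high-frequency $C^0$ piece — is exactly the paper's route, and that part of your argument is sound.

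The gap is in the low-frequency $C^0$ estimate. Your Hausdorff--Young step produces a bound of the form
\begin{equation*}
\norm{T_\varphi\bm{Y}}_{L^\infty}\leq \frac{C}{\abs{z}}\norm{\bm{Y}}_{L^p},
\end{equation*}
but the theorem asserts a bound in terms of $\norm{\bm{Y}}_{C^\gamma(\mbr)}$ alone. On all of $\mbr$ there is no inequality $\norm{\bm{Y}}_{L^p}\lesssim\norm{\bm{Y}}_{C^\gamma}$ for $p\leq 2$ (constants are in $C^\gamma$ but not in $L^p$), so the conclusion as stated does not follow. The $L^p$ hypothesis in the theorem is there only to make $\mc{F}\bm{Y}$ a genuine function so that the multiplier operator is well defined; the quantitative estimate must not leak a dependence on $\norm{\bm{Y}}_{L^p}$. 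The paper instead proves a pointwise kernel decay estimate
\begin{equation*}
\norm{K_0(\thetab)}\leq \frac{C_{\omega,\delta,\sigma_1,\sigma_2,\mc{T}}}{\abs{z}}\frac{1}{1+\abs{\thetab}^3},\qquad K_0=\mc{F}^{-1}\bigl[(z+L_A)^{-1}\varphi\bigr],
\end{equation*}
(Lemma \ref{LA_kernelest}), hence $\norm{K_0}_{L^1}\lesssim 1/\abs{z}$, and then closes with Young's inequality $\norm{K_0*\bm{Y}}_{C^0}\leq\norm{K_0}_{L^1}\norm{\bm{Y}}_{C^0}$, giving the $C^\gamma$-only bound.

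Obtaining that kernel decay is where the real work lies and it is not a one-line fix. The naive attempt — multiply by $1+\abs{\thetab}^2$ and move a full Laplacian $\Delta_\xib$ onto the symbol — fails because $\partial_\xib^{\bm{\alpha}}(z+L_A)^{-1}\sim\abs{\xib}^{-\abs{\bm{\alpha}}}$ for $\abs{\bm{\alpha}}=2$ is not locally integrable in two dimensions. The paper instead multiplies by $1+\abs{\thetab}^3$ (so $1-i\frac{\thetab}{\abs{\thetab}}\cdot\nabla_\xib\Delta_\xib$ falls on the symbol), isolates the most singular contribution $(z+L_A)^{-1}\,\partial_{\xi_j}\Delta_\xib L_A\,(z+L_A)^{-1}$, uses the homogeneity identity $\partial_{\xi_j}\Delta_\xib L_A(\xib)=\abs{\xib}^{-2}\Phi^{(3)}_{A,j}(\hxi)$ from Lemma \ref{l:LA_est}, passes to polar coordinates, exploits that $L_A$ is even so the integrand pairs with $\sin(\thetab\cdot\hxi\,r)/r$, and bounds the resulting one-dimensional integral by Lemma \ref{t:integral_est01}. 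The same mechanism, pushed to fourth derivatives and the decay $1/(1+\abs{\thetab}^4)$, handles the gradient kernel $K_{1,j}$ (with no $1/\abs{z}$ gain, consistent with the second inequality). Your proposal leaves this entire argument missing.

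Your Riesz-transform factorization $\xi_j(z+L_A)^{-1}=(\xi_j/\abs{\xib})\cdot\abs{\xib}(z+L_A)^{-1}$ for the $C^{1,\gamma}$ seminorm is fine and equivalent to simply applying Theorem \ref{fouriermuliplierholdernorm} to the product directly, since the Mikhlin condition is closed under products; this is a cosmetic variation.
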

\begin{proof}
Since
\begin{align*}
    \mc{L}_{\alpha, A}\bm{Y}(\bm{\theta})=-\mc{F}^{-1}L_{A}(\bm{\xi})\mc{F}\bm{Y},
\end{align*}
the Fourier multiplier of $\paren{z-\mc{L}_{\alpha, A}}^{-1}$ is $\paren{z+L_{A}}^{-1}$ and of $\PD{}{\theta_i}\paren{z-\mc{L}_{\alpha, A}}^{-1}$ is $\xi_i\paren{z+L_{A}}^{-1}$.
With \eqref{DDIzLAbnd} and \eqref{DDIxizLAbnd}, we obtain there exists $C_{\omega,\delta, \sigma_1, \sigma_2,\mc{T}}$ s.t.
\begin{align}
    \jump{\paren{z-\mc{L}_{\alpha, A}}^{-1}\bm{Y}}_{C^\gamma\paren{\mbr}}\leq& \frac{C_{\omega,\delta, \sigma_1, \sigma_2,\mc{T}}}{\abs{z}}\jump{\bm{Y}}_{C^\gamma\paren{\mbr}},\label{HolderzLAest00}\\
    \jump{\paren{z-\mc{L}_{\alpha, A}}^{-1}\bm{Y}}_{C^{1,\gamma}\paren{\mbr}}\leq& C_{\omega,\delta, \sigma_1, \sigma_2,\mc{T}}\jump{\bm{Y}}_{C^\gamma\paren{\mbr}}.\label{HolderzLAest10}
\end{align}
Next, for $\norm{\paren{z-\mc{L}_{\alpha, A}}^{-1}\bm{Y}}_{C^0\paren{\mbr}}$, set $\varphi(\bm{\xi})$ to be a smooth and radial cutting function with a compact support in $\mcbr{1}$ and $\varphi(\bm{\xi})=1$ in a neighborhood of $\bm{\xi}=\bm{0}$.
Then,
\begin{align*}
    \begin{split}
        \norm{\paren{z-\mc{L}_{\alpha, A}}^{-1}\bm{Y}}_{C^0\paren{\mbr}}&=\norm{\mc{F}^{-1}\paren{z+L_{A}}^{-1}(\bm{\xi})\mc{F}\bm{Y}}_{C^0\paren{\mbr}}\\
        &\leq  \norm{\mc{F}^{-1}\paren{z+L_{A}}^{-1}(\bm{\xi})\paren{1-\varphi(\bm{\xi})}\mc{F}\bm{Y}}_{C^0\paren{\mbr}}\\
        &\quad+\norm{\mc{F}^{-1}\paren{z+L_{A}}^{-1}(\bm{\xi})\varphi(\bm{\xi})\mc{F}\bm{Y}}_{C^0\paren{\mbr}}.
    \end{split}
\end{align*}
For the first term, since $1-\varphi(\bm{\xi})=0$ in a neighborhood of $\bm{\xi}=\bm{0}$ and $\abs{1-\varphi(\bm{\xi})}\leq 1$, by Lemma \ref{c0normseminorm}, we obtain
\begin{align*}
\begin{split}
        &\norm{\mc{F}^{-1}\paren{z+L_{A}}^{-1}(\bm{\xi})\paren{1-\varphi(\bm{\xi})}\mc{F}\bm{Y}}_{C^0\paren{\mbr}}\\
    \leq& \minspace C\jump{\mc{F}^{-1}\paren{z+L_{A}}^{-1}(\bm{\xi})\paren{1-\varphi(\bm{\xi})}\mc{F}\bm{Y}}_{C^\gamma\paren{\mbr}}
    \leq \frac{C_{\omega,\delta, \sigma_1, \sigma_2,\mc{T}}}{\abs{z}}\jump{\bm{Y}}_{C^\gamma\paren{\mbr}}.
\end{split}
\end{align*}
For the second term, define the kernel
\begin{align*}
    K_0\paren{\bm{\theta}}:= \mc{F}^{-1}\paren{z+L_{A}}^{-1}(\bm{\xi})\varphi(\bm{\xi}),
\end{align*}
so
\begin{align*}
\begin{split}
        &\norm{\mc{F}^{-1}\paren{z+L_{A}}^{-1}(\bm{\xi})\varphi(\bm{\xi})\mc{F}\bm{Y}}_{C^0\paren{\mbr}}\\
        =&\norm{K_0 * \bm{Y}}_{C^0\paren{\mbr}}\
    \leq \norm{K_0 }_{L^1\paren{\mbr}}\norm{\bm{Y}}_{C^0\paren{\mbr}}.
\end{split}
\end{align*}

Then, we estimate $\norm{K_0 }_{L^1}$, by Lemma \ref{LA_kernelest}, we have
\begin{align*}
    \norm{K_0\paren{\bm{\theta}}}\leq& \frac{C_{\omega,\delta,\sigma_1, \sigma_2,\mc{T}}}{\abs{z}}\frac{1}{1+\abs{\thetab}^3},
\end{align*}
so
\begin{align*}
    \begin{split}
        \norm{K_0 }_{L^1}\leq\frac{C_{\omega,\delta,\sigma_1, \sigma_2,\mc{T}}}{\abs{z}}\int_{\mathbb{R}^2}\frac{1}{1+\abs{\thetab}^3}d\thetab\leq \frac{C_{\omega,\delta,\sigma_1, \sigma_2,\mc{T}}}{\abs{z}}.
    \end{split}
\end{align*}
Therefore,
\begin{align*}
    \norm{\mc{F}^{-1}\paren{z+L_{A}}^{-1}(\bm{\xi})\varphi(\bm{\xi})\mc{F}\bm{Y}}_{C^0\paren{\mbr}}\leq\frac{C_{\omega,\delta, \sigma_1, \sigma_2,\mc{T}}}{\abs{z}}\norm{\bm{Y}}_{C^0\paren{\mbr}},
\end{align*}
so
\begin{align*}
    \norm{\paren{z-\mc{L}_{\alpha, A}}^{-1}\bm{Y}}_{C^\gamma\paren{\mbr}}\leq \frac{C_{\omega,\delta, \sigma_1, \sigma_2,\mc{T}}}{\abs{z}}\norm{\bm{Y}}_{C^\gamma\paren{\mbr}}.
\end{align*}
Similarly, for $\norm{\PD{}{\theta_i}\paren{z-\mc{L}_{\alpha, A}}^{-1}\bm{Y}}_{C^0\paren{\mbr}}$, we may use the above technique with the kernel $K_{1,i}$
\begin{align*}
    K_{1,i}\paren{\bm{\theta}}:=& \mc{F}^{-1}\xi_i\paren{z+L_{A}}^{-1}(\bm{\xi})\varphi(\bm{\xi}),\\
    \norm{K_{1,j}\paren{\bm{\theta}}}\leq& C_{\omega,\delta,\sigma_1, \sigma_2,\mc{T}}\frac{1}{1+\abs{\thetab}^4}
\end{align*}
to obtain
\begin{align*}
    \norm{\PD{}{\theta_i}\paren{z-\mc{L}_{\alpha, A}}^{-1}\bm{Y}}_{C^0\paren{\mbr}}\leq C_{\omega,\delta, \sigma_1, \sigma_2,\mc{T}}\norm{\bm{Y}}_{C^0\paren{\mbr}}.
\end{align*}

Thus,
\begin{align*}
    \norm{\paren{z-\mc{L}_{\alpha, A}}^{-1}\bm{Y}}_{C^{1,\gamma}\paren{\mbr}}\leq C_{\omega,\delta, \sigma_1, \sigma_2,\mc{T}}\norm{\bm{Y}}_{C^\gamma\paren{\mbr}}.
\end{align*}

\end{proof}

\begin{thm}\label{t:sectorial_est}
Given a matrix $A$ in $\mc{DA}_{\sigma_1,\sigma_2}$, there exists $\mc{S}_{\omega,\delta}$ with $\omega,\delta>0$ s.t. for all $z\in \mc{S}_{\omega,\delta}$
\begin{align*}
    \norm{\paren{z-\mc{L}_{\alpha, A}}\bm{Y}}_{C^\gamma\paren{\mbr}}\geq C_{\omega,\delta, \sigma_1, \sigma_2,\mc{T}}\abs{z}\norm{\bm{Y}}_{C^\gamma\paren{\mbr}},
\end{align*}
and
\begin{align*}
    \norm{\paren{z-\mc{L}_{\alpha, A}}\bm{Y}}_{C^{\gamma}\paren{\mbr}}\geq C_{\omega,\delta, \sigma_1, \sigma_2,\mc{T}}\norm{\bm{Y}}_{C^{1,\gamma}\paren{\mbr}},
\end{align*}
for all compactly supported $\bm{Y}\in C^{1,\gamma}\paren{\mbr}$.
\end{thm}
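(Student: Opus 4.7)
The two inequalities are the dual statements to the resolvent bounds proved in Theorem \ref{inverse_bounds}. The plan is to deduce them by simply inverting that relation: given compactly supported $\bm{Y}\in C^{1,\gamma}(\mbr)$, set $\bm{F}=(z-\mc{L}_{\alpha,A})\bm{Y}$ and show that $\bm{Y}=(z-\mc{L}_{\alpha,A})^{-1}\bm{F}$, so that the upper bounds on the resolvent yield the desired lower bounds on $(z-\mc{L}_{\alpha,A})$.

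First, I would verify that $\bm{F}$ lives in the space where Theorem \ref{inverse_bounds} applies. Since $\bm{Y}$ is compactly supported and lies in $C^{1,\gamma}(\mbr)$, it belongs to $L^p(\mbr)$ for all $1\leq p\leq\infty$, and by Theorem \ref{l:regul of LA} we have $\mc{L}_{\alpha,A}\bm{Y}\in C^\gamma(\mbr)\cap L^2(\mbr)$; hence $\bm{F}=z\bm{Y}-\mc{L}_{\alpha,A}\bm{Y}\in C^\gamma(\mbr)\cap L^p(\mbr)$ for $1\leq p\leq 2$, which matches the hypothesis of Theorem \ref{inverse_bounds}.

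Second, the identity $\bm{Y}=(z-\mc{L}_{\alpha,A})^{-1}\bm{F}$ has to be checked. Since $\mc{L}_{\alpha,A}$ is the Fourier multiplier operator with symbol $-L_A(\bm{\xi})$ and $(z-\mc{L}_{\alpha,A})^{-1}$ is the Fourier multiplier with symbol $(z+L_A(\bm{\xi}))^{-1}$, both sides have the same Fourier transform $\mc{F}\bm{Y}(\bm{\xi})$ pointwise in $\bm{\xi}\neq 0$. The compact support of $\bm{Y}$ ensures $\mc{F}\bm{Y}$ is smooth, and $\bm{Y}\in L^2$ permits direct Fourier inversion, so the identity holds in the distributional sense and, by the regularity assertion of Theorem \ref{inverse_bounds}, also pointwise as $C^{1,\gamma}$ functions.

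Third, we apply Theorem \ref{inverse_bounds} directly. The first bound gives
\begin{equation*}
\norm{\bm{Y}}_{C^\gamma(\mbr)}=\norm{(z-\mc{L}_{\alpha,A})^{-1}\bm{F}}_{C^\gamma(\mbr)}\leq\frac{C_{\omega,\delta,\sigma_1,\sigma_2,\mc{T}}}{|z|}\norm{\bm{F}}_{C^\gamma(\mbr)},
\end{equation*}
which rearranges to $\norm{(z-\mc{L}_{\alpha,A})\bm{Y}}_{C^\gamma(\mbr)}\geq C^{-1}|z|\norm{\bm{Y}}_{C^\gamma(\mbr)}$; the second bound in Theorem \ref{inverse_bounds} yields identically the $C^{1,\gamma}$ lower bound after absorbing the constant into $C_{\omega,\delta,\sigma_1,\sigma_2,\mc{T}}$.

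The only subtle point is the intermediate identity $\bm{Y}=(z-\mc{L}_{\alpha,A})^{-1}(z-\mc{L}_{\alpha,A})\bm{Y}$: one must check that the range of $(z-\mc{L}_{\alpha,A})$ applied to compactly supported $C^{1,\gamma}$ functions is contained in the class $C^\gamma(\mbr)\cap L^p(\mbr)$ on which the resolvent formula of Theorem \ref{inverse_bounds} was established and that the composition is the identity. This is routine thanks to the Fourier multiplier structure of $\mc{L}_{\alpha,A}$ and the bounds \eqref{LAbnd}, which guarantee that $(z+L_A(\bm{\xi}))$ is invertible uniformly in $\bm{\xi}$ for $z\in\mc{S}_{\omega,\delta}$; nonetheless, it is the one place where some care with distribution theory versus pointwise Fourier manipulation is required. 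Once established, the two lower bounds follow immediately.
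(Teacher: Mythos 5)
Your argument is essentially identical to the paper's: both set $\bm{W}=(z-\mc{L}_{\alpha,A})\bm{Y}$, invoke Theorem \ref{l:regul of LA} to place $\bm{W}$ in $C^\gamma(\mbr)\cap L^2(\mbr)$, and then apply the resolvent bounds of Theorem \ref{inverse_bounds} to $\bm{W}$, using $(z-\mc{L}_{\alpha,A})^{-1}\bm{W}=\bm{Y}$. The paper treats that inversion identity as immediate from the Fourier multiplier structure, whereas you flag it explicitly and justify it; the substance and route are the same.
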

\begin{proof}
Given $\bm{Y}\in C^{1,\gamma}$ with a compact support, by Theorem \ref{l:regul of LA}, $\bm{W}=\paren{ z-\mc{L}_{\alpha, A}}\bm{Y}\in  C^\gamma\paren{\mbr}\cap L^2\paren{\mbr}$.
Through Theorem \ref{inverse_bounds},
\begin{align}
    \norm{\bm{W}}_{C^\gamma\paren{\mbr}}\geq C_{\omega,\delta, \sigma_1, \sigma_2,\mc{T}}^{(1)}\abs{z}\norm{\paren{z-\mc{L}_{\alpha, A}}^{-1}\bm{W}}_{C^\gamma\paren{\mbr}}= C_{\omega,\delta, \sigma_1, \sigma_2,\mc{T}}^{(1)}\abs{z}\norm{\bm{Y}}_{C^\gamma\paren{\mbr}}\label{ineq_semi0}
\end{align}
and
\begin{align}
    \norm{\bm{W}}_{C^\gamma\paren{\mbr}}\geq C_{\omega,\delta, \sigma_1, \sigma_2,\mc{T}}^{(2)}\jump{\paren{z-\mc{L}_{\alpha, A}}^{-1}\bm{W}}_{C^{1,\gamma}\paren{\mbr}}= C_{\omega,\delta, \sigma_1, \sigma_2,\mc{T}}^{(2)}\norm{\bm{Y}}_{C^{1,\gamma}\paren{\mbr}}\label{ineq_semi1}
\end{align}
\end{proof}
In each chart $\hX_n\paren{\thetab}$ and $\bm{Y}_n\paren{\thetab}$, $A$ is $\nabla \bm{X}_n\paren{0}$ and $\rho_n\bm{Y}_n$ is supported in $V_{4R}$ \eqref{def_VR}.
$\mc{L}_{\alpha,A}$ will be used in Proposition \ref{frozen_coef_prop} and \ref{Fsemigroup}.
\begin{remark}\label{arc_chord}
Since 
\begin{align*}
\begin{split}
    A\hxi=\lim_{h\rightarrow0}\frac{\bm{X}_n(h\hxi)-\bm{X}_n(0)}{h},
\end{split}
\end{align*}
it is clear that
\begin{align*}
    \starnorm{\bm{X}}\leq\abs{\bm{X}}_{\circ,n}\leq \liminf_{\xib\rightarrow 0}\frac{\abs{\bm{X}_n\paren{\xib}-\bm{X}_n\paren{0}}}{\abs{\xib}}&\leq \abs{A\hxi},\\
     C\norm{\bm{X}}_{C^{1,\gamma}\paren{\mbs}}\geq\norm{\bm{X}_n}_{C^1\paren{V_{4R}}}\geq \limsup_{\xib\rightarrow 0}\frac{\abs{\bm{X}_n\paren{\xib}-\bm{X}_n\paren{0}}}{\abs{\xib}}&\geq \abs{A\hxi}.
\end{align*}
Thus, we may set $\sigma_1=C \norm{\bm{X}}_{C^{1,\gamma}\paren{\mbs}}, \sigma_2=\starnorm{\bm{X}}$ .
\end{remark}
Next, set $A_0=\nabla \hX_n\paren{0}$, so
\begin{align*}
    A_0=
    \begin{bmatrix}
    2&0\\
    0&2\\
    0&0
    \end{bmatrix},
\end{align*}
and
\begin{align*}
    \mc{L}_{0,A_0}\bm{Y}\paren{\thetab}=-\int_\mbr \PD{}{\eta_k} \frac{1}{4\pi \abs{\thetab}}\bm{W}_k d\etab, 
    L_{0, A_0}\paren{\xib}=\frac{\abs{\xib}}{8}I 
\end{align*}
Set $\mc{L}_A^{(\sigma)}=\paren{1-\sigma}\mc{L}_{0,A_0}+\sigma \mc{L}_{0,A}$, which will be used in Proposition \ref{Fsemigroup} , then
\begin{align*}
    \mc{L}_A^{(\sigma)}\bm{Y}\paren{\thetab}=&-\frac{1}{8\pi}\int_\mbr \PD{}{\eta_k}\paren{ \frac{2\paren{1-\sigma}}{ \abs{\thetab}}+\frac{\sigma}{\abs{A\bm{\theta}}}}\bm{W}_k d\etab,\\
    L_A^{(\sigma)}\paren{\xib}=&\frac{\abs{\xib}}{4}\paren{\frac{1-\sigma}{2}+\sigma\frac{\abs{\xib}}{\det\paren{B}\abs{U\xib}}}
\end{align*}

We just have to adapt the above theorems and their proofs. 
Finally, we obtain
\begin{prop}\label{frozen_coef_prop}
Given $\bm{X}\in C^{1,\gamma}(\mbs)$, and our Stereoghraphic projection charts and the partition functions $\{\hX_n, \rho_n\}$ with the radius $R$, set $\sigma_1=C \norm{\bm{X}}_{C^{1,\gamma}\paren{\mbs}}$,  $\sigma_2=\starnorm{\bm{X}}$.
There exists $\mc{S}_{\omega,\delta}$ with $\omega,\delta>0$ s.t. in each chart $\hX_n$, for $\bm{Y}\in C^{1,\gamma}(\mbs)$, we have the following inequalities:
\begin{equation}\label{continuity_est}
\begin{aligned}
    \norm{\paren{z-\mc{L}_{\alpha, A}}\rho_n\bm{Y}_n}_{C^\gamma\paren{\mbr}}&\geq C_{\omega,\delta, \sigma_1, \sigma_2, \mc{T}}^{(1)}\abs{z}\norm{\rho_n\bm{Y}_n}_{C^\gamma\paren{\mbr}},\\
    \norm{\paren{z-\mc{L}_{\alpha, A}}\rho_n\bm{Y}_n}_{C^{\gamma}\paren{\mbr}}&\geq C_{\omega,\delta, \sigma_1, \sigma_2, \mc{T}}^{(2)}\norm{\rho_n\bm{Y}_n}_{C^{1,\gamma}\paren{\mbr}},\\
    \norm{\paren{z-\mc{L}_A^{(\sigma)}}\rho_n\bm{Y}_n}_{C^{\gamma}\paren{\mbr}}&\geq C_{\omega,\delta, \sigma_1, \sigma_2, \mc{T}}^{(3)}\abs{z}\norm{\rho_n\bm{Y}_n}_{C^{\gamma}\paren{\mbr}},\\
    \norm{\paren{z-\mc{L}_A^{(\sigma)}}\rho_n\bm{Y}_n}_{C^{\gamma}\paren{\mbr}}&\geq C_{\omega,\delta, \sigma_1, \sigma_2, \mc{T}}^{(4)}\norm{\rho_n\bm{Y}_n}_{C^{1,\gamma}\paren{\mbr}},
    \end{aligned}
\end{equation}
where $A=\nabla \bm{X}_n\paren{0}$, and $\sigma, \alpha\in(0,1)$.
\end{prop}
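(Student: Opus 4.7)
The plan is to reduce the proposition to the abstract sectorial estimate already proved in Theorem \ref{t:sectorial_est}, verifying that the hypotheses there are met for each of the four inequalities with the choice of parameters dictated by Remark \ref{arc_chord}. First I would observe that for each chart index $n$, the function $\rho_n \bm{Y}_n$ is compactly supported in $V_{4R}\subset \mathbb{R}^2$ (because $\rho_n$ has compact support there), and $\rho_n \bm{Y}_n \in C^{1,\gamma}(\mathbb{R}^2)$ whenever $\bm{Y}\in C^{1,\gamma}(\mathbb{S}^2)$. Thus Theorem \ref{t:sectorial_est} applies provided we can produce $\sigma_1,\sigma_2>0$ with $A=\nabla \bm{X}_n(0)\in \mc{DA}_{\sigma_1,\sigma_2}$.

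Next I would invoke Remark \ref{arc_chord}, which yields the required two-sided bound
\begin{equation*}
    \starnorm{\bm{X}}\,\abs{\hxi} \leq \abs{A\hxi} \leq C\,\norm{\bm{X}}_{C^{1,\gamma}(\mathbb{S}^2)}\,\abs{\hxi},
\end{equation*}
so $A\in \mc{DA}_{\sigma_1,\sigma_2}$ with $\sigma_1 = C\norm{\bm{X}}_{C^{1,\gamma}(\mathbb{S}^2)}$ and $\sigma_2 = \starnorm{\bm{X}}$. Plugging this into Theorem \ref{t:sectorial_est} directly gives the first two inequalities in \eqref{continuity_est}, with a constant $C_{\omega,\delta,\sigma_1,\sigma_2,\mc{T}}$ that a priori depends on $\bm{X}$ only through these two quantities.

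For the last two inequalities involving $\mc{L}_A^{(\sigma)}=(1-\sigma)\mc{L}_{0,A_0}+\sigma\mc{L}_{0,A}$, I would rerun the arguments of Section \ref{sec:frozen} with the scalar symbol
\begin{equation*}
   L_A^{(\sigma)}(\bm{\xi}) = \frac{\abs{\bm{\xi}}}{4}\paren{\frac{1-\sigma}{2}+\sigma\,\frac{\abs{\bm{\xi}}}{\det(B)\abs{U\bm{\xi}}}},
\end{equation*}
which is manifestly positive, homogeneous of degree one, and uniformly (in $\sigma\in(0,1)$) comparable to $\abs{\bm{\xi}}$ with constants depending only on $\sigma_1,\sigma_2$. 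Specifically, the analogues of \eqref{mubnd}, \eqref{LAbnd}, \eqref{LADbnd}, \eqref{LADbnd02} all hold with constants uniform in $\sigma$, since convex combinations of bounded symbols remain bounded. The resolvent estimates \eqref{DDIzLAbnd}--\eqref{DDIxizLAbnd} then transfer verbatim, and the Fourier multiplier theorem (Theorem \ref{fouriermuliplierholdernorm}) together with Lemma \ref{c0normseminorm} and the kernel bounds of Lemma \ref{LA_kernelest} produce the analogue of Theorem \ref{inverse_bounds}, which by the duality argument of Theorem \ref{t:sectorial_est} yields the desired lower bound.

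The main obstacle I anticipate is purely bookkeeping: ensuring that the constants in the Fourier multiplier estimates for $L_A^{(\sigma)}$ are genuinely uniform in $\sigma\in(0,1)$, so that a single sector $\mc{S}_{\omega,\delta}$ works for all $\sigma$ simultaneously. This reduces to checking that the derivative bounds of the form $|\partial_{\bm{\xi}}^{\bm{\alpha}} L_A^{(\sigma)}|\lesssim \abs{\bm{\xi}}^{1-|\bm{\alpha}|}$ hold with constants depending only on $\sigma_1,\sigma_2$ (and not on $\sigma$); this is immediate since the first summand $(1-\sigma)\abs{\bm{\xi}}/8$ trivially satisfies such bounds and the second summand is $\sigma$ times the scalar part of $L_{0,A}$, already handled in Lemma \ref{l:LA_est}. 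Once that uniformity is in hand, the four inequalities follow with no further essential work.
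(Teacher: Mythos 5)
Your proposal is correct and follows essentially the same route as the paper: reduce to Theorem \ref{t:sectorial_est} via Remark \ref{arc_chord} to identify $\sigma_1,\sigma_2$ for $A=\nabla\bm{X}_n(0)$, then observe that the (scalar, convex-combination) symbol $L_A^{(\sigma)}$ satisfies the same lower bound and derivative estimates uniformly in $\sigma$, so the machinery of Section \ref{sec:frozen} transfers verbatim. The paper itself compresses all of this to the single sentence ``We just have to adapt the above theorems and their proofs,'' so in fact you have supplied more detail than the source does.
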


\subsection{Some estimates for $e^{ t\mc{L}_A}$ }
We have two ways of representing the semigroup $e^{ -t\mc{L}_A}$. One is by the Dunford integral
\begin{align}
    e^{ t\mc{L}_A}=\frac{1}{2\pi i}\int_{\omega+\gamma_{r,\eta}}e^{tz}\paren{z+\mc{L}_A}^{-1}dz,\label{D_int}
\end{align}
where $r>0, \delta<\eta<\frac{\pi}{2}$ and the curve $\gamma_{r,\eta}=\{z\in\mathbb{C} : \abs{\rm{arg} z}=\pi-\eta,\abs{z}\geq r  \}\cap \{z\in\mathbb{C} : \abs{\rm{arg} z}\leq\pi-\eta,\abs{z}= r  \}$.
The other is through Fourier transform,
\begin{align*}
    e^{ t\mc{L}_A}\bm{f}\paren{\thetab}:=\mc{F}^{-1}\left[ e^{ -tL_A\paren{\xib}}\mc{F}[\bm{f}]\paren{\xib}\right]\paren{\thetab}=\int_\mbr K_A\paren{t,\bm{\thetab-\etab}}\bm{f}\paren{\etab}d\etab,
\end{align*}
where $K_A\paren{t,\thetab}=\mc{F}^{-1}\left[ e^{ -tL_A\paren{\xib}}\right]\paren{\thetab}$.
\begin{prop}\label{lin_heat_semi}
Given $0\leq t_0\leq t\leq T$ and $0\leq\beta-\alpha<\frac{1}{2}$, we have the following estimates:
\begin{align*}
    \|e^{(t-t_0) \mc{L}_A}\bm{f}(t_0)\|_{C^\beta(\mathbb{R}^2)}&\leq \frac{C}{(t-t_0)^{\beta-\alpha}}\|\bm{f}(t_0)\|_{C^\alpha(\mathbb{R}^2)},\\
    \|e^{(t-t_0) \mc{L}_A}\bm{f}(t_0)\|_{L^2(t_0,T;C^{\beta}(\mathbb{R}^2))}&\leq C\|\bm{f}(t_0)\|_{C^{\alpha}(\mathbb{R}^2)},
\end{align*}
\end{prop}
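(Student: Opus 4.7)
The plan is to exploit that the Fourier symbol $L_A(\xib)$ of $\mc{L}_A$ is homogeneous of degree one in $\xib$ (visible in \eqref{multiplierNon}, where $\mc{F}_\theta G_A$ is homogeneous of degree $-1$ and $M_A$ of degree $+2$), so that the convolution kernel $K_A(t,\thetab):=\mc{F}^{-1}[e^{-tL_A(\xib)}](\thetab)$ obeys the parabolic scaling $K_A(t,\thetab)=t^{-2}K_A(1,\thetab/t)$. Together with the coercivity $\norm{L_A(\xib)}\geq c|\xib|$ from \eqref{LAbnd} and the symbol regularity \eqref{LADbnd}--\eqref{LADbnd02}, this will give $K_A(1,\cdot)$ Schwartz-type decay, in the same spirit as Lemma \ref{LA_kernelest}.

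The core step is a Bernstein-type bound for the semigroup at dyadic scales. With $\{\Delta_j\}_{j\geq-1}$ an inhomogeneous Littlewood--Paley partition on $\mbr$ with cutoffs $\varphi_j(\xib)=\varphi_0(\xib/2^j)$ for $j\geq 0$, and $\tilde\Delta_j$ a fattened neighboring block satisfying $\tilde\Delta_j\Delta_j=\Delta_j$, I will prove that for $j\geq 0$
\begin{equation*}
\|\Delta_j(e^{t\mc{L}_A}\bm{f})\|_{L^\infty}\leq C\,e^{-ct2^j}\|\tilde\Delta_j\bm{f}\|_{L^\infty},
\end{equation*}
with a trivial $L^\infty$-contractivity bound at $j=-1$. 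The rescaling $\xib=2^j\eta$ uses the homogeneity of $L_A$ to rewrite $\varphi_j(\xib)e^{-tL_A(\xib)}$ as $\varphi_0(\eta)e^{-sL_A(\eta)}$ with $s=t2^j$, and the $L^1$ norm of the inverse Fourier transform is invariant under this rescaling. Repeated integration by parts in $\eta$, using that the symbol is smooth and compactly supported in $\{|\eta|\sim 1\}$ together with the uniform bound $|e^{-sL_A(\eta)}|\leq e^{-cs}$ on that annulus, yields Schwartz decay of the inverse Fourier transform with a polynomial-in-$s$ prefactor, which is absorbed by $e^{-cs}$ to give $\|\mc{F}^{-1}[\varphi_0e^{-sL_A}]\|_{L^1}\leq Ce^{-cs/2}$.

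Given the Bernstein bound, the Besov identification $C^\sigma(\mbr)\simeq B^\sigma_{\infty,\infty}(\mbr)$ for $\sigma\in(0,1)$ (Appendix \ref{sec:appA}) yields
\begin{equation*}
\|e^{t\mc{L}_A}\bm{f}\|_{C^\beta}\leq C\sup_{j\geq-1}2^{j\beta}e^{-ct2^j}\|\tilde\Delta_j\bm{f}\|_{L^\infty}\leq C\paren{\sup_{\tau\geq 1}\tau^{\beta-\alpha}e^{-ct\tau}}\|\bm{f}\|_{C^\alpha}\leq \frac{C}{t^{\beta-\alpha}}\|\bm{f}\|_{C^\alpha},
\end{equation*}
using $\sup_{\tau>0}\tau^{\beta-\alpha}e^{-ct\tau}\leq Ct^{-(\beta-\alpha)}$. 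Replacing $t$ by $t-t_0$ gives the first claimed estimate. The second then follows by directly integrating in time:
\begin{equation*}
\int_{t_0}^T\|e^{(t-t_0)\mc{L}_A}\bm{f}(t_0)\|_{C^\beta}^2\,dt\leq C\|\bm{f}(t_0)\|_{C^\alpha}^2\int_{t_0}^T(t-t_0)^{-2(\beta-\alpha)}\,dt\leq CT^{1-2(\beta-\alpha)}\|\bm{f}(t_0)\|_{C^\alpha}^2,
\end{equation*}
the integral being finite precisely because $\beta-\alpha<1/2$.

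The main obstacle is the Bernstein step: since $L_A$ is matrix-valued and only positive definite, one must avoid diagonalizing and instead control the exponential $e^{-sL_A(\eta)}$ directly. The homogeneity reduces the analysis to a single symbol on an annulus, after which the uniform decay $|e^{-sL_A(\eta)}|\leq e^{-cs}$ for $|\eta|\sim 1$ combined with the derivative bounds from Lemma \ref{l:LA_est} close the argument. The constraint $\beta-\alpha<1/2$ only enters at the very last step, in the integrability of the temporal singularity $(t-t_0)^{-2(\beta-\alpha)}$ near $t=t_0$.
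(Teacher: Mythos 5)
Your Littlewood--Paley approach is a genuinely different route from the paper's. The paper represents $e^{(t-t_0)\mc{L}_A}$ via the Dunford integral \eqref{D_int}, invokes the resolvent bounds of Theorem \ref{inverse_bounds} to get the endpoint estimates in $C^\alpha$ and $C^{1,\alpha}$ (with a factor $(t-t_0)^{-1}$ in the latter), and then interpolates to reach $C^\beta$. You instead work directly on the Fourier side, exploiting the degree-one homogeneity $tL_A(\xib)=L_A(t\xib)$ to obtain a Bernstein-type bound $\|\Delta_j e^{t\mc{L}_A}\bm{f}\|_{L^\infty}\lesssim e^{-ct2^j}\|\tilde\Delta_j\bm{f}\|_{L^\infty}$ at each dyadic scale, and then sum via $C^\sigma\simeq B^\sigma_{\infty,\infty}$. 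Your route is more elementary in that it bypasses the operator-theoretic machinery, and it makes the exponential frequency decay explicit; the paper's route is shorter because the resolvent estimates have already been established for the sectoriality proof.

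There is one place where you understate the difficulty: the ``trivial $L^\infty$-contractivity bound at $j=-1$.'' This is not trivial, for exactly the reason you flag at high frequencies elsewhere --- $L_A$ is homogeneous of degree one, so its higher derivatives blow up like $|\xib|^{1-|\alphab|}$ at the origin, and the third derivative $\partial_\xi^3 e^{-tL_A(\xib)}\sim t|\xib|^{-2}$ is not absolutely integrable in $\mathbb{R}^2$. Brute-force integration by parts therefore does not directly give $L^1$ decay of the low-frequency kernel $\mc{F}^{-1}[\psi(\xib)e^{-tL_A(\xib)}]$. One must exploit that the singular top-order term in $\partial_\xi^3 e^{-tL_A}$ is \emph{odd} in $\xib$ (since $L_A$ is even), so that pairing it with $\sin(\thetab\cdot\xib)$ produces an extra power of $|\xib|$ via Lemma \ref{t:integral_est01}; this is precisely the content of Lemma \ref{eLA_est}, and the analogous point in Lemma \ref{LA_kernelest}. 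Once you invoke that ingredient (or reproduce it), your Bernstein bound at $j=-1$ closes, and the remainder of the argument --- the dyadic supremum, the bound $\sup_{\tau>0}\tau^{\beta-\alpha}e^{-ct\tau}\lesssim t^{-(\beta-\alpha)}$, and the time integration requiring $2(\beta-\alpha)<1$ --- is correct as written.
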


\begin{proof}
By \eqref{D_int}, Theorem \ref{inverse_bounds} and $0\leq t-t_0\leq T$, we have 
\begin{align*}
\begin{split}
    &\norm{e^{(t-t_0) \mc{L}_A}\bm{f}(t_0)}_{C^\alpha(\mathbb{R}^2)}=\norm{\frac{1}{2\pi i}\int_{\omega+\gamma_{r,\eta}}e^{(t-t_0)z}\paren{z+\mc{L}_A}^{-1}\bm{f}(t_0)dz}_{C^\alpha(\mathbb{R}^2)}\\
    &\leq C_{\omega,\delta, \sigma_1, \sigma_2, \mc{T}}\norm{\bm{f}(t_0)}_{C^\alpha(\mathbb{R}^2)}\int_{\omega+\gamma_{r,\eta}}\abs{e^{(t-t_0)z}} \frac{1}{\abs{z}}  d\abs{z}\\
    &\leq C_{\omega,\delta, \sigma_1, \sigma_2, \mc{T}}\norm{\bm{f}(t_0)}_{C^\alpha(\mathbb{R}^2)}\int_{(t-t_0)\paren{\omega+\gamma_{r,\eta}}}\abs{e^{z}} \frac{1}{\abs{z}}  d\abs{z}\\
    &\leq C_{\omega,\delta, \sigma_1, \sigma_2, \mc{T},T}\norm{\bm{f}(t_0)}_{C^\alpha(\mathbb{R}^2)},
\end{split}
\end{align*}
and
\begin{align*}
\begin{split}
    &\norm{e^{(t-t_0) \mc{L}_A}\bm{f}(t_0)}_{C^{1,\alpha}(\mathbb{R}^2)}
    \leq C_{\omega,\delta, \sigma_1, \sigma_2, \mc{T}}\norm{\bm{f}(t_0)}_{C^\alpha(\mathbb{R}^2)}\int_{\omega+\gamma_{r,\eta}}\abs{e^{(t-t_0)z}}   d\abs{z}\\
    &\leq \frac{C_{\omega,\delta, \sigma_1, \sigma_2, \mc{T}}}{t-t_0}\norm{\bm{f}(t_0)}_{C^\alpha(\mathbb{R}^2)}\int_{(t-t_0)\paren{\omega+\gamma_{r,\eta}}}\abs{e^{z}}   d\abs{z}\\
    &\leq \frac{C_{\omega,\delta, \sigma_1, \sigma_2, \mc{T},T}}{t-t_0}\norm{\bm{f}(t_0)}_{C^\alpha(\mathbb{R}^2)}.
\end{split}
\end{align*}
Then, by interpolation theorem, we obtain
\begin{align*}
    \norm{e^{(t-t_0) \mc{L}_A}\bm{f}(t_0)}_{C^{\beta}(\mathbb{R}^2)}\leq\frac{C_{\omega,\delta, \sigma_1, \sigma_2, \mc{T},T, \beta-\alpha}}{\paren{t-t_0}^{\beta-\alpha}}\norm{\bm{f}(t_0)}_{C^\alpha(\mathbb{R}^2)},
\end{align*}
so
\begin{align*}
    \|e^{(t-t_0) \mc{L}_A}\bm{f}(t_0)\|_{L^2(t_0,T;C^{\beta}(\mathbb{R}^2))}&\leq C_{\omega,\delta, \sigma_1, \sigma_2, \mc{T},T, \beta-\alpha}\|\bm{f}(t_0)\|_{C^{\alpha}(\mathbb{R}^2)}.
\end{align*}
\end{proof}

\begin{prop}\label{lin_heat_semi2}
Given $0\leq t_0\leq t\leq T$ and $0\leq\alpha<1$, we have the following estimates:
\begin{align*}
    \norm{\int_{t_0}^t e^{(t-s)\mc{L}_A}\bm{f}(s)ds}_{C^{1,\alpha}(\mathbb{R}^2)}&\leq C\sup_{t_0\leq s\leq t}\|\bm{f}(s)\|_{C^\alpha(\mathbb{R}^2)},\\
    \norm{\int_{t_0}^t e^{(t-s)\mc{L}_A}\bm{f}(s)ds}_{L^2(t_0,T;C^{1,\alpha}(\mathbb{R}^2))}&\leq C\|\bm{f}\|_{L^2(t_0,T;C^{\alpha}(\mathbb{R}^2))}.
\end{align*}
\end{prop}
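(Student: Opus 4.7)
By the sectoriality established in Proposition \ref{frozen_coef_prop} and the resolvent estimates of Theorem \ref{inverse_bounds}, an equivalence of norms holds:
\begin{equation*}
\norm{v}_{C^{1,\alpha}(\mbr)} \simeq \norm{v}_{C^\alpha(\mbr)} + \norm{\mc{L}_A v}_{C^\alpha(\mbr)}.
\end{equation*}
Indeed, writing $v = (z-\mc{L}_A)^{-1}(z-\mc{L}_A)v$ for any $z\in\mc{S}_{\omega,\delta}$ yields the $\lesssim$ direction, and the reverse follows from Theorem \ref{l:regul of LA}. Thus both claimed estimates reduce to bounding $\norm{u}_{C^\alpha_x}$ and $\norm{\mc{L}_A u}_{C^\alpha_x}$, pointwise in $t$ and in $L^2_t$. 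The estimates on $u$ itself are immediate: Proposition \ref{lin_heat_semi} with $\beta = \alpha$ gives $\norm{e^{(t-s)\mc{L}_A}\bm{f}(s)}_{C^\alpha} \lesssim \norm{\bm{f}(s)}_{C^\alpha}$, and integration in time (Minkowski in $s$ for the first bound, Young's convolution with kernel $\chi_{[0,T]}$ for the second) yields the desired estimates.

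The core of the proof is the estimate on $\mc{L}_A u(t)$. Represent it as a time-space convolution
$$\mc{L}_A u(t,\theta) = \int_{t_0}^t \big(H(t-s,\cdot)\ast \bm{f}(s,\cdot)\big)(\theta)\,ds,$$
where $H(\tau,\theta):=\mc{F}_\xi^{-1}[-L_A(\xi) e^{-\tau L_A(\xi)}](\theta)$ is the kernel of $\mc{L}_A e^{\tau\mc{L}_A}$. From the Fourier-multiplier properties of $L_A$ collected in Lemma \ref{l:LA_est}, in particular $\norm{L_A(\xi)}\simeq|\xi|$ together with the scaling decomposition of $\partial_\xi^k L_A(\xi)$ as $|\xi|^{1-k}\Phi_A^{(k)}(\widehat\xi)$ with bounded $\Phi_A^{(k)}$, one derives Poisson-type kernel bounds
\begin{equation*}
|\partial_\theta^{\bm{\beta}} H(\tau,\theta)| \leq C_{\bm{\beta}}\,(\tau + |\theta|)^{-(3+|\bm{\beta}|)},\qquad \int_{\mathbb{R}^2} H(\tau,\theta)\,d\theta = 0,
\end{equation*}
whose detailed justification parallels the analysis of $K_A$ carried out in Appendix \ref{sec:appB}. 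The zero-mean property allows the cancellation
\begin{equation*}
(H(\tau,\cdot)\ast \bm{f}(s,\cdot))(\theta) = \int_{\mathbb{R}^2} H(\tau,\theta-\eta)\big[\bm{f}(s,\eta)-\bm{f}(s,\theta)\big]\,d\eta,
\end{equation*}
bounded by $C [\bm{f}(s)]_{C^\alpha}\tau^{\alpha-1}$ via the explicit computation $\int_{\mathbb{R}^2}(\tau+|\mu|)^{-3}|\mu|^\alpha\,d\mu = C\tau^{\alpha-1}$ (valid for $\alpha\in(0,1)$). A standard near/far splitting with respect to $|\theta-\theta'|$, again using $\int H(\tau,\cdot)\,d\cdot = 0$ to eliminate a boundary contribution and the gradient bound on $H$ in the far region, yields the same $\tau^{\alpha-1}$ rate for the H\"older seminorm of $\mc{L}_A u(t,\cdot)$. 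Integrating $\tau^{\alpha-1}$ over $\tau\in[0,t-t_0]$ delivers the pointwise-in-time estimate, and Young's inequality in time with the $L^1$ kernel $\tau^{\alpha-1}\chi_{[0,T]}$ gives the $L^2_t$ estimate.

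The principal obstacle is establishing the sharp Poisson-type kernel bounds on $H$ for the matrix-valued multiplier $L_A(\xi)$. The Duhamel identity
$$\partial_\xi^{\bm{\beta}} e^{-\tau L_A(\xi)} = -\int_0^\tau e^{-(\tau-\sigma)L_A(\xi)}\,\partial_\xi^{\bm{\beta}} L_A(\xi)\,e^{-\sigma L_A(\xi)}\,d\sigma + \cdots$$
(plus higher Leibniz terms) combined with the structural decay of $\partial_\xi^{\bm{\beta}} L_A(\xi)$ from Lemma \ref{l:LA_est} (iii) is needed to show that derivatives of $\widehat{H}(\tau,\xi) = -L_A(\xi)e^{-\tau L_A(\xi)}$ have the right joint scaling in $\tau$ and $|\xi|$; contour shifting in $\xi$ then transfers these Fourier bounds into the pointwise kernel estimates on $H$ in the physical variable. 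A secondary point is to ensure that all time integrations yield constants depending only on $T$ (and not on $t_0$), which is automatic from the homogeneity of the estimates above.
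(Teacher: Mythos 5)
Your proposal takes a genuinely different route from the paper, but contains a gap in the key step. Let me explain both.

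The paper works directly with the kernel $K_A(\tau,\cdot) = \mc{F}^{-1}[e^{-\tau L_A}]$: it estimates $\nabla\bm{u}$ and $\nabla^2\bm{u}$ via kernel bounds (\ref{eLA_est01})--(\ref{eLA_est02}), uses a \emph{time}-cutoff $\phi^{(t)}(s)$ to separate the region $|t-s|\lesssim 1$ from $|t-s|\gtrsim 1$ (combined with a parabolic rescaling in $\rho=|\thetab-\etab|$ to normalize), and then invokes the Hardy--Littlewood maximal function theorem to pass to $L^p_t$ norms. Your proposal instead reduces via the resolvent equivalence $\|v\|_{C^{1,\alpha}}\lesssim\|v\|_{C^\alpha}+\|\mc{L}_A v\|_{C^\alpha}$ to estimating $\mc{L}_A\bm{u}$, represents it through $H(\tau,\cdot)=\mc{F}^{-1}[-L_A e^{-\tau L_A}]$, and integrates. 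The reduction and the kernel bounds for $H$ are sound (note $H(\tau,\thetab)=\partial_\tau K_A(\tau,\thetab)$, so by scaling the decay $(\tau+|\thetab|)^{-3-|\bm{\beta}|}$ indeed follows from Lemma \ref{eLA_est}). Using Young's convolution inequality in time instead of the maximal function is also fine for the stated $L^\infty$ and $L^2$ conclusions.

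The gap is in the H\"older seminorm estimate for $\mc{L}_A\bm{u}(t,\cdot)$. You claim that a \emph{spatial} near/far splitting ``yields the same $\tau^{\alpha-1}$ rate for the H\"older seminorm,'' i.e.\ $[H(\tau,\cdot)\ast\bm{f}(s,\cdot)]_{C^\alpha}\lesssim\tau^{\alpha-1}[\bm{f}(s)]_{C^\alpha}$, and you then integrate $\tau^{\alpha-1}$ in $\tau$. This pointwise-in-$\tau$ seminorm bound does not hold. The spatial near/far splitting at $|\thetab-\thetab'|$ only yields $[H(\tau,\cdot)\ast\bm{f}]_{C^\alpha}\lesssim\tau^{-1}[\bm{f}]_{C^\alpha}$ (e.g.\ via $|H\ast\bm{f}|\lesssim\tau^{\alpha-1}[\bm{f}]_{C^\alpha}$ and dividing by $|\thetab-\thetab'|^\alpha\leq\tau^\alpha$), and $\tau^{-1}$ is not time-integrable. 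The correct argument must instead split the \emph{time} integral at $\tau\sim|\thetab-\thetab'|$: for $\tau<|\thetab-\thetab'|$ one uses the $L^\infty$ bound $\tau^{\alpha-1}[\bm{f}]_{C^\alpha}$ on the difference and $\int_0^{|\thetab-\thetab'|}\tau^{\alpha-1}d\tau\simeq|\thetab-\thetab'|^\alpha$, while for $\tau>|\thetab-\thetab'|$ one uses the gradient bound $|\thetab-\thetab'|\tau^{\alpha-2}[\bm{f}]_{C^\alpha}$ and $\int_{|\thetab-\thetab'|}^\infty\tau^{\alpha-2}d\tau\simeq|\thetab-\thetab'|^{\alpha-1}$. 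This time-splitting is exactly what the paper's cutoff $\phi^{(t)}$ implements; your sketch, as written, does not reproduce it and the estimate as you describe it would not close.

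Two minor points: the resolvent equivalence you invoke implicitly uses the $L^p$ side-condition in Theorem \ref{inverse_bounds} and the compact-support hypothesis in Theorem \ref{l:regul of LA}; since $\bm{u}$ inherits only polynomial decay from $K_A$ this deserves a comment. And both your argument and the paper's require $\alpha>0$ for the $\tau^{\alpha-1}$ integrability.
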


\begin{proof}
First, define $\bm{u}$ as
\begin{align*}
    \bm{u}(t,t_0,\thetab):=\bm{u}[\bm{f}]\paren{\thetab} \!=\!\int_{t_0}^t\!\! e^{(t-s)\mc{L}_A} \bm{f}(s,\thetab)ds=\!\int_{t_0}^t \int_\mbr \!\!K_A(t\!-\!s,\thetab\!-\!\etab)\bm{f}(s,\etab)d \etab ds.
\end{align*}
Since $t L_A\paren{\xib}=L_A\paren{t\xib}$, 
\begin{align*}
    K_A(t-s,\bm{x}-\bm{y})=\frac{1}{\paren{t-s}^2}K_A\paren{1,\frac{\thetab-\etab}{t-s}}.
\end{align*}
By \eqref{eLA_est00} in Lemma \ref{eLA_est},
\begin{align*}
\begin{split}
    \abs{\bm{u}(t,t_0,\thetab)}
    =&\abs{\int_{t_0}^t \int_\mbr \frac{1}{\paren{t-s}^2}K_A\paren{1,\frac{\thetab-\etab}{t-s}} \bm{f}(s,\etab)d \etab ds}\\
    \leq& \int_{t_0}^t \int_\mbr \frac{1}{\paren{t-s}^2} \frac{C}{1+\abs{\frac{\thetab-\etab}{t-s}}^3}\abs{\bm{f}(s,\etab)}d \etab ds\\
    \leq& C\norm{\bm{f}}_{C^0}\int_{t_0}^t \int_\mbr  \frac{1}{1+\abs{\thetab-\etab}^3}d \etab ds\leq C\norm{\bm{f}}_{C^0}T.
\end{split}
\end{align*}
Next, we assume $\bm{f}\paren{s,\thetab}=0$ when $s< t_0$, so
\begin{align*}
\begin{split}
    \PD{\bm{u}}{\theta_i}
    =&\int_{t_0}^t \int_\mbr \frac{1}{\paren{t-s}^3}\PD{K_A}{\theta_i}\paren{1,\frac{\thetab-\etab}{t-s}} \bm{f}(s,\etab)d \etab ds\\
    =&\int_{-\infty}^t \int_\mbr \frac{1}{\paren{t-s}^3}\PD{K_A}{\theta_i}\paren{1,\frac{\thetab-\etab}{t-s}} \bm{f}(s,\etab)d \etab ds.
\end{split}
\end{align*}
Through \eqref{eLA_est01},
\begin{multline*}
        \int_\mbr \frac{1}{\paren{t-s}^3}\PD{K_A}{\theta_i}\paren{1,\frac{\thetab-\etab}{t-s}} \bm{f}(s,\etab)d \etab 
        \\
        =
        \int_\mbr \frac{1}{\paren{t-s}^3}\PD{K_A}{\theta_i}\paren{1,\frac{\thetab-\etab}{t-s}} \paren{ \bm{f}(s,\etab)-\bm{f}\paren{s,\thetab}}d \etab,
\end{multline*}
and
\begin{align*}
\begin{split}
    &\abs{\int_\mbr \frac{1}{\paren{t-s}^3}\PD{K_A}{\theta_i}\paren{1,\frac{\thetab-\etab}{t-s}} \paren{ \bm{f}(s,\etab)-\bm{f}\paren{s,\thetab}}d \etab}\\
    &\leq\frac{1}{\paren{t-s}^3}\int_\mbr \frac{\abs{\bm{f}(s,\etab)-\bm{f}\paren{s,\thetab}}}{1+\abs{\frac{\thetab-\etab}{t-s}}^4} d \etab\\
    &\leq C\jump{\bm{f}\paren{s,\cdot}}_{C^\alpha}\frac{1}{\paren{t-s}^3}\int_\mbr \frac{\abs{\thetab-\etab}^\alpha}{1+\abs{\frac{\thetab-\etab}{t-s}}^4} d \etab\\
    &=C\jump{\bm{f}\paren{s,\cdot}}_{C^\alpha}\frac{1}{\paren{t-s}^{1-\alpha}}\int_\mbr \frac{\abs{\thetab-\etab}^\alpha}{1+\abs{\thetab-\etab}^4} d \etab\\
    &=C\jump{\bm{f}\paren{s,\cdot}}_{C^\alpha}\frac{1}{\paren{t-s}^{1-\alpha}}.
\end{split}
\end{align*}
By Lemma \ref{t:integral_est02}, for all $ m\leq t$, we obtain
\begin{align*}
\begin{split}
    &\abs{\int_{m}^t \int_\mbr \frac{1}{\paren{t-s}^3}\PD{K_A}{\theta_i}\paren{1,\frac{\thetab-\etab}{t-s}} \bm{f}(s,\etab)d \etab ds}\\
    \leq &C\int_{m}^t \jump{\bm{f}\paren{s,\cdot}}_{C^\alpha}\frac{1}{\paren{t-s}^{1-\alpha}} ds
    \leq C  \paren{t-m}^\alpha M_l\left[\jump{\bm{f}\paren{s,\cdot}}_{C^\alpha}\right]\paren{t},
\end{split}
\end{align*}
so set $m=0$,
\begin{align*}
    \abs{\PD{\bm{u}}{\theta_i}(t,t_0,\thetab)}\leq C T^\alpha M_l\left[\jump{\bm{f}\paren{s,\cdot}}_{C^\alpha}\right]\paren{t}.
\end{align*}
For $\jump{\PD{\bm{u}}{\theta_i}}_{C^\alpha}$, we first compute 
\begin{align*}
\begin{split}
    \frac{\partial^2}{\partial \theta_i \partial \theta_j}\int_{-\infty}^M \int_\mbr& K_A(t-s,\thetab-\etab) \bm{f}(s,\etab)d \etab ds\\
    &=\int_{\infty}^M \int_\mbr \frac{1}{\paren{t-s}^4}\frac{\partial^2 K_A}{\partial \theta_i \partial_j}\paren{1,\frac{\thetab-\etab}{t-s}} \bm{f}(s,\etab)d \etab ds,
\end{split}
\end{align*}
where $ M< t$.
Then, by \eqref{eLA_est02} and Lemma \ref{t:integral_est02}, we have
\begin{align*}
\begin{split}
   \abs{\int_\mbr \!\!\frac{1}{\paren{t\!-\!s}^4}\frac{\partial^2 K_A}{\partial \theta_i \partial_j}\Big(1,\frac{\thetab-\etab}{t\!-\!s}\Big) \bm{f}(s,\etab)d \etab} &\leq  C\jump{\bm{f}\paren{s,\cdot}}_{C^\alpha}\frac{1}{\paren{t\!-\!s}^{2-\alpha}}\int_\mbr \!\frac{\abs{\thetab\!-\!\etab}^\alpha}{1\!+\!\abs{\thetab\!-\!\etab}^4} d \etab\\
    &\leq C\jump{\bm{f}\paren{s,\cdot}}_{C^\alpha}\frac{1}{\paren{t-s}^{2-\alpha}}.
\end{split}
\end{align*}
and
\begin{align*}
\begin{split}
    \abs{\frac{\partial^2}{\partial \theta_i \partial \theta_j}\int_{-\infty}^M \int_\mbr K_A(t-s,\thetab-\etab) \bm{f}(s,\etab)d \etab ds}\leq C \paren{t-M}^{\alpha-1} M_l\left[\jump{\bm{f}\paren{s,\cdot}}_{C^\alpha}\right]\paren{t}.
\end{split}
\end{align*}
When $\abs{\thetab-\etab}=1$, we set a cutting function $\phi(s)$ s.t. $\phi(s)=1$ on $[-\infty,-2]$ and  $\phi(s)=0$ on $[-1, \infty]$, and define $\phi^{(t)}(s)=\phi(s-t)$.
We obtain
\begin{align*}
\begin{split}
    &\abs{\PD{\bm{u}}{\theta_i}(t,t_0,\thetab)-\PD{\bm{u}}{\eta_i}(t,t_0,\etab)}
    =\abs{\PD{\bm{u}}{\theta_i}[\bm{f}](\thetab)-\PD{\bm{u}}{\eta_i}[\bm{f}](\etab)}\\
    &\leq \abs{\PD{\bm{u}}{\theta_i}[\phi^{(t)}\bm{f}](\thetab)-\PD{\bm{u}}{\eta_i}[\phi^{(t)}\bm{f}](\etab)}\!+\!\abs{\PD{\bm{u}}{\theta_i}[(1\!-\!\phi^{(t)})\bm{f}](\thetab)}\!+\!\abs{\PD{\bm{u}}{\eta_i}[(1\!-\!\phi^{(t)})\bm{f}](\etab)}\\
    &\leq  C (t\!-\!M)^{\alpha-1} M_l\big[\jump{\phi^{(t)}\bm{f}(s,\cdot)}_{C^\alpha}\big](t)\!\!+C (t\!-\!m)^{\alpha} M_l\big[\jump{(1\!-\!\phi^{(t)})\bm{f}(s,\cdot)}_{C^\alpha}\big](t),
\end{split}
\end{align*}
where $M=t-1$ and $m=t-2$.
Therefore, since $0\leq\phi\leq 1$ and $\jump{\bm{f}\paren{s,\cdot}}_{C^\alpha}\geq 0$, 
\begin{align*}
\begin{split}
    \abs{\PD{\bm{u}}{\theta_i}(t,t_0,\thetab)-\PD{\bm{u}}{\eta_i}(t,t_0,\etab)}
    \leq C M_l\left[\jump{\bm{f}\paren{s,\cdot}}_{C^\alpha}\right]\paren{t}.
\end{split}
\end{align*}
When $\rho=\abs{\thetab-\etab}\neq 1$, we define $\bm{u}_\rho\paren{t,t_0, \theta}=\frac{1}{\rho}\bm{u}\paren{\rho t,\rho t_0, \rho\theta}, \bm{f}_\rho\paren{t,\theta}=\bm{f}\paren{\rho t,\rho\theta}$ and $\bar{\thetab}=\frac{\thetab}{\rho},\bar{\etab}=\frac{\etab}{\rho}, \bar{t}=\frac{t}{\rho},\bar{t_0}=\frac{t_0}{\rho}$.
We have
\begin{align*}
    \PD{\bm{u}_\rho}{t}\paren{t,\theta}=&\mc{L}_A \bm{u}_\rho\paren{t,\theta}+ \bm{f}_\rho \paren{t,\theta},\\
    \PD{\bm{u}_\rho}{\theta_i}\paren{t,\theta}=&\PD{\bm{u}}{\theta_i}\paren{\rho t,\rho\theta},
\end{align*}
so
\begin{align*}
\begin{split}
    \Big|\PD{\bm{u}}{\theta_i}(t,t_0,\thetab)\!-\!\PD{\bm{u}}{\eta_i}(t,t_0,\etab)\Big|
    \!=\!\Big|\PD{\bm{u}_\rho}{\bar{\theta}_i}(\bar{t},\bar{t_0},\bar{\thetab})\!-\!\PD{\bm{u}_\rho}{\bar{\eta}_i}(\bar{t},\bar{t_0},\bar{\etab})\Big|
    \!\leq\! C M_l[\jump{\bm{f}_\rho\paren{\bar{s},\cdot}}_{C^\alpha}](t).
\end{split}
\end{align*}
Since $\jump{\bm{f}_\rho}_{C^\alpha}\paren{\bar{s}}=\rho^\alpha \jump{\bm{f}}_{C^\alpha}\paren{\rho \bar{s}}$,
\begin{align*}
\begin{split}
    &M_l\left[\jump{\bm{f}_\rho\paren{\bar{s},\cdot}}_{C^\alpha}\right]\paren{\bar{t}}
    =\sup_{\bar{r}>0}\frac{1}{\bar{r}}\int_{\bar{t}-\bar{r}}^{\bar{t}}\jump{\bm{f}_\rho}_{C^\alpha}\paren{\bar{s}}d\bar{s}
    =\rho^\alpha\sup_{\bar{r}>0}\frac{1}{\bar{r}}\int_{\bar{t}-\bar{r}}^{\bar{t}}\jump{\bm{f}}_{C^\alpha}\paren{\rho \bar{s}}d\bar{s}\\
    =&\rho^\alpha\sup_{r>0}\frac{1}{r}\int_{t-r}^{t}\jump{\bm{f}}_{C^\alpha}\paren{s}ds
    =\rho^\alpha M_l\left[\jump{\bm{f}\paren{s,\cdot}}_{C^\alpha}\right]\paren{t}.
\end{split}   
\end{align*}
Hence,
\begin{align*}
    \abs{\PD{\bm{u}}{\theta_i}(t,t_0,\thetab)-\PD{\bm{u}}{\eta_i}(t,t_0,\etab)}
    \leq C M_l\left[\jump{\bm{f}\paren{s,\cdot}}_{C^\alpha}\right]\paren{t}\abs{\thetab-\etab}^\alpha,
\end{align*}
and by the Hardy-Littlewood maximal function theorem, for all $1\leq p\leq \infty$
\begin{align*}
\begin{split}
    \norm{\norm{\bm{u}}_{C^\gamma\paren{\mbr}}}_{L^p\paren{t_0,T}}
    \leq& \norm{C\norm{\bm{f}}_{C^0\paren{\mbr}} +C M_l\left[\jump{\bm{f}\paren{s,\cdot}}_{C^\alpha\paren{\mbr}}\right]\paren{t}}_{L^p\paren{t_0,T}}\\
    \leq& C\norm{\norm{\bm{f}}_{C^0\paren{\mbr}}}_{L^p\paren{t_0,T}} +C\norm{ M_l\left[\jump{\bm{f}\paren{s,\cdot}}_{C^\alpha\paren{\mbr}}\right]\paren{t}}_{L^p\paren{t_0,T}}\\
    \leq& C\norm{\norm{\bm{f}}_{C^0\paren{\mbr}}}_{L^p\paren{t_0,T}} +C\norm{ \jump{\bm{f}\paren{t,\cdot}}_{C^\alpha\paren{\mbr}}}_{L^p\paren{t_0,T}}\\
    \leq& C\norm{\norm{\bm{f}}_{C^\gamma\paren{\mbr}}}_{L^p\paren{t_0,T}}.
\end{split}
\end{align*}

\end{proof}

\section{Local well-posedness}\label{sec: local well-posed}

We write the Peskin problem as an evolution equation 
\begin{equation}\label{Peskin_3D_F}
    \PD{\bm{X}}{t}=F(\bm{X}),\quad t>0, \quad \bm{X}(0)=\bm{X}_0,
\end{equation}
where $F(\bm{X})$ is given in \eqref{Xeq_F}.
We will make use of Theorem 8.4.1 in \cite{Lunardi:analytic-semigroups-optimal-parabolic}:
\begin{thm}\label{Lunardi_thm}
Let $E_1\subset E_0\subset E$ be Banach spaces and let $0<\sigma<1$. Given $T>0$, open set $\mathcal{O}_1\subset E_1$ and a function
\begin{equation*}
    F:[0,T]\times \mathcal{O}_1\mapsto E_0, \quad (t,u)\mapsto F(t,u)
\end{equation*}
such that $F$ and $F_u$ are continuous in $[0,T]\times \mathcal{O}_1$. If for every $(\bar{t},\bar{u})\in [0,T]\times \mathcal{O}_1$ we have $F_u(\bar{t},\bar{u}):E_1\mapsto E_0$ is the part of a sectorial operator $S:D(S)\subset E\mapsto E$ with $D_S(\sigma)\simeq E_0$ and $D_S(\sigma+1)\simeq E_1$, then for every $\bar{t}\in[0,t]$ and $\bar{u}\in\mathcal{O}_1$ there are $\delta>0$, $r>0$ such that if $t_0\in[0,T)$, $|t_0-\bar{t}|\leq \delta$, and $\|u_0-\bar{u}\|\leq r$ then the problem
\begin{equation*}
v'(t)=F(t,v(t)), \quad t_0\leq t\leq t_0+\delta, \quad v(t_0)=u_0,
\end{equation*}
has a unique solution $v\in C([t_0,t_0+\delta];E_1)\cap C^1([t_0;t_0+\delta];E_0).$
\end{thm}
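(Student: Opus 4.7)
The plan is to fix $(\bar t, \bar u)$, linearize the equation around $\bar u$, and realize the solution as a fixed point of a Duhamel-type map using the analytic semigroup generated by the sectorial operator $S = F_u(\bar t, \bar u)$. Write $S_0 := F_u(\bar t,\bar u)$ and decompose
\begin{equation*}
F(t,v) = S_0\, v + G(t,v), \qquad G(t,v) := F(t,v) - S_0\, v,
\end{equation*}
so that the ODE becomes $v'(t) = S_0\, v(t) + G(t,v(t))$. Since $S$ is sectorial on $E$, the part $S_0$ of $S$ acting between $E_1$ and $E_0$ generates an analytic semigroup $\{e^{tS_0}\}_{t\geq0}$, and the equation is equivalent, for solutions with sufficient regularity, to the mild formulation
\begin{equation*}
v(t) = e^{(t-t_0)S_0} u_0 + \int_{t_0}^t e^{(t-s)S_0}\bigl(G(s,v(s))\bigr)\,ds =: \mathcal{T}(v)(t).
\end{equation*}

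Next, I would invoke the characterization $D_S(\sigma)\simeq E_0$, $D_S(\sigma+1)\simeq E_1$ to extract the two main semigroup estimates: the smoothing bound
$\|e^{tS_0}\|_{\mathcal{L}(E_0,E_1)} \leq C\, t^{-1}$ together with the maximal regularity estimate
\begin{equation*}
\Bigl\|\int_{t_0}^{\cdot} e^{(\cdot-s)S_0} f(s)\,ds\Bigr\|_{C([t_0,t_0+\delta];E_1)} \leq C\,\|f\|_{C([t_0,t_0+\delta];E_0)},
\end{equation*}
and the corresponding bound on the derivative in $E_0$. These are the standard consequences of sectoriality combined with the continuous interpolation characterization of $D_S(\sigma)$ (the $\simeq$ hypothesis is precisely what rules out a logarithmic loss). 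With them in hand, $\mathcal{T}$ is well-defined as a map from $C([t_0,t_0+\delta];\overline{B}_r^{E_1}(\bar u))$ to itself, provided $\delta$ and $r$ are small enough and $\|u_0-\bar u\|_{E_1}\leq r$.

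The contraction step is where the bulk of the analytic work lies. Using continuity of $F_u$ in a neighborhood of $(\bar t,\bar u)$ and $G_u(\bar t,\bar u)=0$, one has
\begin{equation*}
\|G(s,v_1)-G(s,v_2)\|_{E_0} \leq \omega(\delta, r)\,\|v_1 - v_2\|_{E_1},
\end{equation*}
where $\omega(\delta,r)\to 0$ as $(\delta,r)\to 0$; combining with the maximal regularity estimate yields
\begin{equation*}
\|\mathcal{T}(v_1) - \mathcal{T}(v_2)\|_{C([t_0,t_0+\delta];E_1)} \leq C\,\omega(\delta,r)\,\|v_1-v_2\|_{C([t_0,t_0+\delta];E_1)},
\end{equation*}
so for $\delta, r$ small $\mathcal{T}$ is a contraction and the Banach fixed point theorem produces a unique $v$. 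One then checks that the fixed point sits in $C^1([t_0,t_0+\delta];E_0)$ by differentiating the Duhamel formula in $t$, using the smoothing estimate together with H\"older continuity in time of $s\mapsto G(s,v(s))$ inherited from $v\in C([t_0,t_0+\delta];E_1)$ and the continuity of $F$.

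The hard part will be the maximal regularity / continuous interpolation step: to close the contraction in the norm of $E_1$ one cannot lose any power of $t-s$ in the convolution integral, and this requires the full force of the hypothesis $D_S(\sigma)\simeq E_0$, $D_S(\sigma+1)\simeq E_1$ (rather than just the weaker $D_S(\sigma,\infty)$ membership). Verifying that the fixed point actually takes values in $E_1$, as opposed to only in an intermediate space, and that it is $C^1$ in time with values in $E_0$, also requires care and is the reason the continuity of $F_u$ (not merely boundedness) enters the statement.
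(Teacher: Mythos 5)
The paper does not supply a proof of this statement: it is quoted as Theorem~8.4.1 of Lunardi's monograph \cite{Lunardi:analytic-semigroups-optimal-parabolic} and invoked as a black box, with the real work in the paper devoted to verifying its hypotheses for the Peskin operator (Propositions \ref{Festimate}--\ref{Fsemigroup}). There is therefore no ``paper's own proof'' against which to compare your attempt.

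That said, your sketch is a faithful outline of the argument Lunardi actually gives: set $S_0=F_u(\bar t,\bar u)$, rewrite $F(t,v)=S_0v+G(t,v)$, pass to the Duhamel formulation, invoke the $C_\alpha$-type maximal regularity estimate which holds \emph{precisely} because $E_0\simeq D_S(\sigma)$ and $E_1\simeq D_S(\sigma+1)$ are continuous (little-H\"older type) interpolation spaces, and close a contraction using the smallness of the local Lipschitz constant of $G$ in $E_1\to E_0$, which you correctly derive from $G_u(\bar t,\bar u)=0$ together with continuity of $F_u$. Two points you should make explicit if you were to flesh this out. First, in the self-map step the constant term $G(\bar t,\bar u)=F(\bar t,\bar u)-S_0\bar u\in E_0$ is generically nonzero; its contribution to $\mathcal{T}(v)-\bar u$ must be absorbed by taking $\delta$ small, separately from the contraction estimate. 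Second, strong continuity of $t\mapsto e^{(t-t_0)S_0}u_0$ at $t=t_0$ with values in $E_1$ is not automatic for an arbitrary analytic semigroup; it is exactly here that the identification $E_1\simeq D_S(\sigma+1)$ (continuous interpolation, not merely real interpolation $D_S(\sigma+1,\infty)$) is used again, and this is also what forces the choice of little H\"older spaces in the paper's application.
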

Then, our main result is the following Theorem:
\begin{thm}\label{MainTh}
Consider the 3D Peskin problem \eqref{Peskin_3D_F} with initial data  satisfying $\bm{X}_0\in h^{1,\gamma}(\mathbb{S}^2)$, $|\bm{X}_0|_*>0$, and $\mc{T}\in C^3$ such that $\mc{T}>0$, $d\mc{T}/d\lambda\geq0$. Then, there exists some time $T>0$ such that \eqref{Peskin_3D_F} has a unique solution $\bm{X}$,
\begin{equation*}
    \bm{X}\in C([0,T];h^{1,\gamma}(\mathbb{S}^2))\cap C^1([0,T];h^\gamma(\mathbb{S}^2)).
\end{equation*}
\end{thm}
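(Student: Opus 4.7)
The plan is to apply the abstract Theorem \ref{Lunardi_thm} with $E_0 = h^\gamma(\mathbb{S}^2)$, $E_1 = h^{1,\gamma}(\mathbb{S}^2)$, an appropriate ambient space $E$ chosen so that the interpolation identifications $D_S(\sigma) \simeq E_0$ and $D_S(\sigma+1) \simeq E_1$ hold for some $\sigma \in (0,1)$ (following the 2D setup of \cite{Rodenberg:peskin-thesis}), and open set $\mathcal{O}_1 \subset E_1$ consisting of configurations satisfying the non-degeneracy $|\bm{X}|_* > 0$. First I would verify the continuity in $(t,\bm{X})$ of $F(\bm{X})$ given by \eqref{F_expr}, and of its Gateaux derivative $\mathcal{L}(\bm{X}) = \partial_{\bm{X}} F(\bm{X})$, using Lemma \ref{Nbound_lem} combined with the $C^3$ regularity of $\mathcal{T}$ (which provides the $C^2$ regularity needed to differentiate the nonlinear tension $T(|\nabla_{\mathbb{S}^2}\bm{X}|)\nabla_{\mathbb{S}^2}\bm{X}$). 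The choice of \emph{little} H\"older spaces, rather than strict $C^{1,\gamma}$, is forced by the requirement $\bm{X}(t) \to \bm{X}_0$ in $C^{1,\gamma}$ as $t \to 0$, since analytic semigroups fail to be strongly continuous on non-separable tails of H\"older norms.

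The core of the proof is showing that, for every $\bar{\bm{X}} \in \mathcal{O}_1$, the operator $\mathcal{L}(\bar{\bm{X}})$ extends to the part of a sectorial operator on $E$. I would proceed by localization through the atlas $\{\mathcal{U}_n, \widehat{\bm{X}}_n\}$ and partition of unity $\{\rho_n\}$ of Definition \ref{def_charts} and Subsection \ref{covering_ste}, together with the cutoff $\widehat{\rho}_n$ of \eqref{cutoff}. Localizing $\mathcal{L}(\bar{\bm{X}})\bm{Y}$ against $\rho_n$, the commutator $[\rho_n, \mathcal{N}(\bar{\bm{X}})]$ is lower order by Lemma \ref{lem_commutator}, and the far-field piece $(1-\widehat{\rho}_n)\mathcal{N}(\bar{\bm{X}})$ is smoothing by Lemma \ref{Nout_bound}. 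In each chart, the nonlinear decomposition $\mathcal{N} = \mathcal{M}(\nabla \bm{X}) + \mathcal{R}(\bm{X})$ from \eqref{nonlinear_split}, combined with Lemma \ref{lem_dif}, allows me to replace $\mathcal{M}(\nabla \bar{\bm{X}}_n)$ by its frozen-coefficient version $\mathcal{L}_{\alpha, A}$ of \eqref{defnL2} with $A = \nabla\bar{\bm{X}}_n(\bm{0})$, up to errors quantitatively small in $\varepsilon(R)$ and in lower-order seminorms of $\bm{Y}$.

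The key analytic input is Proposition \ref{frozen_coef_prop}, providing the resolvent estimates \eqref{continuity_est} for $z - \mathcal{L}_A^{(\sigma)}$ uniformly in $\sigma,\alpha \in [0,1]$. I would use the method of continuity along the parameter $\sigma$, starting from the benchmark operator $\mathcal{L}_{0,A_0}$ (whose symbol is the scalar $|\xi|/8$ and whose semigroup is explicit), and transfer the bounds to each $\mathcal{L}_{\alpha,A}$. Assembling the local estimates through the partition of unity, using the commutator and far-field bounds to absorb the cross-terms, and choosing $R$ small enough that the Lemma \ref{lem_dif} error is dominated by the resolvent gain $|z|^{-1}$ in a sector $\mathcal{S}_{\omega,\delta}$, yields the global sectorial bound $\|(z - \mathcal{L}(\bar{\bm{X}}))^{-1}\|_{\mathcal{B}(E_0)} \lesssim |z|^{-1}$. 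The conclusion of Theorem \ref{MainTh} then follows directly from Theorem \ref{Lunardi_thm}, with uniqueness coming from the abstract theorem.

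The principal obstacle is that, unlike in 2D, the equation is genuinely not semilinear: the symbol of the leading operator depends on $\bm{X}$ itself through $\nabla \bm{X}$, and the Stokeslet term $G^2$ prevents the algebraic cancellations available for the 2D Peskin problem. This forces the double-localization in Lemma \ref{lem_dif} — first over the chart, and then freezing $\nabla \bm{X}(\etab)$ at $\etab = \bm{0}$ inside the non-convolution kernel — and the error estimates must be carefully matched to the sector constants $C_{\omega,\delta,\sigma_1,\sigma_2,\mathcal{T}}$ of Proposition \ref{frozen_coef_prop}. Balancing the smallness of $\varepsilon(R)$ against the proliferation of commutator and off-diagonal terms arising from refining the partition of unity, and doing so while keeping a derivative structure in the remainder kernels so that Lemma \ref{Div_thm} and the cancellations \eqref{odd_fractions} remain available, is where the technical difficulty concentrates.
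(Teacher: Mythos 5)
Your plan follows the paper's architecture closely — Lunardi's Theorem \ref{Lunardi_thm} with $E_1 = h^{1,\gamma}(\mathbb{S}^2)$, $E_0 = h^\gamma(\mathbb{S}^2)$, continuity and Gateaux-differentiability of $F$ (the paper's Propositions \ref{Festimate}, \ref{FGateauxEstimate}), and sectoriality by localization over the atlas, commutator and far-field estimates (Lemmas \ref{lem_commutator}, \ref{Nout_bound}), freezing via Lemma \ref{lem_dif}, and the frozen-coefficient Fourier estimates of Proposition \ref{frozen_coef_prop}. Up to the assembly of the a~priori bound
\begin{equation*}
\|(z - \mc{S}(\bm{X}))\bm{Y}\|_{C^\gamma(\mathbb{S}^2)} \geq C\bigl(\|\bm{Y}\|_{C^{1,\gamma}(\mathbb{S}^2)} + |z|\,\|\bm{Y}\|_{C^\gamma(\mathbb{S}^2)}\bigr),
\end{equation*}
your proposal is essentially the paper's.

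The gap is in the step from this a~priori estimate to the claimed global resolvent bound $\|(z - \mc{S}(\bm{X}))^{-1}\|_{\mathcal{B}(E_0)} \lesssim |z|^{-1}$. The a~priori estimate gives injectivity and closed range, but not surjectivity; without surjectivity the resolvent may not exist. You invoke the method of continuity, but you describe it as acting on the \emph{frozen-coefficient} operators — deforming $\mathcal{L}_{0,A_0}$ to $\mathcal{L}_{\alpha,A}$ on $\mathbb{R}^2$. That step is not needed (the inverse multiplier $(z + L_A(\xi))^{-1}$ is computed explicitly in Theorem~\ref{inverse_bounds}), and more to the point it does not produce invertibility of the global operator $z - \mc{S}(\bm{X})$ on the sphere, which lives on a different space. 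What is actually needed, and what you omit, is the continuity argument at the \emph{global} level: the paper's Proposition~\ref{Fsemigroup} constructs the two families $\mc{S}_\alpha(\bm{X})$ (interpolating $G_1 \rightsquigarrow G_1 + G_2$ and turning on the nonlinear tension) and $\mc{S}_{0,\sigma}(\bm{X})$ (interpolating the kernel $G^1(\hx-\hy) \rightsquigarrow G^1(\bm{X}(\hx)-\bm{X}(\hy))$), proves uniform a~priori bounds along both families using exactly your localization machinery, and reduces surjectivity to the terminal operator $\mc{S}_{0,0}(\bm{X})$. That operator is the single-layer applied to $\Delta_{\mathbb{S}^2}$, and its invertibility is checked by hand: it is diagonal on spherical harmonics with eigenvalues $-\ell(\ell+1)/(2(2\ell+1))$, so its range is dense in $h^\gamma(\mathbb{S}^2)$. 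Without this deformation to an explicitly diagonalizable \emph{global} reference operator, the sectoriality proof is incomplete.
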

\begin{proof}
Let $\mathcal{O}_m=\{\bm{Y}\in h^{1,\gamma}(\mathbb{S}^2):|\bm{Y}|_*\geq m>0\}$, $E_1=h^{1,\gamma}(\mathbb{S}^2)$, $E_0=h^{\gamma}(\mathbb{S}^2)$, and $E=h^{\alpha}(\mathbb{S}^2)$, with $0<\alpha<\gamma$. Define the operator $\mc{S}$ as the linearization of $F$ \eqref{F_expr} around $\bm{X}_0$:
\begin{equation*}
    \mc{S}(\bm{X}_0)\bm{Y}:=\partial_{\bm{X}}F(\bm{X}_0)\bm{Y}=\frac{d}{d\varepsilon}F(\bm{X}_0+\varepsilon\bm{Y})|_{\varepsilon=0}.
\end{equation*}
Since $\bm{X}_0\in\mc{O}_m$ is arbitrary, we can study the G\^ateaux derivative of $F$ at any $\bm{X}\in \mathcal{O}_m$, which is given by
\begin{equation}\label{Soperator}
    \begin{aligned}
    \mc{S}(\bm{X})\bm{Y}&=\mc{S}^1(\bm{X})\bm{Y}+\mc{S}^2(\bm{X})\bm{Y},
    \end{aligned}
\end{equation}
with
\begin{equation*}
    \begin{aligned}
    \mc{S}^1(\bm{X})\bm{Y}&=-\int_{\mathbb{S}^2} \nabla_{\mathbb{S}^2}G(\bm{X}(\hx)-\bm{X}(\hy))\cdot\\
    &\qquad\times\frac{d}{d\varepsilon}\Big(T(|\nabla_{\mathbb{S}^2}(\bm{X}(\hy)+\varepsilon\bm{Y}(\hy))|)(\nabla_{\mathbb{S}^2}(\bm{X}+\varepsilon\bm{Y}))(\hy)\Big)|_{\varepsilon=0}d\hy,
    \end{aligned}
\end{equation*}
\begin{equation*}
    \begin{aligned}
    \mc{S}^2(\bm{X})\bm{Y}&=-\int_{\mathbb{S}^2}\!\! \nabla_{\mathbb{S}^2}\frac{d}{d\varepsilon}\Big(G(\bm{X}(\hx)\!-\!\bm{X}(\hy)\!+\!\varepsilon(\bm{Y}(\hx)-\bm{Y}(\hy)))\Big)\Big|_{\varepsilon=0}\!\!\cdot\\
    &\qquad\times T(|\nabla_{\mathbb{S}^2}\bm{X}(\bm{\hy})|)\nabla_{\mathbb{S}^2}\bm{X}(\bm{\hy})d\hy.
    \end{aligned}
\end{equation*}
It remains to check that the hypothesis of Theorem \ref{Lunardi_thm} are satisfied, which follow from Propositions 
\ref{Festimate}, \ref{FGateauxEstimate}, and \ref{Fsemigroup} below.

\end{proof}

\begin{prop}\label{Festimate} If $m>0$ and $\gamma\in(0,1)$, $\mc{T}\in C^2$, then $F$ \eqref{F_expr} is a continuous map from $\mathcal{O}_m\subset h^{1,\gamma}(\mathbb{S}^2)$ to $h^\gamma(\mathbb{S}^2)$. 
\end{prop}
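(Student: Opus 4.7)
\medskip

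\textbf{Proposal.} The plan is to view the problem as a composition: write $F(\bm{X})=\mc{N}(\bm{X})(Z(\bm{X}))$ with $Z(\bm{X})=T(|\nabla_{\mathbb{S}^2}\bm{X}|)\nabla_{\mathbb{S}^2}\bm{X}$, and then exploit the bounds already established for $\mc{N}(\bm{X})$ in Lemma~\ref{Nbound_lem}. Since $|\nabla_{\mathbb{S}^2}\bm{X}|\geq|\bm{X}|_*\geq m>0$ on $\mc{O}_m$, the scalar function $T(\lambda)=\mc{T}(\lambda)/\lambda$ is smooth on $[m,\infty)$. Using that $\mc{T}\in C^2$, an easy chain-rule argument shows $Z:\mc{O}_m\to C^\gamma(\mathbb{S}^2)$ is (locally) Lipschitz with
\begin{equation*}
\|Z(\bm{X}_1)-Z(\bm{X}_2)\|_{C^\gamma(\mathbb{S}^2)}\le C(m,\|\bm{X}_1\|_{C^{1,\gamma}},\|\bm{X}_2\|_{C^{1,\gamma}})\,\|\bm{X}_1-\bm{X}_2\|_{C^{1,\gamma}(\mathbb{S}^2)}.
\end{equation*}
Combined with \eqref{Nbound}, this gives $F(\bm{X})\in C^{\gamma}(\mathbb{S}^2)$ with bound depending only on $m$ and $\|\bm{X}\|_{C^{1,\gamma}}$.

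Next, to obtain continuity of $F:\mc{O}_m\to C^\gamma(\mathbb{S}^2)$, I split
\begin{equation*}
F(\bm{X}_1)-F(\bm{X}_2)=\mc{N}(\bm{X}_1)\bigl(Z(\bm{X}_1)-Z(\bm{X}_2)\bigr)+\bigl(\mc{N}(\bm{X}_1)-\mc{N}(\bm{X}_2)\bigr)Z(\bm{X}_2).
\end{equation*}
The first summand is handled directly by Lemma~\ref{Nbound_lem} together with the Lipschitz estimate on $Z$. For the second summand, I would replicate the decomposition \eqref{Nholder_split} used in the proof of Lemma~\ref{Nbound_lem}, but with the kernel difference $q_{k,l}(\bm{X}_1;\hx,\hy)-q_{k,l}(\bm{X}_2;\hx,\hy)$ in place of $q_{k,l}$. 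Writing this difference as $\int_0^1\partial_\tau q_{k,l}(\tau \bm{X}_1+(1-\tau)\bm{X}_2;\hx,\hy)\,d\tau$ and using the bounds \eqref{qkernel_bounds} on the derivatives of $G^j$, one obtains pointwise kernel bounds of the form
\begin{equation*}
|q_{k,l}(\bm{X}_1;\hx,\hy)-q_{k,l}(\bm{X}_2;\hx,\hy)|\le \frac{C(m,\|\nabla_{\mathbb{S}^2}\bm{X}_i\|_{C^0})\,\|\bm{X}_1-\bm{X}_2\|_{C^1}}{|\hx-\hy|^2},
\end{equation*}
and an analogous bound for the $\mathbb{S}^2$-gradient of this difference (with an extra factor $|\hx-\hy|^{-1}$). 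Repeating the four-term splitting of \eqref{Nholder_split} and the integration-by-parts trick to handle the principal value piece then yields
\begin{equation*}
\|(\mc{N}(\bm{X}_1)-\mc{N}(\bm{X}_2))Z\|_{C^\gamma(\mathbb{S}^2)}\le C(m,\|\bm{X}_i\|_{C^{1,\gamma}})\,\|Z\|_{C^\gamma(\mathbb{S}^2)}\,\|\bm{X}_1-\bm{X}_2\|_{C^{1,\gamma}(\mathbb{S}^2)}.
\end{equation*}
This is the most laborious step and, I expect, the main obstacle: the kernel $q$ depends on $\bm{X}$ in a genuinely nonlinear way (through both the Stokeslet tensor and the factor $\nabla_{\mathbb{S}^2}\bm{X}$), so the argument must carefully track the same cancellations as in Lemma~\ref{Nbound_lem} while allowing for a second difference. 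Apart from bookkeeping, no new ideas are required beyond those already used for \eqref{Nbound}.

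Finally, to promote the image from $C^\gamma(\mathbb{S}^2)$ to $h^\gamma(\mathbb{S}^2)$ I use the definition of little H\"older spaces: given $\bm{X}\in\mc{O}_m\subset h^{1,\gamma}$, pick a sequence of smooth $\bm{X}_n\to \bm{X}$ in $C^{1,\gamma}(\mathbb{S}^2)$ with $|\bm{X}_n|_*\geq m/2$ for $n$ large. Each $F(\bm{X}_n)$ is smooth on $\mathbb{S}^2$ (the integrand is smooth away from the diagonal and the standard Stokes single-layer argument gives smoothness of the output when the density and surface are smooth), and in particular $F(\bm{X}_n)\in h^\gamma(\mathbb{S}^2)$. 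By the continuity step just established, $F(\bm{X}_n)\to F(\bm{X})$ in $C^\gamma(\mathbb{S}^2)$, and since $h^\gamma(\mathbb{S}^2)$ is closed in $C^\gamma(\mathbb{S}^2)$, we conclude $F(\bm{X})\in h^\gamma(\mathbb{S}^2)$.
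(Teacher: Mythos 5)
Your overall strategy matches the paper's: split $F(\bm{X}_1)-F(\bm{X}_2)$ into a density-difference term $\mc{N}(\bm{X}_1)(Z(\bm{X}_1)-Z(\bm{X}_2))$ (handled directly by Lemma~\ref{Nbound_lem} once $Z$ is shown locally Lipschitz into $C^\gamma$) and a kernel-difference term, then pass to $h^\gamma$ via approximation. Two points are worth flagging.

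For the kernel-difference estimate, you propose the Taylor formula $q(\bm{X}_1)-q(\bm{X}_2)=\int_0^1\partial_\tau q(\tau\bm{X}_1+(1-\tau)\bm{X}_2)\,d\tau$, whereas the paper factors the difference algebraically (pulling a factor $\bm{X}_1-\bm{X}_2$ out of each piece of $q^j_{k,l}$ using identities such as \eqref{odd_fractions02}). Both routes work; the algebraic route is marginally cleaner because it keeps every piece a manifest $\hy$-total-derivative, which is needed for the integration-by-parts step in the $I^3$ term of the \eqref{Nholder_split}-type splitting. Your convex-combination version preserves the total-derivative structure too, so this is a stylistic difference, not a gap.

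The genuine weak spot is the last step, where you approximate $\bm{X}$ by smooth $\bm{X}_n$ and invoke ``the standard Stokes single-layer argument gives smoothness of the output when the density and surface are smooth'' to conclude $F(\bm{X}_n)\in h^\gamma$. That smoothness claim is far from free in this Lagrangian setting — $\bm{X}$ appears both in the surface parametrization and inside the Stokeslet, and the integrand carries an $|\hx-\hy|^{-2}$ singularity, so naive differentiation under the integral sign fails and one needs the derivative-transfer machinery (analogous to \eqref{move_der}) that the paper develops only in Section~\ref{sec:highreg}. You don't actually need smoothness: since you already have the estimate \eqref{FHolderbound} at exponent $\gamma$, the identical computation at any $\alpha\in(\gamma,1)$ gives $\|F(\bm{X}_n)\|_{C^\alpha}\le C\|\nabla_{\mathbb{S}^2}\bm{X}_n\|_{C^\alpha}$, and $C^\alpha(\mathbb{S}^2)\subset h^\gamma(\mathbb{S}^2)$ for $\alpha>\gamma$. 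Approximating $\bm{X}$ by $\bm{X}^m\in C^{1,\alpha}$ (rather than by $C^\infty$ functions) therefore closes the argument with no appeal to potential-theoretic regularity. This is exactly how the paper avoids the issue, and you should replace your smoothness appeal by this cheaper observation.
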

\begin{proof}
Given that $F(\bm{X})=\mc{N}(\bm{X})(T(|\nabla_{\mathbb{S}^2}\bm{X}|)\nabla_{\mathbb{S}^2}\bm{X})$ \eqref{Ndef}, we apply Proposition \ref{Nbound} to obtain that
\begin{equation*}
    \begin{aligned}
    \|F(\bm{X})\|_{C^\gamma(\mathbb{S}^2)}&\leq C\frac{1}{|\bm{X}|_*}\Big(1+\Big(\frac{\|\nabla_{\mathbb{S}^2}\bm{X}\|_{C^0(\mathbb{S}^2)}}{|\bm{X}_*|}\Big)^2\Big)\|T(|\nabla_{\mathbb{S}^2}\bm{X})|)\nabla_{\mathbb{S}^2}\bm{X})\|_{C^\gamma(\mathbb{S}^2)},
    \end{aligned}
\end{equation*}
hence recalling the expression for $T$ \eqref{Taulaw}, the bound above yields that
\begin{equation}\label{FHolderbound}
    \begin{aligned}
    \|F(\bm{X})\|_{C^\gamma(\mathbb{S}^2)}&\leq C(|\bm{X}|_*,\|\nabla_{\mathbb{S}^2}\bm{X}\|_{C^0(\mathbb{S}^2)},\|\mc{T}\|_{C^1})\|\nabla_{\mathbb{S}^2}\bm{X}\|_{C^\gamma(\mathbb{S}^2)}.
    \end{aligned}
\end{equation}

We have thus proved that $F$ maps $C^{1,\gamma}(\mathbb{S}^2)$ to $C^\gamma(\mathbb{S}^2)$. We need to show that it also maps $h^{1,\gamma}(\mathbb{S}^2)$ to $h^\gamma(\mathbb{S}^2)$. Having the estimate \eqref{FHolderbound}, it suffices to show that if $\bm{X}\in h^{1,\gamma}(\mathbb{S}^2)$, then $F(\bm{X})\in h^\gamma(\mathbb{S}^2)$. Since $h^{k,\gamma}(\mathbb{S}^2)$ is the completion of $C^{k,\gamma}(\mathbb{S}^2)$ in any $C^{k,\alpha}(\mathbb{S}^2)$ with $0<\gamma<\alpha<1$, $k\geq0$, let $\bm{X}\in h^{1,\gamma}(\mathbb{S}^2)$, and $\{\bm{X}^m\}_m$ a sequence $\bm{X}^m \in C^{1,\alpha}(\mathbb{S}^2)$, $\alpha>\gamma$, such that $\bm{X}^m\rightarrow \bm{X}$ in $C^{1,\gamma}(\mathbb{S}^2)$. It is clear that the previous estimate \eqref{FHolderbound} also holds replacing $\gamma$ by $\alpha$, thus $F(\bm{X}^m)\in C^\alpha$. We conclude that $F(\bm{X})\in h^{\gamma}(\mathbb{S}^2)$ by showing that
\begin{equation}\label{continuity_X}
    \begin{aligned}
     \|F(\bm{X}^m)-F(\bm{X})\|_{C^\gamma(\mathbb{S}^2)}\leq C \|\bm{X}^m-\bm{X}\|_{C^{1,\gamma}(\mathbb{S}^2)}.
    \end{aligned}
\end{equation}
The estimate will follow from the previous ones by writing $F(\bm{X}^m)-F(\bm{X})$ as follows:
\begin{equation}\label{continuity_X2}
    \begin{aligned}
    F(\bm{X}^m)(\hx)-F(\bm{X})(\hx)&=\Delta_1(\hx)+\Delta_2(\hx),
    \end{aligned}
\end{equation}
with
\begin{equation*}
    \begin{aligned}
    \Delta_1(\hx)&=-\int_{\mathbb{S}^2}\!\! \nabla_{\mathbb{S}^2}G(\bm{X}^m(\hx)\!-\!\bm{X}^m(\hy))\cdot\\
    &\qquad\times\big(T(\nabla_{\mathbb{S}^2}\bm{X}^m(\bm{\hy}))\nabla_{\mathbb{S}^2}\bm{X}^m(\bm{\hy})\!-\!T(\nabla_{\mathbb{S}^2}\bm{X}(\bm{\hy}))\nabla_{\mathbb{S}^2}\bm{X}(\bm{\hy})\big)d\hy,\\
    \Delta_2(\hx)&=\int_{\mathbb{S}^2}\!\! \nabla_{\mathbb{S}^2}\Big(G(\bm{X}^m(\hx)\!-\!\bm{X}^m(\hy))\!-\!G(\bm{X}(\hx)\!-\!\bm{X}(\hy))\Big)\cdot\\
    &\qquad\times T(\nabla_{\mathbb{S}^2}\bm{X}(\bm{\hy}))\nabla_{\mathbb{S}^2}\bm{X}(\bm{\hy})d\hy,
\end{aligned}
\end{equation*}
where both terms have kernels given by a derivative.
The first term has thus already been treated,
\begin{equation}\label{Delta1_bound}
    \begin{aligned}
    &\|\Delta_1\|_{C^\gamma(\mathbb{S}^2)}\\
    &\quad\leq C(\|\nabla_{\mathbb{S}^2} \bm{X}^m\|_{C^0(\mathbb{S}^2)},|\bm{X}^m|_*)\|T(\nabla_{\mathbb{S}^2}\bm{X}^m)\nabla_{\mathbb{S}^2}\bm{X}^m\!-\!T(\nabla_{\mathbb{S}^2}\bm{X})\nabla_{\mathbb{S}^2}\bm{X}\|_{C^\gamma(\mathbb{S}^2)}\\
    &\quad\leq \!C(\|\nabla_{\mathbb{S}^2} \bm{X}^m\|_{C^0(\mathbb{S}^2)},|\bm{X}^m|_*,\|\nabla_{\mathbb{S}^2} \bm{X}\|_{C^0(\mathbb{S}^2)},|\bm{X}|_*,\|\mc{T}\|_{C^2})\!\Big(\!\|\nabla_{\mathbb{S}^2} (\bm{X}^m\!\!-\!\bm{X})\|_{C^\gamma(\mathbb{S}^2)}\\
    &\qquad+(\|\nabla_{\mathbb{S}^2} \bm{X}\|_{C^\gamma(\mathbb{S}^2)}+\|\nabla_{\mathbb{S}^2} \bm{X}^m\|_{C^\gamma(\mathbb{S}^2)})\|\nabla_{\mathbb{S}^2}(\bm{X}^m-\bm{X})\|_{C^0(\mathbb{S}^2)}\Big),
    \end{aligned}
\end{equation}
while the second one can be estimated in a similar manner by noticing that one can always extract $\bm{X}^m-\bm{X}$ from the difference of kernels. Consider for example the kernel $q^1_{k,l}$ \eqref{q_kernels}, 
\begin{equation*}
    \begin{aligned}
    q^1_{k,l}(\hx,\hy)=-\frac{1}{8\pi}\frac{\delta_{\hy}X_i(\hx)}{|\delta_{\hy}\bm{X}(\hx)|^3}\nabla_{\mathbb{S}^2}X_i(\hy)\delta_{k,l}.
    \end{aligned}
\end{equation*}
We can write
\begin{equation*}
    \begin{aligned}
    \frac{1}{8\pi}&\frac{\delta_{\hy}X_i^m(\hx)}{|\delta_{\hy}\bm{X}^m(\hx)|^3}\nabla_{\mathbb{S}^2}X_i^m(\hy)\delta_{k,l}-\frac{1}{8\pi}\frac{\delta_{\hy}X_i(\hx)}{|\delta_{\hy}\bm{X}(\hx)|^3}\nabla_{\mathbb{S}^2}X_i(\hy)\delta_{k,l}\\
    &=\frac{1}{8\pi}\frac{\delta_{\hy}(X_i^m-X_i)(\hx)\nabla_{\mathbb{S}^2}X_i^m(\hy)}{|\delta_{\hy}\bm{X}^m(\hx)|^3}+\frac{1}{8\pi}\frac{\delta_{\hy}X_i(\hx)\nabla_{\mathbb{S}^2}(X_i^m-X_i)(\hy)}{|\delta_{\hy}\bm{X}^m(\hx)|^3}\\
    &\quad+\frac{1}{8\pi}\delta_{\hy}X_i(\hx)\nabla_{\mathbb{S}^2}X_i(\hy)\Big(\frac{1}{|\delta_{\hy}\bm{X}^m(\hx)|^3}-\frac{1}{|\delta_{\hy}\bm{X}(\hx)|^3}\Big).
    \end{aligned}
\end{equation*}
Therefore, it holds that
\begin{equation*}
    \begin{aligned}
       \|\Delta_2\|_{C^\gamma(\mathbb{S}^2)}&\leq C(\|\nabla_{\mathbb{S}^2} \bm{X}\|_{C^0(\mathbb{S}^2)},|\bm{X}|_*,\|\nabla_{\mathbb{S}^2} \bm{X}^m\|_{C^0(\mathbb{S}^2)},|\bm{X}^m|_*,\|\mc{T}\|_{C^1})\\
       &\hspace{2.5cm}\times\|\nabla_{\mathbb{S}^2} \bm{X}\|_{C^\gamma(\mathbb{S}^2)}\|\nabla_{\mathbb{S}^2} (\bm{X}^m-\bm{X})\|_{C^0(\mathbb{S}^2)},
    \end{aligned}
\end{equation*}
which together with \eqref{Delta1_bound} proves \eqref{continuity_X}.

\end{proof}

\begin{prop}\label{FGateauxEstimate}
If $m>0$ and $\gamma\in(0,1)$, $\mc{T}\in C^3$, then the G\^ateaux derivative of $F$ at any $\bm{X}\in \mathcal{O}_m\subset h^{1,\gamma}(\mathbb{S}^2)$ \eqref{Soperator} is continuous and maps  $h^{1,\gamma}(\mathbb{S}^2)$ to $h^\gamma(\mathbb{S}^2)$.
\end{prop}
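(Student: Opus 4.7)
The plan is to mirror the structure of the proof of Proposition \ref{Festimate}, first establishing a $C^\gamma$ bound of the form
\[
\|\mc{S}(\bm{X})\bm{Y}\|_{C^\gamma(\mathbb{S}^2)} \leq C(|\bm{X}|_*,\|\bm{X}\|_{C^{1,\gamma}(\mathbb{S}^2)},\|\mc{T}\|_{C^2}) \,\|\bm{Y}\|_{C^{1,\gamma}(\mathbb{S}^2)}
\]
for each $\bm{X}\in\mathcal{O}_m$, then using a density argument to obtain the little H\"older range, and finally proving continuity in $\bm{X}$. The operator $\mc{S}^1(\bm{X})\bm{Y}$ is readily identified as $\mc{N}(\bm{X})W$, where $W=T_F(\nabla_{\mathbb{S}^2}\bm{X})\nabla_{\mathbb{S}^2}\bm{Y}$ is the linearization of the elastic stress law at $\bm{X}$ in direction $\bm{Y}$ (cf.\ \eqref{tensionD}). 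The assumption $\mc{T}\in C^3$ ensures that $T_F$ is $C^1$ on matrices, so $W\in C^\gamma$ with the required norm control, and Lemma \ref{Nbound_lem} directly yields the bound for $\mc{S}^1$.

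For $\mc{S}^2(\bm{X})\bm{Y}$ I would first compute its kernel explicitly. Applying $\nabla^{\hy}_{\mathbb{S}^2}$ and using the chain rule, the G\^ateaux derivative of $G(\bm{X}(\hx)-\bm{X}(\hy))$ in direction $\bm{Y}$ yields a kernel that decomposes into two pieces: one involving $\partial_i G(\bm{X}(\hx)-\bm{X}(\hy))\,\nabla_{\mathbb{S}^2} Y_i(\hy)$, which fits the $\mc{N}$-type structure of \eqref{Ndef_q} and is again controlled by Lemma \ref{Nbound_lem}; and one involving $\partial_j\partial_i G(\bm{X}(\hx)-\bm{X}(\hy))\,\nabla_{\mathbb{S}^2} X_j(\hy)\,(Y_i(\hx)-Y_i(\hy))$, which is formally of order $|\hx-\hy|^{-3}$ but, thanks to the finite-difference factor $\delta_{\hy}\bm{Y}(\hx)=O(|\hx-\hy|)$ for $\bm{Y}\in C^{1,\gamma}$, has the same $|\hx-\hy|^{-2}$ effective singularity as the kernels treated in Lemma \ref{Nbound_lem}. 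I would control its H\"older norm by the same near/far splitting as in \eqref{Nholder_split}: near the diagonal the factor $\delta_{\hy}\bm{Y}(\hx)$ absorbs a power of $|\hx-\hy|$, leaving an integral controlled by $\|\nabla\bm{Y}\|_{C^\gamma}$; far from the diagonal I would use the mean-value theorem plus an integration by parts analogous to the $I^3$ term in Lemma \ref{Nbound_lem}, with boundary terms controlled through $|\bm{X}|_*>0$.

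Finally, to land in $h^\gamma(\mathbb{S}^2)$ I would approximate $\bm{X},\bm{Y}\in h^{1,\gamma}$ by $C^{1,\alpha}$ sequences with $\gamma<\alpha<1$; the same estimates then give $\mc{S}(\bm{X}^m)\bm{Y}^m\in C^{\alpha}$ with uniform bounds and $\mc{S}(\bm{X}^m)\bm{Y}^m\to\mc{S}(\bm{X})\bm{Y}$ in $C^\gamma$, so the limit lies in the closure $h^\gamma(\mathbb{S}^2)$. Continuity of $\bm{X}\mapsto\mc{S}(\bm{X})$ as a bounded operator from $h^{1,\gamma}$ to $h^\gamma$ would then follow from an estimate analogous to \eqref{continuity_X}, splitting $\mc{S}^j(\bm{X}_1)-\mc{S}^j(\bm{X}_2)$ ($j=1,2$) into pieces where either the kernel difference or the tension-law difference is extracted as a factor of $\bm{X}_1-\bm{X}_2$. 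The main obstacle I anticipate is the continuity estimate for the second piece of $\mc{S}^2$: the difference of second-derivative kernels $\partial^2 G(\delta_{\hy}\bm{X}_1(\hx))-\partial^2 G(\delta_{\hy}\bm{X}_2(\hx))$ is formally of order $|\hx-\hy|^{-4}$, and recovering an integrable kernel will require careful use of the algebraic identity \eqref{odd_fractions} applied to $|\cdot|^3$ and $|\cdot|^5$, combined with the $\delta_{\hy}\bm{Y}(\hx)$ cancellation and the arc-chord condition to absorb the extra power.
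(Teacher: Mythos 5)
Your proposal follows the paper's proof essentially step by step: treating $\mc{S}^1$ via Lemma \ref{Nbound_lem} with the linearized tension tensor, computing the two pieces of the $\mc{S}^2$ kernel, observing that the finite-difference factor $\delta_{\hy}\bm{Y}(\hx)$ tames the second-derivative singularity, exploiting the derivative structure of the kernel for the $I^3$-type boundary term, and closing with density and a continuity estimate using \eqref{odd_fractions}. The only slip is notational: the linearization of the tension law at a variable configuration is $T_S(\nabla_{\mathbb{S}^2}\bm{X})$ of \eqref{Ttilde_law}, not $T_F$ of \eqref{tensionD} (the latter is the frozen-coefficient version at a constant matrix), and the $\mc{T}\in C^3$ hypothesis enters in the Lipschitz-in-$\bm{X}$ continuity estimate at the end rather than in the boundedness of $T_S$ itself; neither affects the substance of the argument.
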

\begin{proof}

The first term $\mc{S}^1(\bm{X})\bm{Y}$ in the G\^ateaux derivative of $F$ \eqref{Soperator} is given in terms of the operator $\mc{N}(\bm{X})$ \eqref{Ndef}, 
\begin{equation}\label{S1N1}
\begin{aligned}
    \mc{S}^1(\bm{X})\bm{Y}(\hx)&=\mc{N}(\bm{X})(T_S(\nabla_{\mathbb{S}^2}\bm{X})\nabla_{\mathbb{S}^2}\bm{Y})(\hx),
\end{aligned}
\end{equation}
with $T_S$ given by
\begin{equation}\label{Ttilde_law}
    \begin{aligned}
    T_S(\nabla_{\mathbb{S}^2}\bm{X})&\!=\!\frac{\mc{T}(|\nabla_{\mathbb{S}^2}\bm{X}|)}{|\nabla_{\mathbb{S}^2}\bm{X}|}\!+\!\Big(\mc{T}'(|\nabla_{\mathbb{S}^2}\bm{X}|)\!-\!\frac{\mc{T}(|\nabla_{\mathbb{S}^2}\bm{X}|)}{|\nabla_{\mathbb{S}^2}\bm{X}|}\Big)\frac{\nabla_{\mathbb{S}^2}\bm{X}\otimes \nabla_{\mathbb{S}^2}\bm{X}}{|\nabla_{\mathbb{S}^2}\bm{X}|^2},
    \end{aligned}
\end{equation}
and, in index notation, 
\begin{equation*}
    \begin{aligned}
    (T_S(\nabla_{\mathbb{S}^2}\bm{X})\nabla_{\mathbb{S}^2}\bm{Y})_{l,i}&=\frac{\mc{T}(|\nabla_{\mathbb{S}^2}\bm{X}|)}{|\nabla_{\mathbb{S}^2}\bm{X}|}(\nabla_{\mathbb{S}^2}\bm{Y})_{l,i}\\
    &\hspace{-0.6cm}+\Big(\mc{T}'(|\nabla_{\mathbb{S}^2}\bm{X}|)\!-\!\frac{\mc{T}(|\nabla_{\mathbb{S}^2}\bm{X}|)}{|\nabla_{\mathbb{S}^2}\bm{X}|}\Big)\frac{(\nabla_{\mathbb{S}^2}\bm{X})_{l,i} (\nabla_{\mathbb{S}^2}\bm{X})_{q,m}}{|\nabla_{\mathbb{S}^2}\bm{X}|^2}(\nabla_{\mathbb{S}^2}\bm{Y})_{q,m}.
    \end{aligned}
\end{equation*}
Proposition \ref{Nbound} then gives that
\begin{equation}\label{S1bound}
\begin{aligned}
    \|\mc{S}^1(\bm{X})\bm{Y}&\|_{C^\gamma(\mathbb{S}^2)}\leq C(|\bm{X}|_*,\|\nabla_{\mathbb{S}^2}\bm{X}\|_{C^0(\mathbb{S}^2)})\|T_S(\nabla_{\mathbb{S}^2} \bm{X})\nabla_{\mathbb{S}^2} \bm{Y}\|_{C^\gamma(\mathbb{S})}\\
    &\leq C(|\bm{X}|_*,\|\nabla_{\mathbb{S}^2}\bm{X}\|_{C^0(\mathbb{S}^2)},\|\mc{T}\|_{C^1})\|\nabla_{\mathbb{S}^2} \bm{Y}\|_{C^\gamma(\mathbb{S})}\\
    &\quad+C(|\bm{X}|_*,\|\nabla_{\mathbb{S}^2}\bm{X}\|_{C^0(\mathbb{S}^2)},\|\mc{T}\|_{C^2})\|\nabla_{\mathbb{S}^2}\bm{X}\|_{C^\gamma(\mathbb{S}^2)}\|\nabla_{\mathbb{S}^2}\bm{Y}\|_{C^0(\mathbb{S})}.
\end{aligned}
\end{equation}
We proceed with $\mc{S}^2(\bm{X})\bm{Y}$,
\begin{equation*}
    \begin{aligned}
    \mc{S}^2(\bm{X})\bm{Y}&=\mc{S}^{2,1}(\bm{X})\bm{Y}+\mc{S}^{2,2}(\bm{X})\bm{Y},\\
    (\mc{S}^{2,j}(\bm{X})\bm{Y})_k(\hx)&=\int_{\mathbb{S}^2} Q^j_{k,l}(\hx,\hy)\cdot(T(\nabla_{\mathbb{S}^2}\bm{X})\nabla_{\mathbb{S}^2}X_l(\bm{\hy})-\bm{\mc{C}}_l)d\hy,
    \end{aligned}
\end{equation*}
where we define the kernels
\begin{equation*}
Q^j_{k,l}(\hx,\hy)=\nabla_{\mathbb{S}^2}\frac{d}{d\varepsilon}\Big(G^j(\bm{X}(\hx)-\bm{X}(\hy)+\varepsilon(\bm{Y}(\hx)-\bm{Y}(\hy)))\Big)\Big|_{\varepsilon=0}.
\end{equation*}
Taking the derivatives we see that
\begin{equation*}
\begin{aligned}
Q^j_{k,l}(\hx,\hy)&=\nabla_{\mathbb{S}^2}\Big(\PD{}{x_i}G^j(\bm{X}(\hx)-\bm{X}(\hy))(Y_i(\hx)-Y_i(\hy))\Big)\\
&=-\PD{}{x_l}\PD{}{x_i}G^j(\bm{X}(\hx)-\bm{X}(\hy))\nabla_{\mathbb{S}^2}X_l(\hy)(Y_i(\hx)-Y_i(\hy))\\
&\quad-\PD{}{x_i}G^j(\bm{X}(\hx)-\bm{X}(\hy))\nabla_{\mathbb{S}^2}Y_i(\hy),
\end{aligned}
\end{equation*}
hence we have the following bound, similarly as in \eqref{qkernel_bounds}
\begin{equation*}
    \begin{aligned}
    |Q_{k,l}^j(\hx,\hy)|&\leq C\Big(\frac{|\nabla_{\mathbb{S}^2}\bm{Y}(\hy)|}{|\Delta_{\hy}\bm{X}(\hx)|^2}+\frac{|\nabla_{\mathbb{S}^2}\bm{X}(\hy)||\Delta_{\hy}\bm{Y}(\hx)|}{|\Delta_{\hy}\bm{X}(\hx)|^3}\Big)\\
    &\leq C\frac{\|\nabla_{\mathbb{S}^2}\bm{Y}\|_{C^0(\mathbb{S}^2)}}{|\bm{X}|_*^2}\Big(1+\frac{\|\nabla_{\mathbb{S}^2}\bm{X}\|_{C^0(\mathbb{S}^2)}}{|\bm{X}|_*}\Big)\frac{1}{|\hx-\hy|^2}.
    \end{aligned}
\end{equation*}
Therefore, 
\begin{equation*}
    \begin{aligned}
    |\mc{S}^{2,j}(\bm{X})\bm{Y}(\hx)|&\!\leq\! C\frac{\|T(\nabla_{\mathbb{S}^2}\bm{X})\nabla_{\mathbb{S}^2}\bm{X}\|_{C^\gamma(\mathbb{S}^2)}}{|\bm{X}|_*^2}\Big(1\!+\!\frac{\|\nabla_{\mathbb{S}^2}\bm{X}\|_{C^0(\mathbb{S}^2)}}{|\bm{X}|_*}\Big)\|\nabla_{\mathbb{S}^2}\bm{Y}\|_{C^0(\mathbb{S}^2)},
    \end{aligned}
\end{equation*}
hence
\begin{equation*}
    \begin{aligned}
    |\mc{S}^{2,j}(\bm{X})\bm{Y}(\hx)|&\leq  C(|\bm{X}|_*,\|\nabla_{\mathbb{S}^2}\bm{X}\|_{C^0(\mathbb{S}^2)},\|\mc{T}\|_{C^1})\|\nabla_{\mathbb{S}^2}\bm{X}\|_{C^\gamma(\mathbb{S}^2)}\|\nabla_{\mathbb{S}^2}\bm{Y}\|_{C^0(\mathbb{S}^2)}.
    \end{aligned}
\end{equation*}
Given that the kernels $Q_{k,l}^j$ are also a derivative, the estimate of the H\"older seminorm follows the same steps as in Proposition \ref{Nbound}. In fact, performing the splitting as in \eqref{Nholder_split}, we find that
\begin{equation*}
    \begin{aligned}
    [\mc{S}^{2,j}&(\bm{X})\bm{Y}]_{C^\gamma(\mathbb{S}^2)}\\
    &\leq C\frac{\|T(\nabla_{\mathbb{S}^2}\bm{X})\nabla_{\mathbb{S}^2}\bm{X}\|_{C^\gamma(\mathbb{S}^2)}}{|\bm{X}|_*^2}\Big(1+\Big(\frac{\|\nabla_{\mathbb{S}^2}\bm{X}\|_{C^0(\mathbb{S}^2)}}{|\bm{X}_*|}\Big)^2\Big)\|\nabla_{\mathbb{S}^2}\bm{Y}\|_{C^0(\mathbb{S}^2)},
    \end{aligned}
\end{equation*}
thus
\begin{equation}\label{S2bound}
    \begin{aligned}
    \|\mc{S}^2(\bm{X})\bm{Y}\|_{C^\gamma(\mathbb{S}^2)}&\leq C(|\bm{X}|_*,\|\nabla_{\mathbb{S}^2}\bm{X}\|_{C^0(\mathbb{S}^2)},\|\mc{T}\|_{C^1})\|\nabla_{\mathbb{S}^2}\bm{X}\|_{C^\gamma(\mathbb{S}^2)}\|\nabla_{\mathbb{S}^2}\bm{Y}\|_{C^0(\mathbb{S}^2)}.
    \end{aligned}
\end{equation}
Together with \eqref{S1bound}, this shows  that $S(\bm{X})$ maps $C^{1,\gamma}(\mathbb{S}^2)$ to $C^\gamma(\mathbb{S}^2)$,
\begin{align}\label{Sbound}
    \begin{split}
        \|\mc{S}(\bm{X})\bm{Y}\|_{C^\gamma(\mathbb{S}^2)}&\leq C(|\bm{X}|_*,\|\nabla_{\mathbb{S}^2}\bm{X}\|_{C^0(\mathbb{S}^2)},\|\mc{T}\|_{C^2})\|\nabla_{\mathbb{S}^2}\bm{X}\|_{C^{\gamma}(\mathbb{S}^2)} \|\nabla_{\mathbb{S}^2}\bm{Y}\|_{C^0(\mathbb{S}^2)}\\
        &\quad+C(|\bm{X}|_*,\|\nabla_{\mathbb{S}^2}\bm{X}\|_{C^0(\mathbb{S}^2)},\|\mc{T}\|_{C^1})\|\nabla_{\mathbb{S}^2} \bm{Y}\|_{C^\gamma(\mathbb{S})}\\
        &\leq C(\starnorm{\bm{X}},\|\nabla_{\mathbb{S}^2}\bm{X}\|_{C^{\gamma}(\mathbb{S}^2)},\|\mc{T}\|_{C^2})\norm{\nabla_{\mathbb{S}^2} \bm{Y}}_{C^\gamma(\mathbb{S}^2)}.
    \end{split}
\end{align}

We are left to show that $\mc{S}(\bm{X})$ also maps $h^{1,\gamma}(\mathbb{S}^2)$ to $h^{\gamma}(\mathbb{S}^2)$ and that it is continuous with respect to $\bm{X}$. We follow the lines below \eqref{FHolderbound}. 
It suffices to show that if $\bm{Y}\in h^{1,\gamma}(\mathbb{S}^2)$, then $\mc{S}(\bm{X})\bm{Y}\in h^\gamma(\mathbb{S}^2)$. Let $\bm{Y}\in h^{1,\gamma}(\mathbb{S}^2)$, and $\{\bm{Y}^m\}_m$ a sequence $\bm{Y}^m \in C^{1,\alpha}(\mathbb{S}^2)$, $\alpha>\gamma$, such that $\bm{Y}^m\rightarrow \bm{Y}$ in $C^{1,\gamma}(\mathbb{S}^2)$. Since $\mc{S}(\bm{X})\bm{Y}^m\in C^\alpha(\mathbb{S}^2)$, we conclude that $\mc{S}(\bm{X})\bm{Y}\in h^{\gamma}(\mathbb{S}^2)$ by showing that
\begin{equation*}
    \begin{aligned}
     \|\mc{S}(\bm{X})\bm{Y}^m-\mc{S}(\bm{X})\bm{Y}\|_{C^\gamma(\mathbb{S}^2)}\leq C \|\bm{Y}^m-\bm{Y}\|_{C^{1,\gamma}(\mathbb{S}^2)}.
    \end{aligned}
\end{equation*}
But since we are dealing with a linear operator, the estimate is trivially satisfied from \eqref{Sbound}.
That the G\^ateaux derivative is continuous in $\bm{X}$ follows along the lines below \eqref{continuity_X}. In fact,
\begin{equation*}
    \begin{aligned}
    \big(\mc{S}(\bm{X}^1)-\mc{S}(\bm{X}^2)\big)\bm{Y}=(\mc{S}^1(\bm{X}^1)-\mc{S}^1(\bm{X}^2))\bm{Y}+(\mc{S}^2(\bm{X}^1)-\mc{S}^2(\bm{X}^2))\bm{Y},
    \end{aligned}
\end{equation*}
and we decompose each $\mc{S}^j$ as in \eqref{continuity_X2}. Then, it is not hard to see that the following bound holds 
\begin{equation*}
    \begin{aligned}
    \|&(\mc{S}(\bm{X}^2)-\mc{S}(\bm{X}^1))\bm{Y}\|_{C^\gamma(\mathbb{S}^2)}\\
    &\leq C(\|\nabla_{\mathbb{S}^2} \bm{X}^1\|_{C^0(\mathbb{S}^2)},|\bm{X}^1|_*,\|\nabla_{\mathbb{S}^2} \bm{X}^2\|_{C^0(\mathbb{S}^2)},|\bm{X}^2|_*,\|\mc{T}\|_{C^3})\\
    &\quad\times\Big(\|\nabla_{\mathbb{S}^2}\bm{Y}\|_{C^\gamma(\mathbb{S}^2)}\|\nabla_{\mathbb{S}^2} (\bm{X}^1-\bm{X}^2)\|_{C^\gamma(\mathbb{S}^2)}\\
    &\quad+\|\nabla_{\mathbb{S}^2}\bm{Y}\|_{C^0(\mathbb{S}^2)}\|\nabla_{\mathbb{S}^2} (\bm{X}^1-\bm{X}^2)\|_{C^0(\mathbb{S}^2)}(\|\nabla_{\mathbb{S}^2} \bm{X}^1\|_{C^\gamma(\mathbb{S}^2)}+\|\nabla_{\mathbb{S}^2} \bm{X}^2\|_{C^\gamma(\mathbb{S}^2)})\Big).
    \end{aligned}
\end{equation*}

\end{proof}

\begin{prop}\label{semigroup_est}
Consider the linear operator $\mc{S}(\bm{X}):C^{1,\gamma}(\mathbb{S}^2)\rightarrow C^{\gamma}(\mathbb{S}^2)$ defined in \eqref{Soperator} with $\bm{X}\in C^{1,\gamma}(\mathbb{S}^2)$, $\mc{T}\in C^{2}$, $\mc{T}>0$, $d\mc{T}/d\lambda\geq0$. Then, there exists a sector such that for all $z$ in the sector
\begin{equation*}
    \|z-\mc{S}(\bm{X})\bm{Y}\|_{C^{\gamma}(\mathbb{S}^2)}\geq C(\|\bm{Y}\|_{C^{1,\gamma}(\mathbb{S}^2)}+|z|\|\bm{Y}\|_{C^{\gamma}(\mathbb{S}^2)}),
\end{equation*}
where the constant $C$ depends only on the sector, $\gamma$,  the norms $\|\bm{X}\|_{C^{1,\gamma}(\mathbb{S}^2)}$ and $\|\mc{T}\|_{C^2}$, and the arc-chord condition $|\bm{X}|_*$.
\end{prop}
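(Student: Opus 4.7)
The plan is to establish sectoriality for the full operator $\mc{S}(\bm{X})$ by combining the frozen-coefficient estimate of Proposition \ref{frozen_coef_prop} with a localization based on the partition of unity $\{\rho_n\}$ from Section \ref{sec:Notation}. First, decompose $\bm{Y} = \sum_n \rho_n \bm{Y}$ in the finite atlas of stereographic charts $\wh{\bm{X}}_n$ with a small radius $R$ to be fixed later. In each chart, set $A_n := \nabla \bm{X}_n(\bm{0})$; by Remark \ref{arc_chord}, $A_n \in \mc{DA}_{\sigma_1,\sigma_2}$ with $\sigma_1 \sim \|\bm{X}\|_{C^{1,\gamma}(\mbs)}$ and $\sigma_2 = \starnorm{\bm{X}}$, so Proposition \ref{frozen_coef_prop} applies uniformly over the finite atlas, providing a common sector $\mc{S}_{\omega,\delta}$ in which
\[
\|(z-\mc{L}_{1, A_n})(\rho_n \bm{Y}_n)\|_{C^{\gamma}(\mbr)} \geq C\big(\|\rho_n\bm{Y}_n\|_{C^{1,\gamma}(\mbr)} + |z|\|\rho_n\bm{Y}_n\|_{C^{\gamma}(\mbr)}\big).
\]

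The heart of the proof is to show that in each chart
\[
\rho_n\, (z-\mc{S}(\bm{X}))\bm{Y} \,=\, (z-\mc{L}_{1, A_n})(\rho_n \bm{Y}_n) + E_n(\bm{Y}),
\]
where $E_n$ is lower-order. Splitting $\mc{S}(\bm{X})\bm{Y}= \mc{N}(\bm{X})(T_S(\nabla_{\mbs}\bm{X})\nabla_{\mbs}\bm{Y}) + \mc{S}^2(\bm{X})\bm{Y}$ as in \eqref{S1N1}, the $\mc{S}^2$ piece already satisfies \eqref{S2bound}, whose right-hand side involves only $\|\nabla_{\mbs}\bm{Y}\|_{C^0(\mbs)}$, hence contributes directly to $E_n$. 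For the $\mc{N}$ piece, I apply four reductions: (i) the kernel decomposition $\mc{N}(\bm{X})Z = \mc{M}(\nabla\bm{X})Z + \mc{R}(\bm{X})Z$ of \eqref{nonlinear_split}, where $\mc{R}$ is lower-order thanks to the extra $E^{\etab}\bm{X}$ factor in its kernels; (ii) the chart-localization using Lemmas \ref{lem_commutator} and \ref{Nout_bound} to pass to $\hat\rho_n$-cutoff quantities, absorbing both the commutator $[\rho_n,\mc{N}(\bm{X})]$ and the far-field piece as $C^0 \to C^1$ errors; (iii) the frozen-coefficient replacement $\mc{M}(\nabla\bm{X}_n) \to \mc{M}(A_n)$ via Lemma \ref{lem_dif}, whose bound carries a prefactor $\varepsilon(R)$ from the modulus of continuity of $\nabla\bm{X}$ on $B_{\hx_n,5R}\cap\mbs$ in front of $\|Z\|_{C^\gamma}$; (iv) the tension-law freezing $T_S(\nabla_{\mbs}\bm{X}) \to T_F(A_n)$, likewise controlled by $\varepsilon(R)$ since $T_S$ depends smoothly on $\nabla\bm{X}$ when $\mc{T}\in C^2$. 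After these reductions the leading operator is precisely $\mc{L}_{1, A_n}$ from \eqref{defnL2} acting on $\rho_n\bm{Y}_n$, and everything else is folded into $E_n$.

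Summing over the finite atlas and invoking the equivalence between $C^{k,\gamma}(\mbs)$ and the chart norms from Section \ref{covering_ste}, one obtains
\[
\|(z-\mc{S}(\bm{X}))\bm{Y}\|_{C^\gamma(\mbs)} \geq C\big(\|\bm{Y}\|_{C^{1,\gamma}(\mbs)} + |z|\|\bm{Y}\|_{C^\gamma(\mbs)}\big) - \sum_n\|E_n(\bm{Y})\|_{C^\gamma(\mbr)}.
\]
By construction, $\|E_n(\bm{Y})\|_{C^\gamma(\mbr)}$ decomposes into two types of contributions: those bounded by $\varepsilon(R)\|\bm{Y}\|_{C^{1,\gamma}(\mbs)}$, absorbed into the left-hand side by taking $R$ small enough; and those bounded by $C(\bm{X},\mc{T})\|\bm{Y}\|_{C^\gamma(\mbs)}$, absorbed into the $|z|\|\bm{Y}\|_{C^\gamma}$ term by enlarging $\omega$ so that $|z|\geq\omega$ dominates the chart-independent constant. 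The positivity hypotheses $\mc{T}>0$, $d\mc{T}/d\lambda\geq 0$ are used only to guarantee that Proposition \ref{frozen_coef_prop} actually yields a nonempty sector (via the coercivity of the multiplier \eqref{multiplierNon}).

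The main obstacle is step (iii)--(iv): the kernel approximation and the tension replacement must be combined so that the reassembled leading operator is \emph{exactly} $\mc{L}_{1, A_n}$ whose symbol $L_{A_n}(\bm{\xi})$ is known to be sectorial from Section \ref{sec:symbol}, rather than a merely approximate surrogate — otherwise the coercivity from Proposition \ref{frozen_coef_prop} could not be cleanly extracted. In particular, when evaluating $T_S(\nabla_{\mbs}\bm{X})$ at the base point one must recover precisely the tensor $T_F(A_n)$ of \eqref{tensionD} (with the correct $A\otimes A/\|A\|_F^2$ structure), and the two remainders — one from freezing $\nabla\bm{X}$ in the kernel, one from freezing it in the tension — must be estimated jointly to avoid double-counting the $C^\gamma$ norm of $\nabla\bm{X}$.
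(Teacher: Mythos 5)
Your overall strategy — partition of unity, commutator estimates, cutoff $\wh{\rho}_n$, frozen-coefficient replacement via Lemma \ref{lem_dif}, tension-law freezing $T_S(\nabla_{\mbs}\bm{X})\to T_F(A_n)$, and coercivity from Proposition \ref{frozen_coef_prop} — is the same as the paper's proof. However, there is a genuine gap in the final absorption step. You assert that the errors split into those bounded by $\varepsilon(R)\|\bm{Y}\|_{C^{1,\gamma}(\mbs)}$ (absorbed by taking $R$ small) and those bounded by $C\|\bm{Y}\|_{C^\gamma(\mbs)}$ (absorbed by enlarging $\omega$ so $|z|$ dominates the constant). The second claim is not accurate: several of the error contributions — notably the far-field piece $(1-\wh{\rho}_n)\mc{S}^1(\bm{X})(\rho_n\bm{Y})$ controlled by Lemma \ref{Nout_bound}, the analogous piece for $\mc{L}_{A}$ from Lemma \ref{MAout_bound}, and part of the kernel-approximation remainder in Lemma \ref{lem_dif} — are only bounded by quantities of the form $C(R)\,\|\nabla_{\mathbb{S}^2}(\rho_n\bm{Y})\|_{C^0(\mbs)}$, which scales like $\|\bm{Y}\|_{C^1(\mbs)}$, not $\|\bm{Y}\|_{C^\gamma(\mbs)}$. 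Since $\|\bm{Y}\|_{C^1}$ is strictly stronger than $\|\bm{Y}\|_{C^\gamma}$ for $\gamma<1$, no amount of enlarging $|z|$ makes $|z|\|\bm{Y}\|_{C^\gamma}$ dominate these intermediate-order terms.

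The fix — which the paper carries out explicitly — is to interpolate the two inequalities of Theorem \ref{t:sectorial_est} (Proposition \ref{frozen_coef_prop}) so as to retain, in addition to $C|z|\|\rho_n\bm{Y}\|_{C^\gamma}$ and $C\|\rho_n\bm{Y}\|_{C^{1,\gamma}}$, the intermediate lower bound
\begin{equation*}
\|J_n^3\|_{C^\gamma(\mathbb{R}^2)}\;\geq\; C|z|^{1-\sigma}\|\rho_n\bm{Y}\|_{C^{\gamma+\sigma}(\mathbb{S}^2)},
\end{equation*}
with $\sigma\in(0,1)$ chosen so that $1<\gamma+\sigma<1+\gamma$; this is \eqref{Jn7bound}. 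Because $C^{\gamma+\sigma}\hookrightarrow C^1$, all the $\|\nabla(\rho_n\bm{Y})\|_{C^0}$-scaling errors are dominated by $|z|^{1-\sigma}\|\rho_n\bm{Y}\|_{C^{\gamma+\sigma}}$ once $|z|$ is taken large, and only then may the sector be shifted to complete the absorption. With that correction your argument coincides with the paper's proof of Proposition \ref{semigroup_est}.
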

\begin{proof}
From \eqref{Soperator}, we have
\begin{equation}\label{Semi_spliting}
    \begin{aligned}
    \|(z-\mc{S}(\bm{X}))\bm{Y}\|_{C^{\gamma}(\mathbb{S}^2)}  \geq     \|(z-\mc{S}^1(\bm{X}))\bm{Y}\|_{C^{\gamma}(\mathbb{S}^2)}   -\|\mc{S}^2(\bm{X})\bm{Y}\|_{C^{\gamma}(\mathbb{S}^2)},
    \end{aligned}
\end{equation}
and using \eqref{S2bound} we obtain that
\begin{equation}\label{semi_bound01}
    \begin{aligned}
    \|(z-\mc{S}(\bm{X}))\bm{Y}\|_{C^{\gamma}(\mathbb{S}^2)}  \geq     \|(z-\mc{S}^1(\bm{X}))\bm{Y}\|_{C^{\gamma}(\mathbb{S}^2)}-C\norm{\nabla_{\mathbb{S}^2} \bm{Y}}_{C^0(\mathbb{S}^2)}.
    \end{aligned}
\end{equation}
We use the notation \eqref{Ttilde_law}. Then, we can write $\mc{S}^1(\bm{X})$ \eqref{S1N1} as
\begin{equation}\label{S1op}
    \begin{aligned}
    \mc{S}^1(\bm{X})\bm{Y}(\hx)&=-\int_{\mathbb{S}^2} \nabla_{\mathbb{S}^2}G(\bm{X}(\hx)-\bm{X}(\hy))\cdot( T_S(\nabla_{\mathbb{S}^2}\bm{X}(\hy))\nabla_{\mathbb{S}^2}\bm{Y}(\hy)-\mc{C})d\hy.
    \end{aligned}
\end{equation}
Next, we introduce the partition of unity $\rho_n$ (see Section \ref{covering_ste}) to write
\begin{equation*}
    \begin{aligned}
    \mc{S}^1&(\bm{X})\bm{Y}(\hx)=\sum_n \mc{S}^1(\bm{X})\paren{\rho_n\bm{Y}}(\hx)\\
    &=\!-\!\sum_n\! \int_{\mathbb{S}^2}\!\! \nabla_{\mathbb{S}^2}G(\bm{X}(\hx)\!-\!\bm{X}(\hy))\!\cdot\!( T_S(\nabla_{\mathbb{S}^2}\bm{X}(\hy))\nabla_{\mathbb{S}^2}(\rho_n\bm{Y})(\hy)\!-\!\mc{C}_n)d\hy,
    \end{aligned}
\end{equation*}
where now we will choose $\mc{C}_n=0$ or $\mc{C}_n=T_S(\nabla_{\mathbb{S}^2}\bm{X}(\hx))\nabla_{\mathbb{S}^2}(\rho_n\bm{Y})(\hx)$. We will extensively use that
\begin{equation}\label{supprhon}
    \text{supp }\rho_n \subset B_{\hx_n,2R}\cap \mathbb{S}^2.
\end{equation}
 We notice that
\begin{equation*}
    \begin{aligned}
    \rho_n(\hx)z\bm{Y}(\hx)-\mc{S}^1(\bm{X})(\rho_n\bm{Y})(\hx)&=\rho_n(\hx)\Big(z\bm{Y}(\hx)-\mc{S}^1(\bm{X})\bm{Y}(\hx)\Big)\\
    &\quad+\Big(\rho_n(\hx)\mc{S}^1(\bm{X})\bm{Y}-\mc{S}^1(\bm{X})(\rho_n\bm{Y})(\hx)\Big),
    \end{aligned}
\end{equation*}
hence
\begin{equation*}
    \begin{aligned}
     2\|\rho_n\|_{C^{\gamma}(\mathbb{S}^2)}\|(z-\mc{S}^1(\bm{X}))\bm{Y}\|_{C^{\gamma}(\mathbb{S}^2)}\geq 
     \|\rho_n\Big(z\bm{Y}-\mc{S}^1(\bm{X})\bm{Y}\Big)\|_{C^{\gamma}(\mathbb{S}^2)}\\
     \geq \|z\rho_n\bm{Y}-\mc{S}^1(\bm{X})(\rho_n\bm{Y})\|_{C^{\gamma}(\mathbb{S}^2)}-\|\rho_n\mc{S}^1(\bm{X})\bm{Y}-\mc{S}^1(\bm{X})(\rho_n\bm{Y})\|_{C^{\gamma}(\mathbb{S}^2)},
    \end{aligned}
\end{equation*}
and summing in $n$ we obtain
\begin{equation}\label{I1I2split}
    \begin{aligned}
     \|(z-\mc{S}^1(\bm{X}))\bm{Y}\|_{C^{\gamma}(\mathbb{S}^2)} &\geq C\sum_n (\|I_n^1\|_{C^{\gamma}(\mathbb{S}^2)}-\|I_n^2\|_{C^{\gamma}(\mathbb{S}^2)}),
     \end{aligned}
 \end{equation}
 where
 \begin{equation}\label{In1term}
     \begin{aligned}
    I_n^{1}= z\rho_n\bm{Y}-\mc{S}^1(\bm{X})(\rho_n\bm{Y}),
    \end{aligned}
\end{equation}
     \begin{equation*}
         \begin{aligned}
    I_n^2=\rho_n\mc{S}^1(\bm{X})\bm{Y}-\mc{S}^1(\bm{X})(\rho_n\bm{Y}).
    \end{aligned}
\end{equation*}
Recalling that $\mc{S}^1(\bm{X})$ is given in terms of $\mc{N}(\bm{X})$ \eqref{S1N1}, we split $I_n^2$ further:
\begin{equation*}
    \begin{aligned}
    I_n^2&=[\rho_n,\mc{N}(\bm{X})](T_S(\nabla_{\mathbb{S}^2}\bm{X})\nabla_{\mathbb{S}^2}\bm{Y})+\mc{N}(\bm{X})\big(T_s(\nabla_{\mathbb{S}^2}\bm{X})\bm{Y}\otimes\nabla_{\mathbb{S}^2}\rho_n\big),
    \end{aligned}
\end{equation*}
and by Proposition \ref{Nbound} and Lemma \ref{lem_commutator}, 
\begin{equation*}
    \begin{aligned}
    \|I_n^2\|_{C^\gamma(\mathbb{S}^2)}&\leq C(|\bm{X}|_*,\|\nabla_{\mathbb{S}^2}\bm{X}\|_{C^0(\mathbb{S}^2)})\Big(\|\nabla_{\mathbb{S}^2}\rho_n\|_{C^0(\mathbb{S}^2)}\|T_S(\nabla_{\mathbb{S}^2}\bm{X})\nabla_{\mathbb{S}^2}\bm{Y}\|_{C^0(\mathbb{S}^2)}\\
    &\quad+\|T_s(\nabla_{\mathbb{S}^2}\bm{X})\bm{Y}\otimes\nabla_{\mathbb{S}^2}\rho_n\|_{C^\gamma(\mathbb{S}^2)}\Big).
    \end{aligned}
\end{equation*}
Therefore,
\begin{equation*}
    \begin{aligned}
     \|I_n^{2}\|_{C^\gamma(\mathbb{S}^2)}&\leq C(|\bm{X}|_*,\|\nabla_{\mathbb{S}^2}\bm{X}\|_{C^0(\mathbb{S}^2)},\|\mc{T}\|_{C^2})\|\nabla_{\mathbb{S}^2}\rho_n\|_{C^\gamma(\mathbb{S}^2)}\\
    &\qquad \times(\|\nabla_{\mathbb{S}^2}\bm{X}\|_{C^\gamma(\mathbb{S}^2)}\|\bm{Y}\|_{C^\gamma(\mathbb{S}^2)}\!+\!\|\nabla_{\mathbb{S}^2}\bm{Y}\|_{C^0(\mathbb{S}^2)}).
    \end{aligned}
\end{equation*}
We proceed to deal with the term $I_n^1$ \eqref{In1term}.
We introduce the cutoff \eqref{cutoff} so that
\begin{equation}\label{In1split}
\begin{aligned}
 \|I_n^1\|_{C^{\gamma}(\mathbb{S}^2)}&\geq  \|z\rho_n\bm{Y}\!-\!\wh{\rho}_n\mc{S}^1(\bm{X})(\rho_n\bm{Y})\|_{C^{\gamma}(\mathbb{S}^2)}\!-\|(\!1-\wh{\rho}_n)\mc{S}^1(\bm{X})(\rho_n\bm{Y})\|_{C^{\gamma}(\mathbb{S}^2)}\\
 &= \|I_n^{1,1}\|_{C^{\gamma}(\mathbb{S}^2)}-\|I_n^{1,2}\|_{C^{\gamma}(\mathbb{S}^2)}.
\end{aligned}
\end{equation}
The last term will be smoother because the integral is not singular. 
In fact, recalling again the expression of $\mc{S}^1(\bm{X})$ in terms of $\mc{N}(\bm{X})$ \eqref{S1N1}, we use Proposition \ref{Nout_bound} to obtain 
\begin{equation*}
    \begin{aligned}
    \|I_n^{1,2}\|_{C^\gamma(\mathbb{S}^2)}&\leq C(R,|\bm{X}|_*,\|\nabla_{\mathbb{S}^2}\bm{X}\|_{C^0(\mathbb{S}^2)})\|T_S(\nabla_{\mathbb{S}^2}\bm{X})\nabla_{\mathbb{S}^2}(\rho_n\bm{Y})\|_{C^0(\mathbb{S}^2)}\\
    &\leq C(R,|\bm{X}|_*,\|\nabla_{\mathbb{S}^2}\bm{X}\|_{C^0(\mathbb{S}^2)},\|\mc{T}\|_{C^1})\|\nabla_{\mathbb{S}^2}(\rho_n\bm{Y})\|_{C^0(\mathbb{S}^2)}.
    \end{aligned}
\end{equation*}
Although the constant in the bound above \eqref{In21bound} becomes large for $R$ small, it will suffice since it is lower order in terms of regularity for $\bm{Y}$.

Next, we proceed to estimate $I_n^{1,1}$ \eqref{In1split}.
 We can decompose further by introducing the frozen-coefficient linear operator.  We denote by $\hX_n$ the stereographic projection centered at $\hx_n$, i.e. $\hx_n=\hX_n(\bm{0})$, and $\bm{X}_n(\thetab)=\bm{X}(\hX_n(\thetab))$ (see Section \ref{covering_ste}).
 Recalling \eqref{S1N1} and \eqref{Ntheta}, \eqref{nonlinear_split}, we have
\begin{equation}\label{In11bound_split}
    \begin{aligned}
\|I_n^{1,1}\|_{C^{\gamma}(\mathbb{S}^2)}\geq C\|z\rho_n\bm{Y}_n-\wh{\rho}_n\mc{N}(\bm{X})(T_S(\nabla_{\mathbb{S}^2}\bm{X})\nabla_{\mathbb{S}^2}(\rho_n\bm{Y}))_n\|_{C^{\gamma}(\mathbb{R}^2)},
    \end{aligned}
\end{equation}
and
\begin{equation*}
    \begin{aligned}
    I_n^{1,1}(\thetab)&=z\rho_n(\thetab)\bm{Y}_n(\thetab)-\wh{\rho}_n(\thetab)\mc{N}(\bm{X})(T_S(\nabla_{\mathbb{S}^2}\bm{X})\nabla_{\mathbb{S}^2}(\rho_n\bm{Y}))_n(\thetab)\\
    &=J^3+J^4+J^5+J^6,
    \end{aligned}
\end{equation*}
with
\begin{equation*}
    \begin{aligned}
    J^3=z\rho_n(\thetab)\bm{Y}_n(\thetab)-\mc{L}_A(\rho_n\bm{Y}_n)(\thetab),
    \end{aligned}
\end{equation*}   
\begin{equation*}
    \begin{aligned}
    J^4& =\wh{\rho}_n(\thetab)[\mc{M}(A)-\mc{N}(\bm{X})](T_S(\nabla_{\mathbb{S}^2}\bm{X})\nabla_{\mathbb{S}^2}(\rho_n\bm{Y}))_n(\thetab)\\
    &=   \wh{\rho}_n(\thetab)[\mc{M}(A)-\mc{M}(\nabla\bm{X}_n)-\mc{R}_n(\bm{X}_n)](T_S(\nabla_{\mathbb{S}^2}\bm{X})\nabla_{\mathbb{S}^2}(\rho_n\bm{Y}))_n(\thetab),
    \end{aligned}
\end{equation*}    
\begin{equation*}
    \begin{aligned}
    J^5=\wh{\rho}_n(\thetab)\Big(\mc{L}_A(\rho_n\bm{Y}_n)(\thetab)-\mc{M}(A)(T_S(\nabla_{\mathbb{S}^2}\bm{X}))\nabla_{\mathbb{S}^2}(\rho_n\bm{Y})\big)_n(\thetab)\Big),
    \end{aligned}
\end{equation*}    
\begin{equation*}
    \begin{aligned}
    J^6=(1-\wh{\rho}_n(\thetab))\mc{L}_A(\rho_n\bm{Y}_n)(\thetab),
    \end{aligned}
\end{equation*}   
where we denote $A$ the constant matrix $A=\nabla \bm{X}_n(\bm{0})$, $\mc{M}(\nabla\bm{X}_n)$ is defined in \eqref{defM},  $\mc{R}(\bm{X}_n)$ in \eqref{defR}, $\mc{L}_A$ in \eqref{defnL2}, $\hx_n=\hX_n(\bm{0})$. 
The bound for $J^6$ follows from Lemma \ref{MAout_bound} together with Remark \ref{arc_chord},
\begin{align*}
    \|J^6\|_{C^1(\mathbb{R}^2)}&\leq C(R,|\bm{X}|_*,\|\nabla_{\mathbb{S}^2}\bm{X}\|_{C^0(\mathbb{S}^2)})\|T_F(A)\nabla(\rho_n\bm{Y}_n)\|_{C^0(\mathbb{R}^2)},
\end{align*}
thus,
\begin{equation}\label{Jn10bound}
    \begin{aligned}
        \|J^{6}\|_{C^1(\mathbb{R}^2)}
    \leq C(R,|\bm{X}|_*,\|\nabla_{\mathbb{S}^2}\bm{X}\|_{C^0(\mathbb{S}^2)},\|\mc{T}\|_{C^1})\|\nabla_{\mathbb{S}^2}(\rho_n\bm{Y})\|_{C^0(\mathbb{S}^2)}.
    \end{aligned}
\end{equation}
Next, Lemma \ref{lem_dif} with $Z=T_S(\nabla_{\mathbb{S}^2}\bm{X})\nabla_{\mathbb{S}^2}(\rho_n\bm{Y})$ provides the following bound for $J^4$:
\begin{equation*}
    \begin{aligned}
    \|J^4&\|_{C^\gamma(\mathbb{R}^2)}\\
    &\leq C(|\bm{X}|_*,\|\nabla_{\mathbb{S}^2}\bm{X}\|_{C^0(\mathbb{S}^2)})\big((1+\|\nabla_{\mathbb{S}^2}\bm{X}\|_{C^\gamma(\mathbb{S}^2)})\|T_S(\nabla_{\mathbb{S}^2}\bm{X})\nabla_{\mathbb{S}^2}(\rho_n\bm{Y})\|_{C^0(\mathbb{S}^2)}\\
    &\quad+\varepsilon(R)\|T_S(\nabla_{\mathbb{S}^2}\bm{X})\nabla_{\mathbb{S}^2}(\rho_n\bm{Y})\|_{C^\gamma(\mathbb{S}^2)}\big),
    \end{aligned}
\end{equation*}
where $\varepsilon(R)\to 0$ as $R\to 0$.
Thus,
\begin{equation*}
    \begin{aligned}
    \|J^4\|_{C^\gamma(\mathbb{R}^2)}&\leq C(|\bm{X}|_*,\|\nabla_{\mathbb{S}^2}\bm{X}\|_{C^0(\mathbb{S}^2)},\|\mc{T}\|_{C^2})\Big(\varepsilon(R)\|\nabla_{\mathbb{S}^2}(\rho_n\bm{Y})\|_{C^\gamma(\mathbb{S}^2)}\\
    &\quad+(1+\|\nabla_{\mathbb{S}^2}\bm{X}\|_{C^\gamma(\mathbb{S}^2)})\|\nabla_{\mathbb{S}^2}(\rho_n\bm{Y})\|_{C^0(\mathbb{S}^2)}\Big).
    \end{aligned}
\end{equation*}
We proceed with $J^5$. Recalling the expression for $T_S$ \eqref{Ttilde_law}, we have
\begin{equation*}
    \begin{aligned}
    (T_S(&\nabla_{\mathbb{S}^2}\bm{X})\nabla_{\mathbb{S}^2}(\rho_n\bm{Y}))_{n,li}(\etab)=(T_S(\nabla_{\mathbb{S}^2}\bm{X})\nabla_{\mathbb{S}^2}(\rho_n\bm{Y})\circ\hX_n)_{l,i}(\etab)\\
    &=(\text{det}(\wh{g}(\etab)))^{-\frac12}\PD{}{\eta_r}(\rho_n Y_{n,q})(\etab)\PD{\wh{X}_m}{\eta_r}(\etab)\Big(\frac{\mc{T}(\lambda_n(\etab))}{\lambda_n(\etab)}\delta_{lq}\delta_{im}\\
    &\quad+\big(\mc{T}'(\lambda_n(\etab))-\frac{\mc{T}(\lambda_n(\etab)}{\lambda_n(\etab)}\big)\frac{\PD{X_{n,l}}{\eta_j}(\etab)\PD{\wh{X}_i}{\eta_j}(\etab)\PD{X_{n,q}}{\eta_p}(\etab)\PD{\wh{X}_m}{\eta_p}(\etab)}{(\lambda_n(\etab))^2\text{det}(\wh{g}(\etab))}\Big),
    \end{aligned}
\end{equation*}
with $\lambda_n(\etab)$ given in \eqref{lambda_n}. Substituting into \eqref{defM},
\begin{equation}\label{Ntheta_aux}
\begin{aligned}
    (\mc{M}&(A)(T_S(\nabla_{\mathbb{S}^2}\bm{X})\nabla_{\mathbb{S}^2}(\rho_n\bm{Y}))_n)_k(\thetab)\\
    &=-\int_{\mathbb{R}^2}m_{m,k,l}(\thetab,\etab)\PD{\wh{X}_i}{\eta_m}(\etab)(T_S(\nabla_{\mathbb{S}^2}\bm{X}(\etab))\nabla_{\mathbb{S}^2}(\rho_n\bm{Y}_n)(\etab))_{l,i}d\eta_1d\eta_2\\
    &=-\int_{\mathbb{R}^2}m_{i,k,l}(\thetab,\etab)(\tilde{T}_S)_{ipql}(\etab)\PD{}{\eta_p}(\rho_n Y_{n,q})(\etab)d\eta_1d\eta_2\\
    &=\tilde{\mc{M}}(A)(\tilde{T}_S\nabla(\rho_n\bm{Y}_n))(\thetab),
\end{aligned}
\end{equation}
where we denote
\begin{equation}\label{Ttildelaw2}
    \begin{aligned}
    (\tilde{T}_S(\nabla\bm{X}))_{ipql}(\etab)\!=\!\frac{\mc{T}(\lambda_n(\etab))}{\lambda_n(\etab)}\delta_{pi}\delta_{ql}\!+\!\big(\mc{T}'(\lambda_n(\etab))\!-\!\frac{\mc{T}(\lambda_n(\etab)}{\lambda_n(\etab))}\big)\frac{\PD{X_{n,l}}{\eta_i}(\etab)\PD{X_{l,q}}{\eta_p}(\etab)}{(\lambda_n(\etab))^2\sqrt{\text{det}(\wh{g}(\etab))}}.
    \end{aligned}
\end{equation}
Thus we can write
\begin{equation*}
    \begin{aligned}
    J^5=\wh{\rho}_n(\thetab)\tilde{\mc{M}}(A)((T_F(A)-\tilde{T}_S(\nabla\bm{X}))\nabla(\rho_n\bm{Y}_n))(\thetab).
    \end{aligned}
\end{equation*}   
Proposition \ref{Mtilde_bound} with Lemma \ref{arc_chord} gives that
\begin{equation*}
    \begin{aligned}
    \|J^5\|_{C^\gamma(\mathbb{R}^2)}&\leq\! C(|\bm{X}_*,\|\nabla_{\mathbb{S}^2}\bm{X}\|_{C^0(\mathbb{S}^2)})\|(T_F(A)-\tilde{T}_S(\nabla\bm{X}))\nabla(\rho_n\bm{Y}_n)\|_{C^\gamma(\mathbb{R}^2)},
    \end{aligned}
\end{equation*}
and since $T_F(A)=\tilde{T}_S(\nabla\bm{X}(\bm{0}))(\bm{0})$, we obtain
\begin{equation*}
    \begin{aligned}
    \|J_5\|_{C^\gamma(\mathbb{R}^2)}&\leq C(|\bm{X}_*,\|\nabla_{\mathbb{S}^2}\bm{X}\|_{C^0(\mathbb{S}^2)},\|\mc{T}\|_{C^2})\Big(\varepsilon(R)\|\nabla_{\mathbb{S}^2}(\rho_n\bm{Y})\|_{C^\gamma(\mathbb{S}^2)}\\
    &\quad+\|\nabla_{\mathbb{S}^2}\bm{X}\|_{C^\gamma(\mathbb{S}^2)}\|\nabla_{\mathbb{S}^2}(\rho_n\bm{Y})\|_{C^0(\mathbb{S}^2)}\Big).
    \end{aligned}
\end{equation*}
Then, we continue from \eqref{In11bound_split},
\begin{equation*}
    \begin{aligned}
    \|I_n^{1,1}\|_{C^\gamma(\mathbb{S}^2)}&\geq \|J_n^3\|_{C^\gamma(\mathbb{R}^2)}-\|J_n^4\|_{C^\gamma(\mathbb{R}^2)}-\|J_n^5\|_{C^\gamma(\mathbb{R}^2)}-\|J_n^6\|_{C^\gamma(\mathbb{R}^2)},
    \end{aligned}
\end{equation*}
so inserting back the bounds for $J_n^4$, $J_n^5$, and $J_n^6$, we have that
\begin{equation*}
    \begin{aligned}
    \|I_n^{1,1}\|_{C^\gamma(\mathbb{S}^2)}&\geq
    \|J_n^3\|_{C^\gamma(\mathbb{R}^2)}\\
    &\quad-C(|\bm{X}|_*,\|\nabla_{\mathbb{S}^2}\bm{X}\|_{C^0(\mathbb{S}^2)})\varepsilon(R)\|\nabla_{\mathbb{S}^2}(\rho_n\bm{Y})\|_{C^\gamma(\mathbb{S}^2)}\\
    &\quad-C(|\bm{X}|_*,\|\nabla_{\mathbb{S}^2}\bm{X}\|_{C^0({\mathbb{S}^2})})\|\nabla_{\mathbb{S}^2}\bm{X}\|_{C^\gamma({\mathbb{S}^2})}\|\nabla_{\mathbb{S}^2}(\rho_n\bm{Y})\|_{C^0(\mathbb{S}^2)}\\
    &\quad-C(R,|\bm{X}|_*,\|\nabla_{\mathbb{S}^2}\bm{X}\|_{C^0({\mathbb{S}^2})})\|\nabla_{\mathbb{S}^2}(\rho_n\bm{Y})\|_{C^0(\mathbb{S}^2)}.
    \end{aligned}
\end{equation*}
Then, we use the frozen-coefficient estimate in Proposition \ref{frozen_coef_prop} for $J_n^3$ \eqref{In11bound_split}. We first interpolate the inequalities in Theorem \ref{t:sectorial_est} to
control the lower-order terms,
\begin{equation*}
    \begin{aligned}
    J_n^3&=(z-\mc{L}_{A})(\rho_n\bm{Y}_n)(\thetab),
    \end{aligned}
\end{equation*}    
\begin{equation}\label{Jn7bound}
    \begin{aligned}
    \|J_n^3\|_{C^\gamma(\mathbb{R}^2)}\geq C|z|\|\rho_n\bm{Y}_n\|_{C^\gamma(\mathbb{R}^2)}+C\|\rho_n\bm{Y}_n\|_{C^{1,\gamma}(\mathbb{R}^2)}+ C|z|^{1-\sigma}\|\rho_n\bm{Y}_n\|_{C^{\gamma+\sigma}(\mathbb{R}^2)},
    \end{aligned}
\end{equation}
where $\sigma\in[0,1]$ is chosen so that $1<\gamma+\sigma<1+\gamma$.
Therefore, we have
\begin{equation*}
    \begin{aligned}
    \|I_n^{1,1}\|_{C^\gamma(\mathbb{R}^2)}&\geq
    C|z|\|\rho_n\bm{Y}\|_{C^\gamma(\mathbb{S}^2)}+C\|\rho_n\bm{Y}\|_{C^{1,\gamma}(\mathbb{S}^2)}+ C|z|^{1-\sigma}\|\rho_n\bm{Y}\|_{C^{\gamma+\sigma}(\mathbb{S}^2)}\\
     &\quad-C(|\bm{X}|_*,\|\nabla_{\mathbb{S}^2}\bm{X}\|_{C^0(\mathbb{S}^2)})\varepsilon(R)\|\nabla_{\mathbb{S}^2}(\rho_n\bm{Y})\|_{C^\gamma(\mathbb{S}^2)}\\
    &\quad-C(|\bm{X}|_*,\|\nabla_{\mathbb{S}^2}\bm{X}\|_{C^0({\mathbb{S}^2})})\|\nabla_{\mathbb{S}^2}\bm{X}\|_{C^\gamma({\mathbb{S}^2})}\|\nabla_{\mathbb{S}^2}(\rho_n\bm{Y})\|_{C^0(\mathbb{S}^2)}\\
    &\quad-C(R,|\bm{X}|_*,\|\nabla_{\mathbb{S}^2}\bm{X}\|_{C^0({\mathbb{S}^2})})\|\nabla_{\mathbb{S}^2}(\rho_n\bm{Y})\|_{C^0(\mathbb{S}^2)}.
    \end{aligned}
\end{equation*}
and taking $R$ small enough, 
\begin{equation}\label{In1bound}
    \begin{aligned}
    \|I_n^{1,1}\|_{C^\gamma(\mathbb{R}^2)}&\geq
    C|z|\|\rho_n\bm{Y}\|_{C^\gamma(\mathbb{S}^2)}+C\|\rho_n\bm{Y}\|_{C^{1,\gamma}(\mathbb{S}^2)}+ C|z|^{1-\sigma}\|\rho_n\bm{Y}\|_{C^{\gamma+\sigma}(\mathbb{S}^2)}\\
    &\quad-C(R,|\bm{X}|_*,\|\nabla_{\mathbb{S}^2}\bm{X}\|_{C^\gamma({\mathbb{S}^2})})\|\nabla_{\mathbb{S}^2}(\rho_n\bm{Y})\|_{C^0(\mathbb{S}^2)}.
    \end{aligned}
\end{equation}
Next, we go back to \eqref{I1I2split} and substitute the above bound together with \eqref{In2bound} and \eqref{In1bound},
\begin{equation*}
    \begin{aligned}
     \|(z&-\mc{S}^1(\bm{X}))\bm{Y}\|_{C^{\gamma}(\mathbb{S}^2)} \\
     &\geq \sum_n \Big(  C|z|\|\rho_n\bm{Y}\|_{C^\gamma(\mathbb{S}^2)}+C\|\rho_n\bm{Y}\|_{C^{1,\gamma}(\mathbb{S}^2)}+ C|z|^{1-\sigma}\|\rho_n\bm{Y}\|_{C^{\gamma+\sigma}(\mathbb{S}^2)}\Big)\\    &\quad-C(R,|\bm{X}|_*,\|\nabla_{\mathbb{S}^2}\bm{X}\|_{C^\gamma({\mathbb{S}^2})})\|\nabla_{\mathbb{S}^2}\bm{Y}\|_{C^0(\mathbb{S}^2)}\\
    &\quad- C(|\bm{X}|_*,\|\nabla_{\mathbb{S}^2}\bm{X}\|_{C^\gamma(\mathbb{S}^2)})\|\nabla_{\mathbb{S}^2}\rho_n\|_{C^{\gamma}(\mathbb{S}^2)}(\|\bm{Y}\|_{C^\gamma(\mathbb{S}^2)}+\|\nabla_{\mathbb{S}^2}\bm{Y}\|_{C^0(\mathbb{S}^2)}).
    \end{aligned}
\end{equation*}
Plugging this inequality in \eqref{semi_bound01},
and then using the triangle inequality and the fact that $C^\alpha\hookrightarrow C^\beta$ for $\alpha\geq\beta$, we obtain
\begin{equation*}
    \begin{aligned}
     \|(z-\mc{S}(\bm{X}))\bm{Y}\|_{C^{\gamma}(\mathbb{S}^2)}
     &\geq   C|z|\|\bm{Y}\|_{C^\gamma(\mathbb{S}^2)}+C\|\bm{Y}\|_{C^{1,\gamma}(\mathbb{S}^2)}+ C|z|^{1-\sigma}\|\bm{Y}\|_{C^{\gamma+\sigma}(\mathbb{S}^2)}\\
     &\quad-C(R,|\bm{X}|_*,\|\nabla_{\mathbb{S}^2}\bm{X}\|_{C^\gamma({\mathbb{S}^2})})\|\bm{Y}\|_{C^{\gamma+\sigma}(\mathbb{S}^2)}.
    \end{aligned}
\end{equation*}
Finally, by moving the sector if necessary to make $|z|$ big, we conclude the result
\begin{equation*}
    \begin{aligned}
     \|(z-\mc{S}(\bm{X}))&\bm{Y}\|_{C^{\gamma}(\mathbb{S}^2)} \geq   C|z|\|\bm{Y}\|_{C^\gamma(\mathbb{S}^2)}+C\|\bm{Y}\|_{C^{1,\gamma}(\mathbb{S}^2)}.
    \end{aligned}
\end{equation*}

\end{proof}

\begin{prop}\label{Fsemigroup}
The G\^ateaux derivative of $F$ at any $\bm{X}\in\mathcal{O}_m$, $\mc{S}(\bm{X})$ \eqref{Soperator}, generates an analytic semigroup on the space $h^{0,\gamma}(\mathbb{S}^2)$.
\end{prop}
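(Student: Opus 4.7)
The plan is to verify that $\mc{S}(\bm{X})$, regarded as an unbounded operator on $h^\gamma(\mathbb{S}^2)$ with dense domain $h^{1,\gamma}(\mathbb{S}^2)$, is sectorial (density is immediate from the definition of little H\"older spaces as completions of smooth functions). Proposition \ref{FGateauxEstimate} already furnishes the boundedness $\mc{S}(\bm{X}):h^{1,\gamma}\to h^\gamma$, while Proposition \ref{semigroup_est} supplies the coercive estimate
\[
\|(z-\mc{S}(\bm{X}))\bm{Y}\|_{C^\gamma(\mathbb{S}^2)} \geq C\bigl(\|\bm{Y}\|_{C^{1,\gamma}(\mathbb{S}^2)} + |z|\|\bm{Y}\|_{C^\gamma(\mathbb{S}^2)}\bigr)
\]
for $z$ in a sector $\mc{S}_{\omega,\delta}$. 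Together, these two propositions yield closedness of $\mc{S}(\bm{X})$, injectivity of $z-\mc{S}(\bm{X})$, and the resolvent bound $\|(z-\mc{S}(\bm{X}))^{-1}\|_{C^\gamma\to C^\gamma}\leq C/|z|$ on the range.

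The key remaining task is surjectivity of $z-\mc{S}(\bm{X}):h^{1,\gamma}\to h^\gamma$. I would use the method of continuity, connecting $\mc{S}(\bm{X})$ via a one-parameter family $\{\mc{S}_\sigma\}_{\sigma\in[0,1]}$ to an operator $\mc{S}_0$ whose invertibility is transparent, for example a global operator obtained by patching the explicit frozen-coefficient operator $\mc{L}_{0,A_0}$ from Section \ref{sec:frozen} through the partition of unity $\{\rho_n\}$; invertibility of $z-\mc{S}_0$ on $h^\gamma$ then follows from the Fourier multiplier analysis of Theorem \ref{inverse_bounds} together with the commutator estimates of Lemma \ref{lem_commutator}. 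Defining $\Sigma:=\{\sigma\in[0,1]: z-\mc{S}_\sigma \text{ is surjective onto } h^\gamma\}$, openness of $\Sigma$ follows from a Neumann perturbation argument and closedness from the uniform coercive estimate, so that $\Sigma=[0,1]$ and surjectivity of $z-\mc{S}(\bm{X})$ follows.

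To ensure the resolvent actually lands in the \emph{little} space $h^{1,\gamma}$ rather than merely in $C^{1,\gamma}$, I would approximate $\bm{f}\in h^{0,\gamma}$ by $\bm{f}_k\in C^{\gamma'}(\mathbb{S}^2)$ with $\gamma<\gamma'<1$; the corresponding solutions $\bm{Y}_k=(z-\mc{S}(\bm{X}))^{-1}\bm{f}_k$ then lie in $C^{1,\gamma'}\subset h^{1,\gamma}$ by the $\gamma'$-analogue of Proposition \ref{semigroup_est}, and they converge in $C^{1,\gamma}$ to the desired $\bm{Y}$, which remains in $h^{1,\gamma}$ since the little space is closed in $C^{1,\gamma}$. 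The resulting sectorial operator then generates an analytic semigroup on $h^{0,\gamma}$ by standard semigroup theory.

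The main obstacle will be the design of the homotopy $\{\mc{S}_\sigma\}$ and the verification that every ingredient in the proof of Proposition \ref{semigroup_est}---namely the splitting $\mc{S}^1+\mc{S}^2$, the localization and frozen-coefficient reduction, the kernel-smallness of Lemma \ref{lem_dif} on small patches, and the commutator bounds of Lemma \ref{lem_commutator}---carries through with constants \emph{uniform in} $\sigma$. The fact that Proposition \ref{frozen_coef_prop} is already stated for convex combinations $\mc{L}_A^{(\sigma)}$ is precisely what makes this uniform control attainable, and it is the structural reason that this particular homotopy is the natural choice.
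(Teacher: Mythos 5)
Your skeleton---density of the domain, resolvent estimate from Proposition \ref{semigroup_est}, and surjectivity via the method of continuity---matches the paper's. Where the proposal goes wrong is the choice of endpoint and the claim that its invertibility is transparent.

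You propose terminating the homotopy at a ``global operator obtained by patching the explicit frozen-coefficient operator $\mc{L}_{0,A_0}$ \ldots through the partition of unity $\{\rho_n\}$'' and assert that invertibility of $z-\mc{S}_0$ ``follows from the Fourier multiplier analysis of Theorem \ref{inverse_bounds} together with the commutator estimates of Lemma \ref{lem_commutator}.'' That is a gap. The patched operator $\sum_n\rho_n\mc{L}_{0,A_0}(\rho_n\,\cdot\,)$ is not a Fourier multiplier in any global chart, so Theorem \ref{inverse_bounds} does not invert it; and Lemma \ref{lem_commutator} controls the localization errors only to the extent needed for the coercive a priori estimate, which is exactly what Proposition \ref{semigroup_est} already gives you (injectivity and closed range), not surjectivity. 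To make your endpoint work you would have to build a parametrix from the local multiplier inverses and run a Fredholm argument to show the error is compact---a nontrivial additional step that the proposal treats as free.

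The paper instead runs a two-stage continuity and lands on an endpoint that is genuinely explicit. Stage one, $\mc{S}_\alpha(\bm{X})$ with $\alpha\in[0,1]$, turns off the $G^2$ part of the Stokeslet and the $\mc{S}^2$ (kernel-perturbation) term, and simultaneously interpolates the tension $T_S$ to the identity; at $\alpha=0$ one has $\mc{S}_0(\bm{X})\bm{Y}=-\int_{\mathbb{S}^2}\nabla_{\mathbb{S}^2}G^1(\bm{X}(\hx)-\bm{X}(\hy))\cdot\nabla_{\mathbb{S}^2}\bm{Y}\,d\hy$. Stage two, $\mc{S}_{0,\sigma}(\bm{X})$, flattens the kernel from $G^1(\bm{X}(\hx)-\bm{X}(\hy))$ to $G^1(\hx-\hy)$. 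The resulting $\mc{S}_{0,0}$ is the single-layer potential on $\mathbb{S}^2$ applied to $\Delta_{\mathbb{S}^2}$, a rotation-invariant operator whose eigenfunctions are the spherical harmonics $Y_{\ell m}$ with eigenvalues $-\ell(\ell+1)/(2(2\ell+1))$; density of finite linear combinations of $Y_{\ell m}$ in $C^\infty(\mathbb{S}^2)$ plus closed range gives surjectivity directly, with no approximation step needed since the eigenfunctions are already smooth. The parameters $\alpha$ and $\sigma$ built into Proposition \ref{frozen_coef_prop}, which you noticed, are there to furnish the uniform coercive estimate along both of these homotopies, not to render a chart-patched local model invertible.
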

\begin{proof}
We need to prove that the operator $\mc{S}(\bm{X})$ is sectorial, i.e., that there exists a sector such that for any $z$ in the sector
\begin{equation*}
    \begin{aligned}
    \|(z-\mc{S}(\bm{X}))^{-1}\bm{Y}\|_{h^\gamma(\mathbb{S}^2)}\leq \frac{C}{|z|}\|\bm{Y}\|_{h^\gamma(\mathbb{S}^2)}.
    \end{aligned}
\end{equation*}
Since the norm on little H\"older spaces $h^{\gamma}(\mathbb{S}^2)$ is the same as in the usual H\"older spaces $C^{\gamma}(\mathbb{S}^2)$, from the previous Proposition \ref{semigroup_est} we are left to prove that the operator $(z-\mc{S}(\bm{X}))$ is invertible from $h^\gamma(\mathbb{S}^2)$ to $h^{1,\gamma}(\mathbb{S}^2)$ for any $z$ in the sector. Similarly as we did in Section \ref{sec:frozen}, define the following family of operators $\mc{S}_\alpha(\bm{X})$, $\alpha\in[0,1]$,
\begin{equation*}
    \begin{aligned}
\mc{S}_\alpha&(\bm{X})\bm{Y}(\hx)\\
&=-\alpha\!\int_{\mathbb{S}^2}\!\!\! \nabla_{\mathbb{S}^2}\Big(G_1(\bm{X}(\hx)\!-\!\bm{X}(\hy))\!+\!\alpha G_2(\bm{X}(\hx)\!-\!\bm{X}(\hy))\Big)\!\cdot\! (T_S(\nabla_{\mathbb{S}^2}\bm{X})\nabla_{\mathbb{S}^2}\bm{Y}(\bm{\hy}))d\hy\\
&\quad-\!(1-\alpha)\int_{\mathbb{S}^2}\!\!\! \nabla_{\mathbb{S}^2}\Big(G_1(\bm{X}(\hx)\!-\!\bm{X}(\hy))\!+\!\alpha G_2(\bm{X}(\hx)\!-\!\bm{X}(\hy))\Big)\cdot\nabla_{\mathbb{S}^2}\bm{Y}(\bm{\hy})d\hy\\
&\quad+\alpha\minspace\mc{S}^2(\bm{X})\bm{Y}(\hx),
    \end{aligned}
\end{equation*}
with
\begin{equation*}
\begin{aligned}
G_\alpha(\bm{x})&=\frac{1}{8\pi}\paren{ G_1(\bm{x})+\alpha G_2(\bm{x})}, \; \bm{x}=(x_1,x_2,x_3),\\
(G_1)_{i,j}(\bm{x})&=\frac{\delta_{ij}}{\abs{\bm{x}}},\qquad (G_2)_{i,j}(\bm{x})=\frac{x_ix_j}{\abs{\bm{x}}^3},
\end{aligned}
\end{equation*}
and $T_S$ given in \eqref{Ttilde_law}.
In particular, $\mc{S}(\bm{X})=\mc{S}_1(\bm{X})$. Propositions \ref{FGateauxEstimate} and \ref{semigroup_est} hold analogously for $\mc{S}_\alpha(\bm{X})$, as all the remainder estimates were always done independently for each part of the kernel $G$ and Proposition \ref{frozen_coef_prop} already included the parameter $\alpha$. In particular, for all $\alpha\in[0,1]$, it holds that
\begin{equation*}
   \frac{1}{C}\|\bm{Y}\|_{C^{1,\gamma}(\mathbb{S}^2)}\geq \|(z-\mc{S}_\alpha(\bm{X}))\bm{Y}\|_{C^{\gamma}(\mathbb{S}^2)}\geq C\|\bm{Y}\|_{C^{1,\gamma}(\mathbb{S}^2)}.
\end{equation*}
Then, by the method of continuity, it suffices to show that the inverse of $(z-\mc{S}_0(\bm{X}))$ exists. Additionally, define a new family of operators $\mc{S}_{0,\sigma}(\bm{X})$, $\sigma\in[0,1]$, as follows
\begin{equation*}
    \begin{aligned}
&\mc{S}_{0,\sigma}(\bm{X})\bm{Y}(\hx)\\
&\quad=-\!\int_{\mathbb{S}^2}\!\!\! \nabla_{\mathbb{S}^2}\Big((1\!-\!\sigma) G_1(\hx\!-\!\hy)\!+\!\sigma G_1(\bm{X}(\hx)\!-\!\bm{X}(\hy))\Big)\cdot\nabla_{\mathbb{S}^2}\bm{Y}(\bm{\hy})d\hy,
    \end{aligned}
\end{equation*}
so that $\mc{S}_{0,1}(\bm{X})=\mc{S}_0(\bm{X})$. 
Then, taking into account \eqref{continuity_est}, it is clear that the following bound holds for all $\sigma\in[0,1]$,
\begin{equation*}
   \frac{1}{C}\|\bm{Y}\|_{C^{1,\gamma}(\mathbb{S}^2)}\geq \|(z-\mc{S}_{0,\sigma}(\bm{X}))\bm{Y}\|_{C^{\gamma}(\mathbb{S}^2)}\geq C\|\bm{Y}\|_{C^{1,\gamma}(\mathbb{S}^2)}.
\end{equation*}
Hence, by the method of continuity again we just need to show that $(z-\mc{S}_{0,0}(\bm{X}))$ is invertible. Since the range is closed, it suffices to show that it is also dense. The operator $\mc{S}_{0,0}(\bm{X})$ is linear and explicit, so we can compute its eigenspace.
Since
\begin{align*}
    \mc{S}_{0,0}(\bm{X})\bm{Y}=\frac{1}{8\pi}\int_{\mathbb{S}^2} \frac{1}{\abs{\hx-\hy}}\Delta_{\mathbb{S}^2}\bm{Y}(\bm{\hy})d\hy,
\end{align*}
we only have to check a component.
From \cite{Folland:introduction-pdes}, a single layer potential $u\paren{\bm{x}}$ of $g\paren{\hy}$ with
\begin{align*}
    u\paren{\bm{x}}=\frac{1}{4\pi}\int_{\mathbb{S}^2} \frac{1}{\abs{\xb-\hy}}g\paren{\hy}d\hy
\end{align*}
can be transformed into a harmonic problem with
\begin{align}
\begin{array}{ll}
    \Delta u=0 &\mbox{ in } \mathbb{R}^3\setminus \mbs\\
    \jump{u}=0,~~ \jump{\nabla u \cdot \bm{n}}=g &\mbox{ on }  \mbs
\end{array}\label{laplace_eq}
\end{align}
If we denote the standard spherical coordinate system $(r,\theta,\vph)$, where $r$ is the radial coordinate, $\theta$ is the polar angle, and $\vph$ is the azimuthal angle, then for the harmonic equation on $\mathbb{R}^3\setminus \mbs$, by separation of variables \cite{Courant-Hilbert:mathematical-physics-vol1-book}, we obtain some solutions $u_{\ell m}\paren{r,\theta, \vph}$ with $l\geq0 $ and $ \abs{m}\leq \ell$ :
\begin{align*}
\begin{split}
    u_{\ell m}\paren{r,\theta, \vph}=\left\{
    \begin{array}{rc}
        Ar^\ell Y_{\ell m},    &   \abs{r}<1\\
        Br^{-(\ell+1)}Y_{\ell m},&   \abs{r}>1
    \end{array}\right.
\end{split}
\end{align*}
where $Y_{l,m}\paren{\theta, \vph}$ is the usual spherical harmonic function of degree $l$ and order $m$, which satisfies the following equation:
\begin{equation}\label{Ylmlap}
\Delta_{\mathbb{S}^2}Y_{\ell m}=\frac{1}{\sin\theta}\PD{}{\theta}\paren{\sin\theta\PD{Y_{\ell m}}{\theta}}+\frac{1}{\sin^2\theta}\PDD{2}{Y_{\ell m}}{\vph}=-\ell(\ell+1)Y_{\ell m}.
\end{equation}
By plugging $u_{\ell m}$ into \eqref{laplace_eq}, we obtain
\begin{align*}
    u_{\ell m}=\frac{1}{2\ell+1}Y_{\ell m}.
\end{align*}
Therefore, combining \eqref{Ylmlap},
\begin{equation*}
    \mc{S}_{0,0}(\bm{X})Y_{\ell,m}=-\frac{\ell(\ell+1)}{2(2\ell+1)}Y_{\ell,m},\qquad \ell\geq0,\quad  \abs{m}\leq \ell.
\end{equation*}
Finally, since finite linear combinations of $Y_{l,m}$ are dense in $C^\infty(\mathbb{S}^2)$, we conclude the existence of the inverse $(z-\mc{S}_{0,0}(\bm{X}))^{-1}:h^{\gamma}(\mathbb{S}^2)\rightarrow h^{1,\gamma}(\mathbb{S}^2)$. 

\end{proof}

\section{Higher Regularity}\label{sec:highreg}

Following the notation in Section \ref{sec:leading:N}, we recall that
\begin{equation*}
    \begin{aligned}
    &\PD{\bm{X}}{t}(\hx)=\mc{N}(\bm{X})(\bm{T}(\nabla_{\mathbb{S}^2}\bm{X}))(\hx),
    \end{aligned}
\end{equation*}
where we denote, with $T$ given in \eqref{Taulaw},
\begin{equation*}
    \bm{T}(\nabla_{\mathbb{S}^2}\bm{X})=T(|\nabla_{\mathbb{S}^2}\bm{X}|)\nabla_{\mathbb{S}^2}\bm{X}.
\end{equation*}
We localize using the partition $\{\rho_n\}$ (see Section \ref{covering_ste}), and
linearize $\bm{T}$ at $\bm{X}(\hx_n)$,
\begin{equation*}
    \begin{aligned}
    \PD{}{t}(\rho_n\bm{X})(\hx)&=\rho_n(\hx)\mc{N}(\bm{X})(T_S(\nabla_{\mathbb{S}^2}\bm{X}(\hx_n))\nabla_{\mathbb{S}^2}\bm{X}))(\hx)\\
    &+\rho_n(\hx)\mc{N}(\bm{X})\Big(\bm{T}(\nabla_{\mathbb{S}^2}\bm{X})\!-\!T_S(\nabla_{\mathbb{S}^2}\bm{X}(\hx_n))\nabla_{\mathbb{S}^2}\bm{X}\Big)(\hx),
    \end{aligned}
\end{equation*}
where we recall that $T_S(\nabla_{\mathbb{S}^2}\bm{X})\nabla_{\mathbb{S}^2}\bm{Y}=\frac{d}{ds}\bm{T}(\nabla_{\mathbb{S}^2}(\bm{X}+s\bm{Y}))|_{s=0}$ was given in \eqref{Ttilde_law}.
 Next, we introduce the commutators,
\begin{equation}\label{eqrhoN}
    \begin{aligned}
    \PD{}{t}(\rho_n&\bm{X})(\hx)=\mc{N}(\bm{X})(T_S(\nabla_{\mathbb{S}^2}\bm{X}(\hx_n))\nabla_{\mathbb{S}^2}(\rho_n\bm{X}))(\hx)\\
    &+\mc{N}(\bm{X})\Big(\rho_n\big(\bm{T}(\nabla_{\mathbb{S}^2}\bm{X})-T_S(\nabla_{\mathbb{S}^2}\bm{X}(\hx_n))\nabla_{\mathbb{S}^2}\bm{X}\big)\Big)(\hx)\\
    &+[\rho_n,\mc{N}(\bm{X})](T_S(\nabla_{\mathbb{S}^2}\bm{X}(\hx_n))\nabla_{\mathbb{S}^2}\bm{X})(\hx)\\
    &-\mc{N}(\bm{X})(T_S(\nabla_{\mathbb{S}^2}\bm{X}(\hx_n))\bm{X}\nabla_{\mathbb{S}^2}\rho_n)(\hx)\\
    &+[\rho_n,\mc{N}(\bm{X})]\Big(\bm{T}(\nabla_{\mathbb{S}^2}\bm{X})-T_S(\nabla_{\mathbb{S}^2}\bm{X}(\hx_n))\nabla_{\mathbb{S}^2}\bm{X}\Big)(\hx),
    \end{aligned}
\end{equation}
and we move to stereographic coordinates to introduce the frozen-coefficient (at $t=0, \hx=\hx_n$) operator and the cutoff $\wh{\rho}_n$ \eqref{cutoff},
\begin{equation}\label{reg_splitting}
    \begin{aligned}
    \PD{}{t}(\rho_n\bm{X}_n)(\thetab)&=\mc{L}_{A_0}(\rho_n\bm{X}_n)(\thetab)+\sum_{j=1}^7\bm{f}^j(\bm{X})(\thetab),
    \end{aligned}
\end{equation}
with
\begin{equation*}
    \begin{aligned}
    \bm{f}^1(\bm{X})(\thetab)&=\mc{N}(\bm{X})\big(\rho_n\big(\bm{T}(\nabla_{\mathbb{S}^2}
    \bm{X})-T_S(\nabla_{\mathbb{S}^2}\bm{X}(\hx_n))\nabla_{\mathbb{S}^2}\bm{X}\big)\big)_n(\thetab),
    \end{aligned}
\end{equation*}
\begin{equation*}
    \begin{aligned}
    \bm{f}^2(\bm{X})&(\thetab)=\wh{\rho}_n(\thetab)\big[\mc{N}(\bm{X})-\mc{M}(A)\big](T_S(\nabla_{\mathbb{S}^2}\bm{X}(\hx_n))\nabla_{\mathbb{S}^2}(\rho_n\bm{X}))_n(\thetab),
    \end{aligned}
\end{equation*}
\begin{equation*}
    \begin{aligned}
    \bm{f}^3(\bm{X})&(\thetab)=\wh{\rho}_n(\thetab)\big(\mc{M}(A)(T_S(\nabla_{\mathbb{S}^2}\bm{X}(\hx_n))\nabla_{\mathbb{S}^2}(\rho_n\bm{X}))_n(\thetab)\!-\!\mc{L}_{A}(\rho_n\bm{X}_n)(\thetab)\big),
    \end{aligned}
\end{equation*}
\begin{equation*}
    \begin{aligned}
    \bm{f}^4(\bm{X})&(\thetab)=(1-\wh{\rho}_n(\thetab))\mc{N}(\bm{X})(T_S(\nabla_{\mathbb{S}^2}\bm{X}(\hx_n))\nabla_{\mathbb{S}^2}(\rho_n\bm{X}))_n(\thetab),
    \end{aligned}
\end{equation*}
\begin{equation*}
    \begin{aligned}
    \bm{f}^5(\bm{X})(\thetab)&=-(1-\wh{\rho}_n(\thetab))\mc{L}_{A}(\rho_n\bm{X}_n)(\thetab),
    \end{aligned}
\end{equation*}
\begin{equation*}
    \begin{aligned}
    \bm{f}^6(\bm{X})(\thetab)&=[\mc{L}_{A}-\mc{L}_{A_0}](\rho_n\bm{X}_n)(\thetab),
    \end{aligned}
\end{equation*}
and
\begin{equation*}
    \begin{aligned}
    \bm{f}^7(\bm{X})(\thetab)&=-\mc{N}(\bm{X})(T_S(\nabla_{\mathbb{S}^2}\bm{X}(\hx_n))\bm{X}\nabla_{\mathbb{S}^2}\rho_n)(\hX_n(\thetab))\\
    &\quad+[\rho_n,\mc{N}(\bm{X})]\bm{T}(\nabla_{\mathbb{S}^2}\bm{X})(\hX_n(\thetab)),
    \end{aligned}
\end{equation*}
where $A_0=\nabla\bm{X}_{0,n}(\bm{0})$ and $A=\nabla\bm{X}_n(\bm{0},t)$. 

\begin{prop}
Let $\bm{X}$ be the solution to the Peskin problem with initial data $\bm{X}_0\in h^{1,\gamma}(\mathbb{S}^2)$ constructed in Theorem \ref{MainTh}. Then,  for any $\alpha\in(0,1)$, it holds that $\bm{X}\in C^1((0,T]; C^{3,\alpha}(\mathbb{S}^2))$. Moreover, for any $3\leq n\in\mathbb{N}$ and $\alpha\in(0,1)$, assuming that $\mc{T}\in C^{n,\alpha}$, it holds that $\bm{X}\in C^1((0,T]; C^{n+1,\beta}(\mathbb{S}^2))$, for any $\beta<\alpha$.
\end{prop}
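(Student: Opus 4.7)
The strategy is a bootstrap on the Duhamel representation arising from the localized splitting \eqref{reg_splitting},
\begin{equation*}
(\rho_n\bm{X}_n)(t) = e^{(t-t_0)\mc{L}_{A_0}}(\rho_n\bm{X}_n)(t_0) + \sum_{j=1}^{7}\int_{t_0}^{t} e^{(t-s)\mc{L}_{A_0}}\bm{f}^j(\bm{X}(s))\,ds,
\end{equation*}
combined with a regularization of the initial datum. Since $\bm{X}_0\in h^{1,\gamma}$, we approximate by $\bm{X}_0^{(m)}\in C^\infty(\mbs)$ with $\bm{X}_0^{(m)}\to\bm{X}_0$ in $h^{1,\gamma}$, obtain the corresponding solutions $\bm{X}^{(m)}$ from Theorem \ref{MainTh}, and prove the higher-regularity estimates for $\bm{X}^{(m)}$ with constants depending only on $\|\bm{X}^{(m)}\|_{C([0,T];h^{1,\gamma})}$, $\starnorm{\bm{X}^{(m)}}^{-1}$, a positive time $t_0$, and $\|\mc{T}\|_{C^{n,\alpha}}$; continuous dependence lets us pass to the limit on each $[t_0,T]\subset(0,T]$. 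The use of \emph{little} H\"older spaces is essential here: it guarantees that the modulus of continuity of $\nabla_{\mbs}\bm{X}^{(m)}$ is controlled uniformly in $m$, so that the smallness factor $\varepsilon(R)$ appearing in Lemma \ref{lem_dif} is $m$-independent.

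The first step of the bootstrap estimates the right-hand side of the Duhamel identity in an $L^p_tC^{1,\gamma'}_x$ norm for some $\gamma'\in(\gamma,1)$. The smoothing property of Proposition \ref{lin_heat_semi} produces the gain of regularity from the initial term at positive times, while the maximal-regularity estimate of Proposition \ref{lin_heat_semi2} absorbs the time convolution. The remainders $\bm{f}^4$--$\bm{f}^7$ are genuinely lower order: their kernels become smooth off the diagonal through the cutoff $1-\wh{\rho}_n$, commutators are controlled by Lemma \ref{lem_commutator}, and the coefficient difference $\mc{L}_A-\mc{L}_{A_0}$ is controlled by Theorem \ref{l:regul of LA}. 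The nonlinear term $\bm{f}^1$ is small near $\hx_n$ via the modulus of continuity of $\nabla_{\mbs}\bm{X}$, and the frozen-coefficient errors $\bm{f}^2,\bm{f}^3$ are precisely handled by Lemma \ref{lem_dif} with the small factor $\varepsilon(R)$. Choosing $R$ small, the high-order contributions to the right-hand side form a contraction in the new norm, closing the step from $C^{1,\gamma}$ to $C^{1,\gamma'}$.

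The main obstacle is that even after this splitting, $\bm{f}^1$, $\bm{f}^2$, $\bm{f}^3$ are of the same differential order as $\bm{X}$ itself, so a naive bootstrap on the Duhamel identity cannot gain a full derivative in one step; this is why the Lunardi theory only delivered $C^{1,\gamma}$. To pass to $C^{2,\gamma}$ and beyond we differentiate \eqref{reg_splitting} in $\thetab$ and exploit the identity that transfers a derivative $\partial_{\theta_i}$ acting on a kernel of the form $G(\bm{X}(\thetab)-\bm{X}(\etab))$ to a derivative $\partial_{\eta_i}$ modulo terms of the same order. After integration by parts, the extra derivative lands on a lower-order factor, and Proposition \ref{lin_heat_semi2} then recovers one full spatial derivative while preserving the H\"older exponent. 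Combined with the chain rule, which at each step consumes one derivative of $\mc{T}$, iterating yields $\bm{X}\in L^p(t_0,T;C^{n+1,\beta}(\mbs))$ and hence $\bm{X}\in C((t_0,T];C^{n+1,\beta}(\mbs))$ for any $\beta<\alpha$.

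Time regularity on $(t_0,T]$ follows from the equation $\partial_t\bm{X}=F(\bm{X})$ together with Propositions \ref{Festimate} and \ref{FGateauxEstimate}: since $F$ is continuous and G\^ateaux differentiable between the relevant H\"older spaces, the spatial regularity $\bm{X}(t)\in C^{n+1,\beta}(\mbs)$ transfers to $\partial_t\bm{X}\in C((t_0,T];C^{n,\beta}(\mbs))$; differentiating Duhamel in time and applying Proposition \ref{lin_heat_semi} once more upgrades this to $C^1((t_0,T];C^{n+1,\beta}(\mbs))$. Passing to the limit $m\to\infty$ in the uniform-in-$m$ estimates produces the corresponding regularity for $\bm{X}$ on every $[t_0,T]\subset(0,T]$, which is the claimed statement.
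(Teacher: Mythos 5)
The central gap is in your regularization scheme. You propose to smooth only the initial datum $\bm{X}_0^{(m)}\in C^\infty(\mbs)$ and take the corresponding solutions $\bm{X}^{(m)}$ from Theorem \ref{MainTh}. But Theorem \ref{MainTh} produces solutions in $C([0,T];h^{1,\gamma}(\mbs))\cap C^1([0,T];h^{\gamma}(\mbs))$ and nothing more, no matter how smooth the initial data is; establishing that smooth data propagate higher spatial regularity is precisely the content of this proposition, so you cannot invoke it. Yet the bootstrap you describe requires exactly this a priori information: because $\bm{f}^1,\bm{f}^2,\bm{f}^6$ contain the top-order derivatives of $\bm{X}^{(m)}$ with small coefficients $\varepsilon(R)$, the self-improving estimate is of the schematic form
\begin{equation*}
\|\bm{X}^{(m)}\|_{L^p_tC^{2,\alpha}_x}\leq C+\varepsilon(R)\,\|\bm{X}^{(m)}\|_{L^p_tC^{2,\alpha}_x},
\end{equation*}
and the absorption step on the right is only valid if the left-hand side is already known to be finite. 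As written, you obtain an a priori estimate, not a proof of membership; the argument is circular.

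The paper resolves this by mollifying not only the initial data but the right-hand side of the localized evolution equation itself: in \eqref{reg_splitting_mol} the forcing is $\sum_j\mc{J}_\delta\bm{f}^j(\bm{X}^\delta)$. Because $\mc{J}_\delta$ is a smoothing operator, $\mc{J}_\delta\bm{f}^j(\bm{X}^\delta)$ is $C^\infty$ in space regardless of the regularity of $\bm{X}^\delta$, and the Duhamel representation with the analytic semigroup $e^{t\mc{L}_{A_0}}$ then yields that $\bm{X}^\delta$ is spatially smooth for $t>0$ \emph{by construction}, with no bootstrap required for this qualitative step. The bootstrap is then run only to produce bounds that are \emph{uniform in $\delta$} — a legitimate operation since both sides are a priori finite for each fixed $\delta$. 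A separate contraction argument (the paragraph around \eqref{convergence_split}) shows $\bm{X}^\delta\to\bm{X}$ in $L^\infty(0,T;C^{1,\gamma})$, which transfers the uniform higher-regularity bounds to the limit. Your other ingredients — the small factor $\varepsilon(R)$ from Lemma \ref{lem_dif}, the derivative-transfer identity corresponding to \eqref{move_der}, the interpolation to control lower-order pieces, the transfer of spatial regularity to time regularity via the equation — are all aligned with the paper, but without mollifying the forcing inside the equation the inductive step does not close.
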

\begin{proof}

The main difficulty is to show the smoothing in space. In fact, assume we have the higher regularity information $\bm{X}\in L^\infty(0,T; C^{n+1,\alpha}(\mathbb{S}^2))$. Then, Theorem \ref{MainTh} states that $\partial_t \bm{X}\in C^0([0,T]; C^\gamma(\mathbb{S}^2))$, and using the equation together with $\bm{X}\in L^\infty(0,T; C^{n+1,\alpha}(\mathbb{S}^2))$, it is straightforward to see that $\partial_t\bm{X}\in L^\infty(0,T; C^{n,\alpha}(\mathbb{S}^2))$. Finally, to get the continuity in time for the higher regularity, it suffices to interpolate taking into account the higher regularity bounds and the continuity in the lower norm. 

We proceed to show the smoothing in space.
We will consider the following mollified version of the system \eqref{reg_splitting},
\begin{equation}\label{reg_splitting_mol}
    \begin{aligned}
    \PD{}{t}(\rho_n\bm{X}_n^\delta)(\thetab)&=\mc{L}_{A_0}(\rho_n\bm{X}_n^\delta)(\thetab)+\sum_{j=1}^7\mc{J}_\delta\bm{f}^j(\bm{X}^\delta)(\thetab),
    \end{aligned}
\end{equation}
 with mollified initial data $\bm{X}_{0,n}^\delta(\thetab)=\mc{J}_\delta \bm{X}_{0,n}(\thetab)$, where $\mc{J}_\delta$ is the standard mollifier by convolution with a Gaussian.

Our main goal is to obtain uniform in $\delta$ bounds for $\bm{X}^\delta$ in $L^\infty(0,T;C^{n,\alpha}(\mathbb{S}^2))$. In fact, by construction, $\bm{X}^\delta$ is smooth, and it is not hard to show that the limit of $\{\bm{X}^\delta\}$ in  $L^\infty(0,T; C^{1,\gamma}(\mathbb{S}^2))$ is given by the solution $\bm{X}$ in Theorem \ref{MainTh}. Hence, by interpolation and using the uniform bounds, we would conclude that $\bm{X}\in L^\infty(0,T;C^{n,\beta}(\mathbb{S}^2))$ for any $\beta<\alpha$. We thus proceed to obtain the uniform bounds first, and show the convergence  $\bm{X}^\delta\to\bm{X}$ at the end.

We use the semigroup $e^{t\mc{L}_{A_0}}$ to write $\rho_n\bm{X}_n^\delta(t)$ in Duhamel form:
\begin{equation}\label{duhamel}
    \begin{aligned}
    \rho_n\bm{X}_n^\delta(t)&=e^{(t-t_0) \mc{L}_{A_0}}(\rho_n\bm{X}_n^\delta(t_0))+\sum_{j=1}^7\int_{t_0}^t e^{(t-\tau)\mc{L}_{A_0}}\mc{J}_\delta\bm{f}^j(\bm{X}^\delta)(\tau)d\tau.
    \end{aligned}
\end{equation}
In the following, we will repeatedly use the estimates in Propositions \ref{lin_heat_semi}-\ref{lin_heat_semi2}.
For simplicity of notation, we will drop the index $\delta$ and the mollifier $\mc{J}_\delta$.

\vspace{0.2cm}

\noindent\underline{Improving regularity to $C^{1,\alpha}(\mathbb{S}^2)$:}
We proceed to obtain bounds in $C^\alpha$, $\alpha\in(0,1)$, for the terms $\bm{f}^j$.
We will be denoting  $C=C(|\bm{X}|_*,\|\bm{X}\|_{C^1(\mathbb{S}^2)},\|\mc{T}\|_{C^2})$, $C(R)=C(|\bm{X}|_*,\|\bm{X}\|_{C^1(\mathbb{S}^2)},\|\mc{T}\|_{C^2},R)$ in the bounds that follow. Lemma \ref{Nbound_lem} gives that
\begin{equation*}
    \begin{aligned}
    \|\bm{f}^1\|_{C^\alpha(\mathbb{R}^2)}&\leq\! C\|\rho_n(\bm{T}(\nabla_{\mathbb{S}^2}\bm{X})\!-\!T_S(\nabla_{\mathbb{S}^2}\bm{X}(\hx_n))\nabla_{\mathbb{S}^2}\bm{X})\|_{C^\alpha(\mathbb{S}^2)}.
    \end{aligned}
\end{equation*}
We note that
\begin{equation*}
    \begin{aligned}
    I&:=\bm{T}(\nabla_{\mathbb{S}^2}\bm{X}(\hx_1))\!-\!T_S(\nabla_{\mathbb{S}^2}\bm{X}(\hx_n))\nabla_{\mathbb{S}^2}\bm{X}(\hx_1)\\
    &\hspace{2cm}-\bm{T}(\nabla_{\mathbb{S}^2}\bm{X}(\hx_2))\!+\!T_S(\nabla_{\mathbb{S}^2}\bm{X}(\hx_n))\nabla_{\mathbb{S}^2}\bm{X}(\hx_2)\\
    &=\bm{T}(\nabla_{\mathbb{S}^2}\bm{X}(\hx_1))\!-\!\bm{T}(\nabla_{\mathbb{S}^2}\bm{X}(\hx_2))\!-\!T_S(\nabla_{\mathbb{S}^2}\bm{X}(\hx_n))(\nabla_{\mathbb{S}^2}\bm{X}(\hx_1)\!-\!\nabla_{\mathbb{S}^2}\bm{X}(\hx_2)),
    \end{aligned}
    \end{equation*}
    thus
    \begin{equation*}
        \begin{aligned}
    |I|&=|\int_0^1 \big(T_S(s\nabla_{\mathbb{S}^2}\bm{X}(\hx_1)+(1-s)\nabla_{\mathbb{S}^2}\bm{X}(\hx_2))-T_S(\nabla_{\mathbb{S}^2}\bm{X}(\hx_n))\big)ds\\\
    &\hspace{2cm}\times(\nabla_{\mathbb{S}^2}\bm{X}(\hx_1)\!-\!\nabla_{\mathbb{S}^2}\bm{X}(\hx_2))|\\
    &\leq C\max\{|\nabla_{\mathbb{S}^2}\bm{X}(\hx_1)\!-\!\nabla_{\mathbb{S}^2}\bm{X}(\hx_n)|,|\nabla_{\mathbb{S}^2}\bm{X}(\hx_2)\!-\!\nabla_{\mathbb{S}^2}\bm{X}(\hx_n)\}\\
    &\hspace{1cm}\times|\nabla_{\mathbb{S}^2}\bm{X}(\hx_1)\!-\!\nabla_{\mathbb{S}^2}\bm{X}(\hx_2)|.
    \end{aligned}
\end{equation*}
Hence, thanks to the presence of $\rho_n$, we obtain
\begin{equation}\label{f1bound}
    \begin{aligned}
    \|\bm{f}^1\|_{C^\alpha(\mathbb{R}^2)}&\leq\! C\varepsilon(R)\|\rho_n\|_{C^\alpha(\mathbb{S}^2)}\|\nabla_{\mathbb{S}^2}\bm{X}\|_{C^\alpha(B_{\hx_n,2R}\cap \mathbb{S}^2)}.
    \end{aligned}
\end{equation}
Using Lemma \ref{lem_dif},
\begin{equation}\label{f2_bound}
    \begin{aligned}
    \|\bm{f}^2\|_{C^\alpha(\mathbb{S}^2)}&\leq C\Big(\varepsilon(R)\|\nabla_{\mathbb{S}^2}(\rho_n\bm{X})\|_{C^\alpha(\mathbb{S}^2)}\\
    &\quad+\|\nabla_{\mathbb{S}^2}\bm{X}\|_{C^{\frac{\alpha}{2}}(B_{5R}(\hx_n)\cap\mathbb{S}^2)}\|\nabla_{\mathbb{S}^2}(\rho_n\bm{X})\|_{C^{\frac{\alpha}{2}}(\mathbb{S}^2)}\Big),
    \end{aligned}
\end{equation}
while $\bm{f}^3$ is identically zero (see \eqref{Ntheta_aux} and \eqref{defnL2}). Lemma \ref{Nout_bound} provides the estimate for $\bm{f}^4$,
\begin{equation}\label{f4_bound}
\begin{aligned}
    \|\bm{f}^4\|_{C^\alpha(\mathbb{S}^2)}&\leq C(R)\|\nabla_{\mathbb{S}^2}(\rho_n\bm{X})\|_{C^0(\mathbb{S}^2)}.
    \end{aligned}
\end{equation}
Then, by Lemmas \ref{Nbound_lem} and \ref{lem_commutator},
\begin{equation}\label{f7}
\begin{aligned}
    \|\bm{f}^7\|_{C^\alpha(\mathbb{S}^2)}&\leq C\|\nabla_{\mathbb{S}^2}\rho_n\|_{C^{\alpha}(\mathbb{S}^2)},
    \end{aligned}
\end{equation}
and Lemma \ref{MAout_bound},
\begin{equation}\label{f5}
\begin{aligned}
    \|\bm{f}^5\|_{C^\alpha(\mathbb{S}^2)}&\leq C\|\nabla_{\mathbb{S}^2}(\rho_n\bm{X})\|_{C^0(\mathbb{S}^2)}.
    \end{aligned}
\end{equation}
Finally, by writing
\begin{equation*}
    \begin{aligned}
    [\mc{L}_A-\mc{L}_{A_0}](\rho_n\bm{X}_n)(\thetab)&=[\tilde{M}(A)-\tilde{M}(A_0)](T_F(A)\nabla(\rho_n\bm{X}_n))(\thetab)\\
    &\quad+\tilde{M}(A_0)((T_F(A)-T_F(A_0))\nabla(\rho_n\bm{X}_n))(\thetab),
    \end{aligned}
\end{equation*}
Lemma \ref{MA_bound} yields that
\begin{equation}\label{f6}
\begin{aligned}
    \|\bm{f}^6\|_{C^\alpha(\mathbb{S}^2)}&\leq C\|A-A_0\|\|\nabla_{\mathbb{S}^2}(\rho_n\bm{X})\|_{C^\alpha(\mathbb{S}^2)}.
    \end{aligned}
\end{equation}
We thus see from \eqref{duhamel} and Propositions \ref{lin_heat_semi}-\ref{lin_heat_semi2} that we can bootstrap to get that $\bm{X}\in L^\infty(0,T;C^{1,\alpha}(\mathbb{S}^2))$ for all $\alpha\in(0,1)$. In fact, consider the case $\gamma<\frac12$ and take $\alpha$ such that $\gamma<\alpha+\frac12\leq2\gamma$.
Then, with $0<\epsilon\leq \gamma-\alpha$ arbitrarily small, and substituting the bounds for $\bm{f}^j$, we obtain 
\begin{equation*}
    \begin{aligned}
     \|\rho_n\bm{X}&\|_{L^2(0,T;C^{\frac32+\alpha}(\mathbb{S}^2))}\leq C \|\rho_n(0)\bm{X}_0\|_{C^{1+\alpha+\epsilon}(\mathbb{S}^2))}+C\sum_{j=1}^7\|\bm{f}^j\|_{L^2(0,T;C^{\frac12+\alpha}(\mathbb{R}^2))}\\
     &\leq C \|\rho_n(0)\bm{X}_0\|_{C^{1+\gamma}(\mathbb{S}^2)}+C(R,T)+\|\bm{X}\|_{L^4(0,T;C^{1+\frac14+\frac{\alpha}2}(\mathbb{S}^2))}^2\\
     &\quad+C\varepsilon(R,T)\big(\|\bm{X}\|_{L^2(0,T;C^{\frac32+\alpha}(B_{5R}(\hx_n)\cap\mathbb{S}^2))}+\|\rho_n\bm{X}\|_{L^2(0,T;C^{\frac32+\alpha}(\mathbb{S}^2)}\big),
    \end{aligned}
\end{equation*}
and so
\begin{equation}\label{aux_reg}
    \begin{aligned}
     \|\rho_n\bm{X}\|_{L^2(0,T;C^{\frac32+\alpha}(\mathbb{S}^2))}&\leq C \|\rho_n(0)\bm{X}_0\|_{C^{1+\gamma}(\mathbb{S}^2)}+C(R,T)\\
     &\quad+C\varepsilon(R,T)\|\bm{X}\|_{L^2(0,T;C^{\frac32+\alpha}(B_{5R}(\hx_n)\cap\mathbb{S}^2))}.
    \end{aligned}
\end{equation}
Now, we can write
\begin{equation*}
    \begin{aligned}
    \|\bm{X}\|_{L^2(0,T;C^{\frac32+\alpha}(B_{5R}(\hx_n)\cap\mathbb{S}^2))}\leq \|\!\!\!\sum_{m\in M_n}\rho_{m}\bm{X}\|_{L^2(0,T;C^{\frac32+\alpha}(B_{5R}(\hx_n)\cap\mathbb{S}^2))},
    \end{aligned}
\end{equation*}
where the cardinal number $|M_n|$ can be picked independent of $R$ and $n$, since the radius of the support of $\rho_n$ and $B_{5R}(\hx_n)\cap\mathbb{S}^2$ are comparable. Therefore, adding in $n$ in \eqref{aux_reg} we obtain
\begin{equation*}
    \begin{aligned}
     \sum_n\|\rho_n\bm{X}\|_{L^2(0,T;C^{\frac32+\alpha}(\mathbb{S}^2))}&\leq C(R,T)+C\varepsilon(R,T)|M_n|\sum_n\|\rho_n\bm{X}\|_{L^2(0,T;C^{\frac32+\alpha}(\mathbb{S}^2))},
    \end{aligned}
\end{equation*}
hence we conclude that
\begin{equation*}
    \begin{aligned}
     \|\bm{X}\|_{L^2(0,T;C^{\frac32+\alpha}(\mathbb{S}^2))}&\leq \sum_n\|\rho_n\bm{X}\|_{L^2(0,T;C^{\frac32+\alpha}(\mathbb{S}^2))}\leq C(R,T).
    \end{aligned}
\end{equation*}
In particular, choosing $\alpha=2\gamma-\frac12$, this uniform bound allows us to conclude that $\bm{X}\in L^2(0,T;C^{1+2\gamma}(\mathbb{S}^2))$, and thus $\bm{X}(t) \in C^{1+2\gamma}(\mathbb{S}^2)$ for a.e. $t\in(0,T)$. Now, pick $t_0\in(0,T)$ arbitrarily close to $0$ and such that $\bm{X}(t_0)\in C^{1+2\gamma}(\mathbb{S}^2)$. It is clear that we can repeat the process to find $t_1>t_0$ such that $\bm{X}(t_1)\in C^{1,\alpha}(\mathbb{S}^2)$ for any $\alpha\in(0,1)$ (the case $\gamma>\frac12$ follows in one step). Starting at $t_1$, we find that
\begin{equation*}
    \begin{aligned}
    \|\rho_n\bm{X}(t)\|_{C^{1,\alpha}(\mathbb{S}^2)}&\leq \|\rho_n\bm{X}(t_1)\|_{C^{1,\alpha}(\mathbb{S}^2)}+C\sup_{t_1\leq\tau\leq t}\sum_{m=1}^7\|\bm{f}^m\|_{C^{\alpha}(\mathbb{S}^2)}.
    \end{aligned}
\end{equation*}
We can thus take the supremum in $t\in(t_1,T)$ and use the previous estimates on $\bm{f}^m$ to conclude that $\bm{X} \in L^\infty(t_1,T;C^{1,\alpha}(\mathbb{S}^2))$ for any $t_1>0$ and any $\alpha\in(0,1)$.
\vspace{0.2cm}

\noindent\underline{Higher regularity:}
To study further smoothing, we first show that we can move derivatives in $\hx$ to derivatives in $\hy$. In fact,  
denoting $\Delta\bm{X}=\bm{X}(\hx)-\bm{X}(\hy)$,
\begin{equation*}
    \begin{aligned}
    &\nabla_{\mathbb{S}^2}\mc{N}(\bm{X})\bm{Y}(\hx)=\\
    &=-\int_{\mathbb{S}^2}\nabla_{\mathbb{S}^2,\hx}\nabla_{\mathbb{S}^2,\hy}G(\Delta\bm{X})\cdot\Delta\nabla_{\mathbb{S}^2}\bm{Y}d\hy\\
    &=-\!\int_{\mathbb{S}^2}\!\!\Big(-\nabla_{\mathbb{S}^2,\hy}\nabla_{\mathbb{S}^2,\hy}G(\Delta\bm{X})\!+\!\nabla_{\mathbb{S}^2,\hy}\big(\nabla_{\mathbb{S}^2,\hx}G(\Delta\bm{X})\!+\!\nabla_{\mathbb{S}^2,\hy}G(\Delta\bm{X})\big)\Big)\!\cdot\!\Delta\nabla_{\mathbb{S}^2}\bm{Y}d\hy,
    \end{aligned}
\end{equation*}
so further integration by parts gives that
\begin{equation}\label{move_der}
    \begin{aligned}
    \nabla_{\mathbb{S}^2}\mc{N}&(\bm{X})\bm{Y}(\hx)=\mc{N}(\bm{X})\nabla_{\mathbb{S}^2}\bm{Y}(\hx)\\
    &\quad-\int_{\mathbb{S}^2}\nabla_{\mathbb{S}^2,\hy}\big(\nabla_{\mathbb{S}^2,\hx}G(\Delta\bm{X})\!+\!\nabla_{\mathbb{S}^2,\hy}G(\Delta\bm{X})\big)\cdot \Delta\nabla_{\mathbb{S}^2}\bm{Y}(\hy)d\hy\\
    &=\mc{N}(\bm{X})\nabla_{\mathbb{S}^2}\bm{Y}(\hx)\\
    &\quad-\int_{\mathbb{S}^2}\nabla_{\mathbb{S}^2,\hy}\Big( \PD{}{x_i}G(\Delta\bm{X})\big(\nabla_{\mathbb{S}^2}X_i(\hx)-\nabla_{\mathbb{S}^2}X_i(\hy)\big)\Big)\cdot \Delta\nabla_{\mathbb{S}^2}\bm{Y}(\hy)d\hy.
    \end{aligned}
\end{equation}
Therefore, we take a derivative in \eqref{eqrhoN} to get
\begin{equation*}
    \begin{aligned}
    \PD{}{t}\nabla_{\mathbb{S}^2}&(\rho_n\bm{X})(\hx)=\mc{N}(\bm{X})\big(\nabla_{\mathbb{S}^2}\big(T_S(\nabla_{\mathbb{S}^2}\bm{X}(\hx_n))\nabla_{\mathbb{S}^2}(\rho_n\bm{X})\big)\big)(\hx)\\
    &\quad+\mc{N}(\bm{X})\Big(\nabla_{\mathbb{S}^2}\big(\rho_n\big(\bm{T}(\nabla_{\mathbb{S}^2}\bm{X})-T_S(\nabla_{\mathbb{S}^2}\bm{X}(\hx_n))\nabla_{\mathbb{S}^2}\bm{X}\big)\big)\Big)(\hx)\\
    &-\int_{\mathbb{S}^2}\nabla_{\mathbb{S}^2,\hy}\Big( \PD{}{x_i}G(\Delta\bm{X})\big(\nabla_{\mathbb{S}^2}X_i(\hx)-\nabla_{\mathbb{S}^2}X_i(\hy)\big)\Big)\cdot\\
    &\hspace{1cm}\times\Delta \big(T_S(\nabla_{\mathbb{S}^2}\bm{X}(\hx_n))\nabla_{\mathbb{S}^2}(\rho_n\bm{X})\big)(\hy)d\hy\\
    &-\int_{\mathbb{S}^2}\nabla_{\mathbb{S}^2,\hy}\Big( \PD{}{x_i}G(\Delta\bm{X})\big(\nabla_{\mathbb{S}^2}X_i(\hx)-\nabla_{\mathbb{S}^2}X_i(\hy)\big)\Big)\cdot\\
    &\hspace{1cm}\times\Delta \big(\rho_n\big(\bm{T}(\nabla_{\mathbb{S}^2}\bm{X})-T_S(\nabla_{\mathbb{S}^2}\bm{X}(\hx_n))\nabla_{\mathbb{S}^2}\bm{X}\big)(\hy)d\hy\\
    &+\nabla_{\mathbb{S}^2}[\rho_n,\mc{N}(\bm{X})]\bm{T}(\nabla_{\mathbb{S}^2}\bm{X})(\hx)-\nabla_{\mathbb{S}^2}\mc{N}(\bm{X})(T_S(\nabla_{\mathbb{S}^2}\bm{X}(\hx_n))\bm{X}\nabla_{\mathbb{S}^2}\rho_n)(\hx)).
    \end{aligned}
\end{equation*}
We introduce the frozen-coefficient operator and the cutoff $\wh{\rho}_n$, 
\begin{equation*}
    \begin{aligned}
    \PD{}{t}\nabla_{\mathbb{S}^2}(\rho_n\bm{X}_n)(\thetab)&=\mc{L}_{A_1}(\nabla_{\mathbb{S}^2}(\rho_n\bm{X})\big)_n(\thetab)+\sum_{j=1}^8 f^j(\thetab),
    \end{aligned}
\end{equation*}
\begin{equation*}
    \begin{aligned}
    f^1(\thetab)&=\mc{N}(\bm{X})\Big(\nabla_{\mathbb{S}^2}\big(\rho_n\big(\bm{T}(\nabla_{\mathbb{S}^2}\bm{X})-T_S(\nabla_{\mathbb{S}^2}\bm{X}(\hx_n))\nabla_{\mathbb{S}^2}\bm{X}\big)\big)\Big)_n(\thetab),
    \end{aligned}
\end{equation*}
\begin{equation*}
    \begin{aligned}
    f^2(\thetab)&=    \wh{\rho}_n(\thetab)[\mc{N}(\bm{X})-\mc{M}(A)]\big(\big(T_S(\nabla_{\mathbb{S}^2}\bm{X}(\hx_n))\nabla_{\mathbb{S}^2}\nabla_{\mathbb{S}^2}(\rho_n\bm{X})\big)\big)_n(\thetab),
    \end{aligned}
\end{equation*}
\begin{equation*}
    \begin{aligned}
    f^3&(\thetab)\!=\!\wh{\rho}_n(\thetab)\Big(\mc{M}(A)(T_S(\nabla_{\mathbb{S}^2}\bm{X}(\hx_n))\nabla_{\mathbb{S}^2}\nabla_{\mathbb{S}^2}(\rho_n\bm{X}))_n(\thetab)\!-\!\mc{L}_A(\nabla_{\mathbb{S}^2}(\rho_n\bm{X}))_n(\thetab)\Big),
    \end{aligned}
\end{equation*}
\begin{equation*}
    \begin{aligned}
    f^4(\thetab)&=(1\!-\!\wh{\rho}_n(\thetab))\mc{N}(\bm{X})\big(T_S(\nabla_{\mathbb{S}^2}\bm{X}(\hx_n))\nabla_{\mathbb{S}^2}\nabla_{\mathbb{S}^2}(\rho_n\bm{X})\big)_n(\thetab),
    \end{aligned}
\end{equation*}
\begin{equation*}
    \begin{aligned}
    f^5(\thetab)&=(1\!-\!\wh{\rho}_n(\thetab))\mc{L}_A(\nabla_{\mathbb{S}^2}(\rho_n\bm{X}))_n(\thetab),
    \end{aligned}
\end{equation*}
\begin{equation*}
    \begin{aligned}
    f^6(\thetab)=[\mc{L}_A-\mc{L}_{A_1}]\big(\nabla_{\mathbb{S}^2}(\rho_n\bm{X})\big)_n(\thetab),
    \end{aligned}
\end{equation*}
\begin{equation*}
    \begin{aligned}
    f^7(\thetab)&=\nabla_{\mathbb{S}^2}[\rho_n,\mc{N}(\bm{X})]\bm{T}(\nabla_{\mathbb{S}^2}\bm{X})(\hX_n(\thetab))\\
    &\quad-\nabla_{\mathbb{S}^2}\mc{N}(\bm{X})(T_S(\nabla_{\mathbb{S}^2}\bm{X}(\hx_n))\bm{X}\nabla_{\mathbb{S}^2}\rho_n)(\hX_n(\thetab)),
    \end{aligned}
\end{equation*}
and
\begin{equation*}
    \begin{aligned}
    f^8(\thetab)&=f^{8,1}(\thetab)+f^{8,2}(\thetab),
    \end{aligned}
\end{equation*}
with
\begin{equation*}
    \begin{aligned}
    f^{8,1}(\thetab)&=-\int_{\mathbb{S}^2}\nabla_{\mathbb{S}^2,\hy}\Big( \PD{}{x_i}G(\bm{X}(\hX(\thetab)-\bm{X}(\hy))\big(\nabla_{\mathbb{S}^2}X_i(\hX(\thetab))-\nabla_{\mathbb{S}^2}X_i(\hy)\big)\Big)\cdot\\
    &\hspace{1cm}\times\Delta \big(T_S(\nabla_{\mathbb{S}^2}\bm{X}(\hx_n))\nabla_{\mathbb{S}^2}(\rho_n\bm{X})\big)(\hy)d\hy,
    \end{aligned}
    \end{equation*}
\begin{equation*}
    \begin{aligned}
    f^{8,2}(\thetab)&=-\int_{\mathbb{S}^2}\nabla_{\mathbb{S}^2,\hy}\Big( \PD{}{x_i}G(\bm{X}(\hX(\thetab))-\bm{X}(\hy))\big(\nabla_{\mathbb{S}^2}X_i(\hX(\thetab))-\nabla_{\mathbb{S}^2}X_i(\hy)\big)\Big)\cdot\\
    &\hspace{1cm}\times\Delta \big(\rho_n\big(\bm{T}(\nabla_{\mathbb{S}^2}\bm{X})-T_S(\nabla_{\mathbb{S}^2}\bm{X}(\hx_n))\nabla_{\mathbb{S}^2}\bm{X}\big)(\hy)d\hy,
    \end{aligned}
\end{equation*}
and $A_1=\nabla\bm{X}_n(\bm{0},t_1)$, $A=\nabla\bm{X}_n(\bm{0},t)$.
Thus, we proceed as we previously did in \eqref{duhamel},
\begin{equation}\label{nablaX_eq_duham}
    \begin{aligned}
    \nabla_{\mathbb{S}^2}(\rho_n\bm{X}_n)(t)&=e^{(t-t_0) \mc{L}_{A_1}}(\nabla_{\mathbb{S}^2}(\rho_n\bm{X}_n)(t_0))+\sum_{j=1}^8\int_{t_0}^t e^{(t-\tau)\mc{L}_{A_1}}f^j(\tau)d\tau.
    \end{aligned}
\end{equation}
Therefore, to bootstrap and get $C^{2,\alpha}$ regularity we need to use Propositions \ref{lin_heat_semi}-\ref{lin_heat_semi2} and obtain $C^{\alpha}$ estimates for the forced terms above.
The estimate \eqref{Jn8bound} in Lemma \ref{lem_dif} gives that
\begin{equation*}
    \begin{aligned}
    \|f^2\|_{C^\alpha(\mathbb{S}^2)} &\leq C\Big(\varepsilon(R)\|\nabla_{\mathbb{S}^2}^2(\rho_n\bm{X})\|_{C^\alpha(\mathbb{S}^2)}\\
    &\quad+\|\nabla_{\mathbb{S}^2}\bm{X}\|_{C^\alpha(\mathbb{S}^2)}\|\nabla_{\mathbb{S}^2}^2(\rho_n\bm{X})\|_{C^0(\mathbb{S}^2)}+\|\nabla_{\mathbb{S}^2}^2(\rho_n\bm{X})\|_{C^0(\mathbb{S}^2)}\Big),
    \end{aligned}
\end{equation*}
while $f^3\equiv0$, and Lemmas \ref{Nout_bound} and \ref{MAout_bound} provide that
\begin{equation*}
\begin{aligned}
    \|f^4\|_{C^\alpha(\mathbb{S}^2)}+\|f^5\|_{C^\alpha(\mathbb{S}^2)}&\leq C(R)\|\nabla_{\mathbb{S}^2}^2\bm{X}\|_{C^0(\mathbb{S}^2)}.
    \end{aligned}
\end{equation*}
As done before in \eqref{f6}, we have that
\begin{equation*}
\begin{aligned}
    \|f^6\|_{C^\alpha(\mathbb{S}^2)}&\leq C\|A-A_1\|\|\nabla_{\mathbb{S}^2}^2(\rho_n\bm{X})\|_{C^\alpha(\mathbb{S}^2)}.
    \end{aligned}
\end{equation*}
We interpolate the $C^2(\mathbb{S}^2)$ norm followed by Young's inequality to get a small coefficient for the higher regularity part:
\begin{equation*}
    \begin{aligned}
    \|\nabla_{\mathbb{S}^2}^2(\rho_n\bm{X})\|_{C^0(\mathbb{S}^2)}\leq C(\varepsilon)\|\nabla_{\mathbb{S}^2}(\rho_n\bm{X})\|_{C^{1-\alpha}(\mathbb{S}^2)}+\varepsilon\|\nabla_{\mathbb{S}^2}^2(\rho_n\bm{X})\|_{C^\alpha(\mathbb{S}^2)},
    \end{aligned}
\end{equation*}
so that
\begin{equation*}
    \begin{aligned}
    \sum_{j=2}^6\|f^j\|_{C^\alpha(\mathbb{S}^2)}\leq C\varepsilon(R,\Delta t)\|\nabla_{\mathbb{S}^2}^2(\rho_n\bm{X})\|_{C^\alpha(\mathbb{S}^2)}\!+\!C(R)\|\nabla_{\mathbb{S}^2}(\rho_n\bm{X})\|_{C^{1-\alpha}(\mathbb{S}^2)},
    \end{aligned}
\end{equation*}
where from now on the constants $C$ and $C(R,\Delta t)$, $\Delta t=t-t_1,$ also depend on the controlled norm $\|\bm{X}\|_{L^\infty(0,T;C^{1,\max\{\alpha,1-\alpha\}}(\mathbb{S}^2))}$.
Next, 
\begin{equation*}
    \begin{aligned}
    \|f^1\|_{C^\alpha(\mathbb{S}^2)}&\leq C\varepsilon \|\nabla_{\mathbb{S}^2}\bm{X}\|_{C^\alpha(\mathbb{S}^2)}\\
    &\quad+C\|\rho_n\big(T_S(\nabla_{\mathbb{S}^2}\bm{X})\nabla_{\mathbb{S}^2}^2\bm{X}-T_S(\nabla_{\mathbb{S}^2}(\hx_n))\nabla_{\mathbb{S}^2}^2\bm{X}\big)\|_{C^\alpha(\mathbb{S}^2)}\\
    &\leq C+C\|\nabla_{\mathbb{S}^2}\bm{X}\|_{C^\alpha(\mathbb{S}^2)}\|\nabla_{\mathbb{S}^2}^2\bm{X}\|_{C^0(B_{\hx_n,2R}\cap\mathbb{S}^2)}\\
    &\quad+C\varepsilon(R) \|\nabla_{\mathbb{S}^2}^2\bm{X}\|_{C^\alpha(B_{\hx_n,2R}\cap\mathbb{S}^2)},
    \end{aligned}
\end{equation*}
so by interpolation again
\begin{equation*}
    \begin{aligned}
    \|f^1\|_{C^\alpha(\mathbb{S}^2)}&\leq C(R)+C\varepsilon(R) \|\nabla_{\mathbb{S}^2}^2\bm{X}\|_{C^\alpha(B_{\hx_n,2R}\cap\mathbb{S}^2)}.
    \end{aligned}
\end{equation*}
The term $f^8$ is lower order, and thus we can control it using interpolation once more. In fact, taking the derivative in the kernel, we have for $f^{8,1}$
\begin{equation*}
    \begin{aligned}
    f^{8,1}(\hx)&=\int_{\mathbb{S}^2}\PD{}{x_j} \PD{}{x_i}G(\Delta\bm{X})\nabla_{\mathbb{S}^2}X_j(\hy)\Delta\nabla_{\mathbb{S}^2}X_i\cdot \Delta\big(T_S(\nabla_{\mathbb{S}^2}\bm{X}(\hx_n))\nabla_{\mathbb{S}^2}(\rho_n\bm{X})\big)d\hy\\
    &\quad-\int_{\mathbb{S}^2}\PD{}{x_i}G(\Delta\bm{X})\nabla_{\mathbb{S}^2}^2X_i(\hy)\cdot \Delta\big(T_S(\nabla_{\mathbb{S}^2}\bm{X}(\hx_n))\nabla_{\mathbb{S}^2}(\rho_n\bm{X})\big)d\hy,
    \end{aligned}
\end{equation*}
and therefore, proceeding as in Lemma \ref{lem_commutator}, we obtain
\begin{equation*}
    \begin{aligned}
    \|f^{8,1}\|_{C^\alpha(\mathbb{S}^2)}&\leq C+C\|\nabla_{\mathbb{S}^2}^2\bm{X}\|_{C^0(B_{\hx_n,2R}\cap\mathbb{S}^2)}\\
    &\leq C(R)+C\varepsilon(R)\|\nabla_{\mathbb{S}^2}^2\bm{X}\|_{C^\alpha(B_{2R}(\hx)\cap\mathbb{S}^2)}).
    \end{aligned}
\end{equation*}
The estimate for $f^{8,2}$ follows in the same manner.
Next, we estimate the commutator terms, $f^7$. Using \eqref{move_der}, we write
\begin{equation*}
    \begin{aligned}
    f^7(\hx)&=f^{7,1}(\hx)+f^{7,2}(\hx)+f^{7,3}(\hx),
    \end{aligned}
\end{equation*}
with
\begin{equation*}
    \begin{aligned}
    f^{7,1}(\hx)&=-[\mc{N}(\bm{X})\nabla_{\mathbb{S}^2},\rho_n]\bm{T}(\nabla_{\mathbb{S}^2}\bm{X})(\hx),
    \end{aligned}
\end{equation*}    
\begin{equation*}
    \begin{aligned}
    &f^{7,2}(\hx)=\int_{\mathbb{S}^2}\nabla_{\mathbb{S}^2,\hy}\Big( \PD{}{x_i}G(\Delta\bm{X})\big(\nabla_{\mathbb{S}^2}X_i(\hx)-\nabla_{\mathbb{S}^2}X_i(\hy)\big)\Big)\cdot \Delta \big(\rho_n\bm{T}(\nabla_{\mathbb{S}^2}\bm{X})(\hy)\big)d\hy\\
    &-\rho_n(\hx)\int_{\mathbb{S}^2}\nabla_{\mathbb{S}^2,\hy}\Big( \PD{}{x_i}G(\Delta\bm{X})\big(\nabla_{\mathbb{S}^2}X_i(\hx)-\nabla_{\mathbb{S}^2}X_i(\hy)\big)\Big)\cdot\Delta \big(\bm{T}(\nabla_{\mathbb{S}^2}\bm{X})(\hy)\big)d\hy\\
    &+\!\!\int_{\mathbb{S}^2}\!\!\!\!\nabla_{\mathbb{S}^2,\hy}\Big( \PD{}{x_i}G(\Delta\bm{X})\big(\nabla_{\mathbb{S}^2}X_i(\hx)\!-\!\nabla_{\mathbb{S}^2}X_i(\hy)\big)\!\Big)\!\cdot\! \Delta \big(T_S(\nabla_{\mathbb{S}^2}\bm{X}(\hx_n))\bm{X}\nabla_{\mathbb{S}^2}\rho_n(\hy)\big)d\hy,
    \end{aligned}
\end{equation*}
and
\begin{equation*}
    \begin{aligned}
    f^{7,3}(\hx)&=-\mc{N}(\bm{X})\big(T_S(\nabla_{\mathbb{S}^2}\bm{X}(\hx_n))\nabla_{\mathbb{S}^2}(\bm{X}\nabla_{\mathbb{S}^2}\rho_n)\big)(\hx)\\
    &\quad+\nabla_{\mathbb{S}^2}\rho_n\mc{N}(\bm{X})(\bm{T}(\nabla_{\mathbb{S}^2}\bm{X}))(\hx).
    \end{aligned}
\end{equation*}    
The term $f^{7,3}$ is lower order and it only requires $C^{1,\alpha}(\mathbb{S}^2)$ regularity for $\bm{X}$, while the estimate for $f^{7,2}$ follows taking the derivative of the kernel, as done for $f^8$. We get that 
\begin{equation*}
    \begin{aligned}
    \|f^{7,2}\|_{C^\alpha(\mathbb{S}^2)}&\leq C+C\|\nabla_{\mathbb{S}^2}^2\bm{X}\|_{C^0(B_{\hx_n,2R}\cap\mathbb{S}^2)}+C\|\nabla_{\mathbb{S}^2}^2\bm{X}\|_{C^0(\mathbb{S}^2)}.
    \end{aligned}
\end{equation*}
By interpolation,
\begin{equation*}
    \begin{aligned}
    \|f^{7,2}\|_{C^\alpha(\mathbb{S}^2)}&+\|f^{7,3}\|_{C^\alpha(\mathbb{S}^2)}\leq C(\tilde{\varepsilon})+C\tilde{\varepsilon}\minspace \|\nabla_{\mathbb{S}^2}^2\bm{X}\|_{C^\alpha(\mathbb{S}^2)},
    \end{aligned}
\end{equation*}
with $\tilde{\varepsilon}>0$ to be chosen.
The term $f^{7,1}$ is written as follows:
\begin{equation*}
    \begin{aligned}
    f^{7,1}(\hx)&= -\int_{\mathbb{S}^2} \nabla_{\mathbb{S}^2,\hy} G(\bm{X}(\hx)\!-\!\bm{X}(\bm{\hy}))\\
    &\qquad\cdot\big(\rho_n(\hx)\nabla_{\mathbb{S}^2} \bm{T}(\nabla_{\mathbb{S}^2}\bm{ X}(\hy))-\nabla_{\mathbb{S}^2}\big(\rho_n(\hy)\bm{T} (\nabla_{\mathbb{S}^2}\bm{X}(\hy))\big)\big)d\hy\\
    &=[\rho_n,\mc{N}(\bm{X})]\nabla_{\mathbb{S}^2}\bm{T}(\nabla_{\mathbb{S}^2}\bm{X})(\hx)\\
     &\quad+\int_{\mathbb{S}^2} \nabla_{\mathbb{S}^2,\hy} G(\bm{X}(\hx)\!-\!\bm{X}(\bm{\hy}))\cdot \nabla_{\mathbb{S}^2}\rho_n(\hy)\bm{T}(\nabla_{\mathbb{S}^2}\bm{X}(\hy))d\hy,
    \end{aligned}
\end{equation*}
hence, by Lemmas \ref{lem_commutator} and \ref{Nbound_lem}, we have that
\begin{equation*}
    \begin{aligned}
    \|f^{7,1}&\|_{C^\alpha(\mathbb{S}^2)}\leq C\|\nabla_{\mathbb{S}^2}\rho_n\|_{C^{1,\alpha}(\mathbb{S}^2)}+C\|\nabla_{\mathbb{S}^2}\rho_n\|_{C^0(\mathbb{S}^2)}\|\nabla_{\mathbb{S}^2}^2\bm{X}\|_{C^0(\mathbb{S}^2)},
    \end{aligned}
\end{equation*}
and, by interpolation,
\begin{equation*}
    \begin{aligned}
    \|f^{7,1}\|_{C^\alpha(\mathbb{S}^2)}&\leq C(\tilde{\varepsilon})+C\tilde{\varepsilon} \|\nabla_{\mathbb{S}^2}^2\bm{X}\|_{C^\alpha(\mathbb{S}^2)}.
    \end{aligned}
\end{equation*}
Then, we have that for any $t_1>0$ and $\alpha\in(0,\frac12)$,
\begin{equation*}
    \begin{aligned}
    \|\nabla_{\mathbb{S}^2}(\rho_n\bm{X})(t)\|_{L^2(t_1,T;C^{1,\alpha}(\mathbb{S}^2))}&\leq C\|\rho_n(t_1)\bm{X}(t_1)\|_{C^{\frac32+\alpha-\epsilon}}\\
    &\quad+\sum_{m=1}^7\!\|f^m(\tau)\|_{L^2(t_1,T;C^\alpha(\mathbb{S}^2))}.
    \end{aligned}
\end{equation*}
Hence, introducing the estimates above for $f^j$ and summing in $n$, we take the partition so that $\varepsilon(R)$ is small enough and then choose $\tilde{\varepsilon}$ small enough (depending on the partition $\rho_n$), to obtain that $\bm{X}\in L^2(t_1,T;C^{2,\alpha}(\mathbb{S}^2))$. 
Finally, we can take $t_2>t_1$ so that $\bm{X}(t_2)\in C^{2,\alpha}(\mathbb{S}^2)$ and use \eqref{duhamel} to conclude that $\bm{X}\in L^\infty(t_2,T; C^{2,\alpha}(\mathbb{S}^2))$ for any $t_2>0$, $\alpha\in(0,\frac12)$. Now, starting with the upgraded regularity and repeating the same steps with no changes, we conclude that $\bm{X}\in L^\infty(t_2,T; C^{2,\alpha}(\mathbb{S}^2))$ for any $t_2>0$, $\alpha\in(0,1)$.

 It is not difficult to show by induction that an analogous formula to \eqref{move_der} holds for higher derivatives. Then, by repeating the steps above one can continue the bootstrapping argument, concluding that for any $n\in\mathbb{N}$, $\bm{X}\in L^\infty(0,T; C^{n,\alpha}(\mathbb{S}^2))$. 
\vspace{0.2cm}

\noindent\underline{$\bm{X}^\delta\to\bm{X}$ in $L^\infty(0,T;C^{1,\gamma}(\mathbb{S}^2))$:}
We write the difference $\Delta^\delta\bm{X}:=\bm{X}^\delta-\bm{X}$ as follows:
\begin{equation*}
    \begin{aligned}
    \rho_n\Delta^\delta\bm{X}_n(t)&=e^{t \mc{L}_{A_0}}\big(\rho_n\Delta^\delta\bm{X}_{0,n}\big)\\
    &\hspace{-1cm}+\sum_{j=1}^7\int_{0}^t e^{(t-\tau)\mc{L}_{A_0}}\Big((\mc{J}_\delta-1)\bm{f}^j(\bm{X}^\delta)(\tau)+\bm{f}^j(\bm{X}^\delta)(\tau)-\bm{f}^j(\bm{X})(\tau)\Big)d\tau,
    \end{aligned}
\end{equation*}
with $\bm{f}^j(\bm{X})$ given in \eqref{reg_splitting}.
Thus,
\begin{equation}\label{convergence_split}
    \begin{aligned}
    \|\rho_n\Delta^\delta\bm{X}_n&\|_{L^\infty(0,T;C^{1,\gamma}(\mathbb{R}^2))}\leq C\|\rho_n\Delta^\delta\bm{X}_{0,n}\|_{C^{1,\gamma}(\mathbb{R}^2)}\\
    &+C\sum_{j=1}^7\|(\mc{J}_\delta-1)\bm{f}^j(\bm{X}^\delta)\|_{L^\infty(0,T;C^\gamma(\mathbb{R}^2))}\\
    &+C\sup_{t\in[0,T]}\sum_{j=1}^7\|\int_{0}^t e^{(t-\tau)\mc{L}_{A_0}}(\bm{f}^j(\bm{X}^\delta)-\bm{f}^j(\bm{X}))(\tau)\|_{C^\gamma(\mathbb{R}^2)}.
    \end{aligned}
\end{equation}
Since $\bm{X}_0\in h^{1,\gamma}(\mathbb{S}^2)$ and  $\bm{f}^j(\bm{X}^\delta)\in h^{\gamma}(\mathbb{S}^2)$, the first two terms converge to zero as $\delta\to 0$. For the third term, we need to show that it can be absorbed by the left-hand side. As in the previous arguments, we will show that for the quasilinear terms we can find a small coefficient, while for the lower order ones we will take advantage of the extra regularity via \eqref{lin_heat_semi} to get a small coefficient for $T$ small enough.

From previous estimates we immediately get that for $j=4,5,7$ and $0<\epsilon<1-\gamma$,
\begin{equation*}
    \begin{aligned}
    \|\bm{f}^j(\bm{X}^\delta)-\bm{f}^j(\bm{X})\|_{C^{\gamma+\epsilon}(\mathbb{S}^2)}\leq C\|\Delta^\delta\bm{X}\|_{C^1(\mathbb{S}^2)},
    \end{aligned}
\end{equation*}
hence \eqref{lin_heat_semi} gives that
\begin{equation*}
    \begin{aligned}
    \sup_{t\in[0,T]}\sum_{j=4,5,7}\|\int_{0}^t e^{(t-\tau)\mc{L}_{A_0}}(\bm{f}^j(\bm{X}^\delta)-\bm{f}^j(\bm{X}))(\tau)\|_{C^\gamma(\mathbb{R}^2)}&\leq C\minspace T^\epsilon \|\Delta^\delta\bm{X}\|_{C^1(\mathbb{S}^2)}.
    \end{aligned}
\end{equation*}
Also, for $\bm{f}^6$,
\begin{equation*}
    \begin{aligned}
    \|\bm{f}^6(\bm{X}^\delta)-\bm{f}^6(\bm{X})\|_{C^{\gamma}(\mathbb{S}^2)}&\leq C\|A-A_0\|\|\nabla_{\mathbb{S}^2}(\rho_n\Delta^\delta\bm{X})\|_{C^{\gamma}(\mathbb{S}^2)}\\
    &\quad+C\|\Delta^\delta \bm{X}\|_{C^1(\mathbb{S}^2)}\|\rho_n\bm{X}^\delta\|_{C^{1,\gamma}(\mathbb{S}^2)},
    \end{aligned}
\end{equation*}
so that \eqref{lin_heat_semi}-\eqref{lin_heat_semi2} give
\begin{equation*}
    \begin{aligned}
    \|\int_{0}^t e^{(t-\tau)\mc{L}_{A_0}}(\bm{f}^6(\bm{X}^\delta)-\bm{f}^6(\bm{X}))(\tau)\|_{L^\infty(0,T;C^\gamma(\mathbb{R}^2))}&\leq C\varepsilon(T) \|\rho_n\Delta^\delta\bm{X}\|_{C^{1,\gamma}(\mathbb{S}^2)}\\
    &\quad+C T^\varepsilon \|\Delta^\delta \bm{X}\|_{C^1(\mathbb{S}^2)}.
    \end{aligned}
\end{equation*}
Next, we proceed with the term $\bm{f}^1$:
\begin{equation*}
    \begin{aligned}
    (\bm{f}^1(\bm{X}^\delta)-\bm{f}^1(\bm{X}))(\thetab)=I_1+I_2,
    \end{aligned}
\end{equation*}
with
\begin{equation*}
\begin{aligned}
    I_1(\thetab)&=(\mc{N}(\bm{X}^\delta)-\mc{N}(\bm{X}))\big(\rho_n\big(\bm{T}(\nabla_{\mathbb{S}^2}
    \bm{X}^\delta)-T_S(\nabla_{\mathbb{S}^2}\bm{X}^\delta(\hx_n))\nabla_{\mathbb{S}^2}\bm{X}^\delta\big)\big)_n(\thetab)\\
    I_2(\thetab)&=\mc{N}(\bm{X})\Big(\rho_n\Big(\bm{T}(\nabla_{\mathbb{S}^2}
    \bm{X}^\delta)-T_S(\nabla_{\mathbb{S}^2}\bm{X}^\delta(\hx_n))\nabla_{\mathbb{S}^2}\bm{X}^\delta\\
    &\hspace{2.5cm}-\bm{T}(\nabla_{\mathbb{S}^2}
    \bm{X})+T_S(\nabla_{\mathbb{S}^2}\bm{X}(\hx_n))\nabla_{\mathbb{S}^2}\bm{X} \Big)\Big)_n(\thetab).
\end{aligned}
\end{equation*}
The first term is estimated easily from Proposition \ref{Nbound_lem} by noticing that one can always extract $\Delta^\delta\bm{X}=\bm{X}^\delta-\bm{X}$ from the difference of the kernels (similarly as done in the proof of Proposition \ref{Festimate}),
\begin{equation}\label{I1_bound}
    \begin{aligned}
    \|I_1\|_{C^\gamma(\mathbb{R}^2)}&\!\leq\! C\|\nabla_{\mathbb{S}^2}\Delta^\delta\bm{X}\|_{C^0(\mathbb{S}^2)}\|\rho_n(\bm{T}(\nabla_{\mathbb{S}^2}\bm{X}^\delta)\!-\!T_S(\nabla_{\mathbb{S}^2}\bm{X}^\delta(\hx_n))\nabla_{\mathbb{S}^2}\bm{X}^\delta)\|_{C^\gamma(\mathbb{S}^2)}\\
    &\leq C\varepsilon(R)\|\nabla_{\mathbb{S}^2}\Delta^\delta\bm{X}\|_{C^0(\mathbb{S}^2)}\|\nabla_{\mathbb{S}^2}\bm{X}\|_{C^\gamma(B_{\hx_n,2R}\cap \mathbb{S}^2)},
    \end{aligned}
\end{equation}
where we have used \eqref{f1bound} in the second step. Proposition \ref{Nbound_lem} gives that
\begin{equation}\label{I2_reg}
\begin{aligned}
    \|I_2\|_{C^\gamma(\mathbb{R}^2)}&=C\|\rho_n\Big(\bm{T}(\nabla_{\mathbb{S}^2}
    \bm{X}^\delta)-T_S(\nabla_{\mathbb{S}^2}\bm{X}^\delta(\hx_n))\nabla_{\mathbb{S}^2}\bm{X}^\delta\\
    &\hspace{2.5cm}-\bm{T}(\nabla_{\mathbb{S}^2}
    \bm{X})+T_S(\nabla_{\mathbb{S}^2}\bm{X}(\hx_n))\nabla_{\mathbb{S}^2}\bm{X} \Big)\|_{C^\gamma(\mathbb{R}^2)}.
\end{aligned}
\end{equation}
Denote 
\begin{equation*}
    \begin{aligned}
    \bm{J}(\bm{Y})(\hx)&=\bm{T}(\nabla_{\mathbb{S}^2}
    \bm{Y}(\hx))-T_S(\nabla_{\mathbb{S}^2}\bm{Y}(\hx_n))\nabla_{\mathbb{S}^2}\bm{Y}(\hx),
    \end{aligned}
\end{equation*}
so that we can write
\begin{equation*}
    \begin{aligned}
    \bm{J}(\bm{X}^\delta&(\hx_1))-\bm{J}(\bm{X}^\delta(\hx_2))=(\nabla_{\mathbb{S}^2}\bm{X}^\delta(\hx_1)-\nabla_{\mathbb{S}^2}\bm{X}^\delta(\hx_2))\\
    &\times\int_0^1 \big(T_S(s_1\nabla_{\mathbb{S}^2}\bm{X}^\delta(\hx_1)+(1-s_1)\nabla_{\mathbb{S}^2}\bm{X}^\delta(\hx_2))-T_S(\nabla_{\mathbb{S}^2}\bm{X}^\delta(\hx_n))\big)ds_1.
    \end{aligned}
\end{equation*}
Then,
\begin{equation*}
    \begin{aligned}
    \bm{J}(\bm{X}^\delta(\hx_1))-\bm{J}(\bm{X}^\delta(\hx_2))-\bm{J}(\bm{X}(\hx_1))+\bm{J}(\bm{X}(\hx_2))=\bm{J}^1+\bm{J}^2,
    \end{aligned}
    \end{equation*}
    with
\begin{equation*}
   \begin{aligned}
        \bm{J}^1&=(\nabla_{\mathbb{S}^2}\Delta^\delta\bm{X}(\hx_1)-\nabla_{\mathbb{S}^2}\Delta^\delta\bm{X}(\hx_2))\\
        &\quad\times\int_0^1 \big(T_S(s_1\nabla_{\mathbb{S}^2}\bm{X}^\delta(\hx_1)+(1-s_1)\nabla_{\mathbb{S}^2}\bm{X}^\delta(\hx_2))-T_S(\nabla_{\mathbb{S}^2}\bm{X}^\delta(\hx_n))\big)ds_1,
   \end{aligned}
\end{equation*}
and
\begin{equation*}
    \begin{aligned}
    \bm{J}^2&=(\nabla_{\mathbb{S}^2}\bm{X}(\hx_1)-\nabla_{\mathbb{S}^2}\bm{X}(\hx_2))\\
    &\quad\times\Big(\int_0^1 \big(T_S(s_1\nabla_{\mathbb{S}^2}\bm{X}^\delta(\hx_1)+(1-s_1)\nabla_{\mathbb{S}^2}\bm{X}^\delta(\hx_2))-T_S(\nabla_{\mathbb{S}^2}\bm{X}^\delta(\hx_n))\big)ds_1\\
    &\quad\quad-\int_0^1 \big(T_S(s_1\nabla_{\mathbb{S}^2}\bm{X}(\hx_1)+(1-s_1)\nabla_{\mathbb{S}^2}\bm{X}(\hx_2))-T_S(\nabla_{\mathbb{S}^2}\bm{X}(\hx_n))\big)ds_1\Big).
    \end{aligned}
\end{equation*}
It follows that
\begin{equation*}
    \begin{aligned}
    |\bm{J}^1|&\leq C\max\{|\nabla_{\mathbb{S}^2}\bm{X}^\delta(\hx_1)\!-\!\nabla_{\mathbb{S}^2}\bm{X}^\delta(\hx_n)|,|\nabla_{\mathbb{S}^2}\bm{X}^\delta(\hx_2)\!-\!\nabla_{\mathbb{S}^2}\bm{X}^\delta(\hx_n)\}\\
    &\hspace{1cm}\times|\nabla_{\mathbb{S}^2}\Delta^\delta\bm{X}(\hx_1)\!-\!\nabla_{\mathbb{S}^2}\Delta^\delta\bm{X}(\hx_2)|.
    \end{aligned}
\end{equation*}
We apply the mean-value theorem again in $\bm{J}^2$:
\begin{equation*}
    \begin{aligned}
    &\int_0^1 \big(T_S(s_1\nabla_{\mathbb{S}^2}\bm{X}^\delta(\hx_1)+(1-s_1)\nabla_{\mathbb{S}^2}\bm{X}^\delta(\hx_2))-T_S(\nabla_{\mathbb{S}^2}\bm{X}^\delta(\hx_n))\big)ds_1\\
    &=\!\int_0^1\!\int_0^1\! DT_S\big(s_2(s_1\nabla_{\mathbb{S}^2}\bm{X}^\delta(\hx_1)\!+\!(1\!-\!s_1)\nabla_{\mathbb{S}^2}\bm{X}^\delta(\hx_2))\!+\!(1\!-\!s_2)\nabla_{\mathbb{S}^2}\bm{X}^\delta (\hx_n)\big)ds_1ds_2\\
    &\qquad\times \big(s_1\nabla_{\mathbb{S}^2}\bm{X}^\delta(\hx_1)+(1-s_1)\nabla_{\mathbb{S}^2}\bm{X}^\delta(\hx_2)-\nabla_{\mathbb{S}^2}\bm{X}^\delta (\hx_n)\big),
    \end{aligned}
\end{equation*}
hence adding and subtracting we obtain
\begin{equation*}
    \begin{aligned}
    |\bm{J}^2|&\leq|\bm{J}^{2,1}|+|\bm{J}^{2,2}|,
    \end{aligned}
    \end{equation*}
    with
    \begin{equation*}
    \begin{aligned}
    |\bm{J}^{2,1}|&\leq|\nabla_{\mathbb{S}^2}\bm{X}(\hx_1)-\nabla_{\mathbb{S}^2}\bm{X}(\hx_2)|\\
    &\quad\times\max\{|\nabla_{\mathbb{S}^2}\bm{X}^\delta(\hx_1)\!-\!\nabla_{\mathbb{S}^2}\bm{X}^\delta(\hx_n)|,|\nabla_{\mathbb{S}^2}\bm{X}^\delta(\hx_2)\!-\!\nabla_{\mathbb{S}^2}\bm{X}^\delta(\hx_n)|\}\\
    &\quad\times\Big|DT_S(s_2(s_1\nabla_{\mathbb{S}^2}\bm{X}^\delta(\hx_1)+(1-s_1)\nabla_{\mathbb{S}^2}\bm{X}^\delta(\hx_2))+(1-s_2)\nabla_{\mathbb{S}^2}\bm{X}^\delta(\hx_n))\\
    &\hspace{1cm}-DT_S(s_2(s_1\nabla_{\mathbb{S}^2}\bm{X}(\hx_1)+(1-s_1)\nabla_{\mathbb{S}^2}\bm{X}(\hx_2))+(1-s_2)\nabla_{\mathbb{S}^2}\bm{X}(\hx_n))\Big|,
    \end{aligned}
    \end{equation*}
    \begin{equation*}
    \begin{aligned}
    |\bm{J}^{2,2}|&\leq|\nabla_{\mathbb{S}^2}\bm{X}(\hx_1)-\nabla_{\mathbb{S}^2}\bm{X}(\hx_2)|\\
    &\quad\times|DT_S(s_2(s_1\nabla_{\mathbb{S}^2}\bm{X}(\hx_1)+(1-s_1)\nabla_{\mathbb{S}^2}\bm{X}(\hx_2))+(1-s_2)\nabla_{\mathbb{S}^2}\bm{X}(\hx_n))|\\
    &\quad\times \max\{|\nabla_{\mathbb{S}^2}\Delta^\delta\bm{X}(\hx_1)\!-\!\nabla_{\mathbb{S}^2}\Delta^\delta\bm{X}(\hx_n)|,|\nabla_{\mathbb{S}^2}\Delta^\delta\bm{X}(\hx_2)\!-\!\nabla_{\mathbb{S}^2}\Delta^\delta\bm{X}(\hx_n)\}.
    \end{aligned}
\end{equation*}
Going back to \eqref{I2_reg}, we thus conclude that
\begin{equation*}
    \begin{aligned}
    \|I_2\|_{C^\gamma(\mathbb{R}^2)}&\leq C\varepsilon(R)
    \|\nabla_{\mathbb{S}^2}\Delta^\delta\bm{X}\|_{C^\gamma(B_{\hx_n,2R}\cap \mathbb{S}^2)}.
    \end{aligned}
\end{equation*}
Together with the bound for $I_1$ \eqref{I1_bound}, we obtain the following estimate for $\bm{f}^1$:
\begin{equation*}
    \begin{aligned}
    \|\int_{0}^t &e^{(t-\tau)\mc{L}_{A_0}}(\bm{f}^1(\bm{X}^\delta)-\bm{f}^1(\bm{X}))(\tau)\|_{L^\infty(0,T;C^\gamma(\mathbb{R}^2))}\leq C\minspace T^\epsilon \|\Delta^\delta\bm{X}\|_{C^1(\mathbb{S}^2)}\\
    &\quad+C\varepsilon(R)\|\Delta^\delta\bm{X}\|_{C^{1}(\mathbb{S}^2)}\|\bm{X}\|_{C^{1,\gamma}(B_{\hx_n,2R}\cap\mathbb{S}^2)}+C\varepsilon(R)\|\Delta^\delta\bm{X}\|_{C^{1,\gamma}(B_{\hx_n,2R}\cap\mathbb{S}^2)}.
    \end{aligned}
\end{equation*}
The estimate for $\bm{f}^2$ follows in the same way than those for $\bm{f}^1$ and $\bm{f}^6$, from which we conclude that
\begin{equation*}
    \begin{aligned}
    \|\rho_n\Delta^\delta\bm{X}_n&\|_{L^\infty(0,T;C^{1,\gamma}(\mathbb{R}^2))}\leq C\|\rho_n\Delta^\delta\bm{X}_{0,n}\|_{C^{1,\gamma}(\mathbb{R}^2)}\\
    &+C\sum_{j=1}^7\|(\mc{J}_\delta-1)\bm{f}^j(\bm{X}^\delta)\|_{L^\infty(0,T;C^\gamma(\mathbb{R}^2))}\\
    &+CT^\varepsilon\|\Delta^\delta\bm{X}\|_{C^1(\mathbb{S}^2)}+C\varepsilon(R)\|\Delta^\delta\bm{X}\|_{C^{1}(\mathbb{S}^2)}\|\bm{X}\|_{C^{1,\gamma}(B_{\hx_n,2R}\cap\mathbb{S}^2)}
    \\
    &+C(\varepsilon(T)+\varepsilon(R))\|\Delta^\delta \bm{X}\|_{C^{1,\gamma}(B_{\hx,2R}\cap\mathbb{S}^2)}.
    \end{aligned}
\end{equation*}
Taking $R$ and $T$ small enough, the last term is absorbed by the left-hand side. Then, adding in $n$, we conclude that, for $T$ and $R$ small enough, the desired estimate holds
\begin{equation*}
    \begin{aligned}
    \|\Delta^\delta\bm{X}&\|_{L^\infty(0,T;C^{1,\gamma}(\mathbb{S}^2))}\!\leq\! C\|\Delta^\delta\bm{X}_{0}\|_{C^{1,\gamma}(\mathbb{S}^2)}\!+\!C\!\sum_{j=1}^7\!\|(\mc{J}_\delta\!-\!1)\bm{f}^j(\bm{X}^\delta)\|_{L^\infty(0,T;C^\gamma(\mathbb{R}^2))}.
    \end{aligned}
\end{equation*}

\end{proof}

\appendix 

\section{Besov Spaces and Fourier Multiplier Theorems}\label{sec:appA}
In this section, we will proof Theorem \ref{fouriermuliplierholdernorm}.
First, define Besov Spaces $B^\gamma_{p,q}$ by a dyadic decomposition.
Set a function $\psi\paren{\xib}\in C^\infty\paren{\mathbb{R}^n}$ s.t.
\begin{align*}
    \psi\paren{\xib}=\left\{
    \begin{array}{cc}
         1& \abs{\xib}\leq 1\\
         0& \abs{\xib}\geq 2
    \end{array}\right.
\end{align*}
and define $\phi\paren{\xib}:=\psi\paren{\xib}-\psi\paren{2\xib}$.
Hence, $\phi\paren{\xib}\in C^\infty\paren{\mathbb{R}^n}$ and
\begin{align*}
    \phi\paren{\xib}=0,& \qquad \abs{\xib}\leq \frac{1}{2}, \abs{\xib}\geq 2,\\
    \sum_{j=-\infty}^\infty \phi\paren{2^{-j}\xib}=1,& \qquad  \abs{\xib}\neq 0.
\end{align*}
Next, the homogeneous dyadic blocks $\Dot{\Delta}_j$ are defined by 
\begin{align}
    \Dot{\Delta}_j f\paren{\thetab}:= \mc{F}^{-1}\paren{\phi\paren{2^{-j}\xib}\mc{F}f\paren{\xib}}\paren{\thetab}=\mc{K}_j \ast f
\end{align}
where $\mc{K}\paren{\thetab}:=\mc{F}^{-1}\paren{\phi\paren{\xib}}\paren{\thetab}$ and $\mc{K}_j\paren{\thetab}:=\mc{F}^{-1}\paren{\phi\paren{2^{-j}\xib}}\paren{\thetab}=2^{jn}\mc{K}\paren{2^j\thetab}$.
Now, given $\gamma$ a real number and $p,q\geq 1$, we may define homogeneous Besov spaces $\Dot{B}_{p,q}^\gamma\paren{\mathbb{R}^n}$ with its seminorm $\norm{\cdot}_{\Dot{B}_{p,q}^\gamma\paren{\mathbb{R}^n}}$ by
\begin{align}
    \norm{f}_{\Dot{B}_{p,q}^\gamma\paren{\mathbb{R}^n}}:=&\paren{\sum_{j=-\infty}^\infty\paren{2^{j\gamma}\norm{\Dot{\Delta}_j f}_{L^p\paren{\mathbb{R}^n}}}^q}^{\frac{1}{q}},\\
    \norm{f}_{\Dot{B}_{p,\infty}^\gamma\paren{\mathbb{R}^n}}:=&\sup_{j\in\mathbb{Z}}\paren{2^{j\gamma}\norm{\Dot{\Delta}_j f}_{L^p\paren{\mathbb{R}^n}}}.
\end{align}
According to \cite[Remark 2.2.2]{Loukas:modern-fourier-analysis} and \cite[Lemma 8.4.2]{Yoshihiro:theory-of-the-lebesque-integral}, we know for all $0<\gamma<1$, $\norm{\cdot}_{\Dot{B}_{p,q}^\gamma\paren{\mathbb{R}^n}}$ and $\jump{\cdot}_{C^\gamma\paren{\mathbb{R}^n}}$ are equivalent, so we only need to prove the Fourier multiplier theorem on $\Dot{B}_{p,q}^\gamma\paren{\mathbb{R}^n}$. The proof is from \cite[Theorem 8.4.3]{Yoshihiro:theory-of-the-lebesque-integral}.

Given $T$ a Fourier multiplier operator with multiplier $m\paren{\bm{\xi}}\in C^s \paren{\mathbb{R}^n\setminus \{\bm{0}\}}\cap L^\infty\paren{\mathbb{R}^n}$, for $s>\frac{n}{2}$ and for all $\abs{\alphab}\leq s$, such that
\begin{align}
    \norm{\partial_{\bm{\xi}}^\alphab m\paren{\bm{\xi}}}\leq C_\alphab \abs{\bm{\xi}}^{-\abs{\alphab}},\label{multi_coef_prop}
\end{align}
we first define a related kernel with $\lambda>0$ by $K_{\lambda}\paren{\thetab}:=\mc{F}^{-1}\paren{\phi\paren{\xib}m\paren{\lambda \xib}}\paren{\thetab}$.
\begin{lemma}
Given $m\paren{\xib}$ satisfying \eqref{multi_coef_prop}, then $K_{\lambda}\paren{\thetab}$ is bounded by
\begin{align}
    \int_{\mathbb{R}^n}\abs{K_\lambda\paren{\thetab}} d\thetab\leq C_{s,n,\phi}D_{m},
\end{align}
where $D_{m}=\max_{\abs{\bm{\alpha}}\leq s}C_{\bm{\alpha}}$
\end{lemma}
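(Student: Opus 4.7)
The plan is to estimate $\|K_\lambda\|_{L^1(\mathbb{R}^n)}$ by the standard Cauchy--Schwarz trick with a polynomial weight, converting the resulting weighted $L^2$ estimate on the kernel side into a Sobolev-type estimate on the multiplier side via Plancherel, and then exploiting the scaling invariance built into the hypothesis \eqref{multi_coef_prop}.

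First I would write
\begin{equation*}
\int_{\mathbb{R}^n}|K_\lambda(\thetab)|\,d\thetab
=\int_{\mathbb{R}^n}|K_\lambda(\thetab)|(1+|\thetab|^{2})^{s/2}\cdot(1+|\thetab|^{2})^{-s/2}d\thetab,
\end{equation*}
and apply Cauchy--Schwarz. Since $s>n/2$, the integral $\int(1+|\thetab|^{2})^{-s}d\thetab$ is a finite constant $C_{s,n}$, so one is reduced to bounding the weighted $L^2$ norm $\bigl(\int|K_\lambda(\thetab)|^{2}(1+|\thetab|^{2})^{s}d\thetab\bigr)^{1/2}$. For integer $s$ this is controlled by $\sum_{|\alphab|\leq s}\|\thetab^{\alphab}K_\lambda\|_{L^2(\mathbb{R}^n)}$, and by Plancherel
\begin{equation*}
\|\thetab^{\alphab}K_\lambda\|_{L^2(\mathbb{R}^n)}
=C_n\,\|\partial_{\xib}^{\alphab}[\phi(\xib)m(\lambda\xib)]\|_{L^2(\mathbb{R}^n)}.
\end{equation*}

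Next I would expand the derivative by the Leibniz rule,
\begin{equation*}
\partial_{\xib}^{\alphab}[\phi(\xib)m(\lambda\xib)]
=\sum_{\betab\leq\alphab}\binom{\alphab}{\betab}\,\partial_{\xib}^{\alphab-\betab}\phi(\xib)\,\lambda^{|\betab|}(\partial^{\betab}m)(\lambda\xib),
\end{equation*}
and invoke the hypothesis \eqref{multi_coef_prop} on $m$ in the form $|(\partial^{\betab}m)(\lambda\xib)|\leq C_{\betab}\,(\lambda|\xib|)^{-|\betab|}$. The factors of $\lambda^{|\betab|}$ then cancel exactly against $\lambda^{-|\betab|}$, leaving
\begin{equation*}
|\partial_{\xib}^{\alphab}[\phi(\xib)m(\lambda\xib)]|\leq C_{s,\phi}\,D_m\sum_{\betab\leq\alphab}|\partial_{\xib}^{\alphab-\betab}\phi(\xib)|\,|\xib|^{-|\betab|}.
\end{equation*}
Because $\phi$ is smooth and supported in the annulus $\tfrac12\leq|\xib|\leq 2$, the factors $|\xib|^{-|\betab|}$ are uniformly bounded on the support, and the whole expression is bounded by $C_{s,n,\phi}D_m\,\mathbf{1}_{\{1/2\leq|\xib|\leq 2\}}$, whose $L^2$ norm is a finite constant $C'_{s,n,\phi}$ independent of $\lambda$.

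Combining these steps gives $\|K_\lambda\|_{L^1(\mathbb{R}^n)}\leq C_{s,n,\phi}D_m$, as claimed. I do not expect a genuine obstacle: the proof is a classical scaling argument, and the one mild bookkeeping point is that the scaling of $m$ produces exactly the right cancellation of the $\lambda$-factors so the bound is $\lambda$-independent. The only place where one might have to be a little careful is if $s$ is allowed to be non-integer; in that case one should replace integer derivatives by the Bessel potential $(1-\Delta_{\xib})^{s/2}$ acting on $\phi(\xib)m(\lambda\xib)$, but since the statement of Theorem \ref{fouriermuliplierholdernorm} quantifies only over multi-indices $|\alphab|\leq s$, taking $s\in\mathbb{N}$ (or the integer part $\lfloor s\rfloor$, which still satisfies $\lfloor s\rfloor>n/2$ when $n\in\{1,2\}$ and $s>n/2$) is sufficient for the applications in Section \ref{sec:frozen}.
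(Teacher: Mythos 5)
Your proposal is correct and follows essentially the same route as the paper: weight $K_\lambda$ by $(1+|\thetab|^2)^{s/2}$, apply Cauchy--Schwarz using $s>n/2$, pass to the Fourier side via Plancherel, expand $\partial_{\xib}^{\alphab}[\phi(\xib)m(\lambda\xib)]$ by Leibniz, and use the scaling hypothesis on $m$ so the powers of $\lambda$ cancel against $|\xib|^{-|\betab|}$ on the compact annular support of $\phi$. The only cosmetic difference is the order of operations (you apply Cauchy--Schwarz first, the paper computes the weighted $L^2$ bound first), and your closing remark about non-integer $s$ is a reasonable aside that the paper leaves implicit.
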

\begin{proof}
Since there exist $C_s$ s.t. for all $\thetab\in \mathbb{R}^n$
\begin{align}
    \paren{1+\abs{\thetab}^2}^s \leq C_s \sum_{\abs{\bm{\alpha}}\leq s} \abs{\thetab^\alphab}^2,
\end{align}
we obtain
\begin{align}
\begin{split}
        &\int_{\mathbb{R}^n}\abs{K_\lambda\paren{\thetab}}^2\paren{1+\abs{\thetab}^2}^s d\thetab\\
    \leq& C_s \sum_{\abs{\bm{\alpha}}\leq s}\int_{\mathbb{R}^n}\abs{\thetab^\alphab K_\lambda\paren{\thetab}}^2 d\thetab\\
    =   & C_{n,s}\sum_{\abs{\bm{\alpha}}\leq s} \int_{\mathbb{R}^n} \abs{\partial_{\bm{\xi}}^\alphab\paren{\phi\paren{\xib} m\paren{\lambda\bm{\xi}}}}^2 d\xib\\
    =   & C_{n,s}\sum_{\abs{\bm{\alpha}}\leq s} \int_{\mathbb{R}^n} \abs{\sum_{\betab\leq\alphab}\combin{\alphab}{\betab}\paren{\partial_{\bm{\xi}}^\betab\phi}\paren{\xib}\lambda^{\abs{\alphab-\betab}}\paren{\partial_{\bm{\xi}}^{\alphab-\betab} m}\paren{\lambda\bm{\xi}}}^2 d\xib\\
    \leq& C_{n,s}D_{m}^2\sum_{\abs{\bm{\alpha}}\leq s} \int_{\mathbb{R}^n} \abs{\sum_{\betab\leq\alphab}\combin{\alphab}{\betab}\paren{\partial_{\bm{\xi}}^\betab\phi}\paren{\xib}\lambda^{\abs{\alphab-\betab}}\abs{\lambda\bm{\xi}}^{-\abs{\alphab-\betab}}}^2 d\xib.
\end{split}
\end{align}
$supp\paren{\phi}\subset\left\{\xib | \frac{1}{2}\leq \abs{\xib}\leq 2\right\}$, so
\begin{align}
\begin{split}
    &\int_{\mathbb{R}^n} \abs{\sum_{\betab\leq\alphab}\combin{\alphab}{\betab}\paren{\partial_{\bm{\xi}}^\betab\phi}\paren{\xib}\lambda^{\abs{\alphab-\betab}}\abs{\lambda\bm{\xi}}^{-\abs{\alphab-\betab}}}^2 d\xib\\
    =&\int_{\frac{1}{2}\leq \abs{\xib}\leq 2} \abs{\sum_{\betab\leq\alphab}\combin{\alphab}{\betab}\paren{\partial_{\bm{\xi}}^\betab\phi}\paren{\xib}\abs{\bm{\xi}}^{-\abs{\alphab-\betab}}}^2 d\xib\leq C_{\phi,\alphab}.
\end{split}
\end{align}
Thus,
\begin{align}
    \int_{\mathbb{R}^n}\abs{K_\lambda\paren{\thetab}}^2\paren{1+\abs{\thetab}^2}^s d\thetab\leq C_{n,s}D_{m}^2\sum_{\abs{\bm{\alpha}}\leq s}C_{\phi,\alphab}\leq  C_{s,n,\phi}D_{m}^2,
\end{align}
and by Holder inequality,
\begin{align}
\begin{split}
        \int_{\mathbb{R}^n}\abs{K_\lambda\paren{\thetab}} d\thetab
    \leq&\paren{\int_{\mathbb{R}^n}\abs{K_\lambda\paren{\thetab}}^2\paren{1+\abs{\thetab}^2}^s d\thetab}^\frac{1}{2}\paren{\int_{\mathbb{R}^n}\paren{1+\abs{\thetab}^2}^{-s} d\thetab}^\frac{1}{2}\\
    \leq&\sqrt{C_{s,n,\phi}}D_{m}\paren{\int_{\mathbb{R}^n}\paren{1+\abs{\thetab}^2}^{-s} d\thetab}^\frac{1}{2}\leq C_{s,n,\phi}D_{m}.
\end{split}
\end{align}
\end{proof}
Next, we may use $K_{\lambda}\paren{\thetab}$ and homogeneous Besov semi norm $\norm{\cdot}_{\Dot{B}_{p,q}^\gamma\paren{\mathbb{R}^n}}$ to prove the Fourier multiplier theorem.
\begin{proof}[Proof of Theorem \ref{fouriermuliplierholdernorm}]
First, set 
\begin{align}
    T_m^jf\paren{\thetab}:=\Dot{\Delta}_j T_m f\paren{\thetab}=\mc{F}^{-1}\paren{\phi\paren{2^{-j}\xib}m\paren{\xib}\mc{F}f\paren{\xib}}\paren{\thetab}.
\end{align}
Since 
\begin{align}
    \mc{F}^{-1}\paren{\phi\paren{2^{-j}\xib}m\paren{ \xib}}\paren{\thetab}=2^{nj} K_{2^j}\paren{2^j\thetab},
\end{align}
\begin{align}
\begin{split}
    \abs{T_m^jf\paren{\thetab}}
    =   &\abs{\mc{F}^{-1}\paren{\phi\paren{2^{-j}\xib}m\paren{ \xib}}\ast f \paren{\thetab}}\\
    \leq&\norm{\mc{F}^{-1}\paren{\phi\paren{2^{-j}\xib}m\paren{ \xib}}\paren{\thetab}}_{L^1\paren{\mbrn{n}}} \norm{f}_{L^\infty\paren{\mbrn{n}}}\\
    \leq&\norm{2^{nj} K_{2^j}\paren{2^j\thetab}}_{L^1\paren{\mbrn{n}}} \norm{f}_{L^\infty\paren{\mbrn{n}}}\\
    \leq&\norm{K_{2^j}\paren{\thetab}}_{L^1\paren{\mbrn{n}}} \norm{f}_{L^\infty\paren{\mbrn{n}}}\\
    \leq&C_{s,n,\phi}D_{m} \norm{f}_{L^\infty\paren{\mbrn{n}}}.
\end{split}
\end{align}
Next, $supp\paren{\phi\paren{2^{-j}\xib}}\subset\left\{\xib \left| 2^{j-1}\leq \abs{\xib}\leq 2^{j+1}\right.\right\}$, so for all $j+1\leq k-1$ or $j-1\geq k+1$
\begin{align}
    \phi\paren{2^{-j}\xib}\phi\paren{2^{-k}\xib}=0.
\end{align}
Therefore,
\begin{align}
    \phi\paren{2^{-j}\xib}\mc{F}f\paren{\xib}= \phi\paren{2^{-j}\xib}\paren{\mc{F}\paren{\Dot{\Delta}_{j-1}f}\paren{\xib}+\mc{F}\paren{\Dot{\Delta}_j f}\paren{\xib}+\mc{F}\paren{\Dot{\Delta}_{j+1}f}\paren{\xib}},
\end{align}
so we obtain
\begin{align}
    T_m^jf\paren{\thetab}=T_m^j\paren{\Dot{\Delta}_{j-1}f}\paren{\thetab} +T_m^j\paren{\Dot{\Delta}_{j}f}\paren{\thetab} +T_m^j\paren{\Dot{\Delta}_{j+1}f}\paren{\thetab}.
\end{align}
Finally, 
\begin{align}
\begin{split}
    &\norm{T_m f}_{\Dot{B}_{\infty,\infty}^\gamma\paren{\mathbb{R}^n}}
    =   \sup_{j\in\mathbb{Z}}2^{j\gamma}\norm{T_m^j f}_{L^\infty\paren{\mathbb{R}^n}}\\
    \leq&\sup_{j\in\mathbb{Z}}2^{j\gamma}\norm{T_m^j \paren{\Dot{\Delta}_{j-1}f}}_{L^\infty\paren{\mathbb{R}^n}}+\sup_{j\in\mathbb{Z}}2^{j\gamma}\norm{T_m^j \paren{\Dot{\Delta}_{j}f}}_{L^\infty\paren{\mathbb{R}^n}}+\sup_{j\in\mathbb{Z}}2^{j\gamma}\norm{T_m^j \paren{\Dot{\Delta}_{j+1}f}}_{L^\infty\paren{\mathbb{R}^n}}\\
    \leq&C_{s,n}D_{m}\paren{\sup_{j\in\mathbb{Z}}2^{j\gamma}\norm{\Dot{\Delta}_{j-1}f}_{L^\infty\paren{\mathbb{R}^n}}+\sup_{j\in\mathbb{Z}}2^{j\gamma}\norm{\Dot{\Delta}_{j}f}_{L^\infty\paren{\mathbb{R}^n}}+\sup_{j\in\mathbb{Z}}2^{j\gamma}\norm{\Dot{\Delta}_{j+1}f}_{L^\infty\paren{\mathbb{R}^n}}}\\
    =   &C_{s,n}D_{m}\paren{2^\gamma+1+2^{-\gamma}}\norm{ f}_{\Dot{B}_{\infty,\infty}^\gamma\paren{\mathbb{R}^n}}.
\end{split}
\end{align}
Since $\norm{\cdot}_{\Dot{B}_{p,q}^\gamma\paren{\mathbb{R}^n}}$ and $\jump{\cdot}_{C^\gamma\paren{\mathbb{R}^n}}$ are equivalent,
\begin{align}
    \jump{T_m u}_{C^{\gamma}\paren{\mathbb{R}^n}}\leq C_{\gamma, s, n} D_m\jump{u}_{C^{\gamma}\paren{\mathbb{R}^n}}.
\end{align}
\end{proof}

\section{Estimates for the semigroup  $e^{-t L_A\paren{\bm{\xi}}}$ }\label{sec:appB}

\begin{lemma}\label{LA_specform}
For all $\beta=\beta_1 \beta_2 \cdots \beta_k$, there exists a matrix $P_{\beta}\paren{\hat{\xi}_1,\hat{\xi}_2}$ of polynomials with degree  $deg\paren{P_{\beta}}\leq  3\abs{\beta}+4$ s.t.
\begin{align}
    \partial_{\beta}L_A\paren{\bm{\xi}}=\frac{1}{\abs{\xib}^{\abs{\beta}-1}}\frac{P_{\beta}\paren{\hat{\xi}_1,\hat{\xi}_2}}{\abs{U\hat{\xib}}^{2\abs{\beta}+3}}.\label{LA_Dform}
\end{align}
More specifically, $P_{\beta}\paren{\hat{\xi}_1,\hat{\xi}_2}$ can be written as
\begin{align}
    \paren{P_{\beta}}_{i_1 i_2}\paren{\hat{\xi}_1,\hat{\xi}_2}=\sum_{j_1,j_2\geq 0,j_1+j_2\leq 3\abs{\beta}+4} c^{\paren{\beta,i_1, i_2}}_{j_1,j_2}\paren{A, U, P,\frac{1}{\det(B)}, \mc{T}, \frac{d \mc{T}}{d\lambda}, \frac{1}{\lambda^2} }\hat{\xi}_1^{j_1}\hat{\xi}_2^{j_2},\label{LA_DPform}
\end{align}
where
\begin{align}
\begin{split}
    &c^{\paren{\beta,i_1, i_2}}_{j_1,j_2}\paren{A, U, P,\frac{1}{\det(B)}, \mc{T}, \frac{d \mc{T}}{d\lambda}, \frac{1}{\lambda^2} }\\
    =&c^{\paren{\beta,i_1, i_2}}_{j_1,j_2}\paren{A_{11},\cdots, A_{32}, U_{11},\cdots, U_{32}, P_{11},\cdots,P_{33}, \frac{1}{\det(B)}, \frac{\mc{T}}{\lambda}, \frac{d \mc{T}}{d\lambda}, \frac{1}{\lambda^2}}
\end{split}
\end{align}
is a polynomial function.

Moreover, $\norm{P_{\beta}\paren{\hat{\xi}_1,\hat{\xi}_2}}_{C^1\paren{\mc{DA}_{\sigma1,\sigma_2}}}$ and $\norm{U\hat{\xib}}_{C^1\paren{\mc{DA}_{\sigma1,\sigma_2}}}$ are uniformly bounded, i.e. there exists $C^{(\beta)}_{\sigma_1, \sigma_2,\mc{T}}$ and $C^{(\beta)}_{\sigma_1, \sigma_2}$ s.t. for all $\hat{\xib}\in\mathbb{S}^1$,
\begin{align}
    \norm{P_{\beta}\paren{\hat{\xi}_1,\hat{\xi}_2}}_{C^1\paren{\mc{DA}_{\sigma1,\sigma_2}}}&\leq C^{(\beta)}_{\sigma_1, \sigma_2,\mc{T}},\\
    \norm{U\hat{\xib}}_{C^1\paren{\mc{DA}_{\sigma1,\sigma_2}}}&\leq C^{(\beta)}_{\sigma_1, \sigma_2}.
\end{align}
\end{lemma}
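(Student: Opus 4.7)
The plan is to prove \eqref{LA_Dform}--\eqref{LA_DPform} by induction on $|\beta|$, exploiting the fact that $L_A(\xi)$ is positively homogeneous of degree $1$ in $\xi$, so that on the unit sphere it becomes a rational function of $\hat\xi = \xi/|\xi|$ whose only non-polynomial denominator is a power of $|U\hat\xi|$.

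\textbf{Base case $|\beta|=0$.} From the explicit formula \eqref{multiplierNon}, together with the identity $|B^{-1}\xi| = |\xi|\,|U\hat\xi|$, the factorization
$$\bm{v}(\xi)\otimes\bm{v}(\xi) = \frac{Q\mc{R}_{\pi/2}B^{-1}\hat\xi \otimes Q\mc{R}_{\pi/2}B^{-1}\hat\xi}{|U\hat\xi|^2},$$
and the homogeneity $M_A(\xi) = |\xi|^2 \widetilde M_A(\hat\xi)$ read off from \eqref{tesionD_fm}, one obtains
$$L_A(\xi) = |\xi|\cdot\frac{P_0(\hat\xi_1,\hat\xi_2)}{|U\hat\xi|^3}, \qquad P_0 = \frac{(|U\hat\xi|^2 I + Q\mc{R}_{\pi/2}B^{-1}\hat\xi \otimes Q\mc{R}_{\pi/2}B^{-1}\hat\xi)\widetilde M_A(\hat\xi)}{4\det(B)}.$$
Here $P_0$ is a matrix-valued polynomial in $(\hat\xi_1,\hat\xi_2)$ of degree at most $2+2=4$, whose coefficients are polynomial expressions in the entries of $A$, $U$, $P$ (noting $U^TU=B^{-2}$ and $P=AU^T$ are polynomial in $U$ and $A$) together with $1/\det(B)$, $\mc{T}/\lambda$, $d\mc{T}/d\lambda$, and $1/\lambda^2$. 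This is precisely \eqref{LA_Dform}--\eqref{LA_DPform} at $|\beta|=0$.

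\textbf{Inductive step.} Assuming \eqref{LA_Dform} for $\beta$, I apply $\partial/\partial\xi_k$ and use the elementary identities
$$\frac{\partial \hat\xi_j}{\partial \xi_k} = \frac{\delta_{jk}-\hat\xi_j\hat\xi_k}{|\xi|}, \qquad \frac{\partial}{\partial\xi_k}|U\hat\xi|^2 = \frac{2}{|\xi|}\bigl[(U^TU\hat\xi)_k - \hat\xi_k|U\hat\xi|^2\bigr].$$
Each differentiation of an object of the form $p(\hat\xi)$ (polynomial in $\hat\xi$ of degree $d$) produces $\frac{1}{|\xi|}\cdot\bigl(\partial_k p - \hat\xi_k\hat\xi_j\partial_j p\bigr)$, which is $1/|\xi|$ times a polynomial of degree at most $d+1$. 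Differentiating $|U\hat\xi|^{-(2|\beta|+3)}$ introduces an extra $\frac{1}{|\xi|}$, raises the denominator power by $2$, and contributes a polynomial factor of degree exactly $3$ in the numerator. Collecting all terms over the common denominator $|U\hat\xi|^{2|\beta|+5}$ and extracting one factor of $1/|\xi|$, the product rule gives
$$\partial_{k}\partial_\beta L_A(\xi) = \frac{1}{|\xi|^{|\beta|}}\cdot\frac{|U\hat\xi|^2\, Q_1(\hat\xi) + P_\beta(\hat\xi)\,Q_2(\hat\xi)}{|U\hat\xi|^{2(|\beta|+1)+3}},$$
with $\deg Q_1 \le \deg P_\beta + 1 \le 3|\beta|+5$ and $\deg Q_2 = 3$. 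The numerator polynomial has degree at most $\max(2 + \deg Q_1,\, \deg P_\beta + \deg Q_2) \le 3|\beta|+7 = 3(|\beta|+1)+4$, yielding \eqref{LA_Dform} for $k+\beta$. Since $\xi$-differentiation never touches $A$, $\lambda$, $\mc{T}$, or $d\mc{T}/d\lambda$, the coefficients of the new polynomial are again polynomial in the listed ingredients, preserving \eqref{LA_DPform}.

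\textbf{Uniform bounds.} The upper bound $|A\xi|\le\sigma_1|\xi|$ forces $\|A\|\le\sigma_1$, so $\mc{DA}_{\sigma_1,\sigma_2}$ is compact in matrix space. Using the bounds \eqref{BUbnd}--\eqref{Bxibnd} one has $\sigma_2\le\lambda\le\sqrt{2}\sigma_1$ and $|U\hat\xi|$ bounded above and below by positive constants depending only on $\sigma_1,\sigma_2$. Since $\mc{T}\in C^2$ with $\lambda\ge\sigma_2>0$, the quantities $\mc{T}/\lambda$ and $d\mc{T}/d\lambda$ are $C^1$ functions of $A$ on $\mc{DA}_{\sigma_1,\sigma_2}$, and the same is true of $\det B$, $U$, and $P$. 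Therefore each coefficient $c^{(\beta,i_1,i_2)}_{j_1,j_2}$ is a $C^1$ function of $A$ on the compact set $\mc{DA}_{\sigma_1,\sigma_2}$, giving uniform $C^1$ bounds for $P_\beta(\hat\xi_1,\hat\xi_2)$ and $|U\hat\xi|$ over $\hat\xi\in\mathbb{S}^1$.

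\textbf{Main obstacle.} The proof is essentially bookkeeping; the only subtlety is ensuring that when one differentiates the $|U\hat\xi|^{-(2|\beta|+3)}$ factor, the extra $|U\hat\xi|^2 = \hat\xi^T U^T U \hat\xi$ introduced to re-unite denominators does not inflate the polynomial degree beyond $3|\beta|+7$. Tracking this sharply is what makes the $3|\beta|+4$ bound work, and this is the step to treat with care.
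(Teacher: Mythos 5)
Your argument is correct and follows essentially the same route as the paper's own proof: both establish the base case by factoring out $|\xi|$ and $|U\hat\xi|^{-3}$ from the explicit symbol $L_A(\xi) = (\mc{F}_\theta G_{\alpha,A})(\xi) M_A(\xi)$ (yielding $\deg P_0 \le 4$), then run the same induction by applying $\partial_{\xi_k}$ to $|\xi|^{-(|\beta|-1)} P_\beta(\hat\xi)|U\hat\xi|^{-(2|\beta|+3)}$, using $\partial_k\hat\xi_j = (\delta_{jk}-\hat\xi_j\hat\xi_k)/|\xi|$ and $\partial_k|U\hat\xi|^2 = \frac{2}{|\xi|}[(U^TU\hat\xi)_k - \hat\xi_k|U\hat\xi|^2]$, and reuniting over the common denominator $|U\hat\xi|^{2|\beta|+5}$ — your degree bookkeeping $\max(2+\deg Q_1,\ \deg P_\beta + 3)\le 3|\beta|+7$ matches the paper's explicit term-by-term count. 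The one stylistic difference is in the uniform-bound step, where the paper spells out the partial derivatives of $A^TA$, $(A^TA)^{-1}$, $\det(A^TA)$, and $\lambda$ with respect to the entries $A_{ij}$, while you invoke compactness of $\mc{DA}_{\sigma_1,\sigma_2}$ and $C^1$-dependence of the ingredients on $A$; both are adequate, and your observation that $U^TU = B^{-2}$ and $P = AU^T$ are polynomial in $U,A$ is the same structural fact the paper relies on.
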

\begin{proof}
Since
\begin{align}
     &\paren{\mc{F}_\theta G_{\alpha,A}}(\bm{\xi})=\frac{1}{\abs{\xib}}\paren{\mc{F}_\theta G_{\alpha,A}}(\hat{\bm{\xi}})\\
    =&\frac{1}{\abs{\xib}}\frac{\paren{I+\alpha P}\abs{U\hat{\xib}}^2-\alpha U\hat{\xib}\otimes U\hat{\xib}}{4\det(B)\abs{U\hat{\xib}}^3},
\end{align}
and $Z(\bm{\xi})= \abs{\xib}^2 Z(\hat{\xib})$ where $Z(\hat{\xib})$ is a matrix of polynomials with degree $2$,
\begin{align}
    L_A\paren{\bm{\xi}}=\abs{\xib} \frac{P_{0}\paren{\hat{\xi}_1,\hat{\xi}_2}}{\abs{U\hat{\xib}}^{3}},
\end{align}
where
\begin{align}
    P_{0}=\frac{\paren{I+\alpha P}\abs{U\hat{\xib}}^2-\alpha U\hat{\xib}\otimes U\hat{\xib}}{4\det(B)}\paren{\frac{\mc{T}}{\lambda}\paren{I-\frac{A\hat{\xib}\otimes A\hat{\xib}}{\lambda^2}}+\D{\mc{T}}{\lambda}\frac{A\hat{\xib}\otimes A\hat{\xib}}{\lambda^2}},
\end{align}
where the degree of $P_0$ is $4$.
Obviously, $P_0$ can be written as \eqref{LA_DPform}.

When $\abs{\beta}=1$,
\begin{align}
\begin{split}
        &\PD{L_A}{\xi_i} \paren{\bm{\xi}}
        =\frac{\xi_i}{\abs{\xib}}\frac{P_{0}\paren{\hat{\xi}_1,\hat{\xi}_2}}{\abs{U\hat{\xib}}^{3}}+\abs{\xib} \sum_{j=1,2}\PD{}{\hat{\xi}_j}\frac{P_{0}\paren{\hat{\xi}_1,\hat{\xi}_2}}{\abs{U\hat{\xib}}^{3}}\PD{}{\xi_i}\frac{\xi_j}{\abs{\xib}}\\
        =&\frac{\xi_i}{\abs{\xib}}\frac{P_{0}\paren{\hat{\xi}_1,\hat{\xi}_2}}{\abs{U\hat{\xib}}^{3}}+\abs{\xib} \sum_{j=1,2}\frac{\PD{P_{0}}{\hat{\xi}_j} \abs{U\hat{\xib}}^{2}-3 P_{0}\paren{U^T U\hat{\xib}}_j}{\abs{ U\hat{\xib}}^{5}}\frac{\delta_{ij}-\hat{\xi}_i\hat{\xi}_j}{\abs{\xib}}\\
        =&\frac{P_i\paren{\hat{\xi}_1,\hat{\xi}_2}}{\abs{ U\hat{\xib}}^{5}},
\end{split}
\end{align}
where
\begin{align}
    P_i\paren{\hat{\xi}_1,\hat{\xi}_2}=\hat{\xi}_iP_{0}\abs{U\hat{\xib}}^{2}+ \sum_{j=1,2}\paren{\PD{P_{0}}{\hat{\xi}_j} \abs{U\hat{\xib}}^{2}-3 P_{0}\paren{U^T U\hat{\xib}}_j}\paren{\delta_{ij}-\hat{\xi}_i\hat{\xi}_j}.
\end{align}
The degrees of all terms are at most $1+4+2=3+2+2=4+1+2=7$.
Since
\begin{align}
    \abs{U\hat{\xib}}^{2}=& \sum_{j_1,j_2=1}^2\sum_{k=1}^3 U_{kj_1}U_{kj_2} \hat{\xi}_{j_1}\hat{\xi}_{j_2},\\
    \paren{U^TU\hat{\xib}}_j=&
    \begin{bmatrix}
       \sum_{k=1}^3 U_{kj}U_{k1} \hat{\xi}_{j}\\
       \sum_{k=1}^3 U_{kj}U_{k2} \hat{\xi}_{j}
    \end{bmatrix},
\end{align}
and $\PD{P_{0}}{\hat{\xi}_j}$ can be written as the form of \eqref{LA_DPform},
$P_i\paren{\hat{\xi}_1,\hat{\xi}_2}$ can be written as \eqref{LA_DPform}.
Thus, the case $\abs{\beta}=1$ holds.

Suppose $\abs{\beta}\leq k-1$ holds, for $\abs{\bar{\beta}}=k$, we may rewrite $\bar{\beta}$ as $\beta \beta_{k}$ where $\abs{\beta}=k-1$.
Then, 
\begin{align}
\begin{split}
     &\partial_{\bar{\beta}}L_A\paren{\bm{\xi}}
    =\partial_{\beta_k}\partial_{\beta}L_A\paren{\bm{\xi}}
    =\PD{}{\xi_{\beta_k}}\paren{\frac{1}{\abs{\xib}^{\abs{\beta}-1}}\frac{P_{\beta}\paren{\hat{\xi}_1,\hat{\xi}_2}}{\abs{U\hat{\xib}}^{2\abs{\beta}+3}}}\\
    =&-\paren{\abs{\beta}-1}\frac{1}{\abs{\xib}^{\abs{\beta}}}\hat{\xi}_{\beta_k}\frac{P_{\beta}}{\abs{U\hat{\xib}}^{2\abs{\beta}+3}}\\
     &+\frac{1}{\abs{\xib}^{\abs{\beta}-1}}\sum_{j=1,2}\frac{\PD{P_{\beta}}{\hat{\xi}_j} \abs{U\hat{\xib}}^{2}-\paren{2\abs{\beta}+3} P_{\beta}\paren{U^T U\hat{\xib}}_j}{\abs{ U\hat{\xib}}^{2\abs{\beta}+5}}\frac{\delta_{\beta_k j}-\hat{\xi}_{\beta_k}\hat{\xi}_j}{\abs{\xib}}\\
    =&\frac{1}{\abs{\xib}^{\abs{\bar{\beta}}}}\frac{P_{\bar{\beta}}\paren{\hat{\xi}_1,\hat{\xi}_2}}{\abs{ U\hat{\xib}}^{\bar{\beta}+3}}.
\end{split}
\end{align}
where 
\begin{align}
    P_{\hat{\beta}}=-\paren{\abs{\beta}-1}\hat{\xi}_iP_{\beta}\abs{U\hat{\xib}}^{2}+ \sum_{j=1,2}\paren{\PD{P_{\beta}}{\hat{\xi}_j} \abs{U\hat{\xib}}^{2}-\paren{2\abs{\bar{\beta}}+1} P_{\beta}\paren{U^T U\hat{\xib}}_j}\paren{\delta_{\beta_k j}-\hat{\xi}_{\beta_k}\hat{\xi}_j}.
\end{align}
The degrees of all terms are at most $1+\paren{3\abs{\beta}+4}+2=\paren{3\abs{\beta}+3}+2+2=\paren{3\abs{\beta}+4}+1+2=3\abs{\bar{\beta}}+4$.
Again, $\PD{P_{\beta}}{\hat{\xi}_j}$ is still able to written as the form of \eqref{LA_DPform},  so the case $\abs{\bar{\beta}}=k$ holds.
By Induction, for all $\beta$, the formulas \eqref{LA_Dform} and \eqref{LA_DPform} hold.

Next, in $P_{\beta}$, since for all square matrix $M$, $\norm{M}\leq \sum \abs{M_{ij}}$, we just need to estimate each element $\paren{P_{\beta}}_{i_1 i_2}$,
\begin{align}
    \abs{\paren{P_{\beta}}_{i_1 i_2}\paren{\hat{\xi}_1,\hat{\xi}_2}}\leq\sum \abs{ c^{\paren{\beta,i_1, i_2}}_{j_1,j_2}\paren{A, U, P,\frac{1}{\det(B)}, \frac{ \mc{T}}{\lambda}, \frac{d \mc{T}}{d\lambda}, \frac{1}{\lambda^2} }}
\end{align}
and $c^{\paren{\beta,i_1, i_2}}_{j_1,j_2}\paren{A, U, P,\frac{1}{\det(B)}, \frac{ \mc{T}}{\lambda}, \frac{d \mc{T}}{d\lambda}, \frac{1}{\lambda^2} }$ 
is a form of a polynomial, so we may only check each variable. Given $A\in \mc{DA}$, $\norm{\paren{ A^TA}^{-1}},\frac{1}{\det(B)}\leq \frac{1}{\sigma_2^2}$ and $\sigma_1\leq\lambda\leq\sqrt{2}\sigma_1$, so all variables are bounded by $\sigma_1$ and $\sigma_2$.
\begin{align}
    \norm{\PD{A^T A}{A_{ij}}}&\leq 2 \lambda\leq2\sqrt{2}\sigma_1,\\
    \norm{\PD{\paren{ A^TA}^{-1}}{A_{ij}}}&=\norm{\paren{ A^TA}^{-1}\PD{A^T A}{A_{ij}}\paren{ A^TA}^{-1}}\leq 2\sqrt{2}\frac{\sigma_1}{\sigma_2^4},
\end{align}
so on $\mc{DA}$, $U, P$ are  $C^1$ functions and their derivatives are bounded by $\sigma_1$ and $\sigma_2$.
Absolutely,
\begin{align}
    \norm{U\hat{\xib}}_{C^1\paren{\mc{DA}_{\sigma1,\sigma_2}}}\leq C^{(\beta)}_{\sigma_1, \sigma_2}.
\end{align}
Since
\begin{align}
    \abs{\PD{\det\paren{A^T A}}{A_{ij}}}&\leq \lambda^2 \leq 8\sigma_1^2,\\
    \abs{\PD{}{A_{ij}}\frac{1}{\det(B)}}&=\abs{\frac{1}{2\det(B)^3}\PD{\det\paren{A^T A}}{A_{ij}}}\leq 8\frac{\sigma_1^2}{\sigma_2^8},
\end{align}
and
\begin{align}
    \abs{\PD{\lambda}{A_{ij}}}=\abs{\frac{A_{ij}}{\lambda}}\leq 1
\end{align}
on $\mc{DA}$, $\frac{1}{\det(B)}, \frac{ \mc{T}}{\lambda}, \frac{d \mc{T}}{d\lambda}, \frac{1}{\lambda^2}$ are also  $C^1$ functions and their derivatives are bounded by $\sigma_1$ and $\sigma_2$.
Therefore, we obtain
\begin{align}
    \norm{P_{\beta}\paren{\hat{\xi}_1,\hat{\xi}_2}}_{C^1\paren{\mc{DA}_{\sigma1,\sigma_2}}}\leq C^{(\beta)}_{\sigma_1, \sigma_2,\mc{T}}.
\end{align}
\end{proof}

\begin{lemma}\label{LA_kernelest}
Given $A\in \mc{DA}_{\sigma_1,\sigma_2}$ and $\varphi\paren{\xib}=\varphi\paren{\abs{\xib}}$, a cutting and decreasing  respect $\abs{\xib}$ and supported in $\mc{B}\paren{1}$, and set 
\begin{align}
    K_0\paren{\bm{\theta}}&:= \mc{F}^{-1}\left[\paren{z+L_{A}}^{-1}(\bm{\xi})\varphi(\bm{\xi})\right],\\
    K_{1,j}\paren{\bm{\theta}}&:= \mc{F}^{-1}\left[ \xi_j\paren{z+L_{A}}^{-1}(\bm{\xi})\varphi(\bm{\xi})\right].
\end{align}
Then, for all $z\in \mc{S}_{\omega,\delta}$, we have the following estimates
\begin{align}
    \norm{K_0\paren{\bm{\theta}}}\leq& \frac{C_{\omega,\delta,\sigma_1, \sigma_2,\mc{T}}}{\abs{z}}\frac{1}{1+\abs{\thetab}^3},\\
    \norm{K_{1,j}\paren{\bm{\theta}}}\leq& C_{\omega,\delta,\sigma_1, \sigma_2,\mc{T}}\frac{1}{1+\abs{\thetab}^4}.
\end{align}

\end{lemma}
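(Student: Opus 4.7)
The proof rests on the resolvent expansion
\begin{equation*}
(z+L_A)^{-1} = \frac{1}{z} - \frac{L_A}{z^2} + \frac{L_A^2(z+L_A)^{-1}}{z^2},
\end{equation*}
which separates the multiplier into pieces of increasing smoothness at $\xi=0$. Writing $K_0 = K_0^{(a)}+K_0^{(b)}+K_0^{(c)}$ accordingly, the first piece $K_0^{(a)} = z^{-1}\mc{F}^{-1}[\varphi]$ is bounded by $C_N/(|z|(1+|\theta|)^N)$ for any $N$, since $\mc{F}^{-1}[\varphi]$ is Schwartz. At the end, I will use the lower bound $|z|\geq \omega \sin\delta$ on $\mc{S}_{\omega,\delta}$ to absorb all excess powers $1/|z|^k$ into $1/|z|$ times constants depending on $\omega,\delta$.

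For the main term $K_0^{(b)} = -z^{-2}\mc{F}^{-1}[L_A\varphi]$, I would introduce a standard dyadic Littlewood--Paley partition $\{\psi_j\}_{j\le 0}$ supported in annuli $|\xi|\sim 2^j$ (only $j\leq 0$ contribute since $\operatorname{supp}\varphi\subset\mc{B}(1)$). The structural Lemma \ref{LA_specform} extends (by the same inductive proof) to all orders and gives $\|\partial_\xi^k L_A\|_{L^\infty(|\xi|\sim 2^j)}\le C\,2^{j(1-k)}$ uniformly. Combining with the scaling $\|\partial^k\psi_j\|_\infty\le C\,2^{-jk}$, Leibniz yields
\begin{equation*}
\|\partial_\xi^k(L_A\varphi\psi_j)\|_{L^1(\mathbb{R}^2)} \leq C\,2^{j(3-k)}, \qquad k\geq 0.
\end{equation*}
The identity $\theta^\alpha\mc{F}^{-1}[f]=i^{|\alpha|}\mc{F}^{-1}[\partial^\alpha f]$ then gives the block estimate $|\mc{F}^{-1}[L_A\varphi\psi_j](\theta)|\le C\,2^{j(3-k)}/|\theta|^k$ for any $k\ge 0$. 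To sum, I choose $k=0$ for $j\le -\log_2|\theta|$ (yielding $\sum 2^{3j}\lesssim 1/|\theta|^3$) and some $k>3$ for $-\log_2|\theta|<j\le 0$ (a geometric sum in $2^{j(3-k)}$ dominated by its largest term $|\theta|^{k-3}$, giving $|\theta|^{k-3}/|\theta|^k = 1/|\theta|^3$). This delivers $|\mc{F}^{-1}[L_A\varphi](\theta)|\le C/(1+|\theta|^3)$, and hence $|K_0^{(b)}(\theta)|\le C/(|z|^2(1+|\theta|^3))$.

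The third piece $K_0^{(c)}$ is handled by the same dyadic machinery applied to $L_A^2\varphi(z+L_A)^{-1}$, which is one degree more regular at the origin and supplies the faster decay $1/(1+|\theta|^4)$ with an additional harmless $1/|z|$ factor. Combining all three pieces and absorbing $1/|z|^k$ factors as described gives the claimed bound on $K_0$. For $K_{1,j}$ the scheme is identical, but the extra factor $\xi_j$ raises the effective degree of homogeneity of the critical term $\xi_j L_A\varphi$ from $1$ to $2$; the analogous dyadic argument produces the block bound $\|\partial^k(\xi_jL_A\varphi\psi_j)\|_{L^1}\le C\,2^{j(4-k)}$ and consequently $|\mc{F}^{-1}[\xi_j L_A\varphi]|\le C/(1+|\theta|^4)$, yielding the second estimate.

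The main obstacle is establishing the uniform dyadic bound $\|\partial^k(L_A\varphi\psi_j)\|_{L^1}\le C\,2^{j(3-k)}$ (and its analogue for $\xi_jL_A$); this is the essential use of the homogeneous structure from Lemma \ref{LA_specform}. It is precisely this homogeneity that produces the $|\theta|^{-3}$ rate: a naive, single-scale integration by parts breaks down at the third derivative, since $\partial^3 L_A\sim |\xi|^{-2}$ is \emph{not} locally integrable in $\mathbb{R}^2$. Splitting by scales restores integrability on each block and the desired summation of the block bounds recovers the sharp decay.
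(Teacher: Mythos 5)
Your proposal is correct, and it takes a genuinely different route from the paper. The paper multiplies $K_0$ by $(1+|\theta|^3)$, realizes the weight as the action of $1-i\,\frac{\theta}{|\theta|}\cdot\nabla_\xi\Delta_\xi$ on $e^{i\theta\cdot\xi}$, and integrates by parts; this leaves a sum of terms most of which are absolutely integrable by the resolvent-derivative bounds of Lemma~\ref{l:LA_est}, except the single dangerous term carrying $\partial_j\Delta_\xi L_A\sim|\xi|^{-2}$, which is not locally integrable in $\mathbb{R}^2$. The paper kills this by an ad hoc symmetry trick: $L_A$ is even, so this term is odd, hence $e^{i\theta\cdot\xi}$ may be replaced by $\sin(\theta\cdot\xi)$, and after passing to polar coordinates the offending $|\xi|^{-2}$ is absorbed into the radial profile $\frac{\sin(Ar)}{r}$, which Lemma~\ref{t:integral_est01} controls uniformly in $A$ by the $C^1$ norm of the angular part. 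Your resolvent expansion $(z+L_A)^{-1}=z^{-1}-z^{-2}L_A+z^{-2}L_A^2(z+L_A)^{-1}$ (which checks out by commutativity of the scalar $z$ with $L_A$) instead isolates the single homogeneous-degree-one obstruction $z^{-2}\mathcal{F}^{-1}[L_A\varphi]$, which you then treat by a standard Littlewood--Paley/Mikhlin argument; the third resolvent piece is two degrees more regular at the origin and absorbed into the same machinery, and the scalar piece is Schwartz. The trade-off is clear: the paper's route stays entirely at the level of the two explicit lemmas already proved (resolvent derivatives up to order two, plus the one-variable sine-integral estimate), whereas your route is conceptually cleaner and more systematic but requires dyadic derivative bounds for $L_A^2(z+L_A)^{-1}$ up to about order four, which in turn needs bounds on $\partial_\xi^\alpha(z+L_A)^{-1}$ for $|\alpha|>2$; the paper only states these for $|\alpha|\le 2$, and you would have to supply the (routine, Neumann-series) extension, together with the all-orders form of Lemma~\ref{LA_specform} that you correctly observe is already available. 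One small point worth flagging explicitly in a write-up: your absorption $1/|z|^k\lesssim 1/|z|$ on the sector uses the lower bound $|z|\ge\omega\sin\delta$, and for the $K_{1,j}$ estimate (where the target has no $1/|z|$ at all) you use the same lower bound to absorb the $1/|z|$ coming from the scalar and middle pieces of the expansion into the constant $C_{\omega,\delta,\sigma_1,\sigma_2,\mathcal{T}}$; this is fine and matches the paper's dependence of the constant on $\omega$.
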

\begin{proof}
For convenience, we define $H(\bm{\xi}):=\paren{z+L_{A}}^{-1}(\bm{\xi})$
First, since
\begin{align}
\begin{split}
        &(1+|\thetab|^3)\int_{\mathbb{R}^2}e^{i\thetab\cdot\xib}\paren{z+L_{A}}^{-1}(\bm{\xi})\varphi(\bm{\xi})d\xib\\
    =&\int_{\mathbb{R}^2}(1-i\frac{\thetab}{|\thetab|}\cdot i\thetab|\thetab|^2)e^{i\thetab\cdot\xib}\paren{z+L_{A}}^{-1}(\bm{\xi})\varphi(\bm{\xi})d\xib\\
    =&\int_{\mathbb{R}^2}\left[(1-i\frac{\thetab}{|\thetab|}\cdot\nabla_{\xib}|\nabla_{\xib}|^2)e^{i\thetab\cdot\xib}\right]\paren{z+L_{A}}^{-1}(\bm{\xi})\varphi(\bm{\xi})d\xib\\
    =&\int_{\mathbb{R}^2}e^{i\thetab\cdot\xib}\paren{z+L_{A}}^{-1}(\bm{\xi})\varphi(\bm{\xi})+i e^{i\thetab\cdot\xib}\frac{\thetab}{|\thetab|}\cdot\nabla_{\xib}\Delta_{\xib}\left[\paren{z+L_{A}}^{-1}(\bm{\xi})\varphi(\bm{\xi}) \right] d\xib\\
    =&\int_{\mc{B}\paren{1}}e^{i\thetab\cdot\xib}\paren{z+L_{A}}^{-1}(\bm{\xi})\varphi(\bm{\xi})+i e^{i\thetab\cdot\xib}\frac{\thetab}{|\thetab|}\cdot\nabla_{\xib}\Delta_{\xib}\left[\paren{z+L_{A}}^{-1}(\bm{\xi})\varphi(\bm{\xi}) \right] d\xib,
\end{split}
\end{align}
By \eqref{IzLAbnd},
\begin{align}
    \norm{\int_{\mc{B}\paren{1}}e^{i\thetab\cdot\xib}\paren{z+L_{A}}^{-1}(\bm{\xi})\varphi(\bm{\xi})d\xib}\leq \frac{C_{\delta,\sigma_1, \sigma_2,\mc{T}}\norm{\varphi}_{C^0}}{\abs{z}}.
\end{align}
Next, we compute 
\begin{align}
\begin{split}
    &\PD{}{\xi_j}\Delta_{\xib}\left[\paren{z+L_{A}}^{-1}\varphi\right]\\
    =&\PD{}{\xi_j}\Delta_{\xib}\paren{z+L_{A}}^{-1}\varphi+\Delta_{\xib}\paren{z+L_{A}}^{-1}\PD{}{\xi_j}\varphi+2\PD{}{\xi_j}\nabla_{\xib}\paren{z+L_{A}}^{-1}\cdot\nabla_{\xib}\varphi\\
    +&2\nabla_{\xib}\paren{z+L_{A}}^{-1}\cdot\PD{}{\xi_j}\nabla_{\xib}\varphi+\PD{}{\xi_j}\paren{z+L_{A}}^{-1}\Delta_{\xib}\varphi+\paren{z+L_{A}}^{-1}\PD{}{\xi_j}\Delta_{\xib}\varphi.
\end{split}
\end{align}
Since $\varphi$ is smooth, we may estimate all of the terms except the first by \eqref{IzLAbnd} and \eqref{DDIzLAbnd02}. 
Obviously, for the last term,
\begin{align}
    \norm{i\frac{\theta_j}{|\thetab|}\int_{\mc{B}\paren{1}}e^{i\thetab\cdot\xib}\paren{z+L_{A}}^{-1}\paren{\PD{}{\xi_j}\Delta_{\xib}\varphi}  d\xib}\leq \frac{C_{\delta,\sigma_1, \sigma_2,\mc{T}}\norm{\varphi}_{C^3}}{\abs{z}}.
\end{align}
Then, for all $\abs{\alpha}=1,2$, $\abs{\alpha}+\abs{\beta}=3$
\begin{align}
\begin{split}
        &\norm{i\frac{\theta_j}{|\thetab|}\int_{\mc{B}\paren{1}}e^{i\thetab\cdot\xib}\partial_{\xib}^\alpha\paren{z+L_{A}}^{-1}(\bm{\xi})\partial_{\xib}^\beta\varphi(\bm{\xi})  d\xib}
    \leq \norm{\varphi}_{C^2}\int_{\mc{B}\paren{1}}\norm{\partial_{\xib}^\alpha\paren{z+L_{A}}^{-1}(\bm{\xi})}  d\xib\\
    \leq& \frac{C_{\delta, \sigma_1, \sigma_2, \mc{T}}}{\abs{z}^2}\norm{\varphi}_{C^2}\int_{\mc{B}\paren{1}}\abs{\bm{\xi}}^{1-\abs{\alpha}}  d\xib
    \leq\frac{C_{\delta, \sigma_1, \sigma_2, \mc{T}}}{\abs{z}^2}\norm{\varphi}_{C^2}.
\end{split}
\end{align}
Now, for the first term,
\begin{align}
\begin{split}
    \PD{}{\xi_j}\Delta_{\xib}H
    =&2\sum_{k=1,2}\left[-\paren{ E_{jkk}+E_{kjk}+E_{kkj}}+\paren{E_{k\paren{jk}}+E_{\paren{jk}k}}\right]\\
    &+\paren{ E_{j\paren{kk}}+E_{\paren{kk}j}}-H\PD{}{\xi_j}\Delta_{\xib}L_A H,
\end{split}\label{DDDIzLA}
\end{align}
where 
\begin{align*}
    E_{ijk}=&H\PD{L_A}{\xi_i}H\PD{L_A}{\xi_j}H\PD{L_A}{\xi_k}H,\\
    E_{k\paren{jk}}=&H\PD{L_A}{\xi_k}H\frac{\partial^2 L_A}{\partial\xi_j\partial\xi_k}H,\\
    E_{j\paren{kk}}=&H\PD{L_A}{\xi_j}H\Delta_{\xib}L_A H.
\end{align*}
Since $\norm{\PD{L_A}{\xi_j}}\lesssim 1$ and $\norm{\frac{\partial^2 L_A}{\partial\xi_j\partial\xi_k}}\lesssim \abs{\xib}^{-1}$, $\norm{E_{ijk}}\lesssim 1$ and $\norm{E_{k\paren{jk}}}, \norm{E_{j\paren{kk}}}\lesssim \abs{\xib}^{-1}$.
Therefore, we only have to check $\paren{z+L_{A}}^{-1}\PD{}{\xi_j}\Delta_{\xib}L_A\paren{z+L_{A}}^{-1}\varphi$.
$L_A$ is an even function, so the term is an odd function. 
By $L_A\paren{\xib}=\abs{\xib}L_A\paren{\hat{\xib}}$ and \eqref{LADfactor03},
\begin{align}
\begin{split}
    &\norm{i\frac{\theta_j}{|\thetab|}\int_{\mc{B}\paren{1}}e^{i\thetab\cdot\xib}\paren{z+L_{A}\paren{\xib}}^{-1}\PD{}{\xi_j}\Delta_{\xib}L_A\paren{\xib}\paren{z+L_{A}\paren{\xib}}^{-1}\varphi\paren{\xib}  d\xib}\\
    =&\norm{-\frac{\theta_j}{|\thetab|}\int_{\mc{B}\paren{1}}\sin \paren{\thetab\cdot\xib}\paren{z+L_{A}\paren{\xib}}^{-1}\PD{}{\xi_j}\Delta_{\xib}L_A\paren{\xib}\paren{z+L_{A}\paren{\xib}}^{-1}\varphi\paren{\xib}  d\xib}\\
    \leq&\norm{\int_{\mc{B}\paren{1}}\sin \paren{\thetab\cdot\hat{\xib} \abs{\xib}}\paren{z+\abs{\xib}L_A\paren{\hat{\xib}}}^{-1}\frac{\Phi^{(3)}_{A,j}\paren{\hat{\xib}}}{\abs{\xib}^2}\paren{z+\abs{\xib}L_A\paren{\hat{\xib}}}^{-1}\varphi\paren{\abs{\xib}}  d\xib}\\
    =&\norm{\int_{\mathbb{S}^1}\int_0^1 \paren{z+r L_A\paren{\hat{\xib}}}^{-1}\Phi^{(3)}_{A,j}\paren{\hat{\xib}}\paren{z+r L_A\paren{\hat{\xib}}}^{-1}\varphi\paren{r}\frac{\sin \paren{\thetab\cdot\hat{\xib} r}}{r} dr d\hat{\xib}}.
\end{split}
\end{align}
By Lemma \ref{t:integral_est01}, we obtain for all $\hat{\xib}\in \mathbb{S}^1$
\begin{align}
\begin{split}
    &\norm{\int_0^1 \paren{z+r L_A\paren{\hat{\xib}}}^{-1}\Phi^{(3)}_{A,j}\paren{\hat{\xib}}\paren{z+r L_A\paren{\hat{\xib}}}^{-1}\varphi\paren{r}\frac{\sin \paren{\thetab\cdot\hat{\xib} r}}{r} dr}\\
    \leq&2 \norm{\paren{z+r L_A\paren{\hat{\xib}}}^{-1}\Phi^{(3)}_{A,j}\paren{\hat{\xib}}\paren{z+r L_A\paren{\hat{\xib}}}^{-1}\varphi\paren{r}}_{C^1\paren{[0,1]}}\\
    \leq&4 \norm{\Phi^{(3)}_{A,j}\paren{\hat{\xib}}}\norm{\varphi\paren{r}}_{C^1\paren{[0,1]}}\norm{\paren{z+r L_A\paren{\hat{\xib}}}^{-1}}_{C^1\paren{[0,1]}}^2\\
    \leq&C_{\delta,\sigma_1,\sigma_2,\mc{T}}\norm{\varphi\paren{r}}_{C^1\paren{[0,1]}}\paren{\frac{1}{\abs{z}}+\frac{1}{\abs{z}^2}}^2.
\end{split}
\end{align}
Therefore,
\begin{align}
\begin{split}
    &\norm{\int_{\mathbb{S}^1}\int_0^1 \paren{z+r L_A\paren{\hat{\xib}}}^{-1}\Phi^{(3)}_{A,j}\paren{\hat{\xib}}\paren{z+r L_A\paren{\hat{\xib}}}^{-1}\varphi\paren{r}\frac{\sin \paren{\thetab\cdot\hat{\xib} r}}{r} dr d\hat{\xib}}\\
    \leq& C_{\delta,\sigma_1,\sigma_2,\mc{T}}\norm{\varphi\paren{r}}_{C^1}\paren{\frac{1}{\abs{z}}+\frac{1}{\abs{z}^2}}^2.
\end{split}
\end{align}
Since $\frac{1}{\abs{z}}\leq C_{\omega,\delta}$ if $z\in \mc{S}_{\omega,\delta}$,
\begin{align}
\begin{split}
    &\norm{\int_{\mc{B}\paren{1}}e^{i\thetab\cdot\xib}\paren{z+L_{A}}^{-1}(\bm{\xi})\varphi(\bm{\xi})+i e^{i\thetab\cdot\xib}\frac{\thetab}{|\thetab|}\cdot\nabla_{\xib}\Delta_{\xib}\left[\paren{z+L_{A}}^{-1}(\bm{\xi})\varphi(\bm{\xi}) \right] d\xib}\\
    \leq&\norm{\varphi\paren{r}}_{C^3}\sum_{k=1}^4 C^{(k)}_{\delta,\sigma_1,\sigma_2,\mc{T}}\frac{1}{\abs{z}^k}\\
    \leq &\frac{C_{\omega,\delta,\sigma_1, \sigma_2,\mc{T}}\norm{\varphi}_{C^3}}{\abs{z}},
\end{split}
\end{align}
and
\begin{align}
    \norm{\int_{\mathbb{R}^2}e^{i\thetab\cdot\xib}\paren{z+L_{A}}^{-1}(\bm{\xi})\varphi(\bm{\xi})d\xib}
    \leq& \frac{C_{\omega,\delta,\sigma_1, \sigma_2,\mc{T}}\norm{\varphi}_{C^3}}{\abs{z}} \frac{1}{1+\abs{\thetab}^3}.
\end{align}
Next, for $K_{1,j}$, we use the same technique.
$(1+|\thetab|^4)K_{1,j}\paren{\thetab}$ becomes
\begin{align}
\begin{split}
        &(1+|\thetab|^4)\int_{\mathbb{R}^2}e^{i\thetab\cdot\xib}\xi_j\paren{z+L_{A}}^{-1}(\bm{\xi})\varphi(\bm{\xi})d\xib\\
    =&\int_{\mathbb{R}^2}\left[(1+|\nabla_{\xib}|^4)e^{i\thetab\cdot\xib}\right]\xi_j\paren{z+L_{A}}^{-1}(\bm{\xi})\varphi(\bm{\xi})d\xib\\
    =&\int_{\mc{B}\paren{1}}e^{i\thetab\cdot\xib}\xi_j\paren{z+L_{A}}^{-1}(\bm{\xi})\varphi(\bm{\xi})+ e^{i\thetab\cdot\xib}\Delta_{\xib}^2\left[\xi_j\paren{z+L_{A}}^{-1}(\bm{\xi})\varphi(\bm{\xi}) \right] d\xib,
\end{split}
\end{align}
By \eqref{IzLAbnd},
\begin{align}
    \abs{\int_{\mc{B}\paren{1}}e^{i\thetab\cdot\xib}\xi_j\paren{z+L_{A}}^{-1}(\bm{\xi})\varphi(\bm{\xi})d\xib}\leq \frac{C_{\delta,\sigma_1, \sigma_2,\mc{T}}\norm{\varphi}_{C^0}}{\abs{z}}.
\end{align}
Next,
\begin{align}
    \Delta_{\xib}^2\left[\xi_j\paren{z+L_{A}}^{-1}(\bm{\xi})\varphi(\bm{\xi}) \right]=4 \PD{}{\xi_j}\Delta_{\xib}\left[\paren{z+L_{A}}^{-1}\varphi\right]+\xi_j\Delta_{\xib}^2\left[\paren{z+L_{A}}^{-1}(\bm{\xi})\varphi(\bm{\xi}) \right]
\end{align}
We have estimated the first term, so let us compute the second,
\begin{align}
\begin{split}
    &\Delta_{\xib}^2\left[\paren{z+L_{A}}^{-1}\varphi \right]\\
    =&\quad\Delta_{\xib}^2\paren{z+L_{A}}^{-1}\varphi+4\nabla_{\xib}\Delta_{\xib}\paren{z+L_{A}}^{-1}\cdot\nabla_{\xib} \varphi+4\left[\nabla_{\xib}^2\paren{z+L_{A}}^{-1}:\nabla_{\xib}^2\varphi\right]\\
     &+2\Delta_{\xib}\paren{z+L_{A}}^{-1}\Delta_{\xib}\varphi+4\nabla_{\xib}\paren{z+L_{A}}^{-1}\cdot\nabla_{\xib}\Delta_{\xib}\varphi+\paren{z+L_{A}}^{-1}\Delta_{\xib}^2\varphi.
\end{split}
\end{align}
For the last four terms, we may estimate them by \eqref{IzLAbnd} and \eqref{DDIzLAbnd02} again.
For the second term, since $\partial^\alpha_{\xib}L_A\lesssim \abs{\xib}^{1-\abs{\alpha}}$, by \eqref{DDDIzLA}, we may obtain
\begin{align}
\begin{split}
        &\norm{\int_{\mc{B}\paren{1}}e^{i\thetab\cdot\xib}\xi_j\nabla_{\xib}\Delta_{\xib}\paren{z+L_{A}}^{-1}\paren{\xib}\cdot\nabla_{\xib} \varphi\paren{\xib} d\xib}\\
    \leq&\norm{\varphi}_{C^1}\sum_{k=1}^4 C^{(k)}_{\delta,\sigma_1,\sigma_2,\mc{T}}\frac{1}{\abs{z}^k}.
\end{split}
\end{align}
In the first term, we have

\begin{align}
\begin{split}
    \Delta_{\xib}^2 H
    =&\sum_{j,k=1,2}\left[\quad 8\paren{E_{jjkk}+E_{jkjk}+E_{jkkj}}\right.\\
    &\qquad\quad\left.      -8\paren{E_{jk\paren{jk}}+E_{j\paren{jk}k}+E_{\paren{jk}jk}}+4H\frac{\partial^2 L_A}{\partial\xi_j\partial\xi_k}H\frac{\partial^2 L_A}{\partial\xi_j\partial\xi_k}H\right]\\
    &+\sum_{j=1,2}\left[-4\paren{E_{jj\paren{kk}}+E_{j\paren{kk}j}+E_{\paren{kk}jj}}+4\paren{E_{j\paren{jkk}}+E_{\paren{jkk}j}}\right]\\
    &+2H\Delta_{\xib}L_A H\Delta_{\xib}L_A H
    -H\Delta_{\xib}^2 L_A H,
\end{split}
\end{align}
where
\begin{align*}
    E_{jjkk}=&H\PD{L_A}{\xi_j}H\PD{L_A}{\xi_j}H\PD{L_A}{\xi_k}H\PD{L_A}{\xi_k}H,\\
    E_{jk\paren{jk}}=&H\PD{L_A}{\xi_j}H\PD{L_A}{\xi_k}H\frac{\partial^2 L_A}{\partial\xi_j\partial\xi_k}H,\\
    E_{jj\paren{kk}}=&H\PD{L_A}{\xi_j}H\PD{L_A}{\xi_j}H\Delta_{\xib}L_A H,\\
    E_{j\paren{jkk}}=&H\PD{L_A}{\xi_k}H\PD{\Delta_{\xib}L_A}{\xi_j}H,
\end{align*}
and we only have to compute the $-\paren{z+L_{A}}^{-1}\Delta_{\xib}^2 L_A\paren{z+L_{A}}^{-1}$ term.
Since $L_A$ is an even function, $-\xi_j\paren{z+L_{A}}^{-1}\Delta_{\xib}^2 L_A\paren{z+L_{A}}^{-1}\paren{\xib}\varphi(\bm{\xi})$ is odd, again, by Lemma \ref{l:LA_est} and \ref{t:integral_est01}
\begin{align}
\begin{split}
    &\norm{-\int_{\mc{B}\paren{1}}e^{i\thetab\cdot\xib}\xi_j\paren{z+L_{A}}^{-1}\Delta_{\xib}^2 L_A\paren{z+L_{A}}^{-1}\paren{\xib}\varphi(\bm{\xi})  d\xib}\\
    =&\norm{\int_{\mc{B}\paren{1}}\xi_j\sin \paren{\thetab\cdot\xib}\paren{z+L_{A}\paren{\xib}}^{-1}\Delta_{\xib}^2 L_A\paren{\xib}\paren{z+L_{A}\paren{\xib}}^{-1}\varphi\paren{\xib}  d\xib}\\
    \leq&\norm{\int_{\mc{B}\paren{1}}\sin \paren{\thetab\cdot\hat{\xib} \abs{\xib}}\paren{z+\abs{\xib}L_A\paren{\hat{\xib}}}^{-1}\frac{\xi_j\Phi^{(4)}_{A}\paren{\hat{\xib}}}{\abs{\xib}^3}\paren{z+\abs{\xib}L_A\paren{\hat{\xib}}}^{-1}\varphi\paren{\abs{\xib}}  d\xib}\\
    =&\norm{\int_{\mathbb{S}^1}\hat{\xi}_j\int_0^1 \paren{z+r L_A\paren{\hat{\xib}}}^{-1}\Phi^{(4)}_{A}\paren{\hat{\xib}}\paren{z+r L_A\paren{\hat{\xib}}}^{-1}\varphi\paren{r}\frac{\sin \paren{\thetab\cdot\hat{\xib} r}}{r} dr d\hat{\xib}}\\
    \leq& 4 \int_{\mathbb{S}^1}\abs{\hat{\xi}_j}\norm{\Phi^{(4)}_{A}\paren{\hat{\xib}}}\norm{\varphi\paren{r}}_{C^1\paren{[0,1]}}\norm{\paren{z+r L_A\paren{\hat{\xib}}}^{-1}}_{C^1\paren{[0,1]}}^2 d\hat{\xib}\\
    \leq&C_{\delta,\sigma_1,\sigma_2,\mc{T}}\norm{\varphi\paren{r}}_{C^1\paren{[0,1]}}\paren{\frac{1}{\abs{z}}+\frac{1}{\abs{z}^2}}^2.
\end{split}
\end{align}
Hence,
\begin{align}
\begin{split}
    &\norm{\int_{\mc{B}\paren{1}}e^{i\thetab\cdot\xib}\xi_j\paren{z+L_{A}}^{-1}(\bm{\xi})\varphi(\bm{\xi})+ e^{i\thetab\cdot\xib}\Delta_{\xib}^2\left[\xi_j\paren{z+L_{A}}^{-1}(\bm{\xi})\varphi(\bm{\xi}) \right] d\xib}\\
    \leq&\norm{\varphi\paren{r}}_{C^4}\sum_{k=1}^5 C^{(k)}_{\delta,\sigma_1,\sigma_2,\mc{T}}\frac{1}{\abs{z}^k}\\
    \leq &C_{\omega,\delta,\sigma_1, \sigma_2,\mc{T}}\norm{\varphi}_{C^4},
\end{split}
\end{align}
and
\begin{align}
    \norm{\int_{\mathbb{R}^2}e^{i\thetab\cdot\xib}\xi_j\paren{z+L_{A}}^{-1}(\bm{\xi})\varphi(\bm{\xi})d\xib}
    \leq& C_{\omega,\delta,\sigma_1, \sigma_2,\mc{T}}\norm{\varphi}_{C^4} \frac{1}{1+\abs{\thetab}^4}.
\end{align}

\end{proof}

\begin{lemma}\label{eLA_est}
Given $k(\bm{x})=\mc{F}^{-1}[e^{-L_A\paren{\xib}}]$, then we have the following estimates
\begin{align}
    \norm{k(\bm{x})}&\leq C\frac{1}{1+\abs{\bm{x}}^3}\label{eLA_est00},\\
    \norm{\PD{}{x_i} k(\bm{x})}&\leq C\frac{1}{1+\abs{\bm{x}}^4}\label{eLA_est01},\\
    \norm{\PD{}{x_i}\PD{}{x_j}k(\bm{x})}&\leq C\frac{1}{1+\abs{\bm{x}}^5}\label{eLA_est02}.
\end{align}
\end{lemma}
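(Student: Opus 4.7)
The plan is to mimic the proof of Lemma \ref{LA_kernelest}, replacing the resolvent symbol $(z+L_A)^{-1}\varphi$ with the exponential symbol $e^{-L_A(\xib)}$. As preparation, I would first establish the pointwise bound
\begin{equation*}
\|\partial^\alpha_\xib e^{-L_A(\xib)}\| \leq C_\alpha\,(1+|\xib|^{1-|\alpha|})\,e^{-c|\xib|},
\end{equation*}
proved by iterating the Duhamel identity
\begin{equation*}
\partial_{\xi_i} e^{-L_A(\xib)} = -\int_0^1 e^{-(1-u)L_A(\xib)}\,(\partial_{\xi_i}L_A(\xib))\,e^{-uL_A(\xib)}\,du.
\end{equation*}
This relies on the spectral bound $\|e^{-tL_A(\xib)}\| \leq Ce^{-ct|\xib|}$ for $t\in[0,1]$ (a consequence of Lemma \ref{l:LA_est}) together with the homogeneity estimates $\|\partial^\beta L_A(\xib)\| \leq C|\xib|^{1-|\beta|}$ provided by Lemmas \ref{l:LA_est} and \ref{LA_specform}.

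For the interior region $|\bm{x}|\leq 1$, the bounds on $\partial^\alpha k$ with $|\alpha|\leq 2$ follow directly by integrating $(i\xib)^\alpha e^{-L_A(\xib)}$ over $\mathbb{R}^2$. For $|\bm{x}|\geq 1$, I would use the standard identities
\begin{equation*}
|\bm{x}|^{2\ell}e^{i\bm{x}\cdot\xib}=(-\Delta_\xib)^\ell e^{i\bm{x}\cdot\xib},\qquad |\bm{x}|^{2\ell+1}e^{i\bm{x}\cdot\xib}=i(-1)^{\ell+1}\tfrac{\bm{x}}{|\bm{x}|}\cdot\nabla_\xib(-\Delta_\xib)^\ell e^{i\bm{x}\cdot\xib},
\end{equation*}
and integrate by parts $3$, $4$, and $5$ times respectively to reduce \eqref{eLA_est00}--\eqref{eLA_est02}, up to signs, to uniform-in-$\bm{x}$ bounds on
\begin{equation*}
\int_{\mathbb{R}^2}\! e^{i\bm{x}\cdot\xib}\nabla_\xib\Delta_\xib e^{-L_A}d\xib,\quad \int_{\mathbb{R}^2}\! e^{i\bm{x}\cdot\xib}\Delta_\xib^2[\xi_i e^{-L_A}]d\xib,\quad \int_{\mathbb{R}^2}\! e^{i\bm{x}\cdot\xib}\nabla_\xib\Delta_\xib^2[\xi_i\xi_j e^{-L_A}]d\xib.
\end{equation*}

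The main obstacle is that the preparatory estimate shows each of these three integrands has pointwise singularity of order $|\xib|^{-2}$ at the origin, which is \emph{not} Lebesgue-integrable on $\mathbb{R}^2$. I would circumvent this precisely as in Lemma \ref{LA_kernelest}, by combining two ingredients. First, since $L_A(-\xib)=L_A(\xib)$ and hence $e^{-L_A}$ is even, each of the three integrands above is an \emph{odd} function of $\xib$ (they involve an odd total number of derivatives applied to an even or odd function of matching parity), so only the $\sin(\bm{x}\cdot\xib)$ part of $e^{i\bm{x}\cdot\xib}$ contributes. Second, passing to polar coordinates $\xib = r\hxi$ and using the homogeneity $L_A(\xib) = r\,L_A(\hxi)$, the worst singular piece of each integral can be rewritten, schematically, as
\begin{equation*}
\int_{\mathbb{S}^1}\int_0^\infty \frac{\sin(r\,\bm{x}\!\cdot\!\hxi)}{r}\,\Phi_A(\hxi)\,e^{-r L_A(\hxi)}\,dr\,d\hxi,
\end{equation*}
with $\Phi_A$ smooth and bounded on $\mathbb{S}^1$. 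The inner $r$-integral is uniformly bounded in $\bm{x}\!\cdot\!\hxi$ and $\hxi$: on $[0,1]$ by the $C^1$-type estimate of Lemma \ref{t:integral_est01}, and on $[1,\infty)$ directly by the exponential factor $\|e^{-rL_A(\hxi)}\|\leq e^{-cr}$. The milder remainder terms in each integral are handled by direct absolute integration using the preparatory pointwise estimate. Combining these contributions yields the three claimed decay rates.
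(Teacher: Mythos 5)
Your proposal is correct and follows essentially the same strategy as the paper's proof: multiply by $1+|\bm{x}|^{3+|\alpha|}$, integrate by parts to produce $\nabla_\xib\Delta_\xib$, $\Delta_\xib^2$, or $\nabla_\xib\Delta_\xib^2$ of the symbol, isolate the one non-integrable (order $|\xib|^{-2}$) piece as the Duhamel term in which all derivatives fall on $L_A$, and then exploit its oddness via polar coordinates and Lemma~\ref{t:integral_est01}. The only differences are cosmetic (stating a preparatory $\|\partial^\alpha_\xib e^{-L_A}\|\lesssim(1+|\xib|^{1-|\alpha|})e^{-c|\xib|}$ bound up front, splitting the radial integral at $r=1$ instead of restricting to $B_1(0)$ before passing to polar coordinates, and a harmless sign slip in $i(-1)^{\ell+1}$), and you supply the five-fold integration by parts for \eqref{eLA_est02} explicitly, which the paper leaves implicit.
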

\begin{proof}
First,  
\begin{equation*}
    \begin{aligned}
    (1+|\bm{x}|^3)\int_{\mathbb{R}^2}e^{i\bm{x}\cdot\xib}e^{-L_A(\xib)}d\xib
    &=\int_{\mathbb{R}^2}(1-i\frac{\bm{x}}{|\bm{x}|}\cdot i\bm{x}|\bm{x}|^2)e^{i\bm{x}\cdot\xib}e^{-L_A(\xib)}d\xib\\
    &=\int_{\mathbb{R}^2}(1-i\frac{\bm{x}}{|\bm{x}|}\cdot\nabla_{\xib}|\nabla_{\xib}|^2)e^{i\bm{x}\cdot\xib}e^{-L_A(\xib)}d\xib\\
    &=\int_{\mathbb{R}^2}e^{i\bm{x}\cdot\xib}e^{-L_A(\xib)}+i\frac{\bm{x}}{|\bm{x}|}\cdot\nabla_{\xib}\Delta_{\xib}e^{-L_A(\xib)}d\xib.
    \end{aligned}
\end{equation*}
Since $e^{- L_A(\xib)}\lesssim e^{-\abs{\xib}}$, we obtain
\begin{equation}\label{aux_decay}
    \begin{aligned}
    (1+|\bm{x}|^3)\norm{k(\bm{x})}&\lesssim\Big(1+\norm{\int_{B_1(0)}e^{i\bm{x}\cdot\xib}\frac{\bm{x}}{|\bm{x}|}\cdot \nabla_{\xib}\Delta_{\xib} e^{-L_A(\xib)}d\xib}\Big).
    \end{aligned}
\end{equation}
Next, since
\begin{align*}
    \PD{}{\xi_i}e^{-L_A(\xib)}=-\int_0^1 e^{-\paren{1-t}L_A(\xib)}\PD{}{\xi_i}L_A(\xib)e^{-tL_A(\xib)}dt,
\end{align*}
\begin{align*}
\begin{split}
    &\partial_{\xib}^{jkk} e^{-L_A(\xib)}\\
    =&-\int_0^1 e^{-\paren{1-t_1}L_A(\xib)}\partial_{\xib}^{jkk}L_A(\xib)e^{-t_1 L_A(\xib)}dt_1\\
    &+H_{21}\paren{j,kk}+H_{21}\paren{k,jk}+H_{21}\paren{jk,k}+H_{22}\paren{kk,j}+H_{22}\paren{jk,k}+H_{22}\paren{k,jk}\\
    &-H_3\paren{\paren{1-t_1}\paren{1-t_2}\paren{1-t_3},j,\paren{1-t_1}\paren{1-t_2}t_3,k,\paren{1-t_1}t_2,k,t_1}\\
    &-H_3\paren{\paren{1-t_1}\paren{1-t_2},k,\paren{1-t_1}t_2\paren{1-t_3},j,\paren{1-t_1}t_2t_3,k,t_1}\\
    &-H_3\paren{\paren{1-t_1}\paren{1-t_2},k,\paren{1-t_1}t_2,k,t_1\paren{1-t_3},j,t_1t_3}\\
    &-H_3\paren{\paren{1-t_1}\paren{1-t_3},j,\paren{1-t_1}t_3,k,t_1\paren{1-t_2},k,t_1t_2}\\
    &-H_3\paren{\paren{1-t_1},k,t_1\paren{1-t_2}\paren{1-t_3},j,t_1\paren{1-t_2}t_3,k,t_1t_2}\\
    &-H_3\paren{\paren{1-t_1},k,t_1\paren{1-t_2}t_3,k,t_1t_2\paren{1-t_3},j,t_1t_2t_3},
\end{split}
\end{align*}
where
\begin{align*}
    H_{21}\paren{\alphab,\betab}
    =&\int_0^1\int_0^1 e^{-\paren{1-t_1}\paren{1-t_2}L_A(\xib)}\partial_{\xib}^{\alphab}L_A(\xib)e^{-\paren{1-t_1}t_2 L_A(\xib)}\partial_{\xib}^{\betab}L_A(\xib)e^{-t_1 L_A(\xib)}dt_1dt_2,\\
    H_{22}\paren{\alphab,\betab}
    =&\int_0^1\int_0^1 e^{-t_1L_A(\xib)}\partial_{\xib}^{\alphab}L_A(\xib)e^{-\paren{1-t_1}t_2 L_A(\xib)}\partial_{\xib}^{\betab}L_A(\xib)e^{- \paren{1-t_1}\paren{1-t_2}L_A(\xib)}dt_1dt_2.
\end{align*}

\begin{align*}
        &H_{3}\paren{s_1,\alphab,s_2,\betab,s_3, \gammab,s_4}\\
    =&\int_0^1\int_0^1\int_0^1 e^{-s_1L_A(\xib)}\partial_{\xib}^{\alphab}L_A(\xib)e^{-t_2 L_A(\xib)}\partial_{\xib}^{\betab}L_A(\xib)e^{-s_3 L_A(\xib)}\partial_{\xib}^{\gammab}L_A(\xib)e^{-s_4L_A(\xib)}dt_1dt_2dt_3.
\end{align*}
Since $L_A(\xib)$ is even and homogeneous of degree one, we have that the third derivatives of $L_A(\xib)$ are odd and homogeneous of degree minus two. The other terms are less singular and thus the corresponding integrals in \eqref{aux_decay} are bounded directly. 
That is to say, $\norm{\partial_{\xib}^{\alphab}L_A(\xib)}\lesssim \abs{\xib}^{1-\abs{\xib}}$ and $\norm{e^{-s L_A(\xib)}}\lesssim e^{-sC\abs{\xib}}$ for some $C>0$, so $\norm{H_{21}}, \norm{H_{22}}\lesssim \abs{\xib}^{-1} e^{-C\abs{\xib}}$,$\norm{ H_3}\lesssim  e^{-C\abs{\xib}}$, and all of them are integrable.
Therefore, we only have to estimate the first, i.e.
\begin{align*}
    \int_{B_1(0)}\int_0^1 e^{-\paren{1-t_1}L_A(\xib)}e^{i\bm{x}\cdot\xib}\frac{\bm{x}}{|\bm{x}|} \cdot\nabla_{\xib}\Delta_{\xib}L_A(\xib)e^{-t_1 L_A(\xib)}dt_1 d\xib.
\end{align*}
We can write 
\begin{equation*}
    \begin{aligned}
    e^{-\paren{1-t_1}L_A(\xib)}\PD{}{\xi_j}\Delta_{\xib} L_A(\xib)e^{-t_1 L_A(\xib)}=\frac{1}{|\xib|^2}e^{-\paren{1-t_1}L_A(\xib)}\Phi^{(3)}_{A,j}(\hat{\xib})e^{-t_1 L_A(\xib)},
    \end{aligned}
\end{equation*}
where $\hat{\xib}=\xib/|\xib|$ and $\Phi^{(3)}_{A,j}(\hat{\xib})$ is even and bounded from below and above, thanks to the arc-chord condition and the $C^1$ regularity (see \eqref{s1s2} and Remark \ref{arc_chord}).
We then have that
\begin{equation*}
    \begin{aligned}
    &\int_{B_1(0)}\int_0^1 e^{-\paren{1-t_1}L_A(\xib)}e^{i\bm{x}\cdot\xib}\frac{\bm{x}}{|\bm{x}|} \cdot\nabla_{\xib}\Delta_{\xib}L_A(\xib)e^{-t_1 L_A(\xib)}dt_1 d\xib\\
    =&\sum_{j=1,2}\frac{ix_j}{|\bm{x}|}\int_{\mathbb{S}^1}\int_0^1\int_0^1 \frac{\sin{(\bm{x}\cdot\hat{\xib}\minspace r})}{r}e^{-\paren{1-t_1}rL_A(\hat{\xib})} \Phi^{(3)}_{A,j}(\hat{\xib})e^{-t_1r L_A(\hat{\xib})} dr dt_1 d\hat{\xib}.
    \end{aligned}
\end{equation*}
By lemma \ref{t:integral_est01}, we obtain for all $\bm{x}\in \mbr,\hat{ \xib}\in \mathbb{S}^1$ and $t_1\in[0,1]$
\begin{align}
\begin{split}
       &\norm{ \int_0^1 \frac{\sin{(\bm{x}\cdot\hat{\xib}\minspace r})}{r} e^{-\paren{1-t_1}rL_A(\hat{\xib})} \Phi^{(3)}_{A,j}(\hat{\xib})e^{-t_1r L_A(\hat{\xib})}dr}\\ 
   \leq &2 \norm{e^{-\paren{1-t_1}rL_A(\hat{\xib})} \Phi^{(3)}_{A,j}(\hat{\xib})e^{-t_1r L_A(\hat{\xib})}}_{C^1\paren{[0,1];r}}.
\end{split}
\end{align}
$L_A(\hat{\xib})$ is positive definite and diagonalizable and $\Phi^{(3)}_{A,j}(\hat{\xib})$ is boundned, so for all $\hat{\xib}\in \mathbb{S}^1$ and $t_1\in[0,1]$,
\begin{align}
    \norm{e^{-\paren{1-t_1}rL_A(\hat{\xib})} \Phi^{(3)}_{A,j}(\hat{\xib})e^{-t_1r L_A(\hat{\xib})}}_{C^1\paren{[0,1];r}}\leq C\paren{ 1+\norm{L_A(\hat{\xib})}}.
\end{align}
Therefore, since $L_A(\hat{\xib})$ is bounded on $\mathbb{S}^1$,
\begin{align}
    \int_{B_1(0)}\int_0^1 e^{-\paren{1-t_1}L_A(\xib)}e^{i\bm{x}\cdot\xib}\frac{\bm{x}}{|\bm{x}|} \cdot\nabla_{\xib}\Delta_{\xib}L_A(\xib)e^{-t_1 L_A(\xib)}dt_1 d\xib
\end{align}
is bounded, and we may conclude that
\begin{equation*}
    \begin{aligned}
    \norm{k(\bm{x})}\lesssim (1+|\bm{x}|^3)^{-1}.
    \end{aligned}
\end{equation*}
Next, since
\begin{align*}
    \PD{}{x_i} k(\bm{x})=\int_{\mathbb{R}^2}i\xi_i e^{-L_A(\xib)}e^{i\bm{x}\cdot\xib}d\xib,
\end{align*}
we have
\begin{align*}
    \norm{\PD{}{x_i} k(\bm{x})}=\int_{\mathbb{R}^2}\abs{\xi_i} \norm{ e^{-L_A(\xib)}}d\xib\leq C,
\end{align*}
where $C$ only depends on $A$. Then,
\begin{align*}
    \abs{\bm{x}}^4\PD{}{x_i} k(\bm{x})=\int_{\mathbb{R}^2}\paren{\Delta_{\xib}}^2 \paren{i\xi_i e^{-L_A(\xib)}}e^{i\bm{x}\cdot\xib}d\xib,
\end{align*}
where $\paren{\Delta_{\xib}}^2\paren{\xi_i e^{-L_A(\xib)}}=4 \PD{}{\xi_i}\Delta_{\xib}e^{-L_A(\xib)}+\xi_i\paren{\Delta_{\xib}}^2 e^{-L_A(\xib)}$.
The first term is the $i$ component in $\nabla_{\xib}\Delta_{\xib} e^{-L_A(\xib)}$, so we may claim the term is integratable by the previous techniques.
For the second term, again,
\begin{align*}
        &\partial_{\xib}^{jjkk}L_A(\xib)\\
    =   &-\int_0^1 e^{-\paren{1-t_1}L_A(\xib)}\partial_{\xib}^{jjkk}L_A(\xib)e^{-t_1 L_A(\xib)}dt_1\\
        &+H_{2}\paren{\paren{1-t_1}\paren{1-t_2},jjk,\paren{1-t_1}t_2,k,t_1}+\cdots\\
        &-H_3\paren{\paren{1-t_1}\paren{1-t_2}\paren{1-t_3},jj,\paren{1-t_1}\paren{1-t_2}t_3,k,\paren{1-t_1}t_2,k,t_1}-\cdots\\
        &+H_4\left(\paren{1-t_1}\paren{1-t_2}\paren{1-t_3}\paren{1-t_4},\paren{1-t_1}\paren{1-t_2}\paren{1-t_3}t_4,\right.\\
        &\qquad\qquad\left. j,\paren{1-t_1}\paren{1-t_2}t_3,k,\paren{1-t_1}t_2,k,t_1\right)+\cdots
\end{align*}
where
\begin{align*}
        &H_{2}\paren{s_1,\alphab,s_2,\betab,s_3}\\
    =   &\int_0^1\int_0^1 e^{-s_1L_A(\xib)}\partial_{\xib}^{\alphab}L_A(\xib)e^{-s_2 L_A(\xib)}\partial_{\xib}^{\betab}L_A(\xib)e^{-s_3 L_A(\xib)}dt_1dt_2\\
        &H_{3}\paren{s_1,\alphab,s_2,\betab,s_3, \gammab,s_4}\\
    =   &\int_0^1\int_0^1\int_0^1 e^{-s_1L_A(\xib)}\partial_{\xib}^{\alphab}L_A(\xib)e^{-s_2 L_A(\xib)}\partial_{\xib}^{\betab}L_A(\xib)e^{-s_3 L_A(\xib)}\partial_{\xib}^{\gammab}L_A(\xib)e^{-s_4L_A(\xib)}\\
        &dt_1dt_2dt_3\\
        &H_{4}\paren{s_1,\alphab,s_2,\betab,s_3, \gammab,s_4,\deltab,s_5}\\
    =   &\int_0^1\int_0^1\int_0^1\int_0^1 e^{-s_1L_A(\xib)}\partial_{\xib}^{\alphab}L_A(\xib)e^{-s_2 L_A(\xib)}\partial_{\xib}^{\betab}L_A(\xib)e^{-s_3 L_A(\xib)}\partial_{\xib}^{\gammab}L_A(\xib)e^{-s_4L_A(\xib)}\\
        &\partial_{\xib}^{\deltab}L_A(\xib)e^{-s_5L_A(\xib)}dt_1dt_2dt_3dt_4.
\end{align*}
There are $14$ $H_2$-type terms, $36$ $H_3$-type terms, $24$ $H_4$-type terms in $\partial_{\xib}^{jjkk}L_A$.
Since $\norm{\partial_{\xib}^{\alphab}L_A(\xib)}\lesssim \abs{\xib}^{1-\abs{\xib}}$ and $\norm{e^{-s L_A(\xib)}}\lesssim e^{-sC\abs{\xib}}$ for some $C>0$, $\norm{\xi_i H_{2}}\lesssim \abs{\xib}^{-1} e^{-C\abs{\xib}}$,$\norm{\xi_i H_3}\lesssim  e^{-C\abs{\xib}}$ and $\norm{\xi_i H_4}\lesssim\abs{\xib}  e^{-C\abs{\xib}}$.
Hence, we only have to check
\begin{align*}
    \int_{\mathbb{R}^2}\int_0^1 e^{-\paren{1-t_1}L_A(\xib)}\xi_i\paren{\Delta_{\xib}}^2L_A(\xib)e^{-t_1 L_A(\xib)}dt_1d\xib.
\end{align*}
Since $\int_0^1 e^{-\paren{1-t_1}L_A(\xib)}\xi_i\paren{\Delta_{\xib}}^2L_A(\xib)e^{-t_1 L_A(\xib)}dt_1$ is odd, we may use the same technique in $k(\bm{x})$ term to obtain
\begin{align*}
    \abs{\bm{x}}^4\norm{ \PD{}{x_i} k(\bm{x})}\leq C,
\end{align*}
so
\begin{align*}
    \norm{ \PD{}{x_i} k(\bm{x})}\lesssim \frac{1}{1+\abs{\bm{x}}^4}.
\end{align*}
Finally, since
\begin{align*}
    \PD{}{x_i} k(\bm{x})=\int_{\mathbb{R}^2}\xi_i e^{-L_A(\xib)}e^{i\bm{x}\cdot\xib}d\xib,
\end{align*}
we have
\begin{align*}
    \norm{\PD{}{x_i} k(\bm{x})}=\int_{\mathbb{R}^2}\abs{\xi_i} \norm{ e^{-L_A(\xib)}}d\xib\leq C,
\end{align*}
where $C$ only depends on $A$.

\end{proof}

\begin{lemma}\label{t:integral_est01}
Given a vector function $M\paren{r}$ in $ C^1\paren{[0,1]}$, we have the following inequality: for all $A\geq 0$,
\begin{align}
    \norm{\int_0^1 M\paren{r}\frac{\sin{\paren{Ar}}}{r}dr}\leq 2\norm{M(0)}+\norm{\frac{d M\paren{r}}{dr}}_{C^0\paren{[0,1]}}\leq 2\norm{M}_{C^1\paren{[0,1]}}.
\end{align}
\end{lemma}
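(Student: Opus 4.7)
The plan is to split $M(r) = M(0) + (M(r)-M(0))$ and treat the two resulting integrals by entirely different means, since the first sees the potential singularity at $r=0$ only through the oscillatory sine factor, while the second sees it through the vanishing of $M(r)-M(0)$ at $r=0$.

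First I would handle the constant part. Writing
\[
\int_0^1 M(0)\frac{\sin(Ar)}{r}\,dr = M(0)\int_0^A \frac{\sin u}{u}\,du,
\]
the sine integral $\mathrm{Si}(X)=\int_0^X \sin(u)/u\,du$ is uniformly bounded on $[0,\infty)$. A standard argument (splitting into integrals over intervals $[k\pi,(k+1)\pi]$ and observing that the resulting series is alternating with terms of decreasing magnitude) gives $|\mathrm{Si}(X)|\le \mathrm{Si}(\pi)<2$ for every $X\ge 0$, and the bound $|\mathrm{Si}(X)|\le 2$ holds trivially at $X=0$ by continuity. This produces the term $2\|M(0)\|$ in the stated inequality.

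For the remaining part, the mean-value theorem applied componentwise gives $\|M(r)-M(0)\|\le r\,\|dM/dr\|_{C^0([0,1])}$, so
\[
\left\|\int_0^1 \bigl(M(r)-M(0)\bigr)\frac{\sin(Ar)}{r}\,dr\right\|
\le \int_0^1 \left\|\frac{dM}{dr}\right\|_{C^0([0,1])} |\sin(Ar)|\,dr
\le \Big\|\frac{dM}{dr}\Big\|_{C^0([0,1])},
\]
using $|\sin(Ar)|\le 1$ and the length of the interval. Adding the two bounds produces the first inequality; the second inequality is immediate from $\|M(0)\|\le \|M\|_{C^0([0,1])}\le \|M\|_{C^1([0,1])}$ and $\|dM/dr\|_{C^0([0,1])}\le \|M\|_{C^1([0,1])}$.

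There is no real obstacle here; the only substantive input is the uniform boundedness of $\mathrm{Si}$, which is classical. The reason the lemma is useful in the applications above is precisely that the constant $2$ is $A$-independent, so that dividing by $r$ against an oscillating kernel of frequency $A$ costs only a uniform constant times the $C^1$ norm of the amplitude $M$, regardless of how large $A$ (i.e.\ $|\thetab\cdot\hat\xib|$) becomes.
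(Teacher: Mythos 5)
Your proof is correct and follows essentially the same approach as the paper: both decompose $M(r) = M(0) + (M(r)-M(0))$, bound the sine-integral uniformly by $2$ after rescaling to $\int_0^A \sin u/u\,du$, and bound the remainder by $\|dM/dr\|_{C^0}$ via the fundamental theorem of calculus (you invoke the mean-value theorem componentwise, which is equivalent). The only cosmetic difference is that the paper cites the numerical maximum $\mathrm{Si}(\pi)\approx 1.852$ directly while you justify the uniform bound by the alternating-series argument.
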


\begin{proof}
\begin{align}
\begin{split}
    \int_0^1 M\paren{r}\frac{\sin{\paren{Ar}}}{r}dr= \int_0^1 M\paren{0}\frac{\sin{\paren{Ar}}}{r}dr+\int_0^1 \frac{M\paren{r}-M\paren{0}}{r}\sin{\paren{Ar}}dr.
\end{split}
\end{align}
For the first term,
\begin{align}
\begin{split}
        &\norm{\int_0^1 M\paren{0}\frac{\sin{\paren{Ar}}}{r}dr}=\norm{M\paren{0}\int_0^1 \frac{\sin{\paren{Ar}}}{r}dr}=\norm{M(0)}\abs{\int_0^A \frac{\sin{\paren{r}}}{r}dr}\\
    \leq&\norm{M(0)}\int_0^\pi \frac{\sin{\paren{r}}}{r}dr\approx 1.852\norm{M(0)} \leq 2\norm{M(0)}.
\end{split}
\end{align}
For the second term, since
\begin{align}
\begin{split}
    \norm{\frac{M\paren{r}-M\paren{0}}{r}}=\frac{\norm{\int_0^r  \frac{d M}{ds}\paren{s}ds}}{r}\leq \norm{\frac{d M\paren{r}}{dr}}_{C^0\paren{[0,1]}},
\end{split}
\end{align}
\begin{align}
\begin{split}
    \norm{\int_0^1 \frac{M\paren{r}-M\paren{0}}{r}\sin{\paren{Ar}}dr}&\leq \int_0^1\norm{ \frac{M\paren{r}-M\paren{0}}{r}}\abs{\sin{\paren{Ar}}}dr\\
    &\leq \norm{\frac{d M\paren{r}}{dr}}_{C^0\paren{[0,1]}}.
\end{split}
\end{align}
Therefore, we obtain the result.

\end{proof}

\begin{lemma}\label{t:integral_est02}
Given $f(t)\geq 0$ is a locally integrable function on $\mathbb{R}$, we have the following estimates:

(i) If $\alpha> -1$, for all $m< t$,
\begin{align}
    \abs{\int_m^t \paren{t-s}^\alpha f ds}\leq C \paren{t-m}^{\alpha+1} M_l [f](t).
\end{align}

(ii) If $\alpha< -1$, for all $M<  t$,
\begin{align}
    \abs{\int_{-\infty}^M \paren{t-s}^\alpha f ds}\leq C \paren{t-M}^{\alpha+1} M_l [f](t).
\end{align}
\end{lemma}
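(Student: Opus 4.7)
The plan is to prove both inequalities by a dyadic decomposition of the intervals of integration around the singular point $s = t$ (case (i)) or the endpoint near $-\infty$ (case (ii)), using the definition of the left maximal function
\[
M_l[f](t) = \sup_{r>0} \frac{1}{r}\int_{t-r}^t f(s)\,ds
\]
to control the mass of $f$ on each dyadic shell, and then summing a geometric series whose convergence is governed by the sign of $\alpha+1$.

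For part (i), set $r_0 = t - m$ and split $[m,t]$ into the dyadic annuli $I_k = [t - 2^{-k}r_0,\, t - 2^{-(k+1)}r_0]$ for $k \ge 0$. On $I_k$ we have $(t-s)\in[2^{-(k+1)}r_0,\,2^{-k}r_0]$, hence $(t-s)^\alpha \le C\,(2^{-k}r_0)^\alpha$ with $C$ depending only on $\alpha$. Since $I_k \subset [t - 2^{-k}r_0,\,t]$, the maximal function bound gives $\int_{I_k} f\,ds \le 2^{-k}r_0\, M_l[f](t)$. Multiplying, summing, and using $\alpha+1>0$ yields
\[
\int_m^t (t-s)^\alpha f(s)\,ds \;\le\; C\,r_0^{\alpha+1} M_l[f](t)\sum_{k\ge 0} 2^{-k(\alpha+1)} \;\le\; C(t-m)^{\alpha+1} M_l[f](t).
\]

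For part (ii), set $r_0 = t - M$ and decompose $(-\infty, M] = \bigcup_{k\ge 0} J_k$ with $J_k = [t - 2^{k+1}r_0,\, t - 2^k r_0]$. On $J_k$ one has $(t-s)\in[2^k r_0, 2^{k+1}r_0]$, so $(t-s)^\alpha \le C(2^k r_0)^\alpha$ (here the reverse inequality for the power uses $\alpha<0$), while $J_k\subset[t-2^{k+1}r_0,\,t]$ gives $\int_{J_k} f\,ds \le 2^{k+1}r_0\,M_l[f](t)$. Summing and using $\alpha+1<0$ makes the series $\sum_k 2^{k(\alpha+1)}$ convergent, producing
\[
\int_{-\infty}^M (t-s)^\alpha f(s)\,ds \;\le\; C(t-M)^{\alpha+1} M_l[f](t).
\]

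There is no genuine obstacle here; the only mild subtlety is to keep the direction of the inequalities for $(t-s)^\alpha$ aligned with the sign of $\alpha$ in each case so that the final geometric series has the correct ratio to be summable. The critical exponents $\alpha=-1$ are excluded precisely because the geometric sum becomes logarithmic there, which is what makes the hypotheses $\alpha>-1$ and $\alpha<-1$ sharp.
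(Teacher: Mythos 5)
Your dyadic decomposition argument is correct and complete; the only thing worth verifying carefully is that the geometric series has the right ratio on each side of the critical exponent $\alpha=-1$, and you handle that properly in both parts (using $\alpha+1>0$ in (i) and $\alpha+1<0$ in (ii), with the small additional observation for the power estimate when $-1<\alpha<0$ that $(t-s)^\alpha\le 2^{-\alpha}(2^{-k}r_0)^\alpha$ on $I_k$). This is a genuinely different route from the paper's proof, which instead integrates by parts: writing $f(s)=-\partial_s\int_s^t f(r)\,dr$, one obtains boundary terms of the form $(t-s)^{\alpha}\int_s^t f(r)\,dr=(t-s)^{\alpha+1}\cdot\frac{1}{t-s}\int_s^t f(r)\,dr$, whose inner average is directly dominated by $M_l[f](t)$, plus a bulk term $-\alpha\int (t-s)^\alpha\cdot\frac{1}{t-s}\int_s^t f\,dr\,ds$ whose integrand is again pointwise controlled by $(t-s)^\alpha M_l[f](t)$ and then integrated explicitly, using $\alpha+1>0$ (resp. $<0$) to make the boundary term at $s=t$ (resp.\ at $s=-\infty$) vanish. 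The integration-by-parts proof avoids any summation and packages the maximal-function comparison into a single pointwise bound; the dyadic proof is more elementary and makes the role of the convergent geometric series --- and hence the sharpness of the exclusion of $\alpha=-1$ --- more transparent. Both are standard and equally rigorous; nothing needs to be added to yours.
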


\begin{proof}
(i) By integration by part theorem,
\begin{align*}
     &\int_m^t \paren{t-s}^\alpha fds= -\int_m^t \paren{t-s}^\alpha \paren{\PD{}{s} \int_s^t f\paren{r}dr} ds\\
    =& \left.\paren{t-s}^\alpha  \int_s^t f\paren{r}dr\right |_{s=t}^m-\alpha \int_m^t \paren{t-s}^\alpha \paren{\frac{1}{t-s} \int_s^t f\paren{r}dr} ds.
\end{align*}
Since
\begin{align*}
     &\limsup_{s\rightarrow t^-}\abs{\paren{t-s}^\alpha  \int_s^t f\paren{r}dr}=\limsup_{s\rightarrow t^-}\abs{\paren{t-s}^{\alpha+1}  \frac{1}{t-s}\int_s^t f\paren{r}dr}\\
    \leq&\limsup_{s\rightarrow t^-}\paren{t-s}^{\alpha+1} M_l [f](t)=0,
\end{align*}
\begin{align*}
    \abs{\left.\paren{t-s}^\alpha  \int_s^t f\paren{r}dr\right |_{s=t}^m}\leq \paren{t-m}^{\alpha+1} M_l [f](t).
\end{align*}
Next,
\begin{align*}
    &\abs{\int_m^t \paren{t-s}^\alpha \paren{\frac{1}{t-s} \int_s^t f\paren{r}dr} ds}\\
    \leq &M_l [f](t)\int_m^t \paren{t-s}^\alpha ds=\frac{1}{\alpha+1} \paren{t-m}^{\alpha+1}M_l [f](t).
\end{align*}
Therefore,
\begin{align*}
    \abs{\int_m^t \paren{t-s}^\alpha f ds}\leq C \paren{t-m}^{\alpha+1} M_l [f](t).
\end{align*}

(ii) The proof is basically the same. It will be
\begin{align*}
     &\abs{\int_{-\infty}^M \paren{t-s}^\alpha fds}\\
    =& \abs{\left.\paren{t-s}^\alpha  \int_s^t f\paren{r}dr\right |_{s=-\infty}^M-\alpha \int_{-\infty}^M \paren{t-s}^\alpha \paren{\frac{1}{t-s} \int_s^t f\paren{r}dr} ds}\\
    \leq&  \paren{t-M}^{\alpha+1} M_l [f](t)+\limsup_{s\rightarrow -\infty} \paren{t-s}^{\alpha+1} M_l [f](t)+M_l [f](t)\int_{-\infty}^M \paren{t-s}^\alpha ds\\
    =&\paren{t-M}^{\alpha+1} M_l [f](t)-\frac{1}{\alpha+1}\paren{t-M}^{\alpha+1} M_l [f](t).
\end{align*}
since $\alpha+1< 0$.
Therefore,
\begin{align*}
    \abs{\int_{-\infty}^M \paren{t-s}^\alpha fds}\leq C \paren{t-M}^{\alpha+1} M_l [f](t).
\end{align*}
\end{proof}




\bibliographystyle{amsplain2link.bst}
\bibliography{references.bib}



\end{document}